\newcommand\sumprime{\operatornamewithlimits{\sum\nolimits^{\prime}}}
\def\C{{\mathbb{C}}}
\def\N{{\mathbb{N}}}
\def\R{{\mathbb{R}}}
\newcommand{\gG}{{\Gamma}}
\newcommand{\kA}{{\mathcal A}}
\newcommand{\kB}{{\mathcal B}}
\newcommand{\kS}{{\mathcal S}}
\newcommand{\kM}{{\mathcal M}}
\newcommand{\kH}{{\mathcal H}}
\newcommand{\kC}{{\mathcal C}}
\newcommand{\kT}{{\mathcal T}}
\newcommand{\bN}{{\mathbb N}}
\newcommand{\bR}{{\mathbb R}}
\newcommand{\bC}{{\mathbb C}}
\def\cH{{\mathcal H}}
\newcommand\dom{\operatorname{dom}}
\newcommand{\gotH}{{\mathfrak H}}
\def\Ext{{\rm Ext\,}}
\def\gr{{\rm gr\,}}
\newcommand{\supp}{\mathop{\rm supp}\nolimits}
\newcommand{\ran}{{\mathrm{ran\,}}}
\newcommand{\diag}{\mathop{\rm diag}\nolimits}
\newcommand{\dist}{\mathop{\rm dist}\nolimits}
\newcommand{\comp}{\mathop{\rm comp}\nolimits}
\newtheorem{theorem}{Theorem}[section]
 \newtheorem{corollary}[theorem]{Corollary}
 \newtheorem{lemma}[theorem]{Lemma}
 \newtheorem{proposition}[theorem]{Proposition}
 \theoremstyle{definition}
 \newtheorem{definition}[theorem]{Definition}
 \theoremstyle{remark}
 \newtheorem{remark}[theorem]{Remark}
 \numberwithin{equation}{section}
\DeclareMathOperator{\Span}{span}\DeclareMathOperator{\imm}{Im}
\DeclareMathOperator{\rank}{rank} \DeclareMathOperator{\cl}{cl}
\def\mul{{\rm mul\,}}
\author{Mark~M.~Malamud  and Konrad Schm\"udgen}
\date{}
\begin{document}
\maketitle

\begin{abstract}
A number of  results on radial positive definite functions on
${\mathbb R^n}$ related to Schoenberg's integral representation
theorem are obtained. They are applied to the study of spectral
properties of self-adjoint realizations   of two- and
three-dimensional Schr\"odinger operators with countably many
point interactions. In particular, we find conditions on the
configuration of point interactions such that any  self-adjoint
realization has purely absolutely continuous non-negative
spectrum. We also apply some results on Schr\"odinger operators to
obtain new results on completely monotone functions.
\end{abstract}

\textbf{Mathematics  Subject  Classification (2000)}.
 47A10, 47B25.

\textbf{Key  words}. Schr\"odinger operator, point interactions,
self-adjoint extension, spectrum, positive definite  function

%%%\end{frontmatter}

\renewcommand{\contentsname}{Contents}
\tableofcontents

%%%%%%%%%%%%%%%%%%%%%%%%%%%%%%%
%%
\section{Introduction}
An important topic in quantum mechanics is the spectral theory of
Schr\"odinger Hamiltonians with point interactions. These are
Schr\"odinger operators on the  Hilbert space $L^2({\mathbb R^d})$,
$1\le d\le 3,$ with potentials supported on a discrete (finite or
countable) set of points of ${\R^d}$. There is an extensive
literature on such operators, see e.g. \cite{AGHH88,
AG2000, BF61, GMZ11, GrHKM80,  HHM83, HHJ84, Kar83, Koc82, Pos08}
and references therein.

Let $X=\{x_j\}_1^{m}$ be the set of points in ${\R^d}$ and let
$\alpha=\{\alpha_j\}_1^{m}$ be a sequence of real numbers, where  $m\in \bN\cup \{\infty\}$.
The mathematical problem is to associate a self-adjoint operator
(Hamiltonian) on $L^2(\mathbb{R}^d)$ with the differential
expression
   \begin{equation}\label{eq0}
 \mathfrak{L}_d :=  \mathfrak{L}_d(X,\alpha) := -\Delta + \sum\limits_{j=1}^m\alpha_j\delta(\cdot - x_j),\qquad \alpha_j\in \mathbb{R},\quad   m \in\mathbb{N}\cup \{\infty\},
   \end{equation}
and to describe its spectral properties.

 There are at least two    natural ways to associate  a self-adjoint Hamiltonian
$H_{X,\alpha}$  with the differential expression \eqref{eq0}.
The first  one  is the form approach. That is, the Hamiltonian $H_{X,\alpha}$ is defined by the self-adjoint operator associated with the
quadratic form
    \begin{equation}\label{0.2}
\widetilde{\mathfrak t}^{(d)}_{X,\alpha}[f] = \int_{\R^d}|\nabla f|^2 dx + %%%%\sum\nolimits_j
\sum\limits_{j=1}^m \alpha_j|f(x_j)|^2, \qquad \dom(\widetilde{\mathfrak t}^{(d)}_{X,\alpha}) = W^{2,2}_{\comp}(\R^d). %%%\setminus X).
    \end{equation}
This is possible for $d=1$ and  finite $m\in\N$, since in this case \emph{the quadratic form $\widetilde{\mathfrak
t}^{(1)}_{X,\alpha}$  is  semibounded below and closable} (cf. \cite{ReeSim75}).
Its closure   $\mathfrak t^{(1)}_{X,\alpha}$ is defined by the same expression
\eqref{0.2} on the domain
$\dom(\mathfrak t^{(1)}_{X,\alpha}) =
W^{1,2}(\R).$ 
For $m=\infty$
the form \eqref{0.2} is also closable whenever it is semibounded (see \cite[Corollary 3.3]{AKM2010}).

Another way to  introduce   local interactions  on
$X:=\{x_j\}_{j=1}^m\subset\mathbb{R}$   is to  consider the minimal
operator corresponding to the expression $\mathfrak{L}_1$  and  to impose boundary
conditions at the points $x_j.$ 
For instance, in the case $d=1$ and $m<\infty$ the domain of the corresponding
Hamiltonian $H_{X,\alpha}$ is given by
$$
\dom(H_{X,\alpha})=\{f\in W^{2,2}({\R}\setminus X)\cap W^{1,2}(\R):  f'(x_j+)-f'(x_j-)=\alpha_j f(x_j)\}.
$$

In contrast to the  one-dimensional case,
the quadratic form \eqref{0.2} 
\emph{is not closable} in $L^2(\mathbb{R}^d)$ for $d\ge 2$, so it
 does not define a self-adjoint operator. The latter
happens because the point evaluations $\,f\rightarrow
f(x)$  are no longer continuous  on the Sobolev space
$W^{1,2}(\mathbb{R}^d)$  in the case $d\ge 2$.

However, it is still possible to apply the  extension theory of symmetric operators.
F. Berezin and L. Faddeev proposed in their pioneering paper \cite{BF61}
 to consider the expression \eqref{eq0} (with $m=1$ and $d=3$) in this framework.
They defined the minimal symmetric  operator $H$ as a restriction of
$-\Delta$ to the domain
$\dom H = \{f \in W^{2,2}(\bR^d): f(x_1)=0\}$ and studied
the spectral properties of \emph{all its self-adjoint extensions}. 
Self-adjoint extensions (or realizations) of $H$ for finitely many
point interactions have been investigated since then in numerous papers (see
\cite{AGHH88}).
In the case of infinitely many point interactions
$X=\{x_j\}_1^\infty$ the minimal operator $H_{\min}$ is defined by
  \begin{equation} 
H_d := H_{d,\min} := -\Delta   \!\upharpoonright \dom H, \qquad \dom(H_d)
= \{f \in W^{2,2}(\bR^d): f(x_j)=0, \quad j\in
\bN\}.
  \end{equation}

In this paper  we investigate the "operator" \eqref{eq0} (with $d=3$ and $m=\infty$)
in the framework of boundary triplets. This is a new approach to the extension
theory of  symmetric operators that has been  developed
during the last three decades (see \cite{GG, DerMal91, BGPankr2008,Sch2012}).
A boundary triplet $\Pi = \{\kH,\gG_0,\gG_1\}$  for the adjoint of a
 densely defined symmetric operator $A$ consists of an auxiliary Hilbert space $\kH$ and two linear mappings
$\Gamma_0,\Gamma_1:\  \dom(A^*)\rightarrow\kH$ such that the mapping $\Gamma :=(\Gamma_0,\Gamma_1): \dom(A^*)
\rightarrow \kH \oplus\kH$ is surjective.
The main requirement is the abstract Green identity
%%%%%%%%%%%%%%%%%%%%%%%%%%%%%%%%%
%%%%%%%%%%%%%%%%%%%%%%%%%%%%%%%%
%
\begin{equation}\label{GI_Intro}
(A^*f,g)_\gotH - (f,A^*g)_\gotH = (\gG_1f,\gG_0g)_\kH -
(\gG_0f,\gG_1g)_\kH,\qquad f,g\in\dom(A^*).
\end{equation}
A boundary triplet for $A^*$ exists whenever $A$ has equal deficiency indices, but it is not unique.
It  plays the role of a "coordinate system" for the quotient space $\dom(A^\ast)/ \dom(\overline{A})$  and leads to
a natural  parametrization   of the  self-adjoint extensions of $A$
by means of self-adjoint linear relations (multi-valued operators) in $\kH$, see \cite{GG} and \cite{Sch2012} for detailed treatments.

The main analytical tool in this approach is  the abstract Weyl
function $M(\cdot)$ which was introduced and studied in
\cite{DerMal91}. This  Weyl function plays a similar role in the
theory of  boundary triplets as the classical Weyl-Titchmarsh
function does in the  theory of Sturm-Liouville operators. In
particular, it allows one to investigate spectral properties of
extensions (see \cite{BraMal02, DerMal91, Mal92, MalNei09}).

When studying boundary value problems for differential
operators, one is searching for an appropriate boundary triplet
 such that:
 \begin{itemize}
 \item the properties of the  mappings $\Gamma=\{\Gamma_0,\Gamma_j\}$ should correlate with  
 trace properties of functions from the maximal domain $\dom(A^*)$, 
\item  the  Weyl function and the boundary operator should  have "good" explicit forms.
\end{itemize}
Such a  boundary triplet  was constructed and applied to
differential operators with infinite deficiency indices in the
following  cases:

\begin{enumerate}
\item[(i)]  smooth elliptic operators in bounded or unbounded
domains (\cite{Gr68}, \cite{Vi63}, see also \cite{Gru08}),
\item[(ii)] the maximal Sturm-Liouville operator $-d^2/dx^2+T$ in
$L^2([0,1]; \cH)$ with an unbounded operator potential $T=T^*\ge a
I$, $T\in \mathcal{C}(\mathcal{H})$ (\cite{GG}, see also
\cite{DerMal91} for the case of $L^2(\R_+; \cH)$),
\item[(iii)] the 1D Schr\"odinger operator $\mathfrak L_{1,X}$ in the cases
$d_*(X) > 0$ (\cite{Koc89}, \cite{Mik94}) and $d_*(X) = 0$
(\cite{KosMal10}), where $d_\ast(X)$ is defined by  \eqref{sparsexn}  below.
 \end{enumerate}
Constructing  such a "good" boundary triplet  involves always
nontrivial analytic results. For instance,  Grubb's
contruction \cite{Gr68} for (i) (see also the adaptation to the case
of Definition \ref{bound}  in \cite{Mal10}) is  based on trace
theory for elliptic operators developed by Lions and Magenes
\cite{LioMag72} (see also \cite{Gru08}). The approach in (iii)
is based on a general construction of a (regularized) boundary
triplet for direct sums of symmetric operators (see \cite[Theorem
5.3]{MalNei09} and \cite[Theorem 3.10]{KosMal10}).

In this paper  we study \emph{all} (that is,  not necessarily local) self-adjoint extensions of the operator
$H=H_3$ (realizations of  $\mathfrak{L}_3$) in the framework of
boundary triplets approach. As in \cite{AGHH88} our crucial assumption is
\begin{align}\label{sparsexn}
d_\ast(X):= {\rm inf}_{j\neq k}~|x_k-x_j|>0.
\end{align}
Our construction of a boundary triplet $\Pi$ for $H^\ast$ is 
based on the following result:  \emph{The sequence
}%
  \begin{align}\label{RieszBasisIntro}
\left\{\frac{e^{-|x - x_j|}} {|x - x_j|}\right\}^{\infty}_{j=1}
   \end{align}
\emph{forms a Riesz basis of the defect subspace} $\mathfrak{N}_{-1}(H) = \ker(H^* + I)$  \emph{of} $H^*$
(cf. Theorem \ref{rieszbasisN1}).
Using this boundary triplet $\Pi$ we parameterize
the set of self-adjoint extensions of $H,$  compute the
corresponding Weyl function $M(\cdot)$ and
investigate various  spectral properties of self-adjoint
extensions (semi-boundedness, non-negativity,
negative spectrum, resolvent comparability, etc.)

Our main  result on spectral properties of Hamiltonians with point interactions concerns the absolutely
continuous spectrum  ($ac$-spectrum). For
instance, if
    \begin{equation}\label{1.8Intro}
C:= \sum_{|j- k| >0}\frac{1}{|x_j-x_k|^2} < \infty,
    \end{equation}
 we prove that the  part ${\widetilde H}E_{\widetilde
H}(C,\infty)$ of \emph{every self-adjoint extension} ${\widetilde
H}$ of $H$ is absolutely continuous (cf. Theorems
\ref{AcspecTheorem} and \ref{AcspecTheorem2}). Moreover, under
 additional assumptions on $X$, we show that the singular
part of
$\widetilde{H}_+ := \widetilde{H}E_{\widetilde H}(0,\infty)$
 is trivial, i.e. $\widetilde{H}_+ =\widetilde{H}^{ac}_+.$

The absolute continuity of
self-adjoint realizations $\widetilde H$ of $H$ has been studied
only  in very few cases. Assuming that $X = Y+\Lambda$,
where $Y=\{y_j\}^N_1\in{\mathbb R}^3$ is a finite set and $\Lambda =
\{\sum^3_1 n_j a_j\in{\R}^3: (n_1,n_2,n_3)\in{\mathbb Z}^3\}$ is a
Bravais  lattice, it was proved in  \cite{AGHH87, AGHHK86, GrHKM80,
HHM83, HHJ84, Kar83, AG2000} (see also \cite[Theorems 1.4.5,
1.4.6]{AGHH88} and the references in \cite{AGHH88} and
\cite{AG2000}) that the  spectrum of some periodic
realizations is absolutely continuous and  has
a band structure with a finite number of gaps.

An important feature of our investigations is an appearently new
\emph{connection between the spectral theory of  operators
\eqref{eq0} for $d=3$  and the class $\Phi_3$ of radial positive
definite functions on $\R^3.$}  We exploit this connection in both
directions. In Section \ref{radialpositive} we combine the
extension theory of  the operator  $H$ with Theorem
\ref{rieszbasisN1} to obtain results on positive definite
functions and the corresponding Gram matrices (\ref{grf}), while
in Section \ref{d=3}   positive definite functions are applied to
the spectral theory of self-adjoint realizations of  operators
\eqref{eq0} with infinitely many point interactions.

The paper consists of two parts and  is organized as follows.

 Section \ref{radialpositive}  deals with   radial positive definite functions  on ${\mathbb R}^d$ and has been inspired
 by possible applications to the spectral theory of operators \eqref{eq0}.
 If $f$ is such a function and  $X=\{x_n\}_1^\infty$ is a sequence of points of ${\mathbb R}^d$, we say that $f$
 is {\it strongly $X$-positive definite} if there exists a constant $c>0$ such that  for all $\xi_1,\dots,\xi_m\in {\mathbb C}$,
  \begin{align*}
  \sum_{j,k=1}^m \xi_k{\overline \xi}_j f(x_k-x_j)\geq c\sum_{k=1}^m |\xi_k|^2, \qquad m\in \bN.
  \end{align*}
Using  Schoenberg's  theorem   we  derive a number of results showing
under certain assumptions on $X$ that  $f$ is strongly $X$-positive definite and that
the  Gram matrix
   \begin{align}\label{grf}
    Gr_X(f) :=  \big(f(|x_k-x_j|)\big)_{k,j\in \bN}
   \end{align}
defines a bounded operator on $l^2(\bN)$.
 The latter  results correlate with the properties
of the sequence $\{e^{i(\cdot,x_k)}\}_{k\in \bN}$  of exponential functions
to form   a Riesz-Fischer sequence or a Bessel sequence, respectively,
 in  $L^2(S^n_{r};\sigma_n)$ for some $r>0$.  

In Section  \ref{rieszbasisn1}  we prove that the sequence \eqref{RieszBasisIntro}  forms a Riesz basis in the closure of its linear span
if and only if $X$ satisfies  \eqref{sparsexn}. This result is
applied  to prove that \emph{for such $X$ and any non-constant
absolute monotone function  $f$ on $\R_+$ the function
$f(|\cdot|_3)$
is strongly $X$-positive definite}. Under an additional assumption it is  shown that the  
  matrix \eqref{grf} defines a boundedly invertible bounded operator on $l^2(\bN)$  (see Theorem \ref{propositionstronglydef}).

 The second part  of the paper is devoted to the spectral theory of self-adjoint
 operators associated with the expression \eqref{eq0}   for countably many point interactions.
Throughout this part we assume that $X$ satisfies condition \eqref{sparsexn}.

 In  Section \ref{preliminaries}  we collect some basic definitions and facts on boundary triplets,
 the corresponding  Weyl functions and  spectral properties of self-adjoint extensions.

In  Subsection \ref{sec5.1} we construct a boundary triplet
for the adjoint operator $H^*$ for $d=3$ and compute the corresponding Weyl function $M(\cdot).$
The explicit form of the Weyl function  given by \eqref{W3Operator} plays crucial role
in the sequel.  For the proof of
the  surjectivity  of the  mapping $\Gamma=(\Gamma_0,\Gamma_1)$      %
 the  strong $X$-positive  definiteness of the function
$e^{-|\cdot|}$ on  $\mathbb{R}^3$ is essentially used. The latter
follows  from the absolute monotonicity of the function $e^{-t}$
on $\R_+.$

 In Subsection \ref{sec5.2} we  describe the quadratic form generated  by  the semibounded
 operator $M(0)$  on $l^2(\N)$ as   strong resolvent limit of the  corresponding   Weyl  function
${M}(-x)$   as $x\to +0.$  For this we use the  strong  $X$-positive  definiteness
of the function   $\tfrac{1-e^{-|\cdot|}}{|\cdot|}$ on $\R^3$ which follows from
the absolute  monotonicity of the function  $\tfrac{1-e^{-t}}{t}$  on $\R_+.$
The operator $M(0)$
enters into the description of  the Krein extension of $H$ for $d=3$
and allow us to  characterize  all  non-negative self-adjoint extensions as well as all self-adjoint extensions with $\kappa (\le\infty)$ negative eigenvalues.
Using the behaviour of the Weyl function  at $-\infty$
we  show that any self-adjoint extension $H_B$ of $H$
  is  semibounded from below  if and only if
the corresponding boundary operator $B$ is.
A similar result for elliptic operators on exterior domains
has recently been  obtained by G. Grubb \cite{Gru2011}.

In Subsection \ref{AcSection}  we  apply a technique elaborated in
\cite{BraMal02, MalNei09} as well as a new general result (Lemma
\ref{lem4.22})  to investigate  the $ac$-spectrum of self-adjoint
realizations. In particular, we prove that the  part ${\widetilde
H}E_{\widetilde H}(C,\infty)$ of \emph{any self-adjoint
realization ${\widetilde H}$ of $\mathfrak{L}_3$  is absolutely
continuous} provided that condition  \eqref{1.8Intro} holds.
Moreover, under some additional assumptions on $X$ we show that
the singular non-negative part $\widetilde{H^s}E_{\widetilde
H}(0,\infty)$  of any realization $\widetilde{H}$ is trivial.
Among others, Theorems \ref{AcspecTheorem} and
\ref{AcspecTheorem2} provide explicit examples which show that an
analog of the Weyl--von Neumann theorem does not hold for
non-additive (singular) compact (and even non-compact)
perturbations.
The proof of  these  results is based on the fact that the
function  $\tfrac{\sin st}{t}$ belongs to $ \Phi_3$ for each $s>0$. Then, by
Propositions \ref{stronglyxdet} and   \ref{pointsr3},
$\tfrac{\sin s|\cdot|}{|\cdot|}$ is  strongly  $X$-positive definite 
for certain subsets $X$ of $\R^3$ and any $s>0.$ The latter  is equivalent to the
invertibility of the matrices
   $$
\kM(t):=\left(\delta_{kj}+\frac{\sin(\sqrt{t}|x_k-x_j|)}{\sqrt{t}|x_k-x_j|+\delta_{kj}}\right)_{j,k=1}^\infty
\qquad\text{for}\qquad t\in \R_+
     $$
and plays  a crucial role  in the proof  of Lemma \ref{lem4.22}.

\medskip

\textbf{Notation.} Throughout the paper  $\gotH$ and $\kH$ are  separable complex
Hilbert spaces. We denote by $\kB({\kH,\gotH})$    the bounded
linear operators from ${\kH}$ into $\gotH$, by $\kB(\kH)$ the set $\kB(\kH,\kH)$,  by $\kC(\kH)$
the   closed linear operators on $\kH$ and by
${\mathfrak S}_p(\kH)$  the Neumann-Schatten ideal on $\kH$.
In particular, ${\mathfrak S}_\infty(\kH)$ and ${\mathfrak S}_1(\kH)$ are the ideals of
compact operators and  trace class operators on $\kH$, respectively.

For  closed linear operator $T$ on  $\mathfrak{H}$, we write $\dom (T)$, $\ker (T)$, $\ran (T)$, $\gr(T)$ for
  the domain, kernel, range, and graph of $T$, respectively, and $\sigma(T)$ and $\rho (T)$
for the spectrum and the resolvent set of $T$. The symbols $\sigma_c(T),\
\sigma_{ac}(T),\  \sigma_{s}(T),\  \sigma_{sc}(T),\ \sigma_{p}(T)$
denote the  continuous, absolutely continuous, singular, singularly
continuous and point spectrum, respectively, of a self-adjoint  operator $T$. Note that $\sigma_{s}(T) = \sigma_{sc}(T)\cup
\sigma_{p}(T)$ and $\sigma(T) = \sigma_{ac}(T)\cup \sigma_{s}(T).$
The defect subspaces of a symmetric operator $T$ are denoted by $\mathfrak {N}_z$. For basic notions and results on operator theory we refer to \cite{ReeSim75}, \cite{ReeSim78}, \cite{Sch2012}, and \cite{Kato66}.

By
\,$C[0,\infty)$ we mean the Banach space of
continuous  bounded functions on $[0,\infty)$ and by
$S^n_{r}$  the sphere in $\R^n$ of  radius $r$ centered
at the origin and   $S^{n}:= S^n_{1}.$\ Further,   $\sum^\prime_{k\in
\bN}$ denotes the sum over all $k$ such that $k\neq j$ and $\sum_{|k-j|>0}$
is the sum over all  $k, j\in \N$ with $k\neq j.$

\section{Radial positive definite  functions}\label{radialpositive}

\subsection{Basic definitions }\label{prelim1}

 Let  $(u,v)=u_1v_1+\ldots+u_nv_n$ be  the scalar
product  of  two vectors  $u=(u_1,\ldots,u_n)$ and
$v=(v_1,\ldots,v_n)$ from $\R^n$, $n\in\N$,   and  let $|u| =
|u|_n =\sqrt{(u,u)}$ be the Euclidean norm of $u$. First we
recall some basic facts and  notions  about  positive  definite
functions \cite{Akh65}.
\begin{definition}\label{defpoz}\cite{Akh65}
 A function $g:\R^n\to\C$ is called {\it positive definite}
if $g$ is
continuous at $0$ and for arbitary finite sets
$\{x_1,\dots,x_m\}$ and
$\{\xi_1,\dots,\xi_m\}$, where $x_k\in \bR^n$ and $\xi_k\in \bC$, we have
\begin{equation}\label{positiv}
\sum_{k,j=1}^{m}\xi_k\overline{\xi}_jg(x_k-x_j)\ge 0.\,
\end{equation}
  \end{definition}
%%%%%%%%%%%%%%%%%%%%%%%%%%%%
%%%%%%%%%%%%%%%%%%%%%%%%%%%%%%
The set of positive definite function on $\bR^n$ is denoted by $\Phi(\mathbb{R}^n)$.

Clearly, a function $g$ on $\bR^n$ is positive definite if and only if it  is continuous at $0$ and the  matrix
$G(X)=\left(g_{kj}{:=}g(x_k{-}x_j)\right)_{k,j=1}^m$ is {\it positive semi-definite} for
any finite subset $X=\{x_j\}_1^m$ of $\R^n.$

The  following  classical  {\it Bochner theorem} gives a   description of   the  class $\Phi(\mathbb{R}^n)$.

 \begin{theorem}\cite{Boch}\label{boch}
A  function $g(\cdot)$  is  positive  definite  on  $\R^n$ if  and only if there is a  finite nonnegative Borel  measure $\mu$ on  $\R^n$ such that
\begin{equation}\label{bochequa}
g(x)=\int\nolimits_{\R^n}e^{i(u,x)}d\mu(u)~~{\rm for ~all}~~x\in \bR^n.
\end{equation}
    \end{theorem}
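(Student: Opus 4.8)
The plan is to prove the nontrivial necessity direction by Fourier-analytic means, extracting the measure $\mu$ as a weak-$*$ limit of explicit nonnegative densities built from $g$; the sufficiency direction is immediate. For sufficiency, substituting \eqref{bochequa} into the left-hand side of \eqref{positiv} and applying Fubini gives
\begin{equation*}
\sum_{k,j=1}^m \xi_k\overline{\xi}_j\,g(x_k-x_j) = \int_{\mathbb{R}^n}\Big|\sum_{k=1}^m \xi_k e^{i(u,x_k)}\Big|^2\,d\mu(u)\ge 0,
\end{equation*}
and continuity at $0$ follows from dominated convergence since $\mu$ is finite. For necessity, I would first record the elementary consequences of positive semidefiniteness of the $2\times 2$ matrices $\big(g(x_k-x_j)\big)$ with $x_1=0$, $x_2=x$: namely $g(0)\ge 0$, $g(-x)=\overline{g(x)}$, and $|g(x)|\le g(0)$, so that $g$ is bounded; the case $g(0)=0$ forces $g\equiv 0$ and is trivial. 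Using the inequality $|g(x)-g(y)|^2\le 2g(0)\,\re\big(g(0)-g(x-y)\big)$ (from a $3\times 3$ submatrix) together with continuity of $g$ at $0$, one upgrades the continuity hypothesis to uniform continuity of $g$ on all of $\mathbb{R}^n$.

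Next I would pass to the continuous form of \eqref{positiv}: approximating integrals by Riemann sums shows that for every continuous $\phi$ with bounded support,
\begin{equation*}
\int_{\mathbb{R}^n}\int_{\mathbb{R}^n} g(s-t)\,\phi(s)\,\overline{\phi(t)}\,ds\,dt\ge 0.
\end{equation*}
Choosing $\phi(s)=e^{-i(u,s)}\mathbf{1}_{Q_T}(s)$ on the cube $Q_T=[0,T]^n$ and changing variables to $\tau=s-t$ produces the Fej\'er-type density
\begin{equation*}
p_T(u):=\frac{1}{(2\pi)^n T^n}\int_{Q_T}\!\int_{Q_T} g(s-t)e^{-i(u,s-t)}\,ds\,dt=\frac{1}{(2\pi)^n}\int_{[-T,T]^n}\prod_{l=1}^n\Big(1-\tfrac{|\tau_l|}{T}\Big)\,g(\tau)\,e^{-i(u,\tau)}\,d\tau,
\end{equation*}
which is nonnegative for every $T>0$ and every $u$. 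Thus $d\mu_T(u):=p_T(u)\,du$ is a family of nonnegative measures, and the problem reduces to controlling their total mass and extracting a limit.

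The technical heart is the uniform mass bound together with tightness. I would obtain this by testing $p_T$ against the self-dual Gaussian $e^{-\varepsilon|u|^2}$: since its Fourier transform is again a Gaussian forming an approximate identity at the origin, $\int_{\mathbb{R}^n} p_T(u)e^{-\varepsilon|u|^2}\,du$ is expressed through the Fej\'er means of $g$ near $0$ and converges to $g(0)$ as $\varepsilon\to 0$; monotone convergence then yields $\mu_T(\mathbb{R}^n)=g(0)$, and the same computation, using continuity of $g$ at $0$, gives tightness of $\{\mu_T\}$. By Helly's selection theorem a subsequence $\mu_{T_k}$ converges weakly-$*$ to a finite nonnegative measure $\mu$ with $\mu(\mathbb{R}^n)=g(0)$. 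Finally, Fourier inversion applied to the explicit $p_T$ gives $\int_{\mathbb{R}^n} e^{i(u,x)}\,d\mu_T(u)=\prod_{l=1}^n\big(1-\tfrac{|x_l|}{T}\big)_+\,g(x)$, whose limit as $T\to\infty$ is $g(x)$; passing to the limit along $T_k$ against the bounded continuous function $e^{i(u,x)}$ (legitimate by tightness, which rules out loss of mass) yields \eqref{bochequa}. I expect the mass bound and the tightness needed in the passage to the limit to be the main obstacle, the positivity $p_T\ge 0$ and the self-dual Gaussian being the decisive devices. A cleaner but less self-contained alternative is the GNS construction: complete the space of finite combinations $\sum_k\xi_k\delta_{x_k}$ under $\langle\xi,\eta\rangle=\sum_{k,j}\xi_k\overline{\eta_j}\,g(x_k-x_j)$, realize translations as a strongly continuous unitary representation of $\mathbb{R}^n$, and apply the SNAG (multidimensional Stone) theorem to its spectral measure evaluated at the cyclic vector $\delta_0$, so that $g(a)=\langle U(a)\delta_0,\delta_0\rangle=\int_{\mathbb{R}^n}e^{i(u,a)}\,d\mu(u)$.
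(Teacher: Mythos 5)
The paper offers no proof of this statement: it is Bochner's classical theorem, quoted from \cite{Boch} and used as a black box throughout, so there is no internal argument to compare yours against. Your proposal is the standard F.~Riesz/Fej\'er-kernel proof of Bochner's theorem and it is essentially correct: the sufficiency computation, the $2\times2$ and $3\times3$ consequences of positive semidefiniteness (boundedness and uniform continuity of $g$), the nonnegativity of the densities $p_T$, the mass identity $\mu_T(\R^n)=g(0)$ via the self-dual Gaussian, and the inversion $\int e^{i(u,x)}\,d\mu_T(u)=\prod_l(1-|x_l|/T)_+\,g(x)$ all go through as you describe. Two details should be made explicit in a full write-up. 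First, your test function $\phi=e^{-i(u,\cdot)}\mathbf{1}_{Q_T}$ is not continuous, so the integrated positivity inequality must be extended from continuous compactly supported $\phi$ to bounded measurable compactly supported $\phi$; this is an easy $L^1$-approximation using $\bigl|\iint g(s-t)\phi(s)\overline{\phi(t)}\,ds\,dt\bigr|\le \|g\|_\infty\|\phi\|_1^2$. Second, the tightness you rightly single out as the crux does follow from your Gaussian computation: writing $h_T(\tau)=\prod_l(1-|\tau_l|/T)_+g(\tau)$ and letting $G_\varepsilon$ denote the normalized Gaussian of width $\sqrt{\varepsilon}$, one has $\int_{\R^n}(1-e^{-\varepsilon|u|^2})\,d\mu_T(u)\le\int G_\varepsilon(\tau)\,|g(0)-h_T(\tau)|\,d\tau$, which is small uniformly in $T\ge 1$ by the continuity of $g$ at $0$ and the bound $|g|\le g(0)$; this controls $\mu_T$ outside balls of radius $\varepsilon^{-1/2}$ and legitimizes passing to the limit against $e^{i(u,x)}$. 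With these two points filled in the proof is complete, and the GNS/SNAG alternative you mention is an equally valid, shorter route.
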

Let us continue with a number  of further basic definitions.
   \begin{definition}
Let $g$ be a positive definite  function on $\bR^n$ and let $X$ be a subset of $\bR^n$.

 (i)   We say that $g$ is {\it strongly $X$-positive definite} if there exists a constant $c>0$ such that
   \begin{equation}\label{stronglyxpositive}
\sum_{k,j=1}^{m}\xi_k\overline{\xi}_jg(x_k-x_j) >  c \sum_{k=1}^m |\xi_k|^2, \qquad \xi=\{\xi_1,\dots,\xi_m\}\in \bC^m\setminus\{0\}.
   \end{equation}
for any finite set $\{x_j\}_{j=1}^m$  of distinct points $x_j\in X.$

 (ii)   It is said that $g$ is {\it strictly $X$-positive definite} if \eqref{stronglyxpositive}
 is satisfied with  $c = 0.$
    \end{definition}

Any  strongly $X$-positive definite  $g$  is also strictly $X$-positive definite.
For {\rm finite} sets $X=\{x_j\}_1^m$  both notions are equivalent by  the compactness of the sphere in $\bC^m.$

The following problem seems to be important and difficult.

{\bf{Problem:}} {\it Let $g$ be  a  positive definite function on $\bR^n$. Characterize those countable subsets $X$ of $\bR^n$ for which $g$ is strictly $X$-positive definite and strongly $X$-positive definite, respectively. }

\medskip
We now define three other  basic concepts which will be crucial in what follows.
    \begin{definition} \cite{young80}
Let $F=\{f_k\}_{k=1}^\infty$ be a sequence of vectors of a Hilbert space $\kH$.

\item (i) This sequence is called a {\it Riesz-Fischer sequence}
if there exists a constant $c>0$ such that
   \begin{align}\label{rieszfischerse}
\bigg\|\sum_{k=1}^m \xi_k f_k\bigg\|^2_\kH \geq  c~ \sum_{k=1}^m |
\xi_k|^2 \quad \text{for all}\quad
(\xi_1,\cdots,\xi_m)\in \bC^m \quad\text{and}\quad m\in \bN.
  \end{align}

\item (ii) The sequence $F$ is said to be a {\it Bessel sequence}
if there is a constant $C>0$ such that
   \begin{align}\label{besselse}
\bigg\|\sum_{k=1}^m \xi_k f_k\bigg\|^2_\kH  \leq C~ \sum_{k=1}^m |
\xi_k|^2 \quad \text{for all}\quad
(\xi_1,\cdots,\xi_m)\in \bC^m \quad\text{and}\quad m\in \bN.
   \end{align}

\item (iii)  The sequence $F$ is called a {\it Riesz basis} of the
Hilbert space $\kH$ if its linear span is dense in $\kH$ and $F$
is both a Riesz-Fischer sequence and a Bessel sequence.
    \end{definition}
Note that the definitions of  Riesz-Fischer and Bessel sequences
given in
 \cite{young80} are different, but they are equivalent to the preceding  definition according to \cite[Theorem 4.3]{young80}.

The following proposition contains some slight reformulations of these notions.

If $\kA=(a_{kj})_{k,j\in \bN}$ is an infinite  matrix of complex entries $a_{kj}$ we shall say that $\kA$ defines  a bounded
operator $A$ on the  Hilbert space $l^2(\bN)$ if
  \begin{align}\label{matrixoperator}
\langle Ax,y\rangle =\sum_{k,j=1}^\infty
a_{kj}x_k\overline{y_j}\qquad {\rm for} \qquad  x=\{x_k\}_{k\in \N},\
y=\{y_k\}_{k\in \N}\in l^2(\bN).
  \end{align}
Clearly, if $\kA$ defines a bounded operator $A$, then $A$ is
uniquely determined by equation (\ref{matrixoperator}).
\begin{proposition}
Suppose that $X=\{x_k\}_{1}^\infty$ is a sequence of pairwise distinct points of $\bR^n$ and  $g$ is a positive definite function given by (\ref{bochequa}) with measure $\mu$. Let $F=\{f_k:=e^{i ( \cdot, x_k)}\}_{k=1}^\infty$ denote the sequence of  exponential functions in the Hilbert space $L^2(\bR^n;\mu)$. Then:
\begin{itemize}
\item[\em (i)]  $F$ is a Riesz-Fischer sequence in  $L^2(\bR^n;\mu)$ if and only if $g$ is strongly $X$-positive definite.
\item[\em (ii)] $F$ is a Bessel sequence if and only if the Gram matrix
\begin{align}\label{grammatrix}
Gr_F = \big(\langle f_k,f_j \rangle_{L^2(\bR^n;\mu)}\big)_{k,j\in
\bN} = \big(g(x_k-x_j)\big)_{k,j\in \bN} =: Gr_X(g)
 \end{align}
defines a bounded operator on $l^2(\bN)$.
   \end{itemize}
      \end{proposition}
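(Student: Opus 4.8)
The whole proposition rests on a single computation, so I would begin there. Using Bochner's representation \eqref{bochequa} and the finiteness of $\mu$ (which guarantees $f_k = e^{i(\cdot,x_k)}\in L^2(\bR^n;\mu)$), I would first record the inner products
\[
\langle f_k, f_j\rangle_{L^2(\bR^n;\mu)} = \int_{\bR^n} e^{i(u,x_k)}\,\overline{e^{i(u,x_j)}}\,d\mu(u) = \int_{\bR^n} e^{i(u,x_k-x_j)}\,d\mu(u) = g(x_k-x_j),
\]
which is exactly the identity claimed in \eqref{grammatrix}. Expanding the square of a finite linear combination then gives the key formula
\[
\Big\|\sum_{k=1}^m \xi_k f_k\Big\|^2_{L^2(\bR^n;\mu)} = \sum_{k,j=1}^m \xi_k\overline{\xi_j}\,\langle f_k, f_j\rangle = \sum_{k,j=1}^m \xi_k\overline{\xi_j}\,g(x_k-x_j),
\]
valid for every $m\in\bN$ and $(\xi_1,\dots,\xi_m)\in\bC^m$. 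This identity is the bridge: its left-hand side is the quantity controlled in the Riesz--Fischer and Bessel conditions \eqref{rieszfischerse}, \eqref{besselse}, while its right-hand side is precisely the Hermitian form occurring in the strong $X$-positive definiteness \eqref{stronglyxpositive} and in $Gr_X(g)$.

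For part (i) the two inequalities now coincide almost verbatim. The Riesz--Fischer inequality \eqref{rieszfischerse} says $\sum_{k,j}\xi_k\overline{\xi_j}g(x_k-x_j) \geq c\sum_k|\xi_k|^2$ for all $\xi$, whereas strong $X$-positive definiteness \eqref{stronglyxpositive} asks for the strict inequality for all $\xi\neq 0$. The only point to check is the harmless discrepancy between ``$\geq$ for all $\xi$'' and ``$>$ for all $\xi\neq 0$'': strong $X$-positive definiteness with constant $c$ yields \eqref{rieszfischerse} with the same $c$ at once, and conversely \eqref{rieszfischerse} with constant $c$ gives, for $\xi\neq 0$, the strict bound $\sum_{k,j}\xi_k\overline{\xi_j}g(x_k-x_j) \geq c\sum_k|\xi_k|^2 > \tfrac{c}{2}\sum_k|\xi_k|^2$, i.e. \eqref{stronglyxpositive} with $c/2$. (Here one uses that the $x_j$ are pairwise distinct, so the finite sets appearing in the two definitions range over the same configurations.) This settles (i).

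For part (ii) I would pass through the synthesis operator $T\colon \xi\mapsto \sum_k\xi_k f_k$, initially defined on finitely supported sequences, for which the key formula reads $\|T\xi\|^2 = \langle Gr_X(g)\,\xi,\xi\rangle$ in the sense of \eqref{matrixoperator} with $a_{kj}=g(x_k-x_j)$. If the matrix $Gr_X(g)$ defines a bounded operator $A$, then $\|T\xi\|^2 = \langle A\xi,\xi\rangle \leq \|A\|\,\|\xi\|^2$, which is the Bessel inequality \eqref{besselse} with $C=\|A\|$. For the converse, I would use that the Hermitian form $q(\xi,\eta):=\sum_{k,j}g(x_k-x_j)\xi_k\overline{\eta_j} = \langle T\xi,T\eta\rangle$ is positive semidefinite; by the Cauchy--Schwarz inequality for such forms, the Bessel bound $q(\xi,\xi)\leq C\|\xi\|^2$ upgrades to $|q(\xi,\eta)| \leq q(\xi,\xi)^{1/2}q(\eta,\eta)^{1/2} \leq C\|\xi\|\,\|\eta\|$. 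Hence $q$ extends, by density of finitely supported sequences in $l^2(\bN)$, to a bounded sesquilinear form on $l^2(\bN)$, and the Riesz representation theorem produces a bounded operator $A$ realizing it; by construction its matrix is $(g(x_k-x_j))_{k,j}$, so $Gr_X(g)$ defines a bounded operator.

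The proof is almost entirely definition-chasing, and the one genuine (if modest) step is the converse in (ii): turning the \emph{diagonal} estimate supplied by the Bessel condition into a bound on the full operator. This is exactly where the positive semidefiniteness of the Gram form is essential --- it is what allows Cauchy--Schwarz to control the off-diagonal entries and to extend the form from finitely supported sequences to all of $l^2(\bN)$. Without it the quadratic-form bound would not by itself yield operator boundedness.
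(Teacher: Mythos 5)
Your proof is correct and follows essentially the same route as the paper: the whole argument rests on the identity $\sum_{k,j}\xi_k\overline{\xi_j}\,g(x_k-x_j)=\big\|\sum_k\xi_kf_k\big\|^2_{L^2(\bR^n;\mu)}$ obtained from Bochner's representation, which is exactly the paper's equation \eqref{equrfbs}, from which both statements are read off. Your extra care with the $\geq$ versus $>$ discrepancy in (i) and the Cauchy--Schwarz extension argument in the converse of (ii) merely fills in steps the paper declares ``immediate.''
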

%%%%%%%%%%%%%%%%%%%%%%%%%%%%%%%%%%%%
   \begin{proof}
Using equation (\ref{bochequa}) we easily derive
\begin{equation}\label{equrfbs}
\sum_{k,j=1}^{m}\xi_k\overline{\xi}_jg(x_k-x_j)=
 \int_{\bR^n}\left|\sum_{k=1}^{m}\xi_k e^{i(u,x_k)}\right|^2\,d\mu(u)=\int_{\bR^n}\left|\sum_{k=1}^{m}\xi_k f_k(u)\right|^2\,d\mu(u)
 = \|\sum_{k=1}^{m}\xi_k f_k\|_{L^2(\bR^n;\mu)}  \,
   \end{equation}
for arbitrary $m\in \bN$ and $\xi=\{\xi_1,\dots,\xi_m\}\in \bC^m$.
Both statements are immediate from  \eqref{equrfbs}.
     \end{proof}

Taking in mind further applications to the spectral theory
of self-adjoint realizations of  $\mathfrak{L}_3$
we will be concerned with
radial positive definite functions. Let us recall the corresponding
concepts.
%%%%%%%%%%%%%%%%%%%%%%%%%%%%%%%%%%%
   \begin{definition} Let $n\in \bN$.
  A function $f\in
C([0,+\infty))$    is called a {\it radial   positive definite}
function  of the class   $\Phi_n$  if $f(|\cdot|_n)$ is a
positive definite  function on  $\R^n$, i.e., if
$f(|\cdot|_n)\in\Phi(\mathbb{R}^n)$ .
\end{definition}
It is known that  $\Phi_{n+1}\subset\Phi_n$  and $\Phi_n\neq \Phi_{n+1}$ for any $n\in \bN$ (see, for instance, \cite{Trigub_1989}, \cite{Zastavnyi_2000}).

A characterization of  the class $\Phi_n$ is given by the  following {\it Schoenberg
 theorem}  \cite{Sch38_1,Sch38}, see,  e.g., \cite[Theorem~5.4.2]{Akh65}  or \cite{BCR,SK}. Let $\sigma_n$ denote
 the normalized surface   measure on  the unit  sphere $S^{n}.$
%%%%%%%%%%%%%%%%%%%%%%%%%%%%%%%%%
  \begin{theorem}\label{schoenbergtheorem}
 A function  $f$ on $[0,+\infty)$   belongs to  the   class $\Phi_n$  if and only if there exists
 a positive  finite Borel measure $\nu$ on $[0,\infty)$ such that
  \begin{equation}\label{schonberg}
f(t)=\int_{0}^{+\infty}\Omega_n(rt)\,d\nu(r), \qquad 
\,t\in [0,+\infty)\,,
   \end{equation}
where
   \begin{equation}\label{Omaga_n}
\Omega_n(|x|)=\int_{S^{n}}e^{i(u,x)}d\sigma_n(u), \qquad x\in\R^n.
  \end{equation}
Moreover, we have
  \begin{equation}\label{kernel}
  \Omega_n(t)=\Gamma\left(\frac{n}{2}\right)\,
  \left(\frac{2}{t}\right)^{\frac{n-2}{2}}J_{\frac{n-2}{2}}(t)=
  \sum_{p=0}^{\infty}\left(-\frac{t^2}{4}\right)^p\frac{\Gamma\left(\frac{n}{2}\right)}{p!\,\Gamma\left(\frac{n}{2}+p\right)},\qquad t\in [0,+\infty).
 \end{equation}
  \end{theorem}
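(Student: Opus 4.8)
The plan is to reduce the statement to Bochner's theorem (Theorem~\ref{boch}) applied to the radial function $g(x):=f(|x|_n)$ on $\R^n$, and then to extract the radial structure of the representing measure. The sufficiency direction is the easy half: if $f(t)=\int_0^\infty \Omega_n(rt)\,d\nu(r)$ with $\nu$ finite and nonnegative, then for each fixed $r\ge 0$ the function $x\mapsto \Omega_n(r|x|_n)=\int_{S^n}e^{i(ru,x)}\,d\sigma_n(u)$ is, by \eqref{Omaga_n}, the Fourier transform of the finite positive image of $\sigma_n$ under $u\mapsto ru$, hence positive definite by the trivial direction of Bochner's theorem. Therefore $g(x)=f(|x|_n)=\int_0^\infty \Omega_n(r|x|_n)\,d\nu(r)$ is a superposition of positive definite functions against the positive weight $\nu$, so it is positive definite; and since $|\Omega_n|\le \Omega_n(0)=1$ with $\nu$ finite, dominated convergence gives $f\in C([0,\infty))$. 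Thus $f\in\Phi_n$.

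For necessity, suppose $f\in\Phi_n$, so that $g(x)=f(|x|_n)\in\Phi(\R^n)$. By Bochner's theorem there is a finite nonnegative Borel measure $\mu$ on $\R^n$ with $g(x)=\int_{\R^n}e^{i(u,x)}\,d\mu(u)$. The first key step is to replace $\mu$ by a \emph{rotation-invariant} measure representing the same $g$. Since $|Rx|_n=|x|_n$ for every $R\in O(n)$, one has $g(Rx)=g(x)$, and writing $R_\ast\mu$ for the pushforward one checks $\int_{\R^n}e^{i(v,x)}\,d(R_\ast\mu)(v)=g(x)$ as well. Averaging over normalized Haar measure $dR$ on $O(n)$, the measure $\overline{\mu}:=\int_{O(n)}R_\ast\mu\,dR$ is finite, nonnegative, $O(n)$-invariant, and still represents $g$ (alternatively one may invoke uniqueness of the Bochner measure to conclude directly that $\mu$ itself is invariant). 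The second key step is the polar disintegration of $\overline{\mu}$: because $\overline{\mu}$ is rotation invariant, its pushforward under $u\mapsto |u|_n$ is a finite measure $\nu$ on $[0,\infty)$, and conditionally on $|u|_n=r$ the measure is the uniform measure $\sigma_n$ on the sphere of radius $r$, so that $\int_{\R^n}h\,d\overline{\mu}=\int_0^\infty\!\int_{S^n}h(r\omega)\,d\sigma_n(\omega)\,d\nu(r)$. Applying this to $h(u)=e^{i(u,x)}$ and using $\int_{S^n}e^{i(r\omega,x)}\,d\sigma_n(\omega)=\Omega_n(r|x|_n)$ yields $g(x)=\int_0^\infty\Omega_n(r|x|_n)\,d\nu(r)$, i.e. \eqref{schonberg} with $t=|x|_n$.

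It remains to derive the explicit formula \eqref{kernel} for $\Omega_n$. Here I would use rotation invariance of $\sigma_n$ once more to take $x=t\,e_1$, reducing \eqref{Omaga_n} to the one-variable integral $\Omega_n(t)=c_n\int_{-1}^1 e^{its}(1-s^2)^{(n-3)/2}\,ds$, where $c_n=\gG(\tfrac n2)\big/\big(\sqrt\pi\,\gG(\tfrac{n-1}2)\big)$ is the density of the first coordinate of a uniformly distributed point of $S^n$. This is precisely Poisson's integral representation of the Bessel function $J_{(n-2)/2}$, which after collecting the $\gG$-factors gives the closed form $\Omega_n(t)=\gG(\tfrac n2)(2/t)^{(n-2)/2}J_{(n-2)/2}(t)$. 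The power series then follows from the series of $J_{(n-2)/2}$; alternatively, and perhaps more transparently, one expands $e^{its}=\sum_k (its)^k/k!$ and integrates term by term, the odd powers vanishing by symmetry and the even powers being evaluated by the Beta integral $\int_{-1}^1 s^{2p}(1-s^2)^{(n-3)/2}\,ds$, which after using $\gG(p+\tfrac12)=\tfrac{(2p)!}{4^p p!}\sqrt\pi$ reproduces the coefficients $\tfrac{\gG(n/2)}{p!\,\gG(n/2+p)}$.

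The obstacle-free parts are the sufficiency direction and the two symmetry reductions; the genuine content lies in the necessity direction. The main obstacle I expect is the measure-theoretic handling of the radial disintegration of $\overline{\mu}$ — in particular making the decomposition $\int h\,d\overline{\mu}=\int_0^\infty\int_{S^n}h(r\omega)\,d\sigma_n\,d\nu$ rigorous, including the possible atom of $\nu$ at $r=0$ and the uniqueness of the angular part — since this is exactly where rotation invariance is cashed in. The secondary technical point is the correct identification of the oscillatory integral with $J_{(n-2)/2}$ together with the precise normalization constant $c_n$; this is standard but requires careful bookkeeping of the Gamma factors.
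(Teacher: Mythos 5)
The paper does not prove this statement: Theorem \ref{schoenbergtheorem} is quoted as a classical result of Schoenberg with references to \cite{Sch38_1,Sch38}, \cite[Theorem 5.4.2]{Akh65}, \cite{BCR} and \cite{SK}, so there is no in-paper proof to compare against. Your argument is the standard textbook proof found in those sources --- sufficiency by superposition of the positive definite kernels $\Omega_n(r|\cdot|)$, necessity by applying Bochner's theorem to $g=f(|\cdot|_n)$, using injectivity of the Fourier transform (or Haar averaging) to make the representing measure $O(n)$-invariant, and then disintegrating it into a radial marginal $\nu$ and the uniform angular measure $\sigma_n$ --- and it is essentially correct, including the treatment of a possible atom of $\nu$ at $r=0$, which only contributes the constant $\nu(\{0\})\Omega_n(0)=\nu(\{0\})$ and is consistent with \eqref{schonberg}. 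The only small caveat is in the derivation of \eqref{kernel}: the reduction to $c_n\int_{-1}^1 e^{its}(1-s^2)^{(n-3)/2}\,ds$ (Poisson's integral) requires $n\ge 2$ for the density to be integrable; for $n=1$ the ``sphere'' $S^1=\{\pm 1\}$ is discrete and one computes $\Omega_1(t)=\cos t$ directly, which still matches the closed form via $J_{-1/2}$. This is a degenerate-case bookkeeping issue, not a gap in the argument.
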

The first three functions $\Omega_n$, $n=1,2,3,$ can be computed as
 \begin{equation}\label{3.13}
   \Omega_1(t)=\cos t,\quad  \Omega_2(t)=J_0(t),\quad  \Omega_3(t)=\frac{{\rm sin} t}{t}\, ,
 \end{equation}
 where $J_0$ is the Bessel function of first kind and order zero (see e.g., \cite{rw},  p. 261).

 It was proved in \cite{GMZ11}  using  Schoenberg's theorem
 that for each non-constant function $f\in \Phi_n$, $n\geq 2$,
 the function  $f(|\cdot|)$ is strictly $X$-positive definite for any {\rm finite} subset $X$ of $\bR^n$.

\subsection{Completely  monotone functions and strong $X$-positive definiteness}
\begin{definition}
A  function
 $f\in C[0,\infty)\cap C^{\infty}{(0,+\infty)}$ is called   completely
 monotone on   $[0,\infty)$  if
$(-1)^kf^{(k)}(t)\ge 0$   for all  $k\in\mathbb{N}\cup \{0\}$ and  $t>0$. The set of  such functions is denoted by $M[0,\infty)$.
  \end{definition}

By Bernstein's theorem \cite{Akh65}, p. 204, a function $f$ on $[0,+\infty)$ belongs to the class  $M{[0,\infty)}$
if and only if there exists a finite positive Borel measure $\tau$ on $[0,+\infty)$ such that
   \begin{equation}\label{3.50}
f(t) = \int^{\infty}_0 e^{-ts}d\tau(s), \qquad   t\in[0,+\infty).
   \end{equation}
 The measure $\tau$ is then uniquely determined by the function $f$.

Schoenberg noted in  \cite{Sch38_1,Sch38} that a function $f$ on $[0,+\infty)$ belongs to
$\bigcap\limits_{n\in\mathbb{N}}\Phi_n$  if and only if
  $f(\sqrt{\cdot})\in M[0,\infty)$.
The following statement is an immediate consequence of
Schoenberg's result.

  \begin{proposition}
If  $f\in M [0,\infty)$, then
$f\in\bigcap\limits_{n\in\mathbb{N}}\Phi_n$.
       \end{proposition}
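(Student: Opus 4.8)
The plan is to read the statement through the characterization of $\bigcap_{n\in\bN}\Phi_n$ recorded just above it: by Schoenberg's observation, a function $f$ on $[0,\infty)$ lies in $\bigcap_{n\in\bN}\Phi_n$ if and only if $f(\sqrt{\cdot})\in M[0,\infty)$. Hence the entire assertion collapses to one implication --- that complete monotonicity of $f$ forces complete monotonicity of the composed function $t\mapsto f(\sqrt{t})$ --- and I would organize the proof around establishing exactly this.

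To do so I would invoke Bernstein's theorem, available since $f\in M[0,\infty)$, to write $f(t)=\int_0^\infty e^{-ts}\,d\tau(s)$ with a finite positive Borel measure $\tau$. Substituting $\sqrt{t}$ gives $f(\sqrt{t})=\int_0^\infty e^{-s\sqrt{t}}\,d\tau(s)$, so it suffices to show that for each fixed $s\ge 0$ the function $t\mapsto e^{-s\sqrt{t}}$ is completely monotone on $[0,\infty)$, and that complete monotonicity is preserved under integration against $\tau$. The latter is routine: all $t$-derivatives of $e^{-s\sqrt{t}}$ are dominated uniformly on each interval $[\varepsilon,\infty)$, so one may differentiate under the integral sign and read off $(-1)^k\frac{d^k}{dt^k}f(\sqrt{t})\ge 0$ for every $k$.

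The one genuinely nontrivial point --- and the step I expect to be the main obstacle --- is the complete monotonicity of $e^{-s\sqrt{t}}$. I would settle it by the classical subordination identity
\[
e^{-s\sqrt{t}}=\frac{s}{2\sqrt{\pi}}\int_0^\infty v^{-3/2}\,e^{-s^2/(4v)}\,e^{-tv}\,dv,\qquad s>0,\ t\ge 0,
\]
which exhibits $e^{-s\sqrt{t}}$ as the Laplace transform of a positive measure in $t$; by the easy direction of Bernstein's theorem this is precisely complete monotonicity. (Equivalently, one notes that $\varphi(t)=\sqrt{t}$ is a Bernstein function, since $\varphi\ge 0$ and $\varphi'(t)=\tfrac12 t^{-1/2}$ is completely monotone, and applies the standard rule that a completely monotone function of a Bernstein function is again completely monotone.) Combining this with the previous paragraph yields $f(\sqrt{\cdot})\in M[0,\infty)$, whence $f\in\bigcap_{n\in\bN}\Phi_n$ by Schoenberg's criterion.
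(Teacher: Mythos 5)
Your proof is correct and follows essentially the same route as the paper: Bernstein's representation of $f$, Schoenberg's criterion, and the complete monotonicity of $e^{-s\sqrt{t}}$. The only (harmless) difference is the order of operations --- the paper applies Schoenberg's criterion to each exponential $e^{-st}$ separately and then integrates against $\tau$ inside the cone $\bigcap_{n}\Phi_n$, whereas you integrate first inside $M[0,\infty)$ (justifying differentiation under the integral) and apply Schoenberg once to $f$; you also supply, via the subordination identity, a proof of the complete monotonicity of $e^{-s\sqrt{t}}$ that the paper merely asserts.
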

%%%%%%%%%%%%%%%%%%%%%%%%%%%%%%%%%%%%%%%%%%%%
   \begin{proof}
For $s\ge 0$ the function $g_s(t) := e^{-s\sqrt{t}}$
is completely
monotone for $t>0$.  Schoenberg's result applies to $g_s(t^2)$ and shows that
$g_s(t^2)=e^{-s t}\in\bigcap\limits_{n\in\mathbb{N}}\Phi_n$. Therefore the integral representation \eqref{3.50} implies that
$f(\cdot)\in\bigcap\limits_{n\in\mathbb{N}}\Phi_n$.
        \end{proof}
%%%%%%%%%%%%%%%%%%%%%%%%%%%%%%%%%%%%%%%%%%%%%%%

For any sequence $X= \{x_k\}_1^\infty$ of points of ${\mathbb R^n}$ we set
 $$d_\ast(X):={\rm inf}_{k\neq j}~|x_k{-}x_j|.$$

The following  proposition  describes a large class of radial positive-definite functions
that are strongly $X$-positive-definite  for any sequence $X$ of points of ${\mathbb R^3}$ such that $d_*(X)>0$.
%%%%%%%%%%%%%%%%%%%%%%%%%%%%%%%%%%%%%%%%%%%%%%%
   \begin{theorem}\label{propositionstronglydef}
Let $f$ be a  nonconstant function of $ M[0,\infty)$ and let $\tau$ be the representing measure
in equation  (\ref{3.50}). Suppose that $X= \{x_k\}_1^\infty$ is
a sequence of points $x_k \in{\mathbb R}^3$.
Then:

(i)  If $d_\ast(X)>0$, then  the function
$f(|\cdot|)$ is strongly $X$-positive definite.

(ii)\ Suppose that $d_\ast(X)>0$ and
\begin{align}\label{fourthmoment}
\int_0^\infty (s + s^{-3})d\tau(s) < \infty.
  \end{align}
Then the Gram matrix $Gr_X(f) =  \big(f(|x_k-x_j|)\big)_{k,j\in
\bN}$ defines a bounded operator with bounded inverse on
$l^2(\bN).$

(iii)\ If the Gram matrix $Gr_X(f)$ defines a bounded
operator with bounded inverse on $l^2(\bN)$, then $d_*(X) > 0.$
   \end{theorem}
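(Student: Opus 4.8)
The plan is to use the Bernstein representation \eqref{3.50} to reduce all three assertions to the single building block $e^{-s|\cdot|}$, $s>0$, and then feed in the Riesz basis result of Section~\ref{rieszbasisn1} (Theorem~\ref{rieszbasisN1}). The starting point is that for every finitely supported $\xi$,
\begin{equation*}
\sum_{k,j}\xi_k\overline{\xi_j}\,f(|x_k-x_j|)=\int_0^\infty\Big(\sum_{k,j}\xi_k\overline{\xi_j}\,e^{-s|x_k-x_j|}\Big)\,d\tau(s),
\end{equation*}
so that $Gr_X(f)=\int_0^\infty Gr_X(e^{-s|\cdot|})\,d\tau(s)$ in the sense of quadratic forms, and everything hinges on the family $Gr_X(e^{-s|\cdot|})=\big(e^{-s|x_k-x_j|}\big)_{k,j}$. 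The link to \eqref{RieszBasisIntro} is that $\tfrac1{4\pi}\tfrac{e^{-s|x|}}{|x|}$ is the kernel of $(-\Delta+s^2)^{-1}$ on $L^2(\R^3)$, whence $(-\Delta+s^2)^{-2}$ has kernel $\tfrac1{8\pi s}e^{-s|x-y|}$ and
\begin{equation*}
\big\langle \tfrac{e^{-s|\cdot-x_k|}}{|\cdot-x_k|},\tfrac{e^{-s|\cdot-x_j|}}{|\cdot-x_j|}\big\rangle_{L^2(\R^3)}=(4\pi)^2\big[(-\Delta+s^2)^{-2}\big](x_k,x_j)=\tfrac{2\pi}{s}\,e^{-s|x_k-x_j|}.
\end{equation*}
Applying the dilation $x\mapsto sx$, this computation identifies $Gr_X(e^{-s|\cdot|})$ with $\tfrac1{2\pi}$ times the Gram matrix of the system \eqref{RieszBasisIntro} attached to the dilated configuration $sX=\{sx_k\}$, for which $d_\ast(sX)=s\,d_\ast(X)$.

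For (i), since $d_\ast(X)>0$ we have $d_\ast(sX)>0$ for all $s>0$, so Theorem~\ref{rieszbasisN1} makes \eqref{RieszBasisIntro} a Riesz basis of the closure of its span for each dilate $sX$; hence its Gram matrix, and therefore $Gr_X(e^{-s|\cdot|})$, is bounded below by some $c(s)>0$. Writing $c(s)$ as the infimum over finitely supported unit vectors $\xi$ of the functions $s\mapsto\sum_{k,j}\xi_k\overline{\xi_j}e^{-s|x_k-x_j|}$, which are continuous in $s$, shows $c(\cdot)$ is measurable. Integrating the pointwise bound $\sum_{k,j}\xi_k\overline{\xi_j}e^{-s|x_k-x_j|}\ge c(s)\|\xi\|^2$ against $\tau$ then yields \eqref{stronglyxpositive} with constant $c=\int_{(0,\infty)}c(s)\,d\tau(s)$, which is strictly positive because $f$ nonconstant forces $\tau((0,\infty))>0$ while $c(s)>0$ throughout $(0,\infty)$.

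For (ii), boundedness comes from the Schur test applied to each $Gr_X(e^{-s|\cdot|})$: using that the balls $B(x_j,d/2)$, $d:=d_\ast(X)$, are disjoint, a packing comparison with $\int_{|x|\ge d/2}e^{-s|x|}\,dx$ gives $\sup_k\sum_j e^{-s|x_k-x_j|}\le 1+C(d)\,(s^{-1}+s^{-2}+s^{-3})$, so that $\|Gr_X(e^{-s|\cdot|})\|\le 1+C(d)(s^{-1}+s^{-2}+s^{-3})$. Integrating against $\tau$ and invoking \eqref{fourthmoment} (which guarantees $\int_0^\infty s^{-3}\,d\tau<\infty$, hence, $\tau$ being finite, also $\int_0^\infty(s^{-1}+s^{-2})\,d\tau<\infty$) bounds $\|Gr_X(f)\|\le\int_0^\infty\|Gr_X(e^{-s|\cdot|})\|\,d\tau(s)<\infty$; I note that my estimate uses only the negative-moment part of \eqref{fourthmoment}, the positive moment $\int_0^\infty s\,d\tau<\infty$ (equivalently $-f'(0^+)<\infty$) presumably entering a more quantitative treatment of the lower bound. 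Bounded invertibility is then immediate from (i): $\langle Gr_X(f)\xi,\xi\rangle\ge c\|\xi\|^2$ on the dense set of finite vectors, and boundedness extends this to all of $l^2(\bN)$, so $Gr_X(f)\ge cI>0$. For (iii), assume $Gr_X(f)$ is bounded with bounded inverse; being positive semidefinite (as $f(|\cdot|)$ is positive definite), it satisfies $Gr_X(f)\ge cI$ with $c=\|Gr_X(f)^{-1}\|^{-1}>0$. Testing with $\xi=e_k-e_j$ for a pair $k\neq j$ gives $f(0)-f(|x_k-x_j|)\ge c$; but if $d_\ast(X)=0$ one may choose such a pair with $|x_k-x_j|$ arbitrarily small, and continuity of $f$ at $0$ forces $f(0)-f(|x_k-x_j|)<c$, a contradiction.

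The main obstacle will be part (i): the whole argument rests on the Green's-function identity that turns $Gr_X(e^{-s|\cdot|})$ into a fixed positive multiple of the Gram matrix of \eqref{RieszBasisIntro} for the dilated set $sX$, thereby transferring Theorem~\ref{rieszbasisN1} to every scale $s>0$. Once this reduction and the dilation bookkeeping are in place, the remaining steps—measurability and positivity of $\int_{(0,\infty)} c(s)\,d\tau$, the packing estimate, and the two-point argument for (iii)—are routine.
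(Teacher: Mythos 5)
Your proposal is correct. Part (i) follows the paper's own route: the identity $\langle\varphi_{j,-s^2},\varphi_{k,-s^2}\rangle_{L^2(\R^3)}=\tfrac{2\pi}{s}e^{-s|x_j-x_k|}$ (equation \eqref{3.26a}) turns $Gr_X(e^{-s|\cdot|})$ into the Gram matrix of the defect elements, whose bounded invertibility is then integrated against the Bernstein measure; the only cosmetic difference is that you reach the Riesz basis property at scale $s$ by dilating $X$ to $sX$ and invoking Theorem \ref{rieszbasisN1} directly, whereas the paper invokes Corollary \ref{cor4.9} via the operator $U_{-s^2}=(I-\Delta)(-\Delta+s^2)^{-1}$. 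Parts (ii) and (iii) genuinely diverge. For the upper bound in (ii) the paper estimates $\big\|\sum_j\xi_j\widetilde{\varphi}_{j,s}\big\|^2\le\|U_{-s^2}\|^2\big\|\sum_j\xi_j\widetilde{\varphi}_{j,1}\big\|^2$ with $\|U_{-s^2}\|=\max(1,s^{-2})$, which is precisely what produces the weight $s+s^{-3}$ in \eqref{fourthmoment}; your Schur-test/packing bound $\sup_k\sum_{j\neq k}e^{-s|x_k-x_j|}\le C(d_*(X))\,(s^{-1}+s^{-2}+s^{-3})$ avoids $U_{-s^2}$ altogether and, as you observe, uses only $\int_0^\infty s^{-3}\,d\tau<\infty$ together with the finiteness of $\tau$ — a slightly weaker hypothesis than \eqref{fourthmoment}. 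For (iii) your argument is markedly shorter than the paper's: the paper splits $\tau$ at a level $s_0$ and estimates $\|T(e_j-e_k)\|$ directly, while you note that a bounded, positive semi-definite, boundedly invertible $Gr_X(f)$ satisfies $Gr_X(f)\ge cI$ with $c=\|Gr_X(f)^{-1}\|^{-1}$, test on $e_k-e_j$ to get $f(0)-f(|x_k-x_j|)\ge c$, and conclude from the continuity of $f$ at $0$ that the mutual distances are bounded away from zero. Both routes are valid; yours buys a weaker hypothesis in (ii) and a two-line proof of (iii), while the paper's argument for (ii) is what motivates the exact form of condition \eqref{fourthmoment}.
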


Theorem   \ref{propositionstronglydef} will be proved in
Section \ref{rieszbasisn1} below. We restate some results
derived in this proof  in the following corollary. Let
$\widetilde{\Phi}=\{\widetilde{\varphi}_j\}^\infty_{j=1}$, where
    \begin{equation}
\widetilde{\varphi}_j(x):=\frac{1}{\sqrt{2\pi}}\int_0^{+\infty}\frac{e^{-s|x-x_j|}}{|x-x_j|}~d\tau(s),
\qquad j\in \N.
    \end{equation}
   \begin{corollary}
Suppose  $X=\{x_j\}_{j=1}^\infty$ is a sequence of points of\  ${\mathbb
R}^3$ and $\tau$ is a  finite positive Borel measure  on
$[0,+\infty)$. Then:

(i) If $d_*( X)>0$ and $\tau((0,+\infty))>0$, then  $\widetilde{\Phi}$ forms a Riesz-Fischer sequence
in $L^2({\mathbb R}^3)$.

(ii) If $d_\ast(X)>0$ and (\ref{fourthmoment}) holds, then  ${\widetilde{\Phi}}$ is a Bessel sequence in $L^2({\mathbb R}^3)$.

(iii)  If $d_*( X)>0$  and (\ref{fourthmoment}) is satisfied, then ${\widetilde{\Phi}}$ forms a Riesz basis in
 its closed linear span.

(iv)\  If  the sequence $\widetilde{\Phi}$ is both  a
Riesz-Fischer and a Bessel sequence in $L^2({\mathbb R}^3)$, then
$d_*(X)>0$.
    \end{corollary}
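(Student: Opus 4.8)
My plan is to deduce all four statements from Theorem \ref{propositionstronglydef} by computing the Gram matrix of $\widetilde\Phi$ in closed form and identifying it as $Gr_X(\widetilde f)$ for an auxiliary completely monotone function $\widetilde f$. Writing $\psi_s(x):=e^{-s|x|}/|x|$ on $\R^3$, so that $\widetilde\varphi_j=(2\pi)^{-1/2}\int_0^\infty\psi_s(\cdot-x_j)\,d\tau(s)$, I first use that $\widehat{\psi_s}$ is a constant multiple of $(s^2+|\xi|^2)^{-1}$ together with the ensuing partial-fraction identity $(\psi_s*\psi_{s'})=\frac{c}{s'^2-s^2}(\psi_s-\psi_{s'})$. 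Fubini then gives
\[
\langle\widetilde\varphi_k,\widetilde\varphi_j\rangle_{L^2(\R^3)}=\widetilde f(|x_k-x_j|),\qquad \widetilde f(t)=c\int_0^\infty\!\!\int_0^\infty\frac{e^{-st}-e^{-s't}}{t\,(s'^2-s^2)}\,d\tau(s)\,d\tau(s'),
\]
with $c>0$ fixed. The integrand is symmetric in $(s,s')$ with removable diagonal singularity, and since $\frac{e^{-st}-e^{-s't}}{t}=\int_s^{s'}e^{-ut}\,du$ it is, for each pair $(s,s')$, a nonnegative superposition of the functions $t\mapsto e^{-ut}$; hence $\widetilde f\in M[0,\infty)$, its representing measure $\widetilde\tau$ in \eqref{3.50} is obtained by interchanging the orders of integration, and $\widetilde f$ is nonconstant exactly when $\tau((0,\infty))>0$. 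Since $\langle\widetilde\varphi_k,\widetilde\varphi_j\rangle=\widetilde f(|x_k-x_j|)$, the Gram matrix of $\widetilde\Phi$ is $Gr_X(\widetilde f)$, so that $\widetilde\Phi$ is a Riesz-Fischer sequence iff $\widetilde f(|\cdot|)$ satisfies \eqref{stronglyxpositive}, and $\widetilde\Phi$ is a Bessel sequence iff $Gr_X(\widetilde f)$ defines a bounded operator on $l^2(\bN)$.

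With this dictionary, (i) is immediate: if $d_\ast(X)>0$ and $\tau((0,\infty))>0$, then $\widetilde f$ is a nonconstant element of $M[0,\infty)$, so Theorem \ref{propositionstronglydef}(i) makes $\widetilde f(|\cdot|)$ strongly $X$-positive definite, which is exactly the Riesz-Fischer bound for $\widetilde\Phi$. For (ii) and (iii) I transfer the moment hypothesis: interchanging integrations gives $\int_0^\infty u\,d\widetilde\tau(u)\le c'\,\tau([0,\infty))^2$ and $\int_0^\infty u^{-3}\,d\widetilde\tau(u)\le c'\big(\int_0^\infty s^{-2}\,d\tau\big)^2$, where the last integral is finite because \eqref{fourthmoment} together with the finiteness of $\tau$ forces $\int s^{-2}\,d\tau<\infty$; a like computation shows $\widetilde\tau$ finite. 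Thus $\widetilde\tau$ satisfies the analogue of \eqref{fourthmoment} and, \eqref{fourthmoment} excluding an atom of $\tau$ at $0$, $\widetilde f$ is nonconstant. Theorem \ref{propositionstronglydef}(ii) applied to $\widetilde f$ then shows $Gr_X(\widetilde f)=Gr_{\widetilde\Phi}$ is bounded and boundedly invertible; boundedness is (ii), while bounded invertibility says exactly that $\widetilde\Phi$ is simultaneously Bessel and Riesz-Fischer, i.e. a Riesz basis of its closed linear span, which is (iii).

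For the converse (iv) I apply Theorem \ref{propositionstronglydef}(iii) to $\widetilde f$: if $\widetilde\Phi$ is both Bessel and Riesz-Fischer then each $\widetilde\varphi_j\in L^2(\R^3)$ (hence $\widetilde f(0)<\infty$, $\widetilde f\in M[0,\infty)$, and $\widetilde f$ nonconstant, since a constant $\widetilde f$ would make $Gr_{\widetilde\Phi}$ an all-equal matrix, which does not define a bounded operator on $l^2(\bN)$), and $Gr_X(\widetilde f)=Gr_{\widetilde\Phi}$ is bounded with bounded inverse, so $d_\ast(X)>0$; alternatively, Bessel gives $\widetilde\varphi_j=(2\pi)^{-1/2}\Psi(\cdot-x_j)$ with $\Psi=\int_0^\infty\psi_s\,d\tau(s)\in L^2(\R^3)$, whence if $d_\ast(X)=0$ one chooses $k_n\neq j_n$ with $|x_{k_n}-x_{j_n}|\to0$ and continuity of translations forces $\|\widetilde\varphi_{k_n}-\widetilde\varphi_{j_n}\|\to0$, contradicting \eqref{rieszfischerse} with $\xi=e_{k_n}-e_{j_n}$. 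The main obstacle is the opening step: justifying the Fubini interchanges and the Yukawa convolution identity that yield the closed form $\langle\widetilde\varphi_k,\widetilde\varphi_j\rangle=\widetilde f(|x_k-x_j|)$, establishing $\widetilde f\in M[0,\infty)$, and carrying out the moment transfer $\tau\mapsto\widetilde\tau$. A secondary caveat is that in (i) the $\widetilde\varphi_j$ lie in $L^2(\R^3)$ only when $\int s^{-2}\,d\tau<\infty$; without this one reads (i) as the inequality \eqref{rieszfischerse}, which still follows by restricting the Plancherel identity $\|\sum_k\xi_k\widetilde\varphi_k\|^2=c\int_{\R^3}\big|\sum_k\xi_k e^{-i(\eta,x_k)}\big|^2 G(\eta)^2\,d\eta$, with $G(\eta)=\int_0^\infty\frac{4\pi}{s^2+|\eta|^2}\,d\tau(s)$, to a ball on which $G>0$ and invoking a separation (Ingham-type) inequality for the exponentials $\{e^{-i(\eta,x_k)}\}$.
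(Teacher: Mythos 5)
Your proposal is correct in substance, and it takes a genuinely more careful route than the paper. The paper derives this corollary by pointing back to the proof of Theorem \ref{propositionstronglydef} and to the identification $Gr_{\widetilde\Phi}=Gr_X(f)$ asserted in \eqref{2.16A}; but the estimates in that proof (e.g.\ \eqref{3.51}, \eqref{est2}) control $\sum_{j,k}f(|x_j-x_k|)\xi_j\overline{\xi}_k=\int_0^\infty s\,\|\sum_j\xi_j\widetilde\varphi_{j,s}\|^2d\tau(s)$, which differs from $\|\sum_j\xi_j\widetilde\varphi_j\|^2$ precisely by the cross terms $\langle\varphi_{j,-s^2},\varphi_{k,-s'^2}\rangle$ with $s\neq s'$. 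Indeed \eqref{2.16A} holds literally only when $\tau$ is concentrated at $s=1$: one has $\|\widetilde\varphi_j\|^2=2\iint(s+s')^{-1}d\tau(s)\,d\tau(s')$, which in general is not $f(0)=\tau([0,\infty))$. Your computation of the true Gram function $\widetilde f$ via the partial-fraction/convolution identity, the observation that $\widetilde f\in M[0,\infty)$ with representing measure $\widetilde\tau$ obtained by smearing $\tau\times\tau$ over the intervals between $s$ and $s'$ with density $|s'^2-s^2|^{-1}$, and the moment transfer (the arithmetic checks out: $\int u\,d\widetilde\tau\le 2\,\tau([0,\infty))^2$ and $\int u^{-3}d\widetilde\tau\le 2\bigl(\int s^{-2}d\tau\bigr)^2$, the latter finite under \eqref{fourthmoment}) supply exactly what is needed to apply the theorem as a black box to $\widetilde f$, and so constitute an honest proof of the corollary as stated. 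Your translation-continuity argument for (iv) is likewise a clean substitute for the Lemma \ref{lemma4.7}-type argument used elsewhere in the paper.

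Two caveats. First, in (i) your main line requires $\widetilde f(0)<\infty$, i.e.\ $\widetilde\varphi_j\in L^2(\R^3)$, which the stated hypotheses do not guarantee; you flag this, but your fallback imports a multidimensional Ingham--Kahane separation inequality, which is a heavy and unproved citation. It can be avoided: in your Plancherel identity bound $G(\eta)^2\ge c_M\,(M^2+|\eta|^2)^{-2}$ for any $M$ with $\tau([\varepsilon,M])>0$, and note that $(M^2+|\eta|^2)^{-2}$ is a positive multiple of the Fourier transform of $e^{-M|x|}$, whose strong $X$-positive definiteness is exactly \eqref{3.51} (equivalently Corollary \ref{cor4.9}); this gives the Riesz--Fischer bound with no new input. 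Second, in (ii)--(iii) the degenerate case $\tau=0$ should be excluded, since then $\widetilde f\equiv 0$ is constant and (iii) fails trivially; this is implicit in your phrase ``$\widetilde f$ is nonconstant'' but worth stating.
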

 An immediate consequence of the preceding corollary is
    \begin{corollary}
    Let $f$, $X$ and $\tau$ be as in Theorem \ref{propositionstronglydef} and assume that condition \eqref{fourthmoment} holds.
    Then the sequence $\widetilde{\Phi}=\{\widetilde{\varphi}_j \}_1^\infty$ forms a Riesz basis in its closed linear
span if and only if $d_\ast(X)>0$.
\end{corollary}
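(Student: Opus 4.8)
The plan is to obtain the statement as a direct consequence of the preceding corollary, exploiting the fact that, by the very definition of a Riesz basis, the sequence $\widetilde\Phi$ is a Riesz basis of its closed linear span if and only if it is simultaneously a Riesz-Fischer sequence and a Bessel sequence in $L^2(\mathbb{R}^3)$ (the density requirement being automatic once we work inside the closed linear span). Thus the two implications of the corollary correspond to parts (iii) and (iv) of the preceding corollary, and the whole task is to check that their hypotheses are met.

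For the "if" direction I would assume $d_\ast(X)>0$ and apply part (iii) of the preceding corollary, whose hypotheses are exactly $d_\ast(X)>0$ together with \eqref{fourthmoment}; the latter holds by assumption. The one point needing attention is that the Riesz-Fischer half rests on part (i), which requires $\tau((0,+\infty))>0$. I would supply this from the standing hypothesis that $f$ is nonconstant: by Bernstein's theorem the representing measure in \eqref{3.50} is unique, and were $\tau$ supported only at the origin, \eqref{3.50} would give $f(t)\equiv\tau(\{0\})$, contradicting nonconstancy. Hence $\tau((0,+\infty))>0$, part (iii) applies, and $\widetilde\Phi$ is a Riesz basis in its closed linear span.

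For the "only if" direction I would simply observe that a Riesz basis of its closed linear span is, by definition, both a Riesz-Fischer and a Bessel sequence, so part (iv) of the preceding corollary applies verbatim and yields $d_\ast(X)>0$. Since all the analytic weight rests in Theorem \ref{propositionstronglydef} and its attendant corollary, I do not expect a genuine obstacle here: the only step requiring care is the bookkeeping of which hypothesis feeds which half of the Riesz basis definition, together with the small observation that nonconstancy of $f$ forces $\tau((0,+\infty))>0$, which is precisely what the Riesz-Fischer estimate needs.
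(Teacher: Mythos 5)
Your proposal is correct and follows exactly the route the paper intends: the paper states this corollary as an immediate consequence of the preceding corollary, with the "if" direction being part (iii) and the "only if" direction being part (iv) combined with the definition of a Riesz basis. Your extra observation that nonconstancy of $f$ forces $\tau((0,+\infty))>0$ (via uniqueness in Bernstein's representation) is a legitimate and welcome piece of bookkeeping that the paper leaves implicit.
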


%%%%%%%%%%%%%%%%%%%%%%%%%%%%%%%%%%%%%%%%%%%%%%%%%%%%%%%%%%%%
    \begin{remark}
Let $f$ be an absolutely monotone function with integral
representation \eqref{3.50}. Then
   \begin{equation}\label{2.16A}
Gr_X(f) = \bigl(f(|x_j - x_k|)\bigr)_{j,k\in{\N}} =
\bigl(\langle\widetilde{\varphi}_j,\widetilde{\varphi}_k\rangle_{L^2({\R}^3)}\bigr)_{j,k\in{\N}}
= Gr_{\widetilde{\Phi}}.
   \end{equation}
    \end{remark}
%%%%%%%%%%%%%%%%%%%%%%%%%%%%%%%%%%%
%%%%%%%%%%%%%%%%%%%%%%%%%%%%%%%%%%%%%%%%%%%%%
%
%
    \begin{proposition}\label{setkrf}
Suppose that $f\in \Phi_n$  and let $\nu$ be the corresponding
representing measure from \eqref{schonberg}. Let
$X=\{x_k\}^\infty_1$ be an arbitrary sequence from $\bR^n$. Then
$f$ is  {\it strongly $X$-positive definite} if and only if there
exists a Borel subset  $\mathcal K \subset (0,+\infty)$ such that
$\nu(\mathcal K)>0$ and the system $\{e^{i(
\cdot,x_k)}\}_{k=1}^\infty$ forms a {\it Riesz-Fischer sequence}
in $L^2(S^n_{r};\sigma_n)$ for every $r\in \mathcal K.$
   \end{proposition}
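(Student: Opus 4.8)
The plan is to reduce both sides of the equivalence to a single quadratic‑form inequality and then analyse when it can hold. Writing points of $S^n_r$ as $r\omega$ with $\omega\in S^n$, Schoenberg's representation \eqref{schonberg}--\eqref{Omaga_n} gives, exactly as in the computation \eqref{equrfbs} but with $\mu$ disintegrated over radii,
\[
\sum_{k,j=1}^m \xi_k\overline{\xi}_j\,f(|x_k-x_j|)=\int_0^\infty \Phi_r(\xi)\,d\nu(r),\qquad \Phi_r(\xi):=\Big\|\sum_{k=1}^m\xi_k\,e^{i(\,\cdot\,,x_k)}\Big\|^2_{L^2(S^n_r;\sigma_n)},
\]
since $e^{i(r\omega,x_k)}=e^{ir(\omega,x_k)}$ and $\langle e^{i(\cdot,x_k)},e^{i(\cdot,x_j)}\rangle_{L^2(S^n_r;\sigma_n)}=\Omega_n(r|x_k-x_j|)$. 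Thus $f$ is strongly $X$‑positive definite iff the left side is $\ge c\sum_k|\xi_k|^2$ for some $c>0$, while $\{e^{i(\cdot,x_k)}\}$ is Riesz--Fischer in $L^2(S^n_r;\sigma_n)$ iff $\Phi_r(\xi)\ge c_r\sum_k|\xi_k|^2$ for some $c_r>0$. I would record the optimal fiber constant $c_r:=\inf_{m}\lambda_{\min}\!\big(G^{(m)}_r\big)$, where $G^{(m)}_r:=(\Omega_n(r|x_k-x_j|))_{k,j=1}^m$; as the $G^{(m)}_r$ are nested principal submatrices, $\lambda_{\min}(G^{(m)}_r)$ decreases in $m$ by Cauchy interlacing, and as its entries are real‑analytic (hence continuous) in $r$, the map $r\mapsto c_r$ is measurable. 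The fiber Riesz--Fischer property at radius $r$ is precisely $c_r>0$.

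For the direction ``$\Leftarrow$'' I would argue directly. Suppose $\mathcal K\subset(0,\infty)$ is Borel with $\nu(\mathcal K)>0$ and $c_r>0$ on $\mathcal K$. Decomposing $\mathcal K=\bigcup_{N}\mathcal K_N$ with $\mathcal K_N:=\{r\in\mathcal K: c_r\ge 1/N\}$, these sets increase to $\mathcal K$, so $\nu(\mathcal K_{N_0})>0$ for some $N_0$. Since $\Phi_r(\xi)\ge c_r\sum_k|\xi_k|^2$ and the integrand is non‑negative, restricting the integral to $\mathcal K_{N_0}$ yields
\[
\sum_{k,j=1}^m \xi_k\overline{\xi}_j\,f(|x_k-x_j|)\ \ge\ \int_{\mathcal K_{N_0}}\!\Phi_r(\xi)\,d\nu(r)\ \ge\ \frac{\nu(\mathcal K_{N_0})}{N_0}\sum_{k=1}^m|\xi_k|^2,
\]
which is strong $X$‑positive definiteness with $c=\nu(\mathcal K_{N_0})/N_0>0$.

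The substantial direction is ``$\Rightarrow$'', which I would treat in contrapositive form: assuming $c_r=0$ for $\nu$‑a.e.\ $r\in(0,\infty)$, produce for each $\varepsilon>0$ a finitely supported unit vector $\xi$ with $\int_0^\infty\Phi_r(\xi)\,d\nu(r)<\varepsilon$, so that $f$ fails to be strongly $X$‑positive definite. A first structural input is that for every fixed $m$ and every $r>0$ the exponentials $e^{ir(\cdot,x_1)},\dots,e^{ir(\cdot,x_m)}$ are linearly independent in $L^2(S^n)$, i.e.\ $\lambda_{\min}(G^{(m)}_r)>0$; this is the finite strict‑positive‑definiteness result of \cite{GMZ11} recalled after \eqref{3.13} (for $n\ge 2$), and it shows that the degeneracy $c_r=\lim_m\lambda_{\min}(G^{(m)}_r)=0$ is a genuinely infinite‑dimensional, non‑uniform‑in‑$m$ phenomenon rather than an exact linear relation on any fiber.

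This extraction of a common approximate null direction is exactly where I expect the main obstacle. The naive route—interchanging $\inf_\xi$ with $\int d\nu$ to get $\inf_\xi\int\Phi_r(\xi)\,d\nu=\int\inf_\xi\Phi_r(\xi)\,d\nu=\int c_r\,d\nu=0$—is invalid: only $\inf\int\ge\int\inf$ is automatic, and the reverse genuinely fails, since averaging positive‑semidefinite fiber forms can destroy degeneracy (two rank‑deficient Gram matrices may average to a boundedly invertible one), and no Sion‑type minimax applies because the unit sphere is not convex. Moreover, analyticity cuts against a cheap construction: for a \emph{fixed} nonzero finite $\xi$ the function $r\mapsto\Phi_r(\xi)$ is real‑analytic and not identically zero, hence vanishes only at isolated radii, so no single fixed $\xi$ can be near‑null on a positive‑measure set of radii. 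The proof must therefore exploit the rigid, coherent $r$‑dependence of the specific field $r\mapsto e^{ir(\cdot,x_k)}$—the same translation pattern at every radius, joint continuity of $(r,\xi)\mapsto\Phi_r(\xi)$—together with a growing‑support construction: using Egorov's theorem on the finite measure $\nu$ to obtain a set of nearly full $\nu$‑measure on which $\lambda_{\min}(G^{(m)}_r)\to0$ uniformly, and then building $\xi$ of increasing length whose fiber energy $\Phi_r(\xi)$ is uniformly small across that set while the tail contribution over the remaining small‑$\nu$‑measure radii is controlled by $\|\xi\|^2\,\nu(\text{exceptional set})$. Carrying out this simultaneous, asymptotic near‑annihilation is the delicate technical core on which the whole direction rests.
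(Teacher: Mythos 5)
Your reduction to the fiberwise quadratic forms $\Phi_r$ is exactly the paper's identity \eqref{intfxkxjAA}, and your proof of the ``if'' direction is correct and in fact tighter than the paper's: where the authors simply say ``choosing $c(r)$ measurable'', you work with the optimal constant $c_r=\inf_m\lambda_{\min}(G^{(m)}_r)$, justify its measurability (infimum of countably many functions continuous in $r$), and extract a level set $\mathcal K_{N_0}$ of positive $\nu$-measure before restricting the integral. That half is complete.

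The ``only if'' direction, however, is not proved in your proposal: you diagnose the difficulty (the inequality $\inf_\xi\int\Phi_r(\xi)\,d\nu\ge\int\inf_\xi\Phi_r(\xi)\,d\nu$ goes the wrong way, and a single fixed $\xi$ cannot be near-null on a set of radii of positive measure), you outline an Egorov-plus-growing-support strategy, and then you explicitly leave its ``delicate technical core'' unexecuted. That is a genuine gap: no construction of the required approximate null vectors is ever carried out, so the forward implication is asserted but not established. For comparison, the paper disposes of this direction with the single sentence ``The converse follows easily from equation \eqref{intfxkxjAA}'', which is the trivial-looking but actually substantive half; your analysis of why it is not a formal consequence of \eqref{intfxkxjAA} is sound. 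Indeed, for $n=1$ the implication fails outright: $f(t)=\sin(2\pi t)/(2\pi t)\in\Phi_1$ with $X=\N$ has Gram matrix equal to the identity, hence is strongly $X$-positive definite, yet $L^2(S^1_r;\sigma_1)$ is two-dimensional and no infinite exponential system can be Riesz--Fischer there; so any correct proof must use $n\ge 2$ (infinite-dimensional fibers) in an essential way, and cannot be a purely measure-theoretic manipulation of \eqref{intfxkxjAA}. To close your argument you would have to actually produce, under the assumption $c_r=0$ for $\nu$-a.e.\ $r$, finitely supported unit vectors $\xi$ with $\int_0^\infty\Phi_r(\xi)\,d\nu(r)$ arbitrarily small — the step your proposal names but does not perform.
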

\begin{proof} From
\eqref{schonberg} and \eqref{Omaga_n} it follows that for
$(\xi_1,\dots,\xi_m)\in \bC^m$ and  $m\in \bN$,
  \begin{equation}\label{intfxkxjAA}
\sum_{j,k =1}^{m}\xi_j\overline{\xi}_kf(|x_j - x_k|)=
 \int_{0}^{+\infty}\left(\;
 \int_{S^n}\left|\sum_{k=1}^{m}\xi_k e^{i(u,rx_k)}\right|^2\,d\sigma_n(u)
 \right)\,d\nu(r).
 \end{equation}

Suppose that there exists a set $\mathcal K$ as stated above.
Then for every $r \in \mathcal K$ there is a constant $c(r)>0$ such that
  \begin{equation}\label{2.18B}
 \big\|\sum_{k=1}^{m}\xi_k e^{i(u,rx_k)}\big\|^2_{L^2(S^n)}   \ge  c(r)
\sum_{k=1}^m | \xi_k|^2.
     \end{equation}
Choosing $c(r)$ measurable and
combining this inequality with  \eqref{intfxkxjAA} we obtain

  \begin{equation}\label{intfxkxjAB}
\sum_{j,k =1}^{m}\xi_j\overline{\xi}_kf(|x_j - x_k|)=
 \int_{\mathcal K} \left(\big\|\sum_{k=1}^{m}\xi_k e^{i(u,rx_k)}\big\|^2_{L^2(S^n)}
 \right)\,d\nu(r)  \ge  c
\sum_{k=1}^m | \xi_k|^2,
 \end{equation}
where $c := \int_{\mathcal K}c(r)d\nu(r).$ Since $\nu(\mathcal K)>0$ and $c(r)>0$, we  have $c>0$. That is, $f$ is strongly $X$-positive definite.

The converse follows easily from equation \eqref{intfxkxjAA}.
\end{proof}
%%%%%%%%%%%%%%%%%%%%%%%%%%%%%%%%%%%
%%%%%%%%%%%%%%%%%%%%%%%%%%%%%%%%%%%%%%%%%%%%
%
%
  \begin{remark} Of course, the set $\mathcal K$ in Proposition \ref{setkrf} is not unique in general.
If the measure $\nu$ has an atom $r_0\in (0, +\infty)$, i.e.,
$\nu(\{r_0\})>0$, then one can choose $\mathcal K=\{r_0\}$. For
instance, for the function $f(\cdot)=\Omega_n(r_0 \cdot)$ the
representative measure from  formula (\ref{schonberg}) 
is the delta measure $\delta_{r_0}$ at $r_0$. Therefore,
  $f(\cdot)=\Omega_n(r_0 \cdot)$  is strongly  $X$-positive definite  if and only if
 the system $\{e^{i( \cdot, x_k)}\}_{k=1}^\infty$
forms a {\it Riesz-Fischer sequence} in $L^2(S^n_{r_0};\sigma_n).$
  \end{remark}

\subsection{Strong $X$-positive definiteness of  functions of the class  $\Phi_n$}

Let  $\Lambda=\{\lambda_k\}_1^\infty$ be a sequence of reals.
 For $r>0$ let $n(r)$ denote the largest number of points $\lambda_k$ that are contained in an interval of length $r.$
 Then the {\it upper density} of $\Lambda$ is defined by
$$
D^\ast(\Lambda)=\lim_{r\to +\infty} n(r)r^{-1}.
$$
Since $n(r)$ is subadditive, it follows   that this limit always exists (see e.g.  \cite{beurling}).

\noindent
In what follows we need the classical result  on Riesz-Fischer sequences of exponents
in  $L^2(-a,a)$.
   \begin{proposition}\label{rieszfischeralla}
Let $\Lambda=\{\lambda_k\}_1^\infty$ be a real sequence and $a>0$.
Set $E(\Lambda):=\{e^{i\lambda_k x}\}_1^\infty$.
   \begin{itemize}
\item[\em (i)] If $d_\ast(\Lambda)>0$ and $D^\ast(\Lambda)<a/\pi$,
then $E(\Lambda)$  is a Riesz-Fischer sequence in $L^2(-a,a)$.
\item[\em (ii)] If $E(\Lambda)$ is a Riesz-Fischer sequence in
$L^2(-a,a)$, then $d_\ast(\Lambda)>0$ and $D^\ast(\Lambda) \leq
a/\pi$.
\end{itemize}
  \end{proposition}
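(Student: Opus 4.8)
The plan is to reduce the whole proposition to the classical Beurling--Landau density theorems for interpolation in Paley--Wiener spaces, via the standard duality between the Riesz-Fischer property and the solvability of a moment problem. The guiding observation is that the critical density for the space of functions with spectrum in $[-a,a]$ is exactly $a/\pi$ (the Nyquist density for a band of length $2a$), so the thresholds in (i) and (ii) are precisely the interpolation thresholds.

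The first step is the duality. I would invoke the standard characterization (\cite{young80}) that a sequence $\{f_k\}$ in a Hilbert space is a Riesz-Fischer sequence if and only if the moment problem $\langle g,f_k\rangle=c_k$ is solvable for every $\{c_k\}\in\ell^2$. Applying this with $f_k=e^{i\lambda_k x}$ in $L^2(-a,a)$ and computing $\langle g,f_k\rangle=\int_{-a}^{a}g(x)e^{-i\lambda_k x}\,dx=(\mathcal{F}g)(\lambda_k)$, where $\mathcal{F}g$ denotes the Fourier transform of $g$ extended by zero off $[-a,a]$, I identify ``$E(\Lambda)$ is Riesz-Fischer in $L^2(-a,a)$'' with ``$\Lambda$ is a set of interpolation for the Paley--Wiener space $PW_a$ of entire functions of exponential type $\le a$ that are square-integrable on $\R$'': that is, for every $\{c_k\}\in\ell^2$ there exists $F\in PW_a$ with $F(\lambda_k)=c_k$. (For separated $\Lambda$ the Plancherel--P\'olya inequality guarantees that the sampling values of $PW_a$ functions lie in $\ell^2$, so $\ell^2$-data is the natural target and the picture is consistent.)

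With this reduction in hand, the separation requirement in (ii) is elementary, so I would dispose of it first. If $d_\ast(\Lambda)=0$, choose indices $k\ne j$ with $|\lambda_k-\lambda_j|$ arbitrarily small and test the Riesz-Fischer inequality on the vector $\xi=e_k-e_j$: one gets $\|e^{i\lambda_k x}-e^{i\lambda_j x}\|^2_{L^2(-a,a)}\ge 2c$, but $\|e^{i\lambda_k x}-e^{i\lambda_j x}\|^2_{L^2(-a,a)}=\int_{-a}^{a}|e^{i(\lambda_k-\lambda_j)x}-1|^2\,dx\to 0$, a contradiction. It then remains to match the density conditions to interpolation: for (i) I invoke Beurling's interpolation theorem, that a separated $\Lambda$ with $D^\ast(\Lambda)<a/\pi$ is a set of interpolation for $PW_a$, which by Step~1 makes $E(\Lambda)$ Riesz-Fischer; for (ii) I invoke the necessary density bound, that any interpolating sequence for $PW_a$ satisfies $D^\ast(\Lambda)\le a/\pi$, which combined with separation yields the conclusion.

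The main obstacle is exactly these two density theorems, which carry the real content. The necessity $D^\ast(\Lambda)\le a/\pi$ is a Landau-type comparison argument, bounding the number of interpolation conditions on a long interval by the effective dimension of $PW_a$ restricted there, via concentration estimates for the sinc/prolate kernel. The sufficiency in (i) is the more delicate half: for a separated sequence below the critical density one must construct an entire generating function $G$ of exponential type $a$ vanishing exactly on $\Lambda$, with controlled growth (a canonical product), and then solve the interpolation problem by a Lagrange-type series $\sum_k c_k\,G(z)/\big(G'(\lambda_k)(z-\lambda_k)\big)$ with $\ell^2$ control. Producing the growth and summability estimates on $G$ from the hypothesis $D^\ast(\Lambda)<a/\pi$ is the technical heart; this is the classical Beurling construction, and I would either reproduce the product estimates or cite them (\cite{beurling},\cite{young80}).
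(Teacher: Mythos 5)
Your proposal is correct and takes essentially the same route as the paper: the paper gives no proof of this proposition at all, simply attributing assertion (i) to Beurling and assertion (ii) to Landau (pointing to Young and Seip for expositions), which is exactly where your argument also places the real content. The extra detail you supply --- the moment-problem/Paley--Wiener duality identifying the Riesz--Fischer property with $\ell^2$-interpolation in $PW_a$, and the elementary observation that $d_*(\Lambda)=0$ violates the lower bound via $\|e^{i\lambda_kx}-e^{i\lambda_jx}\|_{L^2(-a,a)}\to 0$ --- is the standard reduction implicit in those citations and is sound.
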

Assertion (i) of Proposition \ref{rieszfischeralla} is a theorem
of A. Beurling \cite{beurling}, while assertion (ii) is a result
of H.J. Landau \cite{landau}, see e.g. \cite{young98} and
\cite{seip}. Proposition \ref{rieszfischeralla}
yields the following statement. 
   \begin{corollary}\label{rieszfischeralla1}
If $d_\ast(\Lambda)>0$ and $D^\ast(\Lambda)=0$, then $E(\Lambda)$ is a Riesz-Fischer sequence in $L^2(-a,a)$ for all $a>0.$
   \end{corollary}
 From this corollary it follows  that  $E(\Lambda)$ is a
 Riesz-Fischer sequence in $L^2(-a,a)$ for all $a>0$ if $\lim_{k\to \infty} (\lambda_{k+1}{-}\lambda_k)=+\infty.$

Now we are ready to state the main result of this subsection.
%%%%%%%%%%%%%%%%%%%%%%%%%%%%%%%%%%%%%%%%%
    \begin{proposition}\label{stronglyxdet}
Let  $f \in \Phi_n$,  $f\not = const,$
and let $X=\{x_k\}_1^\infty$ be a sequence of points $x_k\in
\bR^n$, $n\geq 2,$  of the form $x_k=(0,x_{k2},\dots,x_{kn})$. If
the sequence $X_n:=\{x_{kn}\}_{k=1}^\infty$ of n-th coordinates
satisfies the conditions $d_\ast(X_n)>0$ and $D^\ast(X_n)=0$,
then $f$ is strongly $X$--positive definite.
  \end{proposition}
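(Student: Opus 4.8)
The plan is to verify the Riesz--Fischer criterion of Proposition~\ref{setkrf}. Since $f$ is non-constant, its Schoenberg measure $\nu$ from \eqref{schonberg} cannot be concentrated at the origin (otherwise $f(t)=\int_0^\infty\Omega_n(rt)\,d\nu(r)=\nu(\{0\})$ would be constant), so $\nu((0,\infty))>0$. Hence it suffices to prove that for every $r>0$ the exponential system $\{e^{ir(u,x_k)}\}_k$ is a Riesz--Fischer sequence in $L^2(S^n;\sigma_n)$: then any Borel set $\mathcal K\subset(0,\infty)$ with $\nu(\mathcal K)>0$ fulfils the hypothesis of Proposition~\ref{setkrf}, and strong $X$-positive definiteness follows. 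In view of \eqref{intfxkxjAA} the whole problem is thereby reduced to a uniform lower bound
$$\int_{S^n}\Big|\sum_{k=1}^m\xi_k e^{ir(u,x_k)}\Big|^2 d\sigma_n(u)\ \ge\ c(r)\sum_{k=1}^m|\xi_k|^2,\qquad c(r)>0 .$$

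The key idea is to exploit the assumption $x_{k1}=0$ together with the rotation invariance of $\sigma_n$. I would split $\mathbb R^n=P\oplus P^\perp$ with $P=\Span\{e_1,e_n\}$ and write $u=(p,q)$ accordingly; the exponent then becomes $(u,x_k)=p_n x_{kn}+(q,\mu_k)$, where $\mu_k=(x_{k2},\dots,x_{k,n-1})$ collects the middle coordinates and the $e_1$-component of $p$ drops out \emph{precisely because} $x_{k1}=0$. Since $\sigma_n$ is invariant under the copy of $SO(2)$ acting in the plane $P$, I disintegrate $\sigma_n$ over the transverse variable $q$: conditionally on $q$ the point $p$ is distributed uniformly on the circle of radius $\rho=\sqrt{1-|q|^2}$ in $P$. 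For fixed $q$ the factor $e^{ir(q,\mu_k)}$ has modulus one, so replacing $\xi_k$ by $\xi_k e^{ir(q,\mu_k)}$ leaves $\sum_k|\xi_k|^2$ unchanged; this is the step that neutralizes the (possibly unbounded) middle coordinates.

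First I would restrict the $q$-integration to $\{|q|\le\tfrac12\}$, a set of positive marginal measure on which $\rho\ge\tfrac{\sqrt3}{2}$. On the circle in $P$, parametrized by $p_n=\rho\sin\theta$, only the $n$-th coordinate survives, and the substitutions $s=\sin\theta$, $w=r\rho s$ turn the inner integral into
$$\frac1\pi\int_{-r\rho}^{r\rho}\Big|\sum_k\xi_k e^{ir(q,\mu_k)}e^{iw x_{kn}}\Big|^2\frac{dw}{\sqrt{(r\rho)^2-w^2}}\ \ge\ \frac1{\pi r\rho}\int_{-r\rho}^{r\rho}\Big|\sum_k\xi_k e^{ir(q,\mu_k)}e^{iw x_{kn}}\Big|^2 dw .$$
Here the frequencies are exactly the $n$-th coordinates $X_n=\{x_{kn}\}$. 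Since $d_\ast(X_n)>0$ and $D^\ast(X_n)=0$, Corollary~\ref{rieszfischeralla1} guarantees that $\{e^{iw x_{kn}}\}_k$ is a Riesz--Fischer sequence in $L^2(-a,a)$ for every $a>0$, with lower bound $A(a)$ that is non-decreasing in $a$; thus for $r\rho\ge r\sqrt3/2$ the inner integral is $\ge \tfrac{A(r\sqrt3/2)}{\pi r}\sum_k|\xi_k|^2$ (using $|e^{ir(q,\mu_k)}|=1$). Integrating this bound over $\{|q|\le\tfrac12\}$ produces the desired constant $c(r)>0$ and completes the reduction.

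The main obstacle is exactly the presence of the middle coordinates $x_{k2},\dots,x_{k,n-1}$, which are completely unconstrained and may be unbounded; a naive restriction to a tubular neighbourhood of the great circle through $\pm e_n$ fails, because the cross terms $(q,\mu_k)$ need not be small. The device that removes this difficulty is the choice $P=\Span\{e_1,e_n\}$: using the vanishing first coordinate to make the circle direction orthogonal to every $x_k$, the middle coordinates enter only through $q$, i.e.\ as a constant (in $\theta$) unimodular phase on each circle, and therefore disappear upon taking the $\ell^2$-norm of the coefficients. Once this is in place, the remainder is the classical one-dimensional Beurling--Landau input supplied by Corollary~\ref{rieszfischeralla1}.
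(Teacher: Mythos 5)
Your proof is correct and follows essentially the same route as the paper's: both exploit $x_{k1}=0$ to reduce the spherical integral to a one-dimensional integral over an interval of length comparable to $r$ in which only the frequencies $X_n$ appear, absorb the middle coordinates into unimodular phases that leave $\sum_k|\xi_k|^2$ unchanged, and then invoke Corollary \ref{rieszfischeralla1}. The only cosmetic differences are that you disintegrate $\sigma_n$ over circles in the $(e_1,e_n)$-plane and fix $r$ via Proposition \ref{setkrf}, whereas the paper projects onto the equatorial ball, bounds the Jacobian $\bigl(1-|v|^2\bigr)^{-1/2}\ge 1$ from below, and integrates the resulting estimate directly against $\nu$.
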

%%%%%%%%%%%%%%%%%%%%%%%%%%%%%%%%%%%%%%%%%%%
  \begin{proof}
By Schoenberg's theorem \ref{schoenbergtheorem},  $f$ admits a
representation  (\ref{schonberg}).
 Let $\xi=(\xi_1,\dots,\xi_m)\in \bC^m$, $m\in \bN$. It follows
from  \eqref{schonberg} and \eqref{Omaga_n}  that
  \begin{equation}\label{intfxkxj}
\sum_{k,j=1}^{m}\xi_k\overline{\xi}_jf(|x_k-x_j|)=
 \int_{0}^{+\infty}\left(\;
 \int_{S^n}\left|\sum_{k=1}^{m}\xi_k e^{i(u,rx_k)}\right|^2\,d\sigma_n(u)
 \right)\,d\nu(r).
 \end{equation}
Next, we  transform the integral over $S^n$ in \eqref{intfxkxj}.
Recall  that in terms of spherical coordinates 
  \begin{align*}
u_1&={\rm cos}~\varphi_1,\quad   u_{n-1} ={\rm sin}~\varphi_1
\cdots {\rm sin}~\varphi_{n-2}~{\rm cos}~\varphi_{n-1}, \quad
u_n={\rm sin}~\varphi_1 \cdots {\rm sin}~\varphi_{n-2}~{\rm
sin}~\varphi_{n-1},\\ &
~~~\varphi_1,\dots,\varphi_{n-2}\in [0,\pi] \qquad {\rm and}\qquad
\varphi_{n-1}\in [0,2\pi],
   \end{align*}
the surface measure $\sigma_n$ on the  unit sphere $S^{n}$ is
given by
  \begin{align*}
d\sigma_n(u)\equiv d\sigma_n(u_1,\dots,u_n)= {\rm sin}^{n-2}
\varphi_1~{\rm sin}^{n-3} \varphi_2 \cdots {\rm sin}~
\varphi_{n-2}~ d\varphi_1\cdots d\varphi_{n-1}.
 \end{align*}
Set $v=(u_2,\dots,u_n)$ and  $B_{n-1}:=\{v\in \R^{n-1}:\  |v|\leq 1 \}$. Writing
$u\in S^n$ as $u=(u_1,v) $, we derive from the previous  formula
    \begin{align}\label{surfaceuv}
d\sigma_n(u)= \frac{1}{\sqrt{1-|v|^2}}~ dv, \quad {\rm where}\quad
u_1^2+|v|^2=1, \qquad  v\in B_{n-1}.
   \end{align}
Further, we write  $v=(w,t)$, where $w\in \bR^{n-2}$ and $t\in
\bR$, and $x_k=(0,x_{2k},\dots,x_{nk})=(0,y_k,x_{kn})$, where
$y_k\in \bR^{n-2}$. Then we have $(u,rx_k)= r(w,y_k)+rtx_{kn}$.
Let $B_{n-2}$ denote the unit ball
 $B_{n-2}:=\{w\in \bR^{n-2}: |w|\leq 1\}$ in $\bR^{n-2}$.  Using the equality (\ref{surfaceuv}) we then compute
     \begin{align}\label{surfaceuv1}
\int_{S^n}\left|\sum_{k=1}^{m}\xi_k e^{i(u,rx_k)}\right|^2~d\sigma_n(u)&=
\int_{B_{n-1} } \left|\sum_{k=1}^{m}\xi_k e^{ir(w, y_k)} e^{i rtx_{nk}}\right|^2~\frac{1}{\sqrt{1{-}|v|^2}}~ dv\\ &\geq \int_{B_{n-1} } \left|\sum_{k=1}^{m}\xi_k e^{ir(w, y_k)} e^{i rtx_{nk}}\right|^2~~ dv\nonumber\\ & =
\int_{B_{n-2}} \left(\int_{-\sqrt{1{-}|w|^2} }^{\sqrt{1{-}|w|^2} }\left|\sum_{k=1}^{m}\xi_k e^{ir(w, y_k)} e^{irtx_{nk}}\right|^2~dt\right) dw\nonumber \\ &=\int_{B_{n-2}}r^{-1} \left(\int_{-r\sqrt{1{-}|w|^2} }^{r\sqrt{1{-}|w|^2} }
\left|\sum_{k=1}^{m}(\xi_k e^{ir(w, y_k)}) e^{isx_{nk}}\right|^2~ds\right) dw.\label{surfaceuv2}
       \end{align}
Since  $d_\ast(X_n)>0$ and $D^\ast(X_n)=0$ by assumption,
Corollary \ref{rieszfischeralla1} implies that  for any $a>0$ the
sequence $\{e^{i sx_{kn}}\}_{k=1}^\infty$ is a Riesz-Fischer sequence in
$L^2(-a,a)$. That is,  there exists a constant $c(a)>0$ such that
    \begin{align*}
\int_{-a}^a \left| \sum_{k=1}^{m}(\xi_k e^{ir(w, y_k)}) e^{isx_{nk}}\right|^2 ds
\geq c(a)\sum_{k=1}^m |~\xi_k e^{ir(w, y_k)}|^2
=c(a)\sum_{k=1}^m|~\xi_k|^2.
   \end{align*}
Inserting this inequality, applied
with $a=r\sqrt{1{-}|w|^2}>0$, into (\ref{surfaceuv2}) and then (\ref{surfaceuv2}) into (\ref{intfxkxj}) we obtain
    \begin{align*}
\sum_{k,j=1}^{m}\xi_k\overline{\xi}_jf(|x_k-x_j|)  &\geq
 \int_{0}^{+\infty}\left(\int_{B_{n-2}}r^{-1} \left(\int_{-r\sqrt{1{-}|w|^2} }^{r\sqrt{1{-}|w|^2} }
\left|\sum_{k=1}^{m}(\xi_k e^{ir(w, y_k)}) e^{isx_{nk}}\right|^2~ds\right) dw
 \right) d\tilde{\nu}(r)\\
&\geq \int_{0}^{+\infty}\left(\int_{B_{n-2}}r^{-1} c(r\sqrt{1{-}|w|^2}~)~\bigg(\sum_{k=1}^m|~\xi_k|^2\bigg)~ dw\right) d\tilde{\nu}(r)\\
& \geq \left( \int_{0}^{+\infty}\int_{B_{n-2}}r^{-1} c(r\sqrt{1{-}|w|^2}~)~ dw d\tilde{\nu}(r)\right)\sum_{k=1}^m|~\xi_k|^2~.
  \end{align*}
The double integral in front of the last sum is a finite constant,
say $\gamma$, by construction. Since $f$ is not constant by
assumption, $\tilde{\nu}((0,+\infty))>0$.  Therefore, since
$r^{-1}c(r\sqrt{1{-}|w|^2}~)>0$ for all $r>0$ and $|w|<1$, we
conclude that $\gamma >0$. This shows that $f$ is strongly
$X$-positive definite.
   \end{proof}
%%%%%%%%%%%%%%%%%%%%%%%%%%%%%%%%%%%%%%%%%

Assuming  $f\in\Phi_{n+1}$ rather than $f\in \Phi_{n}$ we obtain the following corollary.
%%%%%%%%%%%%%%%%%%%%%%%%%%%%%%%%%%%%%%%%%
     \begin{corollary}
Assume that $f\in\Phi_{n+1}$ and $f$ is not constant. Let $X=\{x_k\}_1^\infty$
be a sequence of points  $x_k=(x_{k1},x_{k2},\dots,x_{kn}) \in
\bR^n.$ If the sequence $X_n:=\{x_{kn}\}_{k=1}^\infty$ of n-th
coordinates satisfies the conditions $d_\ast(X_n)>0$ and
$D^\ast(X_n)=0$,
then $f$ is strongly $X$--positive definite.
   \end{corollary}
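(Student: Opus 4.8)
The plan is to deduce this corollary from Proposition \ref{stronglyxdet} by a dimension-lifting trick: I would embed the configuration $X\subset\bR^n$ isometrically into $\bR^{n+1}$ in such a way that the \emph{first} coordinate vanishes, and then invoke Proposition \ref{stronglyxdet} in dimension $n+1$. The point is that the extra degree of freedom afforded by the hypothesis $f\in\Phi_{n+1}$ (as opposed to $f\in\Phi_n$) is exactly what is needed to accommodate the first coordinate $x_{k1}$, which is unconstrained here but was forced to be zero in Proposition \ref{stronglyxdet}.

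Concretely, I would define the lifted sequence $\widetilde{X}=\{\widetilde{x}_k\}_1^\infty\subset\bR^{n+1}$ by prepending a zero coordinate,
\[
\widetilde{x}_k := (0,x_{k1},x_{k2},\dots,x_{kn})\in\bR^{n+1},\qquad k\in\bN.
\]
The key elementary observation is that this prepending is an isometry, $|\widetilde{x}_k-\widetilde{x}_j|_{n+1}=|x_k-x_j|_n$ for all $j,k$, whence
\[
f\bigl(|\widetilde{x}_k-\widetilde{x}_j|_{n+1}\bigr)=f\bigl(|x_k-x_j|_n\bigr).
\]
Consequently the quadratic forms $\sum_{k,j}\xi_k\overline{\xi}_j f(|x_k-x_j|_n)$ and $\sum_{k,j}\xi_k\overline{\xi}_j f(|\widetilde{x}_k-\widetilde{x}_j|_{n+1})$ are identical, so it suffices to prove that $f$ is strongly $\widetilde{X}$-positive definite.

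Next I would verify that $\widetilde{X}$ meets the hypotheses of Proposition \ref{stronglyxdet} with $n$ replaced by $n+1$. By construction each $\widetilde{x}_k$ has vanishing first coordinate, i.e. $\widetilde{x}_k=(0,\widetilde{x}_{k,2},\dots,\widetilde{x}_{k,n+1})$ with $\widetilde{x}_{k,n+1}=x_{kn}$, so the sequence of $(n{+}1)$-th coordinates of $\widetilde{X}$ is precisely $X_n=\{x_{kn}\}_1^\infty$, which by assumption satisfies $d_\ast(X_n)>0$ and $D^\ast(X_n)=0$. Since $f\in\Phi_{n+1}$ is nonconstant and $n+1\ge 2$, Proposition \ref{stronglyxdet} applies verbatim and yields that $f$ is strongly $\widetilde{X}$-positive definite. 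Combined with the isometry above, this gives the strong $X$-positive definiteness of $f$.

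The argument carries no real obstacle beyond the embedding bookkeeping: all the analytic content (the passage to spherical coordinates, the reduction to the last coordinate, and the Beurling--Landau Riesz-Fischer estimate via Corollary \ref{rieszfischeralla1}) is already packaged inside Proposition \ref{stronglyxdet}. The only conceptual point deserving emphasis is \emph{why} the stronger membership $f\in\Phi_{n+1}$ is the right hypothesis: the prepended zero coordinate consumes exactly one dimension, converting the previously constrained first coordinate of $X$ into a harmless interior coordinate of a genuine $\bR^{n+1}$-configuration, to which Schoenberg's representation \eqref{schonberg} in dimension $n+1$ applies. I would therefore expect the write-up to be short, with the isometry and the identification of $X_n$ as the terminal coordinate sequence being the two steps that warrant explicit checking.
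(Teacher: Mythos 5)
Your proposal is correct and follows exactly the paper's own argument: the paper likewise identifies $\R^n$ with $\mathbf 0\oplus\R^n\subset\R^{n+1}$, notes that the lifted sequence $\hat X=\{(0,x_k)\}$ has vanishing first coordinate and terminal coordinate sequence $X_n$, and applies Proposition \ref{stronglyxdet} in dimension $n+1$. Your write-up merely makes the isometry and the coordinate bookkeeping more explicit than the paper does.
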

%%%%%%%%%%%%%%%%%%%%%%%%%%%%%%%%%%%%%%%
   \begin{proof}
We identify ${\mathbb R}^n$ with the subspace $\mathbf 0\oplus{\R}^n$ of ${\mathbb
R}^{n+1}$.  Then $X$ is identified with the sequence
$\hat{X} =\{(0,x_{k })\}_{k=1}^\infty$.
Since $f\in\Phi_{n+1}$,  Proposition \ref{stronglyxdet} applies
to the sequence $\hat{X}$, so $f$ is strongly
$\hat{X}$-positive definite. Hence it is strongly $X$-positive
definite.
   \end{proof}
%%%%%%%%%%%%%%%%%%%%%%%%%%%%%%%%%%%%%%%%%%%%%%%%
%%%%%%%%%%%%%%%%%%%%%%%%%%%%%%%%%%%%%%%%%%%%%%%

The next proposition gives a more precise result for a sequence
 $X=\{x_k\}_{k=1}^\infty$ of $\bR^3$ which are located on a line.
    \begin{proposition}\label{pointsr3}
Suppose that  $\Lambda=\{\lambda_k\}_1^\infty$ is a real sequence and  $r>0$. Let $X$ be the sequence $X=\{x_k{:=}(0,0,\lambda_k)\}_{k=1}^\infty$ in $\bR^3$ and let $f_r(x):=\Omega_3(r|x|)$, $x\in \bR^3$.
 \begin{itemize}
\item[\em (i)]
If $d_\ast(\Lambda)>0$ and $D^\ast(\Lambda)< r/\pi$, then the functions $f_r$ is strongly $X$-positive definite.
\item[\em (ii)] If $f_r$ is strongly $X$-positive definite, then  $d_\ast(\Lambda)>0$ and $D^\ast(\Lambda)\leq r/\pi$.
\end{itemize}
  \end{proposition}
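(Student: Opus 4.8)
The plan is to exploit the fact that the representing measure of $f_r$ in Schoenberg's representation \eqref{schonberg} is the point mass $\delta_r$, and then to collapse the spherical integral governing strong $X$-positive definiteness into a one-dimensional integral of exponentials over $(-r,r)$, where Proposition \ref{rieszfischeralla} applies verbatim. First I would note that $f_r(t)=\Omega_3(rt)=\int_0^\infty\Omega_3(st)\,d\delta_r(s)$, so $\nu=\delta_r$ in \eqref{schonberg}. Substituting this into the master identity \eqref{intfxkxj} and using $x_k=(0,0,\lambda_k)$, so that $(u,rx_k)=r\lambda_k u_3$, yields
\[
\sum_{k,j=1}^m \xi_k\overline{\xi}_j\,f_r(|x_k-x_j|)=\int_{S^3}\Big|\sum_{k=1}^m \xi_k\,e^{ir\lambda_k u_3}\Big|^2 d\sigma_3(u).
\]

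The integrand depends on $u$ only through the coordinate $u_3$, and the key step is the Archimedes (hat-box) projection theorem: the pushforward of the normalized surface measure $\sigma_3$ on the unit $2$-sphere under $u\mapsto u_3$ is the uniform measure $\tfrac12\,ds$ on $[-1,1]$. Applying this and then the substitution $t=rs$ transforms the spherical integral into
\[
\frac{1}{2r}\int_{-r}^{r}\Big|\sum_{k=1}^m \xi_k\,e^{i\lambda_k t}\Big|^2 dt=\frac{1}{2r}\Big\|\sum_{k=1}^m \xi_k\,e^{i\lambda_k\cdot}\Big\|^2_{L^2(-r,r)}.
\]
With this identity in hand, strong $X$-positive definiteness of $f_r$ --- the bound $\sum_{k,j}\xi_k\overline{\xi}_j f_r(|x_k-x_j|)\ge c\sum_k|\xi_k|^2$ for some $c>0$ --- becomes, up to the harmless constant factor $\tfrac1{2r}$, precisely the Riesz-Fischer inequality \eqref{rieszfischerse} for $E(\Lambda)=\{e^{i\lambda_k t}\}$ in $L^2(-r,r)$. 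Hence $f_r$ is strongly $X$-positive definite if and only if $E(\Lambda)$ is a Riesz-Fischer sequence in $L^2(-r,r)$.

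Given this equivalence, both assertions follow by invoking Proposition \ref{rieszfischeralla} with $a=r$. For (i), the hypotheses $d_\ast(\Lambda)>0$ and $D^\ast(\Lambda)<r/\pi$ are exactly the hypotheses of Proposition \ref{rieszfischeralla}(i), which gives the Riesz-Fischer property of $E(\Lambda)$ and hence the strong $X$-positive definiteness of $f_r$. For (ii), strong $X$-positive definiteness of $f_r$ supplies the Riesz-Fischer property, and Proposition \ref{rieszfischeralla}(ii) then forces $d_\ast(\Lambda)>0$ and $D^\ast(\Lambda)\le r/\pi$.

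I expect the only genuinely non-formal step to be the measure-projection identity; once the integrand is recognized as a function of $u_3$ alone, the Archimedes theorem does all the work and the remainder is bookkeeping together with the Beurling--Landau results packaged in Proposition \ref{rieszfischeralla}. The one point worth checking explicitly is that the positive constant $\tfrac1{2r}$ is irrelevant both to the existence and to the non-existence of a positive lower bound, which is immediate since $r>0$ is fixed.
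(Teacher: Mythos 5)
Your proposal is correct and follows essentially the same route as the paper: the paper also reduces the quadratic form to $\frac{2\pi}{r}\int_{-r}^{r}\big|\sum_k\xi_k e^{i\lambda_k t}\big|^2dt$ (its equation \eqref{equalitym3}), obtained by the explicit spherical-coordinate computation $u_3=\cos\theta$, $d\sigma_2=\sin\theta\,d\varphi\,d\theta$, $t=r\cos\theta$ — which is precisely the Archimedes projection identity you invoke — and then applies Proposition \ref{rieszfischeralla} with $a=r$ in both directions. The only differences are the harmless normalization constant and that you state the conclusion of the converse step correctly (the paper's part (ii) contains a slip, saying ``$E(\Lambda)$ is strongly $X$-positive definite'' where it means ``a Riesz-Fischer sequence'').
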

%%%%%%%%%%%%%%%%%%%%%%%%%%%%%%%%%%%%%%%%
       \begin{proof}  Suppose that $\xi=(\xi_1,\dots,\xi_m)\in \bC^m$, $m\in \bN$.
We introduce spherical coordinates on the  unit sphere $S^2$ in $\bR^3$ by
  \begin{align*}
u_1={\rm sin}~\theta ~{\rm cos}~\varphi,\quad u_2={\rm sin}~
\theta ~{\rm sin}~\varphi, \quad u_3={\rm cos}~ \theta,\qquad {\rm
where}\quad  \theta\in [0,\pi], \varphi\in [0,2\pi].
  \end{align*}
Then the surface measure $\sigma_2$ on the sphere $S^2$ is given
by $d\sigma_2(u)={\rm sin}~\theta d \varphi d\theta$ and   $( u,r
x_k) =r \lambda_k~{ \rm cos}~\theta$. Using these facts and
equation  (\ref{Omaga_n})  we compute
\begin{align*}
\sum_{k,j=1}^m \xi_k\overline{\xi}_j~f_r(|x_k-x_j|)& =\sum_{k,j=1}^m \xi_k\overline{\xi}_j~\Omega_3(r|x_k-x_j|)=
 \int_{S^2}\left|\sum_{k=1}^{m}\xi_k e^{i(u, r x_k)}\right|^2\,d\sigma_2(u)\\ &=
  \int_0^{2\pi} \int_{0}^{\pi}\left|\sum_{k=1}^m \xi_k e^{i r\xi_k{\rm cos}~\theta}\right|^2{\rm sin}~\theta~ d \varphi d\theta= 2\pi \int_0^{\pi} \left|\sum_{k=1}^m \xi_k e^{i r\lambda_k{\rm cos}~\theta}\right|^2 {\rm sin}~\theta\,  d\theta~.
\end{align*}
Transforming the latter integral by setting
$t=r~{\rm cos}~\theta$ we obtain
\begin{align}\label{equalitym3}
\sum_{k,j=1}^m \xi_k\overline{\xi}_j~f(|x_k-x_j| = \frac{2\pi}{r} \int_{-r}^{r} \left|\sum_{k=1}^m \xi_k e^{i \lambda_k t}\right|^2 \,dt.
\end{align}
Equality  \eqref{equalitym3}  is the crucial step for the  proof of Proposition \ref{pointsr3}.

(i): Since $d_\ast (\Lambda)>0$ and $D^\ast(\Lambda)<r/\pi$, $E(\Lambda)=\{e^{i \lambda_k t}\}_{k=1}^\infty$ is is Riesz-Fischer sequence in $L^2(-r,r)$  by Proposition \ref{rieszfischeralla}(i). This means that  there exists a constant $c>0$ such that
\begin{align*}
\int_{-r}^r \left| \sum_{k=1}^{m}\xi_k e^{i\lambda_k t} \right|^2 dt \geq c\sum_{k=1}^m|~\xi_k|^2.
\end{align*}
Combined with (\ref{equalitym3}) it follows that $f$ is strongly $X$-positive definite.

(ii): Since $f$ is strongly $X$-positive definite, there is a constant $c>0$  such that
\begin{align*}
\sum_{k,j=1}^m \xi_k\overline{\xi}_j~f(|x_k-x_j| \geq c\sum_{k=1}^m|~\xi_k|^2
\end{align*}
Because of (\ref{equalitym3}) this implies that $E(\Lambda)$ is strongly $X$-positive definite. Therefore, $d_\ast (\Lambda)>0$ and $D^\ast(\Lambda)\leq r/\pi$ by Proposition \ref{rieszfischeralla}(ii).
\end{proof}
    \begin{corollary}\label{cor3.31}
Assume the conditions of Proposition  \ref{pointsr3} and $r_0>0.$
Then the functions $f_r$  are strongly $X$-positive definite for any
$r\in (0,r_0)$ if and only if
$d_\ast(\Lambda)>0$ and $D^\ast(\Lambda) =0$.
    \end{corollary}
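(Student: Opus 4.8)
The plan is to prove this purely as a two-sided application of Proposition \ref{pointsr3}, quantified over the whole interval $r\in(0,r_0)$. Neither direction requires reopening the integral computation \eqref{equalitym3}; the content of that computation is already packaged into parts (i) and (ii) of Proposition \ref{pointsr3}, and the only new ingredient here is a limiting argument in the parameter $r$. So I would state at the outset that $d_\ast(\Lambda)$ does not depend on $r$, whereas the density threshold $r/\pi$ does, and this is exactly what lets a statement "for all $r\in(0,r_0)$" pin $D^\ast(\Lambda)$ down to $0$.

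For the sufficiency direction, I would assume $d_\ast(\Lambda)>0$ and $D^\ast(\Lambda)=0$ and fix an arbitrary $r\in(0,r_0)$. Since $r>0$ we have $r/\pi>0=D^\ast(\Lambda)$, so the hypotheses $d_\ast(\Lambda)>0$ and $D^\ast(\Lambda)<r/\pi$ of Proposition \ref{pointsr3}(i) are met, and that proposition yields that $f_r$ is strongly $X$-positive definite. As $r\in(0,r_0)$ was arbitrary, $f_r$ is strongly $X$-positive definite for every $r$ in the interval.

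For the necessity direction, I would assume $f_r$ is strongly $X$-positive definite for all $r\in(0,r_0)$. Applying Proposition \ref{pointsr3}(ii) for a single such $r$ already gives $d_\ast(\Lambda)>0$. The same proposition gives $D^\ast(\Lambda)\le r/\pi$, and this holds simultaneously for every $r\in(0,r_0)$; taking the infimum over $r$ (equivalently, letting $r\to+0$) forces $D^\ast(\Lambda)\le\inf_{r\in(0,r_0)} r/\pi=0$, hence $D^\ast(\Lambda)=0$ since the upper density is nonnegative by definition. The only point that needs a moment's care is this last inference: one must use that the strong positive definiteness is assumed for arbitrarily small $r$, not for one fixed $r$, so I would emphasize that the infimum over the interval is genuinely $0$. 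There is no real obstacle beyond correctly handling this $r\to+0$ passage, which is where the two one-sided estimates $D^\ast\le r/\pi$ and $D^\ast<r/\pi$ meet and collapse to the sharp value $D^\ast=0$.
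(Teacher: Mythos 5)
Your argument is correct and is exactly the intended one: the paper states Corollary \ref{cor3.31} without proof precisely because it is the immediate two-sided application of Proposition \ref{pointsr3} that you describe, with the necessity direction using $D^\ast(\Lambda)\le r/\pi$ for all $r\in(0,r_0)$ and $D^\ast(\Lambda)\ge 0$ to force $D^\ast(\Lambda)=0$. Nothing is missing.
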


\subsection{Boundedness of  Gram matrices}

Here  we discuss  the question of when the Gram matrix (\ref{grammatrix}) defines a  bounded operator on $l^2(\bN)$.
A standard criterion for showing that a matrix defines a bounded operator
is {\it Schur's test}. It can  be stated as follows:
   \begin{lemma}\label{schurtest}
Let $\kA=(a_{kj})_{k,j\in \bN}$ be an infinite hermitian matrix  satisfying
    \begin{align}\label{schurtestas}
C:= {\rm sup}_{j\in \bN}~\sum_{k=1}^\infty |a_{kj}| <\infty.
   \end{align}
Then the  matrix $\kA$ defines a  bounded self-adjoint operator $A$
on $l^2(\bN)$ and we have $\|A\|\leq C$.
     \end{lemma}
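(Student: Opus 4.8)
The plan is to show that the sesquilinear form built from $\kA$ is bounded on $l^2(\bN)$ and then to read off the operator $A$ together with its norm bound from the Riesz representation theorem. The first observation I would record is that hermiticity upgrades the column-sum bound \eqref{schurtestas} to a row-sum bound: since $a_{kj}=\overline{a_{jk}}$ gives $|a_{kj}|=|a_{jk}|$, we have
\[
\sup_{k\in\bN}\sum_{j=1}^\infty |a_{kj}| = \sup_{k\in\bN}\sum_{j=1}^\infty |a_{jk}| = C.
\]

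For $x=\{x_k\},y=\{y_j\}\in l^2(\bN)$ I would consider the double sum $s(x,y):=\sum_{k,j}a_{kj}x_k\overline{y_j}$ of \eqref{matrixoperator}. The heart of the argument --- and really the only step that needs care --- is the Cauchy--Schwarz splitting $|a_{kj}|=|a_{kj}|^{1/2}\cdot|a_{kj}|^{1/2}$, which yields
\[
\sum_{k,j}|a_{kj}|\,|x_k|\,|y_j|\le\Big(\sum_{k,j}|a_{kj}|\,|x_k|^2\Big)^{1/2}\Big(\sum_{k,j}|a_{kj}|\,|y_j|^2\Big)^{1/2}.
\]
Carrying out the $j$-summation first in the left factor produces the row sums $\sum_j|a_{kj}|\le C$, while the $k$-summation first in the right factor produces the column sums $\sum_k|a_{kj}|\le C$; hence the right-hand side is at most $(C\|x\|^2)^{1/2}(C\|y\|^2)^{1/2}=C\|x\|\,\|y\|$. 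In particular the double series defining $s(x,y)$ converges absolutely, and $|s(x,y)|\le C\|x\|\,\|y\|$.

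It then remains to manufacture the operator. Fixing $x$, the map $y\mapsto s(x,y)$ is a bounded conjugate-linear functional on $l^2(\bN)$ of norm at most $C\|x\|$, so by the Riesz representation theorem there is a unique vector, call it $Ax$, with $s(x,y)=\langle Ax,y\rangle$ for all $y\in l^2(\bN)$. Linearity of $x\mapsto Ax$ is immediate from that of $s(\cdot,y)$, and taking $y=Ax$ gives $\|Ax\|^2\le C\|x\|\,\|Ax\|$, whence $\|Ax\|\le C\|x\|$ and therefore $\|A\|\le C$. Self-adjointness follows directly from hermiticity, since $\langle Ax,y\rangle=s(x,y)=\overline{s(y,x)}=\overline{\langle Ay,x\rangle}=\langle x,Ay\rangle$ for all $x,y$.

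Because the construction is entirely elementary, I do not expect a genuine obstacle; the one point demanding attention is matching the two weight distributions in the Cauchy--Schwarz step to the row and column sums respectively, after which both absolute convergence of the series in \eqref{matrixoperator} and the bound $\|A\|\le C$ drop out automatically.
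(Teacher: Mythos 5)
Your proof is correct: the Cauchy--Schwarz splitting $|a_{kj}|=|a_{kj}|^{1/2}\cdot|a_{kj}|^{1/2}$ combined with the row/column sum bounds (the row bound coming from hermiticity) gives $|s(x,y)|\le C\|x\|\,\|y\|$, and the Riesz representation step correctly produces the operator of \eqref{matrixoperator} with $\|A\|\le C$ and $A=A^*$. The paper does not prove this lemma itself but refers to \cite{young80}, p.~159, where essentially this same standard Schur-test argument is given, so there is nothing further to compare.
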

A proof of  Lemma \ref{schurtest} can be found, e.g., in
\cite{young80}, p. 159.
 \begin{lemma}\label{schurtest1}
Let $\kA=(a_{kj})_{k,j\in \bN}$ be an infinite hermitian matrix.
Suppose that $\{a_{kj}\}_{k=1}^\infty\in l^2(\bN)$ for all $j\in \bN$ and
\begin{align}\label{schurtestas1}
 \lim_{m\to \infty} \bigg({\rm sup}_{j\geq m}~\sum_{k\geq m} |a_{jk}|\bigg)=0 .
   \end{align}
Then the  hermitian matrix $\kA=(a_{kj})_{k,j\in \bN}$ defines a
compact self-adjoint operator on $l^2(\bN)$.
     \end{lemma}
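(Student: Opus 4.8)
The plan is to approximate $\kA$ in operator norm by finite-rank operators: I split off the lower-right "tail corner'' of the matrix and control it by Schur's test (Lemma~\ref{schurtest}), while the complementary "cross'' part is finite rank thanks to the column-$l^2$ hypothesis.

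Fix $m\in\bN$, let $P_m$ be the orthogonal projection of $l^2(\bN)$ onto $\Span\{e_1,\dots,e_m\}$, and split $\kA$ entrywise according to whether $\min(k,j)>m$ or not. Let $\kB_m$ collect the entries $a_{kj}$ with $k>m$ and $j>m$, and let $T_m$ collect the remaining entries, namely those with $k\le m$ or $j\le m$; since these index sets are complementary, their entrywise sum is $\kA$. First I would check that $T_m$ is a bounded finite-rank operator. Its matrix is supported on the first $m$ rows and first $m$ columns. For $x\in l^2(\bN)$ the components with $k\le m$ equal $\sum_j a_{kj}x_j$, which are well defined because, by hermiticity, the $k$-th row is the conjugate of the $k$-th column and hence lies in $l^2(\bN)$ by assumption; this part has range in $\Span\{e_1,\dots,e_m\}$. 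The components with $k>m$ equal $\sum_{j\le m}a_{kj}x_j$, a finite combination of tails of columns (each in $l^2(\bN)$), and depend only on $P_m x$. Thus $T_m$ maps $l^2(\bN)$ boundedly onto a subspace of dimension at most $2m$, so it is finite rank; it is self-adjoint because $\kA$ is hermitian and the supporting index set is symmetric.

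Next I would estimate $\kB_m$. Viewed as a hermitian matrix on $l^2(\{m+1,m+2,\dots\})$ and extended by zero on $P_m l^2(\bN)$, it satisfies, by \eqref{schurtestas1},
\[
\|\kB_m\|\le \sup_{j>m}\ \sum_{k>m}|a_{kj}| =: c_m, \qquad c_m\to 0 \ \ (m\to\infty),
\]
where the bound on the norm comes from Schur's test (Lemma~\ref{schurtest}) and $c_m\to 0$ because $c_m$ is a shift of the sequence in \eqref{schurtestas1}. Choosing $m$ large enough that $c_m<\infty$, the identity $\kA=T_m+\kB_m$ exhibits $\kA$ as the matrix, in the sense of \eqref{matrixoperator}, of the bounded operator $A:=T_m+\kB_m$; by the uniqueness remark after \eqref{matrixoperator}, $A$ is independent of $m$. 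Running the same decomposition for every $m$ gives $A-T_m=\kB_m$, whence $\|A-T_m\|\le c_m\to 0$. Therefore $A$ is the operator-norm limit of the finite-rank operators $T_m$ and is compact; it is self-adjoint as a norm limit of self-adjoint operators (equivalently, because its matrix $\kA$ is hermitian).

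I expect the only genuine subtlety to lie in the cross part $T_m$: one must avoid treating $\kA$ as an operator before it is known to be one, and it is precisely the hypothesis $\{a_{kj}\}_{k=1}^\infty\in l^2(\bN)$---which the tail condition \eqref{schurtestas1} does not supply for the first few columns---that makes $T_m$ a bona fide bounded finite-rank operator. Once this is in place, the result follows from Schur's test and the closedness of the compact operators in operator norm.
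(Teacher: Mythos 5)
Your proof is correct, and it follows the same overall strategy as the paper --- approximate $\kA$ in operator norm by finite-rank pieces and control the tail by a Schur-type estimate --- but with a genuinely different, and in fact more careful, decomposition. The paper truncates to the top-left block $\kA_m$ (entries with $k<m$ \emph{and} $j<m$) and claims the complement $\kA-\kA_m$ satisfies the Schur condition \eqref{schurtestas}; taken literally this is problematic, since for a fixed column $j<m$ the complement's column sum is the $l^1$-tail $\sum_{k\ge m}|a_{kj}|$, which the hypotheses (column in $l^2$, tail control only for $j\ge m$) do not make finite. The paper's subsequent norm estimate $\|B_mx\|^2=\sum_{j>m}\bigl|\sum_{k>m}a_{jk}x_k\bigr|^2$ silently treats $B_m$ as the pure corner anyway. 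Your split resolves this: you put the cross terms (entries with $\min(k,j)\le m$) into the finite-rank part $T_m$, where the column-$l^2$ hypothesis is exactly what is needed to make $T_m$ a bounded operator of rank at most $2m$, and you leave only the corner $\kB_m$ for the Schur test, whose norm is bounded by the quantity in \eqref{schurtestas1} and hence tends to $0$. The paper's tail estimate via Cauchy--Schwarz and your appeal to Lemma \ref{schurtest} are essentially the same computation, so the real gain of your route is the correct placement of the cross terms; the cost is the slightly more elaborate bookkeeping needed to verify that $T_m$ is well defined, bounded, finite rank, and self-adjoint.
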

\begin{proof}
For $m\in \bN$ let   $\kA_m$ denote the matrix
$(a^{(m)}_{kj})_{k,j\in \bN}$, where $a^{(m)}_{kj}:=0$ if either
$k\geq m$ or $j\geq m$ and $a^{(m)}_{kj}=a_{kj}$ otherwise.
Clearly, $\kA_m$ defines a bounded operator $A_m$ on $l^2(\bN)$.
From (\ref{schurtestas1}) it follows that  the  matrix $\kA-\kA_m$
satisfies condition (\ref{schurtestas}) for large $m$, so
$\kA-\kA_m$ defines a bounded  operator $B_m$. Therefore  
$\kA$ defines the bounded self-adjoint operator
$A:=A_m+B_m$.

Let $\varepsilon >0$ be given. By  \eqref{schurtestas1}, there
exists $m_0$ such that $\sum_{k\geq m} |a_{jk}|<\varepsilon$ for
$m>m_0$ and $j>m_0$. Using the latter,  the  Cauchy-Schwarz
inequality and the relation $a_{kj}=a_{jk}$ we derive
\begin{align*}
\|B_m x\|^2 &= \sum_{j>m} \bigg|\sum_{k>m} a_{jk}x_k\bigg|^2\leq \sum_{j>m} \bigg(\sum_{k>m} | a_{jk}|\bigg) \bigg(\sum_{k>m} |a_{jk}||x_k|^2\bigg)\\&\leq \varepsilon
\sum_{k>m} \sum_{j>m}|a_{kj}||x_k|^2\leq \varepsilon^2 \sum_{k>m} |x_k|^2\leq \varepsilon^2 \|x\|^2
\end{align*}
for $x=\{x_j\}_1^\infty\in l^2(\bN)$ and $m>m_0$. This proves that $\lim_m \|B_m\|=\lim_m \|A-A_m\|=0$.
Obviously, $A_m$ is compact, because it has finite rank. Therefore,  $A$ is compact.
\end{proof}
An immediate consequence of Lemma \ref{schurtest1}  is the following corollary.
    \begin{corollary}\label{schurcor}
If $\kA=(a_{kj})_{k,j\in \bN}$ is  an infinite hermitian matrix satisfying
\begin{align}\label{schurtestas2}
 \lim_{m\to \infty} \bigg({\rm sup}_{j\in \bN}~\sum_{k\geq m} |a_{jk}|\bigg)=0 ,
   \end{align}
then the  matrix $\kA$ defines a  compact self-adjoint operator
on $l^2(\bN)$.
     \end{corollary}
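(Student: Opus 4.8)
The plan is to verify that the two hypotheses of Lemma \ref{schurtest1} are both consequences of \eqref{schurtestas2}, since the condition \eqref{schurtestas2} is nothing but a strengthening of \eqref{schurtestas1} (the supremum is taken over all $j\in\bN$ rather than only over $j\geq m$). Once this is done, the conclusion is immediate from that lemma.

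First I would check the column-summability requirement $\{a_{kj}\}_{k=1}^\infty\in l^2(\bN)$ for each $j$. By hermiticity $|a_{kj}|=|a_{jk}|$, so it suffices to control $\sum_k|a_{jk}|$ for fixed $j$. Using \eqref{schurtestas2}, choose $m_0\in\bN$ so large that $\sup_{j\in\bN}\sum_{k\geq m_0}|a_{jk}|\leq 1$. Then for each fixed $j$,
$$
\sum_{k=1}^\infty |a_{kj}|=\sum_{k<m_0}|a_{jk}|+\sum_{k\geq m_0}|a_{jk}|\leq \sum_{k<m_0}|a_{jk}|+1<\infty,
$$
the first sum being finite as a finite sum. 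Hence the $j$-th column lies in $l^1(\bN)\subset l^2(\bN)$, which is the first hypothesis of Lemma \ref{schurtest1}.

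Next I would deduce \eqref{schurtestas1} from \eqref{schurtestas2}. For every $m$ the supremum over the smaller index set $\{j\geq m\}$ does not exceed the supremum over all of $\bN$, that is,
$$
\sup_{j\geq m}\sum_{k\geq m}|a_{jk}|\leq \sup_{j\in\bN}\sum_{k\geq m}|a_{jk}|,
$$
and the right-hand side tends to $0$ as $m\to\infty$ by \eqref{schurtestas2}; thus \eqref{schurtestas1} holds. With both hypotheses verified, Lemma \ref{schurtest1} applies and shows that $\kA$ defines a compact self-adjoint operator on $l^2(\bN)$, which is precisely the assertion. I do not anticipate a real obstacle here, as the corollary is a genuine weakening of Lemma \ref{schurtest1}; the only point deserving a word is the column-summability condition, which is not literally contained in \eqref{schurtestas2} but follows at once by freezing $m$ at a single sufficiently large value.
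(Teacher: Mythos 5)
Your proposal is correct and follows exactly the route the paper intends: the paper states the corollary as an immediate consequence of Lemma \ref{schurtest1}, and you simply verify both hypotheses of that lemma, including the column-summability condition $\{a_{kj}\}_k\in l^2(\bN)$, which indeed follows from \eqref{schurtestas2} by fixing one large $m_0$ and using $l^1(\bN)\subset l^2(\bN)$. Nothing is missing.
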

        \begin{proposition}\label{mfxbounded}
Let  $f\in \Phi_n$, $n\geq 2$, and let $\nu$ be the representing
measure  in  equation (\ref{schonberg}). Let $X=\{x_k\}_1^\infty$
be a sequence of pairwise different points $x_k\in \bR^n$. Suppose
that for each $j,k\in \bN$, $j\neq k$, there are  positive numbers
$\alpha_{kj}$ such that
   \begin{align}\label{conditionsum}
K&:={\rm sup}_{j\in \bN}~\sum\nolimits^\prime_{k\in \bN} \frac{1}{(\alpha_{kj}|x_k-x_j|)^{\frac{n-1}{2}}}<\infty,\\
L&:={\rm sup}_{j\in \bN}~\sum\nolimits^\prime_{k\in \bN} \nu([0,
\alpha_{kj}])<\infty,\label{condzero}
   \end{align}
Then the matrix  $Gr_X(f) := (f(|x_k-x_j|)_{k,j\in \bN}$
defines a bounded self-adjoint operator on $l^2(\bN)$.
     \end{proposition}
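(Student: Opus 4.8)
The plan is to verify the hypothesis of \emph{Schur's test} (Lemma \ref{schurtest}) for the Hermitian matrix $Gr_X(f) = \bigl(f(|x_k - x_j|)\bigr)_{k,j \in \bN}$; it is Hermitian because $f$ is real-valued (the kernel $\Omega_n$ in \eqref{Omaga_n} is real) and $|x_k - x_j| = |x_j - x_k|$. Thus it suffices to bound the row sums $\sum_{k \in \bN} |f(|x_k - x_j|)|$ uniformly in $j$. The diagonal term equals $f(0) = \int_0^\infty \Omega_n(0)\, d\nu(r) = \nu([0,\infty))$, a finite constant independent of $j$, so the real work is to control the off-diagonal part $\sum\nolimits^\prime_{k \in \bN} |f(|x_k - x_j|)|$.

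The first step is a two-sided pointwise estimate on $\Omega_n$. From the Bessel-function formula \eqref{kernel} and the standard asymptotics $|J_\nu(t)| \le c\, t^{-1/2}$ for $t \ge 1$, together with the trivial bound $|\Omega_n(t)| \le \Omega_n(0) = 1$ coming from \eqref{Omaga_n}, I would derive that there is a constant $C_n > 0$ with
\begin{equation*}
|\Omega_n(t)| \le 1 \quad\text{and}\quad |\Omega_n(t)| \le C_n\, t^{-(n-1)/2} \qquad (t > 0).
\end{equation*}
The assumption $n \ge 2$ enters here to guarantee the positive decay exponent $(n-1)/2 > 0$.

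The crucial idea is then to exploit the freedom in the cut-off parameters $\alpha_{kj}$. Fixing $j$ and $k \ne j$ and writing $t = |x_k - x_j|$, I would split the Schoenberg representation \eqref{schonberg} at $r = \alpha_{kj}$, bounding the low-frequency part by $|\Omega_n| \le 1$ and the high-frequency tail by the decay estimate. Since $r \mapsto (rt)^{-(n-1)/2}$ is decreasing on $[\alpha_{kj},\infty)$, the tail satisfies
\begin{equation*}
\Bigl| \int_{\alpha_{kj}}^\infty \Omega_n(rt)\, d\nu(r) \Bigr| \le C_n\, (\alpha_{kj} t)^{-(n-1)/2}\, \nu([0,\infty)),
\end{equation*}
while the low-frequency part is at most $\nu([0,\alpha_{kj}])$. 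Summing over $k \ne j$ and invoking \eqref{condzero} and \eqref{conditionsum} gives the uniform bound
\begin{equation*}
\sum\nolimits^\prime_{k \in \bN} |f(|x_k - x_j|)| \le L + C_n\, \nu([0,\infty))\, K.
\end{equation*}
Adding the diagonal term $f(0) = \nu([0,\infty))$ verifies the Schur condition \eqref{schurtestas}, and Lemma \ref{schurtest} then yields the assertion.

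I expect the only genuinely delicate point to be the decay estimate $|\Omega_n(t)| \le C_n t^{-(n-1)/2}$, which hinges on the oscillatory asymptotics of the Bessel function $J_{(n-2)/2}$; once it is in hand, the splitting at $\alpha_{kj}$ and the summation are routine. The design of the two hypotheses is transparent in this light: \eqref{condzero} governs the low-frequency regime where $\Omega_n$ is merely bounded, and \eqref{conditionsum} governs the high-frequency tail where the decay of $\Omega_n$ takes effect.
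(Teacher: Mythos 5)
Your proposal is correct and follows essentially the same route as the paper: the two-sided estimate $|\Omega_n(t)|\le 1$ and $|\Omega_n(t)|\le C_n t^{(1-n)/2}$ from the Bessel asymptotics, the splitting of the Schoenberg integral at $r=\alpha_{kj}$ so that \eqref{condzero} controls the low-frequency part and \eqref{conditionsum} the tail, and Schur's test (Lemma \ref{schurtest}) to conclude. Your explicit insertion of absolute values around $f(|x_k-x_j|)$ is a slight tidying of the paper's argument, but the bound $L+C_n K\,\nu(\bR)$ and the overall structure are identical.
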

%%%%%%%%%%%%%%%%%%%%%%%%%%%%%%%%%%%%%%
      \begin{proof}
By (\ref{kernel}) the function $\Omega_n(t)$ has an alternating
power series expansion and $\Omega_n(0)=1$. Therefore we have
$\Omega_n(t)\leq 1$ for $t\in [0, \infty)$.
It is well-known (see, e.g., \cite{rw}, p. 266) that the Bessel
function $  J_{\frac{n-2}{2}}(t)$ behaves asymptotically as
$\sqrt{\frac{2}{\pi t}}$ as $t\to \infty$. Therefore, it follows
from  (\ref{kernel}) that there exists a constant $C_n$ such that
   \begin{align}\label{besselsymp}
|\Omega_n(t)|\leq C_n t^{\frac{1-n}{2}}\qquad{\rm for}\qquad
t\in (0, \infty).
      \end{align}
Using these facts and the assumptions (\ref{conditionsum}) and
(\ref{condzero}) we obtain
\begin{align*}
\sum\nolimits^\prime_{k\in \bN}  f(|x_k-x_j|)&=\sum\nolimits^\prime_{k\in \bN}~ \int_0^{\infty} \Omega_n(r|x_k-x_j|) ~d\nu(r)  \\
&\leq  \sum\nolimits^\prime_{k\in \bN}
\left(\int_0^{\alpha_{kj}}
1~ d\nu(r)+ C_n\int_{\alpha_{kj}}^{\infty}\big(r|x_k-x_j|\big)^{\frac{1-n}{2}} ~d\nu(r)\right)\\
&\leq \sum\nolimits^\prime_{k\in \bN} \nu([0, \alpha_{kj}]) +
\sum\nolimits^\prime_{k\in \bN} C_n
\int_{\alpha_{kj}}^{\infty} \big(\alpha_{kj}|x_k-x_j|\big)^{\frac{1-n}{2}} ~d\nu(r)\\
&=L +  C_n\bigg(\sum\nolimits^\prime_{k\in \bN}
(\alpha_{kj}|x_k-x_j|)^{\frac{1-n}{2}}\bigg)  \nu(\bR) \leq
L+ C_nK\,\nu(\bR),
 \end{align*}
so that
  \begin{align}
{\rm sup}_{j\in \bN}~\sum_{k=1}^\infty f(|x_k-x_j|) \leq f(0)+ L + C_nK \,\nu(\bR)<\infty.
 \end{align}
This shows that the assumption (\ref{schurtestas}) of the Schur
test is fulfilled, so  the matrix $Gr_X(f)$ defines  a bounded
operator  by Lemma \ref{schurtest}.
   \end{proof}
The  assumptions (\ref{condzero}) and (\ref{conditionsum}) are a
growth condition of the measure $\nu$ at zero combined with a
density condition for the set  of points $x_k$. Let us assume that
$\nu([0,\varepsilon])=0$ for some $\varepsilon >0$. Setting
$\alpha_{kj}=\varepsilon$ in Proposition \ref{mfxbounded},
(\ref{condzero}) is trivially satisfied and (\ref{conditionsum})
holds whenever
 \begin{align}\label{conditionsum1}
{\rm sup}_{j\in \bN}~\sum\nolimits^\prime_{k\in \bN}
\frac{1}{|x_k-x_j|^{\frac{n-1}{2}}}<\infty.
\end{align}
Because of its importance we restate this result in the special
case  when $\nu= \delta_{r}$ is a delta measure at $r\in (0,
\infty)$ separately as
\begin{corollary}
If $X=\{x_k\}_1^\infty$ is a sequence of pairwise distinct points $x_k\in
\bR^n$ satisfying  (\ref{conditionsum1}), then
 for any $r>0$ the infinite matrix
$\big(\Omega_n(r |x_k-x_j|)\big)_{k,j\in \bN}$
defines a bounded operator on $l^2(\bN)$.
\end{corollary}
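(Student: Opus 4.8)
The plan is to identify $f(t):=\Omega_n(rt)$ as a function of class $\Phi_n$ whose Schoenberg representing measure in \eqref{schonberg} is the point mass $\delta_r$, and then to invoke Proposition~\ref{mfxbounded} verbatim. Substituting $\nu=\delta_r$ into \eqref{schonberg} gives $\int_0^\infty \Omega_n(st)\,d\nu(s)=\Omega_n(rt)=f(t)$, so $f\in\Phi_n$ with this $\nu$, and the matrix under consideration is precisely the Gram matrix $Gr_X(f)=\big(f(|x_k-x_j|)\big)_{k,j\in\bN}$.

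First I would exploit the one feature of $\delta_r$ that matters: its support sits strictly away from the origin. Since $r>0$, I fix any $\varepsilon\in(0,r)$, so that $\nu([0,\varepsilon])=\delta_r([0,\varepsilon])=0$. This is exactly the situation discussed in the paragraph following Proposition~\ref{mfxbounded}, where one takes the constant choice $\alpha_{kj}=\varepsilon$ for all $k\neq j$. With this choice the hypotheses \eqref{condzero} and \eqref{conditionsum} collapse to elementary statements: the quantity $L$ in \eqref{condzero} equals $\sup_{j}\sum_{k\neq j}\nu([0,\varepsilon])=0$, and the quantity $K$ in \eqref{conditionsum} factors as $K=\varepsilon^{-(n-1)/2}\,\sup_{j}\sum_{k\neq j}|x_k-x_j|^{-(n-1)/2}$, which is finite precisely because $X$ obeys the density condition \eqref{conditionsum1}.

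Having checked both hypotheses, the conclusion is immediate: Proposition~\ref{mfxbounded} yields that $Gr_X(f)=\big(\Omega_n(r|x_k-x_j|)\big)_{k,j\in\bN}$ defines a bounded self-adjoint operator on $l^2(\bN)$. There is essentially no obstacle here---the substantive work (the asymptotic bound \eqref{besselsymp} on $\Omega_n$ together with the Schur-test estimate) is already packaged inside Proposition~\ref{mfxbounded}. The only genuine point, and the place where the hypothesis $r>0$ is actually used, is that the support of $\delta_r$ stays bounded away from $0$; this is what renders the growth condition of $\nu$ at the origin vacuous and lets one shift the entire burden onto the density condition \eqref{conditionsum1}.
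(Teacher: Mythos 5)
Your proposal is correct and follows exactly the route the paper intends: the corollary is obtained by specializing Proposition \ref{mfxbounded} to the representing measure $\nu=\delta_r$, choosing $\alpha_{kj}=\varepsilon\in(0,r)$ so that condition \eqref{condzero} is vacuous and \eqref{conditionsum} reduces to \eqref{conditionsum1}. Nothing is missing.
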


Applying the Schur test one can derive a number of further results when
the matrices $Gr_X(f)$
and  $\big(\Omega_n(r |x_k-x_j|)\big)_{k,j\in \bN}$
 define bounded operators on $l^2(\bN)$. An example is the
next proposition.

\begin{proposition}\label{boundedr3}
Suppose  $X=\{x_k\}_1^\infty$ is a sequence of distinct points
$x_k\in \bR^3$ such that
  \begin{align}\label{matrixs1}
K := {\rm sup}_{j\in \bN}~\sum\nolimits_{k\in \N}^\prime
\frac{1}{|x_k{-}x_j|}\, <\infty.
\end{align}
Let $r\in (0,+\infty)$ and let $\kA$ be the infinite matrix given
by
    \begin{align}\label{defkmx}
\Omega_3(t,X) := \big(\Omega_3(t(|x_k-x_j|)\big)_{k,j\in
\bN}=\left(\frac{\sin~(t|x_k{-}x_j|)}{t|x_k{-}x_j|}~\right)_{k,j\in
\bN},
  \end{align}
where we  set $\frac{\sin~ 0}{0}:=1$. If $r^{-1}K<1$, then $\kA$
defines a bounded self-adjoint operator $A$ on $l^2(\bN)$ with
bounded inverse; moreover,  $\|A\|\leq 1+r^{-1}K$ and $\|A^{-1}\|
\leq (1{-}r^{-1}K)^{-1}.$
 \end{proposition}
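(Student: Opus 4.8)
The plan is to exploit the obvious splitting of $\kA$ into its diagonal and off-diagonal parts and then reduce everything to Schur's test (Lemma \ref{schurtest}) combined with a perturbation-of-the-identity argument. First I would write $\kA = I + \kB$, where $I$ is the identity matrix (coming from the diagonal entries $\Omega_3(0)=1$, via the convention $\frac{\sin 0}{0}:=1$) and $\kB=(b_{kj})_{k,j\in\bN}$ has entries
\[
b_{kj} = \frac{\sin(r|x_k-x_j|)}{r|x_k-x_j|} \quad (k\neq j), \qquad b_{kk}=0.
\]
Both matrices are real and symmetric, hence hermitian, so it suffices to control $\kB$.

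The key estimate is the elementary bound $|\sin\theta|\leq 1$, which gives
\[
|b_{kj}| = \frac{|\sin(r|x_k-x_j|)|}{r|x_k-x_j|} \leq \frac{1}{r|x_k-x_j|}, \qquad k\neq j.
\]
Summing over $k$ and invoking the hypothesis \eqref{matrixs1} yields
\[
\sup_{j\in\bN}\sum_{k=1}^\infty |b_{kj}|
= \sup_{j\in\bN}\sum\nolimits^\prime_{k\in\bN}\frac{|\sin(r|x_k-x_j|)|}{r|x_k-x_j|}
\leq r^{-1}\,\sup_{j\in\bN}\sum\nolimits^\prime_{k\in\bN}\frac{1}{|x_k-x_j|}
= r^{-1}K.
\]
By Schur's test (Lemma \ref{schurtest}), $\kB$ therefore defines a bounded self-adjoint operator $B$ on $l^2(\bN)$ with $\|B\|\leq r^{-1}K$. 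Consequently $\kA$ defines the bounded self-adjoint operator $A=I+B$, and $\|A\|\leq 1+\|B\|\leq 1+r^{-1}K$.

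For the invertibility and the bound on $\|A^{-1}\|$ I would use the standing assumption $r^{-1}K<1$. Since $B$ is self-adjoint with $\|B\|\leq r^{-1}K<1$, its spectrum lies in $[-r^{-1}K,\,r^{-1}K]$, so the spectrum of $A=I+B$ is contained in $[\,1-r^{-1}K,\,1+r^{-1}K\,]\subset(0,\infty)$; equivalently, $I+B$ is inverted by the Neumann series $\sum_{n\geq 0}(-B)^n$, which converges in operator norm because $\|B\|<1$. Hence $A$ is boundedly invertible and
\[
\|A^{-1}\| \leq \frac{1}{1-\|B\|} \leq \frac{1}{1-r^{-1}K} = (1-r^{-1}K)^{-1}.
\]

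There is no genuine obstacle in this argument: the only point requiring a little care is the bookkeeping of the diagonal, namely that the convention $\frac{\sin 0}{0}=1$ makes the diagonal of $\kA$ exactly the identity, so that the entire burden falls on the off-diagonal part $\kB$, to which Schur's test applies verbatim through the crude bound $|\sin|\leq 1$. Everything else is the standard Neumann-series estimate for a self-adjoint perturbation of the identity of norm strictly less than one.
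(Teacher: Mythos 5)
Your proof is correct and follows essentially the same route as the paper's: the same splitting $\kA = I + \kB$ into diagonal and off-diagonal parts, the same bound $|\sin\theta|\leq 1$ feeding into Schur's test (Lemma \ref{schurtest}) to get $\|B\|\leq r^{-1}K$, and the same Neumann-series conclusion for the invertibility of $I+B$. No differences worth noting.
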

%%%%%%%%%%%%%%%%%%%%%%%%%%%%%%%%%%%
       \begin{proof}
Set $\kS\equiv(a_{kj})_{k,j\in\bN}:=\kA-I$, where $I$ is  the
identity matrix. 
Since $a_{kk}=0,$ one has
  \begin{align*}
{\rm sup}_{j\in \bN} ~\sum\nolimits_k |a_{kj}| 
={\rm sup}_{j\in \bN}~\sum\nolimits^\prime_k
\left|\frac{\sin(r|x_k{-}x_j|)}{r|x_k{-}x_j|}\right| 
\leq r^{-1}~ {\rm sup}_{j\in \bN} \sum\nolimits^\prime_k
~\frac{1}{|x_k{-}x_j|}= r^{-1}K.
  \end{align*}
This  shows that the Hermitean matrix $\kS$ satisfies the
assumption (\ref{schurtestas}) of Lemma \ref{schurtest} with
$C\leq r^{-1}K$. Thus $\kS$ is the matrix of a bounded self-adjoint
operator $S$ such that $\|S\|\leq r^{-1}K$. We  have $\kS:=\kA-I$.
This implies that $\kA$ is the matrix of a bounded self-adjoint
operator $A=I+S$ and $\|A\|\leq 1+r^{-1}K$. Since $r^{-1}K<1$, $A$
has a  bounded inverse 
and $\|A^{-1}\| 
\leq (1{-}r^{-1}K)^{-1}.$
\end{proof}

%%%%%%%%%%%%%%%%%%%%%%%%%%%%%%%%%%%%%%%%%%%%%%%

\section{Riesz bases of  defect  subspaces  and the property of strong $X$-positive definiteness}\label{rieszbasisn1}

Let  $\Delta$ denote the  Laplacian on $\bR^3$ with
 domain $\dom(-\Delta) =  W^{2,2}(\bR^3)$ in $L^2(\bR^3).$
It is well known that $-\Delta$ is self-adjoint. We fix
a sequence $X=\{x_j\}_1^\infty$ of pairwise distinct points
$x_j\in \mathbb{R}^3$ and denote by $H$ the   restriction

  \begin{equation}\label{min} 
H:=-\Delta   \!\upharpoonright \dom H, \qquad    \dom H = \{f \in
W^{2,2}(\bR^3): f(x_j)=0 \quad {\rm for~all}\quad j\in \bN\}.
  \end{equation}
We abbreviate $r_j:=|x-x_j|$ for  $x=(x^1,x^2,x^3)\in\bR^3$.
For $z\in \bC\setminus [0,+\infty)$ we denote by  $\sqrt{z}$
 the branch of the square root  of $z$ with positive
imaginary part.

Further, let us recall the formula for the resolvent $(-\Delta -
zI)^{-1}$   on $L^2(\R^3)$ (see \cite{{MasN}}):
      \begin{equation}\label{3.2S}
((-\Delta - z I)^{-1}f)(x) = \frac{1}{4\pi}\int_{{\mathbb
R}^3}\frac{e^{i\sqrt{z}|x-t|}}{|x-t|}~f(t)~dt,\qquad f\in
L^2(\bR^3).
      \end{equation}
     \begin{lemma}\label{varphicomplete}
 The sequence $E := \big\{\frac{1}{\sqrt{2\pi}}\varphi_j\big\}_{j=1}^\infty=\big\{\frac{1}{\sqrt{2\pi}}\frac{e^{-|x- x_j|}}{|x- x_j|}\big\}_{j=1}^\infty$ is
 normed and complete in the defect subspace $\mathfrak N_{-1}\bigl(\subset
 L^2({\mathbb R}^3)\bigr)$ of the operator $H$.
    \end{lemma}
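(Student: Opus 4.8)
The plan is to establish the two assertions — normalization and completeness — separately, both resting on the explicit resolvent kernel \eqref{3.2S} at the spectral point $z=-1$. Here $\sqrt{-1}=i$, so $i\sqrt{z}=-1$ and the kernel becomes $\frac{1}{4\pi}\frac{e^{-|x-t|}}{|x-t|}$; in other words $\frac{1}{4\pi}\varphi_j$ is precisely the Green function $G_{-1}(\cdot,x_j)$ of $-\Delta+I$. This single observation drives everything.

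First, normalization. Substituting $y=x-x_j$ and passing to polar coordinates, I would compute
\[
\|\varphi_j\|_{L^2}^2=\int_{\mathbb R^3}\frac{e^{-2|y|}}{|y|^2}\,dy=4\pi\int_0^\infty e^{-2r}\,dr=2\pi ,
\]
so that $\frac{1}{\sqrt{2\pi}}\varphi_j$ is a unit vector; in particular $\varphi_j\in L^2(\mathbb R^3)$, the only borderline point being the local singularity $|y|^{-1}$ at $x_j$, which is square-integrable in dimension three.

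Next I would show $\varphi_j\in\mathfrak N_{-1}$ via the standard identity $\mathfrak N_{-1}=\ker(H^*+I)=\ran(H+I)^\perp$. Take any $f\in\dom H\subset W^{2,2}(\mathbb R^3)$ and put $h:=(-\Delta+I)f\in L^2$. By the Sobolev embedding $W^{2,2}(\mathbb R^3)\hookrightarrow C(\mathbb R^3)$ the value $f(x_j)$ is well defined, and applying \eqref{3.2S} with $z=-1$ at the point $x_j$ gives $f(x_j)=\frac{1}{4\pi}\int_{\mathbb R^3}\varphi_j(t)h(t)\,dt=\frac{1}{4\pi}\langle h,\varphi_j\rangle$, since $\varphi_j$ is real. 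As $f\in\dom H$ forces $f(x_j)=0$ by \eqref{min}, I obtain $\langle(H+I)f,\varphi_j\rangle=\langle h,\varphi_j\rangle=0$ for every $f\in\dom H$; hence $\varphi_j\perp\ran(H+I)$, that is, $\varphi_j\in\mathfrak N_{-1}$.

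Finally, completeness. Suppose $g\in\mathfrak N_{-1}$ satisfies $g\perp\varphi_j$ for all $j$, and set $u:=(-\Delta+I)^{-1}g\in W^{2,2}(\mathbb R^3)$. The same kernel computation yields $u(x_j)=\frac{1}{4\pi}\langle g,\varphi_j\rangle=0$ for every $j$, so $u\in\dom H$ and $g=(-\Delta+I)u=(H+I)u\in\ran(H+I)$. Since simultaneously $g\in\mathfrak N_{-1}=\ran(H+I)^\perp$, this forces $\langle g,g\rangle=0$, hence $g=0$; thus $\{\varphi_j\}$ has dense span in $\mathfrak N_{-1}$, which is the asserted completeness. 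I expect the only delicate points to be the justification of pointwise evaluation $f\mapsto f(x_j)$ (handled by the Sobolev embedding, valid precisely because $2\cdot 2>3$) and the identification $\mathfrak N_{-1}=\ran(H+I)^\perp$. It is worth noting that, in contrast to the Riesz basis statement of Theorem \ref{rieszbasisN1}, no separation hypothesis such as $d_\ast(X)>0$ enters this argument.
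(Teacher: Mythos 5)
Your proof is correct and follows essentially the same route as the paper: completeness is obtained exactly as in the paper's argument, by observing that $f\perp\varphi_j$ forces $u(x_j)=0$ for $u=(I-\Delta)^{-1}f$, hence $f\in\ran(H+I)\cap\mathfrak N_{-1}=\{0\}$. The only cosmetic differences are that you compute $\|\varphi_j\|^2=2\pi$ by a direct polar-coordinate integral rather than by setting $x=y=x_j$ in the convolution identity \eqref{3.2AS}, and that you spell out the (standard) verification $\varphi_j\in\mathfrak N_{-1}=\ran(H+I)^\perp$, which the paper takes for granted.
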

%%%%%%%%%%%%%%%%%%%%%%%%%%%%%%%%%%%%%%%%
     \begin{proof}
Suppose that $f\in\mathfrak N_{-1}$ and  $f \perp E$. Then $u := (I-\Delta)^{-1}f\in
W^{2,2}({\mathbb R}^3).$ By  \eqref{3.2S}, we have
      \begin{equation}\label{3.2SM}
u(x)=\frac{1}{4\pi}\int_{{\mathbb
R}^3}\frac{e^{-|x-t|}}{|x-t|}f(t)dt.
      \end{equation}
Therefore, the orthogonality condition $f\perp E$ means that
    \begin{eqnarray}\label{3.3S}
0 = \langle f, \varphi_j\rangle =
%%%\overline{f(x)}dx =
\frac{1}{4\pi}\int_{{\mathbb
R}^3}{f(t)}\frac{e^{-|t - x_j|}}{|t - x_j|}~dt   
= u(x_j), \qquad  j\in{\mathbb N}.
  \end{eqnarray}
By \eqref{3.3S}  and \eqref {min},  $u\in\dom(H)$ and   $f = (I-\Delta)u = (I+H)u\in\ran(I+H)$. Thus,
    \begin{equation*}
f\in\mathfrak N_{-1}\cap\ran(I+H)=\{0\},
    \end{equation*}
i.e. $f=0$ and the system $E$ is complete.

The function $e^{-|\cdot|}(\in W^{2,2}(\R^3)$)  is a (generalized)
solution of the equation $(I - \Delta)e^{-|x|} = 2
\frac{\exp({-|x|})}{|x|}$.     Therefore it follows from
\eqref{3.2SM} with $f= f_y(x) := \frac{e^{-|x-y|}}{|x-y|}$  that
      \begin{equation}\label{3.2AS}
\frac{e^{-|x-y|}}{2} = \frac{1}{4\pi}\int_{{\mathbb
R}^3}\frac{e^{-|x-t|}}{|x-t|}\cdot \frac{e^{-|t-y|}}{|t-y|}~dt .
      \end{equation}
Setting here $x=y =x_j$   we get $\|\varphi_j\|^2 = 2\pi$, i.e.,
the system $E$ is normed.
    \end{proof}
%%%%%%%%%%%%%%%%%%%%%%%%%%%%%%%%%%%%
%%%%%%%%%%%%%%%%%%%%%%%%%%%%

In order to state the next result we need the following definition.

   \begin{definition}
A sequence $\{f_j\}_1^\infty$ of vectors of a Hilbert space is
called {\it $w$-linearly independent} if for any complex sequence
$\{c_j\}_1^\infty$  the relations
   \begin{equation}\label{3.5$}
\sum^{\infty}_{j=1}c_j f_j=0 \qquad {\rm and}\qquad
\sum^{\infty}_{j=1}|c_j|^2\|f_j\|^2<\infty
    \end{equation}
 imply that $c_j=0$ for all $j\in{\mathbb N}$.
   \end{definition}
%%%%%%%%%%%%%%%%%%%%%%%%%%%%%%%%%%%%%%%%%%%%%%%%%%
%%%%%%%%%%%%%%%%%%%%%%%%%%%%%%%%%%%%%%%%%%%%%%%%%%
   \begin{lemma}\label{lem4.3}
Assume that  $X=\{x_j\}_1^\infty$ has no finite
accumulation points. Then the sequence $E = \big\{\frac{1}{\sqrt{2\pi}}\varphi_j\big\}_{j=1}^\infty =
\big\{\frac{1}{\sqrt{2\pi}}~\frac{e^{-|x - x_j|}}{|x-
x_j|}\big\}_{j=1}^\infty$ is $\omega$-linearly independent in $\mathfrak H =
L^2({\mathbb R}^3)$.
        \end{lemma}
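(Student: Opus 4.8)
The plan is to turn the problem into a distributional statement supported on the point set $X$ and then to read off the coefficients one at a time. First I would record that by Lemma \ref{varphicomplete} we have $\|\varphi_j\|^2 = 2\pi$, so each $\tfrac{1}{\sqrt{2\pi}}\varphi_j$ has norm $1$; consequently the summability hypothesis in the definition of $w$-linear independence reduces to $\{c_j\}_1^\infty \in l^2(\bN)$. It therefore suffices to prove the following: if $\{c_j\} \in l^2(\bN)$ and $u := \sum_{j=1}^\infty c_j \varphi_j = 0$ in $L^2(\bR^3)$ (the sum converging in $L^2$), then $c_j = 0$ for every $j$.

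The key analytic input is that $\tfrac{1}{4\pi}\,\tfrac{e^{-|x|}}{|x|}$ is the fundamental solution of $I-\Delta$ on $\bR^3$; this is exactly the content of the resolvent kernel \eqref{3.2S} evaluated at $z=-1$. Hence $(I-\Delta)\varphi_j = 4\pi\,\delta_{x_j}$ in the sense of distributions on $\bR^3$. Since the partial sums of $u$ converge in $L^2(\bR^3)$, they converge in the space of distributions $\mathcal{D}'(\bR^3)$, and because $I-\Delta$ is continuous on $\mathcal{D}'(\bR^3)$ I may apply it termwise to obtain $(I-\Delta)u = 4\pi\sum_{j=1}^\infty c_j\,\delta_{x_j}$ in $\mathcal{D}'(\bR^3)$. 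Using the hypothesis $u=0$, this yields the identity $\sum_{j=1}^\infty c_j\,\delta_{x_j} = 0$ in $\mathcal{D}'(\bR^3)$.

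The final step is localization. Fix an index $j_0$. Since $X$ has no finite accumulation point, the point $x_{j_0}$ is isolated in $X$, so there is $\delta>0$ with $B(x_{j_0},\delta)\cap X = \{x_{j_0}\}$. Choosing $\psi\in C_c^\infty(B(x_{j_0},\delta))$ with $\psi(x_{j_0})=1$ and pairing the distributional identity with $\psi$, every term with $j\neq j_0$ drops out because $x_j\notin\supp\psi$, leaving $4\pi\,c_{j_0}=0$, whence $c_{j_0}=0$. As $j_0$ is arbitrary, all coefficients vanish, which is precisely $w$-linear independence. The only point that genuinely requires care is the termwise application of $I-\Delta$ to the infinite sum; I expect this to be the sole "obstacle," but it is routine, since $L^2$-convergence implies convergence in $\mathcal{D}'(\bR^3)$ and $I-\Delta$ is continuous there, so the substantive content is just the identity $(I-\Delta)\varphi_j = 4\pi\,\delta_{x_j}$ together with the isolation of each $x_j$.
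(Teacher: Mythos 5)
Your proof is correct, but it follows a genuinely different route from the paper's. The paper argues pointwise: from $\{c_j\}\in l^2(\bN)$ and the fact that each $\varphi_j$ solves the homogeneous equation $(\Delta-1)v=0$ away from $x_j$, it deduces that the series $\sum_j c_j\varphi_j$ converges locally uniformly on compact subsets of $\R^3\setminus X$; then, isolating $x_k$ in a compact neighbourhood $U_k$ (possible because $X$ has no finite accumulation points), it rewrites the identity $\sum_j c_j\varphi_j=0$ as $-c_k e^{-|x-x_k|}=|x-x_k|\sum_{j\neq k}c_j\varphi_j(x)$ on $U_k\setminus\{x_k\}$ and lets $x\to x_k$: the right-hand side is a bounded function times $|x-x_k|$, so $c_k=0$. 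You instead apply $I-\Delta$ in $\mathcal D'(\R^3)$, use the fundamental-solution identity $(I-\Delta)\varphi_j=4\pi\delta_{x_j}$ (which is indeed just \eqref{3.2S} at $z=-1$), and test the resulting identity $\sum_j c_j\delta_{x_j}=0$ against a bump function supported near the isolated point $x_{j_0}$. The trade-off: your distributional argument sidesteps the one step of the paper's proof that is not fully spelled out, namely the passage from $L^2$-convergence to locally uniform convergence away from $X$ (which rests on interior elliptic estimates for $\Delta-1$), replacing it by the trivial sequential continuity of a differential operator on $\mathcal D'$; the paper's argument, in exchange, stays entirely at the level of pointwise limits and requires no distribution theory. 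Both proofs use the hypothesis on $X$ in exactly the same way, to isolate each $x_j$ from the rest of the set.
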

%%%%%%%%%%%%%%%%%%%%%%%%%%%%%%%%%%%%%
    \begin{proof}
Assume that for some complex sequence $\{c_j\}_1^\infty$
conditions \eqref{3.5$} are satisfied with
$\varphi_j$ in place of $f_j$.  By
Lemma  \ref{varphicomplete}, $\|\varphi_j\| = \sqrt{2\pi}.$ Hence
the second of conditions \eqref{3.5$}  is equivalent to
$\{c_j\}\in l^2.$  Furthermore, since  each function
$\varphi_{j}(x)$ is harmonic in ${\R}^3\setminus \{x_j\}$, this
implies that the  series $\sum^{\infty}_{j=1}c_j\varphi_{j}$
converges uniformly on each compact subset of ${\R}^3\setminus X$.

Fix $k\in \bN$. Since the points $x_j$ are pairwise distinct  and
the  set $X$  has no finite accumulation points, there exist a
compact neighborhood  $U_k$ of $x_k$ and such that $x_j\notin U_k$
for all $j\not =k$. Then, by the preceding considerations, the series
$\sum_{j\not =k}c_j\varphi_{j}$  converges uniformly on $U_k$.

From  the first equality of (\ref{3.5$}) it follows that
  \begin{equation*}
-c_k = \sum\nolimits_{j\in \N}^\prime
c_j e^{-|x-x_j|}|x{-}x_j|^{-1}|x{-}x_k|
  \end{equation*}
for all $x\in U_k$, $x\neq x_k$. Therefore, passing to the limit
as $x\to x_k$ we obtain $c_k=0$.
        \end{proof}
%%%%%%%%%%%%%%%%%%%%%%%%%%%%%%%%%%%%%%%%%%%%%%%%%%%%%%%%%%
           \begin{definition}
$(i)$   A sequence $\{f_j\}_1^\infty$ in the Hilbert space $\mathfrak
H$ is called minimal if for any $k$
     \begin{equation}
\dist\{f_k, \mathfrak H^{(k)}\}= \varepsilon_k>0, \qquad   \mathfrak H^{(k)} := {\rm
span}\{f_j:\ j\in \N\setminus \{k\}\},\quad k\in{\mathbb N}.
     \end{equation}

$(ii)$ A sequence $\{f_j\}_1^\infty$   is said to be uniformly
minimal if ${\rm inf}_{k\in \N}~\varepsilon_k>0$.

$(iii)$  A sequence $\{g_j\}_1^\infty\subset \mathfrak H$ is called
biorthogonal to $\{f_j\}_1^\infty$ if $\langle f_j,g_k\rangle
=\delta_{jk}$ for all $j,k\in \N$.
           \end{definition}
%%%%%%%%%%%%%%%%%%%%%%%%%%%%%%%%%%%%%%%%%%%%%%%%%%%%%%%%

           Let us recall two well-known facts (see e.g. \cite{GK65}):
A biorthogonal sequence to $\{f_j\}_1^\infty$ exists if and only if
the sequence $\{f_j\}_1^\infty$ is minimal. If this is true, then the
biorthogonal sequence is uniquely determined if and only if the
set $\{f_j\}_1^\infty$ is complete in $\mathfrak H$.

Recall that the sequence $\{\varphi_{j}\}$  is complete in
$\mathfrak N_{-1}$ according to Lemma \ref{varphicomplete}.

%%%%%%%%%%%%%%%%%%%%%%%%%%%%%%%%%%%%
%%%%%%%%%%%%%%%%%%%%%%%%%%%%%%%%%%%%%%%%%%%
    \begin{lemma}\label{accuminimal}
Assume that  $X=\{x_j\}_1^\infty$ has no finite
accumulation points.

\item $(i)$ The  sequence $E := \{\varphi_j\}_1^\infty$ is minimal
in $\mathfrak N_{-1}$.

\item $(ii)$  The corresponding biorthogonal sequence
$\{\psi_j\}_1^\infty$  is also complete in $\mathfrak N_{-1}$.
        \end{lemma}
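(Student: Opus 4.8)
The plan is to prove (i) by a local elliptic-regularity argument that exploits the singularity of $\varphi_k$ at $x_k$, and then to derive (ii) by reducing the completeness of $\{\psi_j\}$ to the triviality of an intersection of the subspaces $\mathfrak H^{(k)}$, which I settle by the same local estimate combined with a removable-singularity argument.

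For (i), fix $k$. Since $X$ has no finite accumulation point, $x_k$ is isolated, so I may choose $\rho>0$ with $\overline{B_\rho(x_k)}\cap X=\{x_k\}$. Each $\varphi_j$ with $j\neq k$ is a classical solution of $\Delta u=u$ on $B_\rho(x_k)$ (its only pole $x_j$ lies outside), hence so is every finite combination $h=\sum_{j\neq k}c_j\varphi_j$. Interior elliptic estimates for the equation $\Delta u=u$ provide a constant $C=C(\rho)$ with $\|h\|_{L^\infty(B_{\rho/2}(x_k))}\le C\,\|h\|_{L^2(B_\rho(x_k))}$ for all such $h$. Suppose now, for contradiction, that $\dist(\varphi_k,\mathfrak H^{(k)})=0$, and take $h_n\in\operatorname{span}\{\varphi_j:j\neq k\}$ with $h_n\to\varphi_k$ in $L^2(\R^3)$. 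Then $\|h_n\|_{L^2(B_\rho(x_k))}$ is bounded, so the $h_n$ are uniformly bounded on $B_{\rho/2}(x_k)$; passing to an a.e.\ convergent subsequence forces $|\varphi_k|\le C'$ a.e.\ near $x_k$, contradicting $\varphi_k(x)=e^{-|x-x_k|}/|x-x_k|\to\infty$ as $x\to x_k$. Thus $\dist(\varphi_k,\mathfrak H^{(k)})>0$ for every $k$, i.e.\ $E$ is minimal. Since $E$ is complete by Lemma \ref{varphicomplete}, the biorthogonal sequence $\{\psi_j\}$ exists and is unique.

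For (ii), let $H_k$ be the restriction of $-\Delta$ to $\{u\in W^{2,2}(\R^3):u(x_i)=0\ \text{for all}\ i\neq k\}$. Arguing exactly as in Lemma \ref{varphicomplete} (applied to $X\setminus\{x_k\}$) gives $\mathfrak N_{-1}(H_k)=\mathfrak H^{(k)}$, and since $H_k$ is obtained from $H$ by dropping one interpolation condition, $\mathfrak H^{(k)}$ has codimension one in $\mathfrak N_{-1}$. Because $\psi_k\perp\varphi_i$ for all $i\neq k$ and $\psi_k\neq0$, the vector $\psi_k$ spans $\mathfrak N_{-1}\ominus\mathfrak H^{(k)}$. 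Consequently a vector $g\in\mathfrak N_{-1}$ satisfies $g\perp\psi_k$ for all $k$ if and only if $g\in\bigcap_k\mathfrak H^{(k)}$, so completeness of $\{\psi_j\}$ is equivalent to $\bigcap_k\mathfrak H^{(k)}=\{0\}$. To prove the latter, take $g\in\bigcap_k\mathfrak H^{(k)}$; approximating $g$ in $L^2$ by finite combinations from $\operatorname{span}\{\varphi_i:i\neq k\}$ and applying the local bound of (i), I find that $g$ coincides a.e.\ near each $x_k$ with a bounded function. On $\R^3\setminus X$ the membership $g\in\mathfrak N_{-1}=\ker(H^*+I)$ means $(-\Delta+I)g=0$ classically, and boundedness at each isolated point $x_k$ makes these singularities removable, so $(-\Delta+I)g=0$ holds on all of $\R^3$. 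As $g\in L^2(\R^3)$, the Fourier transform gives $(1+|\xi|^2)\widehat g=0$, whence $g=0$.

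I expect the two analytic inputs to be the main obstacles: the interior $L^\infty$–$L^2$ estimate for $\Delta u=u$ together with the transfer of the uniform bound to the $L^2$-limit in both parts, and the removable-singularity step in (ii), which must exclude any $\delta$-type contribution of $(-\Delta+I)g$ at the points $x_k$. The crucial point is that boundedness of $g$ near each $x_k$ precisely rules out the only admissible singular profile $1/|x-x_k|$, thereby promoting $g$ to a global $L^2$-solution of $(-\Delta+I)g=0$, which can only be zero.
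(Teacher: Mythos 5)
Your proof is correct, but it follows a genuinely different route from the paper's. For minimality the paper is constructive: it builds the biorthogonal system explicitly, starting from bump functions $\widetilde u_j\in C_0^\infty(\R^3)$ with $\widetilde u_j(x_i)=\delta_{ij}$, correcting them by elements of $\dom(H)$ to obtain $u_j\in W^{2,2}(\R^3)$ with $(I-\Delta)u_j=\psi_j\in\mathfrak N_{-1}$ and $u_j(x_i)=\delta_{ij}$, and then reading off $\langle\psi_j,\varphi_i\rangle=u_j(x_i)=\delta_{ij}$ from the resolvent formula; completeness of $\{\psi_j\}$ is then deduced by showing that $W^{2,2}(\R^3)$ is the closed span of $\dom(H)$ and the $u_j$, whence $f\perp\psi_j$ for all $j$ together with $f\in\mathfrak N_{-1}$ forces $f\perp\ran(I-\Delta)=L^2(\R^3)$. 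You instead prove $\dist(\varphi_k,\mathfrak H^{(k)})>0$ directly from the interior $L^\infty$--$L^2$ estimate for $\Delta u=u$ and the unboundedness of $\varphi_k$ at $x_k$, and you settle completeness of $\{\psi_j\}$ by identifying $\mathfrak H^{(k)}$ with the defect subspace $\mathfrak N_{-1}(H_k)$ of the operator obtained by dropping one interpolation condition (hence of codimension one in $\mathfrak N_{-1}$), reducing the question to $\bigcap_k\mathfrak H^{(k)}=\{0\}$, which your local bound plus a removable-singularity argument dispatches. Both arguments are sound; yours is more conceptual and avoids the explicit construction, at the cost of invoking elliptic interior estimates, the codimension count $n_{\pm}(H)=n_{\pm}(H_k)+1$ (which you assert rather than prove, but which does follow from the closedness of $\ran(H+I)\subset\ran(H_k+I)$ since both operators are bounded below by $I$), and the classification of distributions supported at a point (only the profile $|x-x_k|^{-1}$ is compatible with $g\in L^2_{\loc}$, and boundedness excludes it). One practical remark: the paper's explicit functions $u_j$ are reused later, in the proof of Theorem \ref{rieszbasisN1}, to establish the Bessel-type inequality for $\{\psi_j\}$, so your shorter argument would not fully substitute for the construction downstream.
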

%%%%%%%%%%%%%%%%%%%%%%%%%%%%%%
   \begin{proof}
$(i)$  To prove minimality it suffices to construct a biorthogonal
system.  Since $X$ has no finite accumulation
point, for any $j\in{\mathbb N}$ there exists a function
${\widetilde u}_j\in C_0^\infty({\mathbb R}^3)$ such that
   \begin{equation}\label{3.8S}
{\widetilde u}_j(x_j) = 1  \quad\text{and}\qquad  {\widetilde
u}_j(x_k)=0\qquad \text{for}\qquad  k\not =j.
   \end{equation}
Moreover, ${\widetilde u}_j(\cdot)$ can be chosen  compactly
supported in a small neighbourhood of $x_j.$

Let ${\widetilde \psi}_j:=(I-\Delta){\widetilde u}_j,\  j\in{\mathbb
N}$.  In general, ${\widetilde\psi}_j\notin\mathfrak N_{-1}$. To avoid
this drawback we put
    \begin{equation}
\psi_j:= P_{-1}{\widetilde\psi}_j\in\mathfrak N_{-1}\qquad \text{and}
\qquad g_j:={\widetilde\psi}_j - \psi_j, \qquad j\in{\N},
    \end{equation}
where $P_{-1}$ is the orthogonal projection in $\mathfrak H$ onto $\mathfrak
N_{-1}$. Then $g_j\in\ran(I+H) = \mathfrak H\ominus \mathfrak N_{-1},\
j\in{\N}$. Setting  $v_j=(I-\Delta)^{-1}g_j,$ we get
$v_j\in\dom (H)\subset \dom (\Delta)$. Therefore, by the Sobolev embedding theorem, $v_j\in C(\R^3)$. %%%It follows from \eqref{3.8S} that
Together with the sequence $\{{\widetilde u}_j\}_1^\infty$ we consider the
sequence of functions
   \begin{equation}\label{3.8AS}
u_j:={\widetilde u}_j-v_j\in W^{2,2}(\R^3), \qquad   j\in{\mathbb N}.
   \end{equation}
Since $v_j\in\dom(H)$, the functions $u_j$
satisfy relations \eqref{3.8S} as well.
Thus,
    \begin{equation}\label{3.9S}
-\Delta u_j + u_j= \psi_j\in\mathfrak N_{-1}\quad\text{and}\quad
u_j(x_k)=\delta_{kj} \quad\text{for}\quad  j,k\in{\N}.
   \end{equation}
Combining these relations with the resolvent formula \eqref{3.2S}
we get
    \begin{equation}\label{3.7$}
\langle \psi_j,\varphi_k  \rangle  = \frac{1}{4\pi}\int_{{\mathbb
R}^3}\psi_j(x)\frac{e^{-|x - x_j|}}{|x - x_j|}dx = (I
-\Delta)^{-1} \psi_j  = u_j(x_k) = \delta_{kj}, \qquad j,k\in{\mathbb
N}.
    \end{equation}
These  relations mean  that the sequence $\{\psi_j\}_1^\infty$ is
biorthogonal to the sequence $\{\varphi_j\}_1^\infty$. Hence the
latter is minimal.
%%%%%%%%%%%%%%%%%%%%%%%%%%%%%%%%%%%%%%%%%%%%
%%%%%%%%%%%%%%%%%%%%%%%%%%%%%%%%%%%%%%

$(ii)$  Let  $\mathfrak H_1$ denote the closed
linear span of the set $\{u_j;j\in \bN\}$ in $W^{2,2}({\mathbb
R}^3).$

We prove that  $W^{2,2}({\mathbb R}^3)$ is the closed linear span of its subspaces
$\mathfrak H_1$ and $ \dom(H).$
Indeed, assume that  $g\in W^{2,2}({\mathbb R}^3)$ and has a compact
support $K=\supp g$. Then the intersection $X \cap K$ is finite
since $X$ has no accumulation points. Therefore  the function
     \begin{equation}
g_1=\sum_{x_j\in K}g(x_j)u_j
     \end{equation}
is well defined and $g_1\in \mathfrak H_1$, It follows from
\eqref{3.9S}  that $g_0:= g - g_1\in\dom(H)$ and $g = g_1 + g_0$.
It remains to note that $C^{\infty}_0({\mathbb R}^3)$ is dense in
$W^{2,2}({\mathbb R}^3)$.

Suppose that $f\in\mathfrak N_{-1}$ and $\langle f,\psi_j \rangle =
0,\ j\in{\mathbb N}$. Then, by  \eqref{3.9S},
      \begin{equation}\label{4.18S}
0 = \langle  f,\psi_j \rangle  =  \langle f, (-\Delta + I)u_j
\rangle ,  \qquad j\in{\N}.
      \end{equation}
The inclusion  $f\in\mathfrak N_{-1}$ means that
$f\perp(I-\Delta)\dom(H)$.  Combining this   with
\eqref{4.18S} and using that $ W^{2,2}({\mathbb R}^3)$ is the  closure of $\mathfrak H_1+ \dom(H)$ as shown above, it follows that
$f\perp \ran(I-\Delta) = L^{2}({\mathbb R}^3)$. Thus
$f=0$ and the sequence  $\{\psi_j\}_1^\infty$ is complete.
   \end{proof}
%%%%%%%%%%%%%%%%%%%%%%%%%%%%%%%%%%%%%%%%%%
%%%%%%%%%%%%%%%%%%%%%%%%%%%%%%%%%%%%%%
   \begin{lemma}
If   $E= \{\varphi_j\}_1^\infty$ is uniformly minimal,
then $X$ has no finite accumulation points.
       \end{lemma}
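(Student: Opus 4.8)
The plan is to show that uniform minimality forces a uniform positive lower bound on the mutual distances $|x_j-x_k|$, i.e. $d_\ast(X)>0$, and then to observe that $d_\ast(X)>0$ precludes any finite accumulation point. The entire argument reduces to an exact evaluation of the Gram entries $\langle\varphi_j,\varphi_k\rangle$, after which everything becomes elementary.

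First I would record the inner products. By Lemma \ref{varphicomplete} each $\varphi_j$ satisfies $\|\varphi_j\|^2=2\pi$, and setting $x=x_j$, $y=x_k$ in the identity \eqref{3.2AS} gives
$$\langle\varphi_j,\varphi_k\rangle=\int_{\mathbb R^3}\frac{e^{-|x-x_j|}}{|x-x_j|}\,\frac{e^{-|x-x_k|}}{|x-x_k|}\,dx=2\pi\,e^{-|x_j-x_k|}.$$
Consequently
$$\|\varphi_j-\varphi_k\|^2=\|\varphi_j\|^2+\|\varphi_k\|^2-2\re\langle\varphi_j,\varphi_k\rangle=4\pi\bigl(1-e^{-|x_j-x_k|}\bigr),$$
which is the decisive formula: it shows that $\varphi_j$ and $\varphi_k$ are close in $L^2(\mathbb R^3)$ exactly when the points $x_j$ and $x_k$ are close.

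Next I would invoke uniform minimality. By definition there is a constant $\varepsilon>0$ with $\dist(\varphi_k,\mathfrak H^{(k)})\geq\varepsilon$ for all $k$, where $\mathfrak H^{(k)}=\Span\{\varphi_j:j\neq k\}$. Since $\varphi_j\in\mathfrak H^{(k)}$ for every $j\neq k$, the distance estimate yields $\|\varphi_j-\varphi_k\|\geq\varepsilon$, so the formula above gives $4\pi\bigl(1-e^{-|x_j-x_k|}\bigr)\geq\varepsilon^2$ for all $j\neq k$. As this quantity is always strictly below $4\pi$, we have $\varepsilon^2/(4\pi)<1$, and solving for the distance produces
$$|x_j-x_k|\geq-\log\!\Bigl(1-\tfrac{\varepsilon^2}{4\pi}\Bigr)=:\delta>0\qquad(j\neq k),$$
whence $d_\ast(X)\geq\delta>0$.

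Finally, $d_\ast(X)>0$ forces $X$ to have no finite accumulation point: any bounded ball can contain only finitely many points that are pairwise at distance $\geq\delta$ (a standard packing/volume estimate), so no point of $\mathbb R^3$ can be an accumulation point of $X$. I do not expect a genuine obstacle here; the only nontrivial ingredient is the Gram-entry identity, and that is already furnished by \eqref{3.2AS}. Thus the heart of the proof is simply reading off $\|\varphi_j-\varphi_k\|^2=4\pi\bigl(1-e^{-|x_j-x_k|}\bigr)$ and inverting this relation.
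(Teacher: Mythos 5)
Your proof is correct, and it takes a genuinely different route from the paper's. The paper argues via the biorthogonal system: uniform minimality gives $\sup_j\|\varphi_j\|\,\|\psi_j\|<\infty$, hence the functions $u_j=(I-\Delta)^{-1}\psi_j$ are bounded in $W^{2,2}(\R^3)$; a Sobolev-embedding compactness argument then extracts a uniformly convergent subsequence, and the interpolation conditions $u_{j_m}(x_{j_m})=1$, $u_{j_m}(x_{j_{m-1}})=0$ produce a contradiction at a putative accumulation point. You instead use only the elementary consequence $\inf_{j\neq k}\|\varphi_j-\varphi_k\|\ge\varepsilon>0$ of uniform minimality together with the exact Gram identity from \eqref{3.2AS}, giving $\|\varphi_j-\varphi_k\|^2=4\pi\bigl(1-e^{-|x_j-x_k|}\bigr)$ and hence the quantitative bound $d_\ast(X)\ge-\log\bigl(1-\varepsilon^2/(4\pi)\bigr)>0$. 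All steps check out: $\varphi_j$ does lie in $\mathfrak H^{(k)}$ for $j\neq k$, the inner products are evaluated correctly, and $d_\ast(X)>0$ trivially excludes finite accumulation points. Your argument is shorter, avoids the biorthogonal sequence and compactness machinery entirely, and in fact proves the strictly stronger conclusion $d_\ast(X)>0$; combined with Theorem \ref{rieszbasisN1} this shows that for this particular system uniform minimality is already equivalent to being a Riesz basis of $\mathfrak N_{-1}$. The paper's softer argument has the virtue of not depending on the closed-form Gram entries, but since \eqref{3.2AS} is available here, your route is the more economical one.
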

%%%%%%%%%%%%%%%%%%%%%%%%%%%%%%%%%%%%%%%%%%%%%%%%%
    \begin{proof}
 Since   $\{\varphi_j\}_1^\infty$ is minimal in
$\mathfrak N_{-1}$,    
there  exists the biorthogonal sequence
$\{\psi_j\}_1^\infty$ in $\mathfrak N_{-1}$.
It was already mentioned that the uniform minimality of
$E=\{\varphi_j\}_1^\infty$ is equivalent to  $\sup_{j\in{\mathbb
N}}\|\varphi_j\|\cdot\|\psi_j\|<\infty$. Therefore,  since  $\|\varphi_j\|=2\sqrt \pi$, by Lemma \ref{varphicomplete}, the
sequence $(\psi_j;j\in \bN)$ is uniformly bounded, i.e.
$\sup_j\|\psi_j\| =: C_0 <\infty$.
Setting   $u_j = (I -\Delta)^{-1}\psi_j \in W^2_2({\mathbb R}^3)$ we
conclude that  the sequence $\{u_j\}_1^\infty$ is uniformly bounded in
$W^{2,2}({\mathbb R}^3),$ that is, $ \sup_{j\in \N}\|u_j\|_{W^{2,2}} = C_1<\infty.$

 Now assume  to the contrary that there is a finite
accumulation point $y_0$ of $X$. Thus, there exists  a
subsequence $\{x_{j_m}\}_{m=1}^\infty$ such that $y_0 =
\lim_{m\to\infty}x_{j_m}$.
By the Sobolev embedding theorem, the set
$\{u_j;j\in \bN\}$ is compact in $C({\mathbb R}^3)$. Thus there
exists a subsequence of $\{u_{j_m}\}$ which
converges uniformly to $u_0\in C({\mathbb R}^3)$. Without loss of generality
we assume that the sequence $\{u_{j_m}\}$ itself converges to $u_0$,
i.e. $\lim_{m\to\infty}||u_{j_m} - u_0\|_{C({\mathbb R}^3)}=0$. Hence
    \begin{eqnarray*}
1= u_{j_m}(x_{j_{m}})\underset{m\to\infty}{\to} u_{0}(y_0) = 1, \qquad  \
0=u_{j_m}(x_{j_{m-1}})\underset{m\to\infty}{\to} u_{0}(y_0)=0,
  \end{eqnarray*}
which is the desired contradiction.
     \end{proof}
%%%%%%%%%%%%%%%%%%%%%%%%%%%%%%%%%%%%%%
\begin{lemma}\label{lemma4.7}
Suppose that $d_*(X) = 0$. If the matrix $\kT_1:=(\frac{1}{2}~e^{-|x_j -
x_k|})_{j,k\in{\mathbb N}}$ defines a bounded self-adjoint operator $T_1$ on $l^2(\N)$, then
 $0\in\sigma_c(T_1)$, hence $T_1$ has no bounded inverse.
   \end{lemma}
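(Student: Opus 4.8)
The plan is to recognize $\kT_1$ as (a multiple of) the Gram matrix of the functions $\varphi_j(x) = e^{-|x-x_j|}/|x-x_j|$ and to run a Rayleigh-quotient argument against the $\omega$-linear independence of these functions. Setting $x=x_j$, $y=x_k$ in \eqref{3.2AS} gives $\tfrac12 e^{-|x_j-x_k|} = \tfrac{1}{4\pi}\langle \varphi_j,\varphi_k\rangle_{L^2(\bR^3)}$, so that $\kT_1 = \tfrac{1}{4\pi}\bigl(\langle\varphi_j,\varphi_k\rangle\bigr)_{j,k}$. For finitely supported $c=\{c_j\}$ this yields the identity
\[
\langle T_1 c,c\rangle = \frac{1}{4\pi}\Bigl\|\sum_j c_j\varphi_j\Bigr\|^2_{L^2(\bR^3)}\ge 0 .
\]
Since $T_1$ is bounded, the resulting bound $\tfrac{1}{4\pi}\|\sum_j c_j\varphi_j\|^2 = \langle T_1 c,c\rangle \le \|T_1\|\,\|c\|^2$ shows that $\{\varphi_j\}$ is a Bessel sequence; hence the synthesis map $c\mapsto\sum_j c_j\varphi_j$ extends to a bounded operator on $l^2(\bN)$, and both the displayed identity and the inequality $\langle T_1c,c\rangle\ge 0$ persist for all $c\in l^2(\bN)$. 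In particular $T_1\ge 0$.

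To place $0$ in the spectrum I would use the hypothesis $d_*(X)=0$. Choose indices $j_n\neq k_n$ with $|x_{j_n}-x_{k_n}|\to 0$ and set $c^{(n)}:=\tfrac{1}{\sqrt2}(e_{j_n}-e_{k_n})$, a unit vector in $l^2(\bN)$. Evaluating the quadratic form on the corresponding $2\times 2$ principal block gives $\langle T_1 c^{(n)},c^{(n)}\rangle = \tfrac12\bigl(1-e^{-|x_{j_n}-x_{k_n}|}\bigr)\to 0$. Combined with $T_1\ge 0$, this forces $\inf\sigma(T_1)=\inf_{\|c\|=1}\langle T_1 c,c\rangle = 0$, so $0\in\sigma(T_1)$.

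It remains to prove $\ker T_1=\{0\}$, and this is where the main work lies. The natural tool is Lemma \ref{lem4.3}, which gives $\omega$-linear independence of $\{\tfrac{1}{\sqrt{2\pi}}\varphi_j\}$, but it is stated under the assumption that $X$ has no finite accumulation point — an assumption not present in the current hypotheses. The crucial observation is that the standing boundedness assumption on $\kT_1$ already supplies this property: the column $T_1 e_j = \{\tfrac12 e^{-|x_k-x_j|}\}_k$ must belong to $l^2(\bN)$, whereas if infinitely many $x_k$ accumulated at a finite point the terms $e^{-|x_k-x_j|}$ would not tend to zero and the column would fail to be square-summable. Thus Lemma \ref{lem4.3} applies. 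Now, if $T_1 c=0$ for some $c\in l^2(\bN)$, then $\langle T_1 c,c\rangle = 0$ forces $\sum_j c_j\varphi_j = 0$ in $L^2(\bR^3)$, and $\omega$-linear independence gives $c=0$.

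Putting the pieces together: $0\in\sigma(T_1)$ while $\ker T_1=\{0\}$, i.e. $0\notin\sigma_p(T_1)$; since a self-adjoint injective operator has dense range ($\overline{\ran T_1}=(\ker T_1)^\perp = l^2(\bN)$), we conclude $0\in\sigma_c(T_1)$, and in particular $T_1$ has no bounded inverse. The one delicate point, and the step I expect to be easy to overlook, is precisely the deduction that boundedness of $\kT_1$ rules out finite accumulation points of $X$, which is exactly what is needed to invoke the $\omega$-linear independence of $\{\varphi_j\}$.
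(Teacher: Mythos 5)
Your proof is correct, but it takes a genuinely different route from the paper's. The paper argues directly on the operator norm: choosing $j\neq k$ with $r_{jk}=|x_j-x_k|<\varepsilon$, it estimates the entries of the column difference $2T_1(e_j-e_k)=\{e^{-r_{pj}}-e^{-r_{pk}}\}_{p}$ pointwise via the triangle inequality, $|e^{-r_{pj}}-e^{-r_{pk}}|\le \varepsilon C e^{-r_{pj}}$, and then sums using the boundedness of $T_1$ (namely $\sum_p e^{-2r_{pj}}=4\|T_1e_j\|^2\le 4\|T_1\|^2$) to get $\|T_1(e_j-e_k)\|\le\varepsilon C\|T_1\|$ against $\|e_j-e_k\|=\sqrt2$; thus $0$ lies in the approximate point spectrum and $T_1$ is not boundedly invertible. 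You instead work with the quadratic form: the Gram identity $\langle T_1c,c\rangle=\tfrac{1}{4\pi}\bigl\|\sum_j c_j\varphi_j\bigr\|^2$ coming from \eqref{3.2AS} gives $T_1\ge0$, the $2\times2$ principal block gives $\langle T_1c^{(n)},c^{(n)}\rangle=\tfrac12\bigl(1-e^{-|x_{j_n}-x_{k_n}|}\bigr)\to0$, and then $\min\sigma(T_1)=\inf_{\|c\|=1}\langle T_1c,c\rangle=0$. Your version is softer in that only $2\times2$ minors and positivity are needed, while the paper's is more elementary and self-contained. More importantly, your proof establishes the stated conclusion $0\in\sigma_c(T_1)$ in full: the paper's estimate only yields $0\in\sigma(T_1)$ (which is all that is actually used later, in the necessity part of Theorem \ref{rieszbasisN1}), whereas you additionally verify $\ker T_1=\{0\}$ via the $\omega$-linear independence of Lemma \ref{lem4.3}. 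Your bridge to that lemma --- that boundedness of $\kT_1$ forces every column $\{e^{-|x_k-x_j|}\}_{k}$ into $l^2(\N)$ and hence rules out finite accumulation points of $X$ --- is sound and is indeed the one step that had to be supplied.
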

%%%%%%%%%%%%%%%%%%%%%%%%%%%%%%%%%%
    \begin{proof}
Let $\varepsilon>0$. Since $d_*(X)=0$, there exist numbers
$ n_j\in{\mathbb N}$ such that
$r_{jk}:=|x_{n_j}-x_{n_k}|<\varepsilon$. Let $e_n$ denote the  vector
$e_n := \{\delta_{p,n}\}^{\infty}_{p=1}$ of $ l^2({\mathbb N})$. Then
$2~ T_1(e_j - e_k) = \{e^{-r_{pj}}-e^{-r_{pk}}\}^\infty_{p=1} \in l^2({\mathbb
N}).$

   Since
$|r_{pj}-r_{pk}|\le r_{jk}<\varepsilon $  by the triangle
inequality,  $e^{-\varepsilon}\le\exp(r_{pj}-r_{pk})\le
e^{\varepsilon}$ and hence
   \begin{equation*}
|e^{-r_{pj}} - e^{-r_{pk}}| = e^{-r_{pj}}|1 - e^{r_{pj} -
r_{pk}}|\le \varepsilon Ce^{-r_{pj}}, \qquad j,k,p\in{\N},
   \end{equation*}
where $C>0$ is a constant. Using the assumption that $T_1$ is
bounded we get
    \begin{equation}
4\|T_1(e_j - e_k)\|^2 \le\varepsilon^2 C^2 \sum\nolimits_p
e^{-2r_{pj}} = 4\varepsilon^2 C^2~\|T_1 e_j\|^2  \le 4\varepsilon^2
C^2~\|T_1\|^2.
  \end{equation}
Since $\varepsilon>0$ is arbitrary and  $\|e_j-e_k\| =\sqrt{2}$
for $j\neq k$, it follows that $0\in\sigma_c(T_1)$.
    \end{proof}
%%%%%%%%%%%%%%%%%%%%%%%%%%%%%%%%%%%%%%%%
%%%%%%%%%%%%%%%%%%%%%%%%%%%%%%%%%%%%%%%%%%
   \begin{theorem}\label{rieszbasisN1}
The sequence $E=
\{\varphi_j\}_1^\infty$  forms a Riesz basis of the Hilbert
space $\mathfrak N_{-1}$ if and only if $d_*(X) >0$.
       \end{theorem}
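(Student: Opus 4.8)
The plan is to reduce the whole statement to a spectral property of a single explicit matrix. Putting $x=x_j$ and $y=x_k$ in \eqref{3.2AS} gives $\langle\varphi_j,\varphi_k\rangle_{L^2(\R^3)}=2\pi\,e^{-|x_j-x_k|}$, so the Gram matrix of $E$ is $4\pi\,\kT_1$, where $\kT_1=\bigl(\tfrac12 e^{-|x_j-x_k|}\bigr)_{j,k}$ is exactly the matrix of Lemma \ref{lemma4.7}. Since $E$ is complete in $\mathfrak N_{-1}$ by Lemma \ref{varphicomplete}, by the definition of a Riesz basis $E$ is a Riesz basis of $\mathfrak N_{-1}$ precisely when it is simultaneously Bessel and Riesz--Fischer, equivalently when its Gram operator $\Gamma$ satisfies $aI\le\Gamma\le bI$ with $0<a\le b$, i.e. when $\Gamma$ (hence $T_1$) is bounded with bounded inverse on $l^2(\bN)$. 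Thus the theorem reduces to: \emph{$T_1$ is bounded and boundedly invertible iff $d_*(X)>0$}.

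The necessity direction is then immediate from Lemma \ref{lemma4.7}. If $E$ is a Riesz basis, then $\Gamma=4\pi T_1$ is bounded and boundedly invertible; were $d_*(X)=0$, Lemma \ref{lemma4.7} would force $0\in\sigma_c(T_1)$, contradicting invertibility. Hence $d_*(X)>0$.

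For sufficiency assume $d:=d_*(X)>0$; in particular $X$ has no finite accumulation point, so Lemmas \ref{varphicomplete}, \ref{lem4.3} and \ref{accuminimal} apply. First I would verify that $E$ is Bessel by applying the Schur test (Lemma \ref{schurtest}) to $\kT_1$: as the balls $B_{d/2}(x_k)$ are pairwise disjoint, a volume-packing estimate bounds the number of $x_k$ lying in the shell $m\le|x-x_j|<m+1$ by $C_d(m+1)^2$, whence $\sup_j\sum_k e^{-|x_j-x_k|}\le\sum_{m\ge 0}C_d(m+1)^2 e^{-m}<\infty$ uniformly in $j$. Thus $\kT_1$ meets \eqref{schurtestas} and $\Gamma$ is bounded.

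The decisive point is the Riesz--Fischer (lower) bound, and here I would \emph{not} attack the strong $X$-positive definiteness of $e^{-|\cdot|}$ directly (that will instead be \emph{deduced} afterwards, in Theorem \ref{propositionstronglydef}), but exploit the explicit biorthogonal sequence $\{\psi_j\}$ of Lemma \ref{accuminimal}. When $d>0$ one may take the auxiliary functions there as translates $\widetilde u_j(x)=\chi(x-x_j)$ of one fixed $\chi\in C_0^\infty(\R^3)$ with $\chi(0)=1$ and $\supp\chi\subset B_{d/3}(0)$; then $\widetilde u_j(x_k)=\delta_{jk}$, the supports $B_{d/3}(x_j)$ are disjoint, and the functions $\widetilde\psi_j=(I-\Delta)\widetilde u_j$ are mutually orthogonal with a common norm, hence trivially Bessel. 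Since $\psi_j=P_{-1}\widetilde\psi_j$ is the orthogonal projection of $\widetilde\psi_j$ onto $\mathfrak N_{-1}$, the biorthogonal sequence $\{\psi_j\}$ is Bessel too, say with bound $B'$. For any finite combination $g=\sum_k\xi_k\varphi_k$, biorthogonality gives $\xi_j=\langle g,\psi_j\rangle$, so $\sum_j|\xi_j|^2=\sum_j|\langle g,\psi_j\rangle|^2\le B'\|g\|^2$, which is the Riesz--Fischer inequality for $E$ with constant $1/B'$. Together with the Bessel bound this shows $E$ is a Riesz basis of $\mathfrak N_{-1}$. I expect the main obstacle to be exactly this lower bound: the separation $d_*(X)>0$ must be converted into a uniform quantitative estimate, and the disjoint-support construction above is where the hypothesis is used in an essential way — it is precisely what breaks down when points accumulate, in agreement with Lemma \ref{lemma4.7}.
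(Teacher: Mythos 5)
Your proof is correct, and its skeleton coincides with the paper's: the necessity direction is word-for-word the paper's argument (Gram matrix $=$ const$\cdot\kT_1$ via \eqref{3.2AS}, then Lemma \ref{lemma4.7} kills bounded invertibility when $d_*(X)=0$), and your lower bound rests on exactly the same construction the paper uses, namely translates $\widetilde u_j=\chi(\cdot-x_j)$ of one bump function with pairwise disjoint supports, projected to produce the biorthogonal system $\{\psi_j\}$ of Lemma \ref{accuminimal}. The differences are in the packaging and in the Bessel half. The paper verifies the two conditions \eqref{4.22S} of the Gelfand--Krein criterion (\cite[Theorem 6.2.1]{GK65}), which requires completeness of \emph{both} $\{\varphi_j\}$ and $\{\psi_j\}$ (Lemma \ref{accuminimal}(ii)); you instead work directly from the paper's definition of a Riesz basis (complete $+$ Bessel $+$ Riesz--Fischer) and convert the Bessel bound for $\{\psi_j\}$ into the Riesz--Fischer bound for $\{\varphi_j\}$ by the biorthogonality identity $\xi_j=\langle g,\psi_j\rangle$, which lets you dispense with completeness of $\{\psi_j\}$ altogether. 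For the Bessel bound on $\{\varphi_j\}$ the paper uses the Sobolev trace estimate $\sum_j|u(x_j)|^2\le C\|u\|^2_{W^{2,2}(\R^3)}$ over the disjoint balls $B_j$, whereas you apply the Schur test (Lemma \ref{schurtest}) to $\kT_1$ with a volume-packing count of points in spherical shells; both are correct and uniform in $j$, and your route has the advantage of being entirely elementary and of directly exhibiting the constant in \eqref{schurtestas}, at the cost of being specific to the exponential kernel rather than to general elements of $\mathfrak N_{-1}$.
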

%%%%%%%%%%%%%%%%%%%%%%%%%%%%%%%%%%%%%%%%%%%%%%%%%
     \begin{proof}
\emph{Sufficiency.}  Suppose  that $d_\ast(X)>0$.   By Lemmas
\ref{varphicomplete} and
 \ref{accuminimal}, both sequences $\{\varphi_j\}_1^\infty$ and
$\{\psi_j\}_1^\infty$ are complete in $\mathfrak N_{-1}$. Therefore, by
\cite[Theorem 6.2.1]{GK65}),  the sequence $\{\varphi_j\}$ forms a
Riesz basis in $\mathfrak N_{-1}$ if and and only if
  \begin{equation}\label{4.22S}
\sum_{j=1}^\infty |\langle f, \varphi_j \rangle|^2 < \infty
\qquad\text{and}\qquad \sum_{j=1}^\infty | \langle f, \psi_j
\rangle|^2 < \infty\qquad\text{for all}\quad  f\in \mathfrak N_{-1}.
  \end{equation}

Let $B_j$ denote the ball in $\R^3$ centered at $x_j$ with the
radius $r=d_*(X)/3$, $j\in \N$. Clearly $B_j \cap B_k= \emptyset$
for $j\not = k.$  By the Sobolev embedding theorem, there is a
constant $C>0$ such that
   \begin{equation}\label{4.20SS}
|v(x_j)| \le C \|v\|_{W^{2,2}(B_j)},\qquad  v\in W^{2,2}(B_j),
\qquad  j\in \N,
    \end{equation}
where  $C$ is independent  of $j$ and $v\in W^{2,2}(B_j).$

Let  $f\in \mathfrak N_{-1}$ and set $u=(I-\Delta)^{-1}f$ $u\in
W^{2,2}(\R^3)$.  Combining   \eqref{4.20SS}  with the
representation \eqref{3.2S} for $u$  we get
  \begin{equation}
\sum_{j=1}^\infty |(f, \varphi_j)|^2 =  \sum_{j=1}^\infty
|u(x_j)|^2 \le C\sum_{j=1}^\infty \|u\|^2_{W^{2,2}(B_j)}\le
C\|u\|^2_{W^{2,2}(\R^3)}, \quad f\in \mathfrak N_{-1}.
  \end{equation}
This proves the first  inequality of  \eqref{4.22S}.

  We now derive the second inequality. Let   $B_0$ be the ball centered at zero  with the radius $r=d_*(X)/3$.
We choose a function ${\widetilde u}_0\in C^{\infty}_0({\mathbb R}^3)$
supported in  $B_0$ and satisfying ${\widetilde u}_0(0)=1$. Put
    \begin{equation}\label{418S}
{\widetilde u}_j(x) := {\widetilde u}_0(x-x_j), \qquad j\in{\mathbb
N}.
        \end{equation}
Clearly,  the sequence $\{\widetilde u_j\}_1^\infty$ satisfies
conditions \eqref{3.8S}. Then repeating the reasonings of the
proof of Lemma \ref{accuminimal}(i)  we find a sequence
$\{v_j\}_1^\infty$ of vectors from $\dom(H)$  such that the new sequence $\{u_j:= {\widetilde u}_j - v_j\}_1^\infty$
 satisfies relations \eqref{3.9S}. Hence for any $f\in{\mathfrak
N}_{-1}$ we have
   \begin{eqnarray}\label{4.20S}
\langle f,\psi_j \rangle = \langle f, (-\Delta + I)u_j \rangle
 = \langle f, (-\Delta + I) ({\widetilde u}_j - v_j) \rangle =
\langle f, (-\Delta+I){\widetilde u}_j \rangle,  \quad j\in{\mathbb
N}.
   \end{eqnarray}
Since ${\widetilde u}_j(\cdot)$ is supported in the ball $B_j,$ it
follows from \eqref{418S} and relations \eqref{4.20S}  that
  \begin{eqnarray*}
\sum_{j=1}^\infty | \langle f, \psi_j \rangle |^2 =
\sum_{j=1}^\infty | \langle f, (-\Delta+I){\widetilde u}_j
\rangle|^2
\le C\sum_{j=1}^\infty \|f \|^2_{L^{2}(B_j)} \|\widetilde u_j\|^2_{W^{2,2}(B_j)} \nonumber  \\
= C\sum_{j=1}^\infty \|f \|^2_{L^{2}(B_j)}\|\widetilde
u_0\|^2_{W^{2,2}(B_0)} =  C\|\widetilde u_0\|^2_{W^{2,2}(B_0)}
\sum_{j=1}^\infty \|f \|^2_{L^{2}(B_j)} \le C\|\widetilde
u_0\|^2_{W^{2,2}(B_0)}\|f\|^2_{L^{2}(\R^3)}.  
  \end{eqnarray*}
Thus, the second  inequality of  \eqref{4.22S} is also proved, hence
$\{\varphi_j\}$  forms  a Riesz basis.

\emph{Necessity.}  Suppose  that $d_\ast(X)=0$.
  By \cite[Theorem 6.2.1]{GK65}, a sequence
$\Psi=\{\psi_j\}^{\infty}_1$ of vectors is a Riesz basis of a Hilbert space $\mathfrak
H$ if and only if it is complete in $\mathfrak H$ and its Gram matrix
$Gr_{\Psi}:=(\langle\psi_j,\psi_k\rangle)_{j,k\in{\N}}$ defines a
bounded  operator on $l^2(\mathbb N)$ with bounded
inverse.

By \eqref{3.2AS},
 $E = \{\varphi_j\}_1^\infty$ has the Gram matrix
$Gr_{E} = (\langle\varphi_j, \varphi_k\rangle)_{j,k\in{\N}} =
(\pi e^{-|x_j - x_k|})_{j,k\in{\N}} = 2\pi \kT_1.$ Therefore, by Lemma
\ref{lemma4.7}, if  $Gr_{E}$ defines a bounded operator,  this operator is not boundedly invertible.
Hence  $E=\{\varphi_j\}_1^\infty$ is not a Riesz basis by the preceding theorem.
    \end{proof}
   \begin{remark}
Note that the proof of uniform minimality of the system $E$ is
much simpler. Combining \eqref{418S} with \eqref{4.20S} we
obtain
   \begin{equation}
| \langle f,\psi_j \rangle |
\le\|f\|_{L^2}\cdot\|(I-\Delta){\widetilde
u}_j\|_{L^2}\le\|f\|_{L^2}\|{\widetilde u}_j\|_{W^{2,2}(\R^3)} =
\|f\|_{L^2}\|{\widetilde u}_0\|_{W^{2,2}(\R^3)}, \qquad j\in{\N}.
   \end{equation}
Since  $f\in{\mathfrak N}_{-1}$ is arbitrary, one  has
$\sup_{j\in \N}\|\psi_j\|_{L^2(\R^3)}\le\|{\widetilde
u_0}||_{W^{2,2}(\R^3)},$ so  $\{\psi_j\}_{j\in \N}$
is uniformly minimal.
       \end{remark}
%%%%%%%%%%%%%%%%%%%%%%%%%%%%%%%%%%%%
%%%%%%%%%%%%%%%%%%%%%%%%%%%%%%%%%%%%%%%%%%%%%%%%%%

Next we set
    \begin{equation}
\varphi_{j,z}(x) := \frac{e^{i\sqrt{z}|x-x_j|}}{|x-x_j|}
\qquad\text{and}\qquad  e_{j,z}(x) :=  e^{i\sqrt{z}|x-x_j|},\quad j\in \N.
    \end{equation}
Clearly, $\varphi_{j,-1} = \varphi_{j},\ j\in \N.$
    \begin{corollary}\label{cor4.9}
Suppose that  $d_*(X) >0$. Then for any $z\in \bC\setminus
[0,+\infty),$  the sequence $E_z :=
\{\frac{1}{\sqrt{2\pi}}\varphi_{j,z}\}_{j=1}^\infty$ forms a Riesz
basis  in the deficiency subspace $\mathfrak N_z$ of the operator $H$.
Moreover, for $z=-a^2<0$ ($a>0$)  the system ${\sqrt{a}}E_{-a^2} =
\{\frac{\sqrt{a}}{\sqrt{2\pi}}\varphi_{j,-a^2}\}_{j=1}^\infty$ is
normed.
    \end{corollary}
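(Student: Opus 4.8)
The plan is to deduce the statement directly from Theorem~\ref{rieszbasisN1} by transporting the Riesz basis $E=\{\varphi_j\}=\{\varphi_{j,-1}\}$ of $\mathfrak N_{-1}$ onto $\mathfrak N_z$ by means of a single bounded, boundedly invertible operator. Write $R_z:=(-\Delta-z)^{-1}$ for the resolvent of the self-adjoint extension $-\Delta$ of $H$; since $\sigma(-\Delta)=[0,+\infty)$, both $-1$ and $z$ lie in $\rho(-\Delta)$. I would consider
$$U_{z,-1}:=I+(z+1)R_z, \qquad U_{-1,z}:=I+(-1-z)R_{-1}.$$
A short computation with the resolvent identity $R_{-1}R_z=(R_{-1}-R_z)/(z+1)$ shows $U_{-1,z}U_{z,-1}=U_{z,-1}U_{-1,z}=I$, so $U_{z,-1}$ is a topological isomorphism of $\mathfrak H=L^2(\R^3)$. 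Moreover, for $f\in\mathfrak N_{-1}=\ker(H^\ast+I)$ one has $H^\ast f=-f$ and $R_zf\in\dom(-\Delta)\subset\dom(H^\ast)$, whence $(H^\ast-z)U_{z,-1}f=-(z+1)f+(z+1)f=0$; thus $U_{z,-1}$ restricts to a topological isomorphism of $\mathfrak N_{-1}$ onto $\mathfrak N_z$. (This is the standard isomorphism between deficiency subspaces in extension theory.)

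The key step is that $U_{z,-1}$ maps the basis vectors \emph{exactly} onto the desired ones: $U_{z,-1}\varphi_{j}=\varphi_{j,z}$ for every $j$. The point is the identity
$$R_z\varphi_{j,-1}=\tfrac{1}{z+1}\bigl(\varphi_{j,z}-\varphi_{j,-1}\bigr),$$
after which $U_{z,-1}\varphi_j=\varphi_j+(\varphi_{j,z}-\varphi_j)=\varphi_{j,z}$ is immediate. To verify this identity I would observe that near $x_j$ the two Coulomb singularities cancel, so $\varphi_{j,z}-\varphi_{j,-1}$ is bounded and (since $\imm\sqrt z>0$) decays exponentially; hence it lies in $W^{2,2}(\R^3)=\dom(-\Delta)$. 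Then a distributional computation using $(-\Delta-z)\varphi_{j,z}=4\pi\delta_{x_j}=(-\Delta+I)\varphi_{j,-1}$ gives $(-\Delta-z)\bigl(\varphi_{j,z}-\varphi_{j,-1}\bigr)=(z+1)\varphi_{j,-1}$, which is exactly the displayed identity; alternatively it follows from the explicit formula for the convolution of two Yukawa potentials, the analogue of \eqref{3.2AS} for distinct exponents. This singular resolvent identity is the only genuinely delicate point: one must check that the cancellation of singularities really places the right-hand side in $\dom(-\Delta)$, so that the formal manipulation with $\delta_{x_j}$ is legitimate.

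With these two ingredients the conclusion is routine. Since $d_\ast(X)>0$, Theorem~\ref{rieszbasisN1} gives that $E=\{\varphi_j\}$ (equivalently $\{\tfrac{1}{\sqrt{2\pi}}\varphi_j\}$, the constant being irrelevant) is a Riesz basis of $\mathfrak N_{-1}$. The Riesz basis property — density of the span together with the two-sided frame bounds — is preserved under any topological isomorphism, so $\{U_{z,-1}\varphi_j\}=\{\varphi_{j,z}\}$, and hence $E_z=\{\tfrac{1}{\sqrt{2\pi}}\varphi_{j,z}\}$, is a Riesz basis of $\mathfrak N_z$.

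It remains to prove the normalization claim. For $z=-a^2$ with $a>0$ one has $\sqrt z=ia$, so $\varphi_{j,-a^2}(x)=e^{-a|x-x_j|}/|x-x_j|$, and therefore
$$\|\varphi_{j,-a^2}\|^2=\int_{\R^3}\frac{e^{-2a|y|}}{|y|^2}\,dy=4\pi\int_0^\infty e^{-2ar}\,dr=\frac{2\pi}{a}.$$
Consequently $\bigl\|\tfrac{\sqrt a}{\sqrt{2\pi}}\,\varphi_{j,-a^2}\bigr\|^2=\tfrac{a}{2\pi}\cdot\tfrac{2\pi}{a}=1$, so the system $\sqrt a\,E_{-a^2}$ is normed (consistent with $\|\varphi_j\|^2=2\pi$ from Lemma~\ref{varphicomplete} at $a=1$). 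This completes the proof.
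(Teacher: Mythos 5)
Your proof is correct and follows essentially the same route as the paper: the operator $U_{z,-1}=I+(z+1)(-\Delta-z)^{-1}$ is exactly the paper's $U_z=(I-\Delta)(-\Delta-z)^{-1}$, and both arguments transport the Riesz basis $\{\varphi_j\}$ of $\mathfrak N_{-1}$ from Theorem \ref{rieszbasisN1} onto $\mathfrak N_z$ via this topological isomorphism. The only (immaterial) differences are that you verify $U_{z,-1}\varphi_j=\varphi_{j,z}$ through the distributional identity $(-\Delta-z)(\varphi_{j,z}-\varphi_j)=(1+z)\varphi_j$ rather than the paper's convolution-symmetry identity, and you compute $\|\varphi_{j,-a^2}\|^2=2\pi/a$ by direct integration instead of via the analogue of \eqref{3.2AS}.
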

%%%%%%%%%%%%%%%%%%%%%%%%%%%%
    \begin{proof}
 It is easily seen that
    \begin{equation}
\int_{{\mathbb R}^3}\frac{e^{-|x-y|}}{|x-y|}\cdot
\frac{e^{i\sqrt{z}|y-x_j|}}{|y-x_j|} dy = \int_{{\mathbb
R}^3}\frac{e^{i\sqrt{z}|x-y|}}{|x-y|}\cdot
\frac{e^{-|y-x_j|}}{|y-x_j|} dy, \qquad j\in{\mathbb N}.
    \end{equation}
Using \eqref{3.2S}  we can rewrite this equality as
    \begin{equation}
(I - \Delta)^{-1}\varphi_{j,z} = (-\Delta - z)^{-1}\varphi_j,
\qquad j\in{\N}, \quad z\in \C\setminus \overline\R_+.
  \end{equation}
Therefore,  we have
  \begin{align}\label{varphiuz}
\varphi_{j,z}=U_{z}\varphi_j,\qquad \text{\rm where}\qquad
U_z:=(I-\Delta)(-\Delta - z)^{-1} = I-(1+z)(\Delta + z)^{-1}.
     \end{align}
Obviously, $U_z$ is a continuous bijection of $\mathfrak N_{-1}$ onto
$\mathfrak N_z$. Therefore, since $E = E_{-1} =\{\varphi_j\}_{j\in
\N}$ is Riesz basis of $\mathfrak N_{-1}$ by Theorem
\ref{rieszbasisN1}, $E_z=\{\varphi_{j,z}\}_{j=1}^\infty$ is a
Riesz basis of $\mathfrak N_z$.

  To prove the second statement we note that for  any $a>0$
the function $e^{-a|\cdot|}(\in W^{2,2}(\R^3)$) is a (generalized)
solution of the equation $(a^2I - \Delta)e^{-a|x|} = 2a
\frac{\exp({-a|x|})}{|x|}$.  Taking this equality into account we
obtain from  \eqref{3.2S} with $z=-a^2$ and $f= f_y(x) :=
\frac{e^{-a|x-y|}}{|x-y|}$ that
      \begin{equation}\label{3.26a}
\frac{e^{-a|x-y|}}{2a} = \frac{1}{4\pi}\int_{{\mathbb
R}^3}\frac{e^{-a|x-t|}}{|x-t|}\cdot \frac{e^{-a|t-y|}}{|t-y|}~dt,
\qquad a>0.
      \end{equation}
Setting here $x=y =x_j$   we get $\|\varphi_{j,-a^2}\|^2 =
2\pi/a$, i.e., the system ${\sqrt{a}}E_{-a^2}$ is normed.
    \end{proof}
Now we are ready to prove  Theorem \ref{propositionstronglydef}.
\smallskip

{\it Proof of Theorem \ref{propositionstronglydef}.}

(i): Suppose that $s\in (0,+\infty)$ and set
 \begin{align*}
 g_s(x) := s^{-1}e^{-s|x|},\qquad  {\widetilde{\varphi}}_{j,s}(x):=\frac{1}{\sqrt{2\pi}}\varphi_{j,-s^2}(x)=\frac{1}{\sqrt{2\pi}}~
\frac{e^{-s|x- x_j|}}{|x-
x_j|}~,\qquad j\in \N.
\end{align*}
Equation  \eqref{3.2AS} shows that  $Gr_X(g_s) =
\big(g_s(x_k{-}x_j)\big)_{k,j\in \bN}$ is the Gram matrix of
the sequence $E_{-s^2} :=\{
\widetilde{\varphi}_{j,s}\}_{j=1}^\infty$. Since $d_\ast(X)>0$ by
assumption,  $E_{-s^2}$ forms a Riesz basis by Corollary
\ref{cor4.9}.  Therefore it follows from \cite[Theorem
6.2.1]{GK65} that for any $s>0$ the Gram matrix
$\bigl(\langle{\widetilde\varphi}_{j,s},{\widetilde
\varphi}_{k,s}\rangle_{L^2({\R}^3)}\bigr)_{j,k\in{\N}} =
Gr_X(g_s)$ defines a bounded operator on $l^2(\bN)$ with
bounded inverse. Hence for any $s>0$ there exist numbers $C(s)>0$
and $c(s)>0$ such that
      \begin{equation}\label{3.51}
 C(s)\sum^m_{j=1}|\xi_j|^2 \ge  \sum^m_{j,k=1}\langle{\widetilde{\varphi}}_{j,s},{\widetilde{\varphi}}_{k,s}\rangle_{L^2({\R}^3)}\xi_j\overline{\xi}_k
 =  \sum^m_{j,k=1}s^{-1} e^{-s|x_j-x_k|}\xi_j\overline{\xi}_k\  \ge \
 c(s)\sum^m_{j=1}|\xi_j|^2
          \end{equation}
for all  $(\xi_1,\cdots,\xi_m)\in \bC^m$ and $m\in \bN$. Clearly,
the function $c(s)$ on $(0,+\infty)$ can be chosen  to be
measurable. Since $c(s)>0$ on $\R_+$ and $\tau(\R_+)>0$, we have
$c:=  \int_{(0,+\infty)} s c(s)d\tau(s)>0.$ Combining \eqref{3.50}
with  \eqref{3.51} we arrive at the inequality
   \begin{align}\label{firstesti}
\sum^m_{j,k=1}f(|x_j &- x_k|)\xi_j\overline{\xi}_k =
\int^{\infty}_0 \left(\sum^m_{j,k=1}
e^{-s|x_j-x_k|}\xi_j\overline{\xi}_k\right) d\tau(s) \nonumber\\ &
\ge  \int^{\infty}_0 s\left(c(s)\sum^m_{j=1}|\xi_j|^2
\right)d\tau(s) =  c~\sum^m_{j=1}|\xi_j|^2.
\end{align}
This means that the function $f(|\cdot|)$ is strongly $X$-positive
definite.

(ii):  By  \eqref{varphiuz}, $U_{-s^2}
=(I-\Delta)(-\Delta+s^2)^{-1}$, hence  $\|U_{-s^2}\|={\rm
max}~(1,s^{-2})$. 
Moreover, by \eqref{varphiuz}, $\widetilde{\varphi}_{j,s}=
U_{-s^2}\widetilde{\varphi}_{j,1}$.  Using  the preceding facts
we derive
    \begin{align}\label{est2}
&\sum^m_{j,k=1}f(|x_j - x_k|)\xi_j\overline{\xi}_k  =\int^{\infty}_0 \left(\sum^m_{j,k=1}
e^{-s|x_j-x_k|}\xi_j\overline{\xi}_k\right) d\tau(s)\\&=
\sum^m_{j,k=1}\int_0^{+\infty}s~\langle{\widetilde{\varphi}}_{j,s},{\widetilde{\varphi}}_{k,s}\rangle
\xi_j\overline{\xi}_k~ d\tau(s)\nonumber =
\int_0^{+\infty}s~ \big\|\sum_{j=1}^m
\xi_j\widetilde{\varphi}_{j,s}\big\|^2 ~d\tau(s)\\&=
\int_0^{+\infty}s~ \big\|U_{-s^2}\big(\sum_{j=1}^m
\xi_j\widetilde{\varphi}_{j,1}\big)\big\|^2 ~d\tau(s)\nonumber
\leq \int_0^{+\infty}s~ \|U_{-s^2}\|^2\big\|\sum_{j=1}^m
\xi_j\widetilde{\varphi}_{j,1}\big\|^2 ~d\tau(s)\\&=
2\int_0^{+\infty}s~
\|U_{-s^2}\|^2\sum^m_{j,k=1}\langle{\widetilde{\varphi}}_{j,1},{\widetilde{\varphi}}_{k,1}\rangle
\xi_j\overline{\xi}_k~ d\tau(s)\nonumber \\&\leq \int_0^{+\infty}
s(1 + s^{-4})C(1)\bigg(\sum_{j=1}^m |\xi_j|^2\bigg) ~d\tau(s)=
C~\sum_{j=1}^m |\xi_j|^2~,
    \end{align}
where $C:= C(1)\int_0^{+\infty} (s{+}s^{-3})~d\tau(s)< \infty$ by
assumption (\ref{fourthmoment}).

It follows from (\ref{firstesti}) and (\ref{est2}) that the matrix
$Gr_X(f)$ defines a bounded operator with bounded inverse.

(iii) Suppose that $d_*(X)=0$.  Assume to the contrary that the Gram matrix $Gr_X(f)$
defines a bounded operator, say $ T$, with bounded inverse on $l^2(\mathbb N)$.

Fix $\varepsilon \in (0, \tau([0, \infty)) )$. Since the measure $\tau$ is finite, there exists $s_0>0$ such that
    \begin{equation}\label{4.31}
\int_{[s_0,\infty)}d\tau(s)<\varepsilon < \tau([0, \infty)).
    \end{equation}
By the assumption $d_*(X)=0$ we can find points $x_k,
x_j\in X$, $k,l\in \bN$, such that
$r_{jk} = |x_j - x_k|\le s^{-1}_0 \ln\bigl(1 +
\varepsilon ~(\tau([0,s_0]))^{-1}\bigr).$
Fix a number $l\in {\N}$. First suppose $r_{jl}\le r_{kl}$.  Then
    \begin{equation}\label{4.32}
0\le ( 1 - e^{-s(r_{kl}-r_{jl})})^2 \le 1 - e^{-sr_{kj}}
\le \frac{\varepsilon~(\tau([0,s_0]))^{-1}} {1 + \varepsilon~(\tau([0,s_0]))^{-1}}  \le \varepsilon~(\tau([0,s_0]))^{-1},\quad
s\in[0,s_0].
    \end{equation}
Using (\ref{4.31}) and (\ref{4.32}) we derive
    \begin{eqnarray}\label{4.33}
\left( \int^{\infty}_0 (e^{-sr_{jl}}-e^{-sr_{kl}})d\tau(s) \right)^2
= \left(\int^{\infty}_0(1 - e^{-s(r_{kl}-r_{jl})}
)e^{-sr_{jl}}d\tau(s)\right)^2 \nonumber\\
\le
\left(\int^{\infty}_{s_0}\bigl(1-e^{-s(r_{kl}-r_{jl})}\bigr)^2d\tau(s)
+ \int^{s_0}_0
\bigl(1-e^{-s(r_{kl}-r_{jl})}\bigr)^2d\tau(s)\right)\bigg( \int_0^\infty e^{-2sr_{jl}}d\tau(s)\bigg)\nonumber \\ \le
2\varepsilon \int_0^\infty e^{-2sr_{jl}}d\tau(s).
   \end{eqnarray}
If
$r_{jl}>r_{kl}$ then the same reasoning yields
    \begin{equation}\label{4.34}
\left(\int^{\infty}_0(e^{-sr_{jl}}-e^{-sr_{kl}})d\tau(s)\right)^2
\le 2\varepsilon\int_0^\infty e^{-2sr_{kl}}d\tau(s).
    \end{equation}
Summing over $l$ in \eqref{4.33} resp.  \eqref{4.34}  we obtain
    \begin{eqnarray}
\| T (e_j - e_k)\|^2_{l^2(\N)} = \sum_l ~\left|\langle T (e_j-e_k),e_l\rangle\right|^2 =\sum_l
\left(\int^{\infty}_0(e^{-sr_{jl}}-e^{-sr_{kl}})d\tau(s)\right)^2
\nonumber \\
\le2\varepsilon \sum_l
\left(\int^{\infty}_0e^{-2sr_{jl}}d\tau(s) +
\int^{\infty}_0e^{-2sr_{kl}}d\tau(s)\right)   =2 \varepsilon
\bigl(\| T e_j\|^2 + \| Te_k\|^2\bigr)
 \le 4\varepsilon \| T\|^2.
    \end{eqnarray}
and hence
\begin{equation}
4=\|e_j-e_k\|^2 \leq \|T^{-1}\|^2 \|T(e_j-e_k)\|^2\leq 4\varepsilon \|T^{-1}\|^2 \|T\|^2
\end{equation}
for $j\neq k$. Since $\varepsilon>0$ is arbitrary, this is a contraction.   $\hfill \Box$

\medskip

Now we return to the considerations related to  Theorem \ref{rieszbasisN1} and
recall the following
%%%%%%%%%%%%%%%%%%%%%%%%%%%%%%%%%%%%%%%%%
    \begin{definition}
A basis $\{f_j\}_1^\infty$ of a  Hilbert space $\mathfrak H$ is called a {\it Bari basis}
if there exists an orthonormal basis $\{g_j\}_1^\infty$ of
$\mathfrak H$ such that
    \begin{equation}
\sum_{j\in \N}\|f_j-g_j\|^2<\infty.
    \end{equation}
   \end{definition}
%%%%%%%%%%%%%%%%%%%%%%%%%%%%%%
It is known that each Bari  basis is  a Riesz basis. The converse
statement is not true.

%%%%%%%%%%%%%%%%%%%%%%%%%%%%%%%%%%%%%%%%%%%%%%%
   \begin{proposition}
Assume that  $X$ has no finite  accumulation points.  Then the
sequence $E := \big\{\frac{1}{\sqrt{2\pi}}\varphi_j\big\}_{j=1}^\infty:=
\big\{\frac{1}{\sqrt{2\pi}}\frac{e^{-|x- x_j|}}{|x-
x_j|}\big\}_{j=1}^\infty$  forms a Bari basis of ${\mathfrak N}_{-1}$ if and
only if
    \begin{equation}\label{4.28S}
\sum_{j,k\in \N, j\not =k}e^{-2|x_j-x_k|}<\infty.
    \end{equation}
Moreover, this condition is equivalent to
    \begin{equation}
D_\infty:= {\rm lim}_{n\to\infty}D(\varphi_1,\ldots,\varphi_n)>0,
    \end{equation}
where $D(\varphi_1,\ldots,\varphi_n)$   denotes the determinant
of the matrix $\bigl(\langle \varphi_j,\varphi_k\rangle \bigr)^n_{j,k=1}$.
     \end{proposition}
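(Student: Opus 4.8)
The plan is to reduce everything to the Gram matrix of the \emph{normalized} system $E$. Since the functions $\varphi_j$ are real, identity \eqref{3.2AS} (with $x=x_j$, $y=x_k$) shows that $E$ has Gram matrix $G:=\bigl(e^{-|x_j-x_k|}\bigr)_{j,k\in\N}$, a real symmetric matrix with unit diagonal; here I read $D(\varphi_1,\dots,\varphi_n)=\det(G_n)$, the Gram determinant of the normalized system. Condition \eqref{4.28S} is then precisely $\|G-I\|_{\mathfrak{S}_2}^2=\sum_{j\neq k}e^{-2|x_j-x_k|}<\infty$, i.e. $G-I$ is Hilbert--Schmidt. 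A first observation I would record is that each of the two conditions in the statement already forces $d_\ast(X)>0$: if $d_\ast(X)=0$ there are infinitely many distinct index pairs with $|x_j-x_k|\to 0$, each contributing a term $>e^{-2}$ to the sum in \eqref{4.28S}, a contradiction. Hence under \eqref{4.28S} Theorem \ref{rieszbasisN1} applies and $E$ is a Riesz basis, so $G$ is bounded and boundedly invertible, $cI\le G\le CI$ with $c,C>0$.

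For the equivalence ``Bari $\Leftrightarrow$ \eqref{4.28S}'' I would orthogonalize $E$ by its Gram operator. Setting $g_j:=\sum_i (G^{-1/2})_{ij}\,E_i$ one checks $\langle g_j,g_k\rangle=(G^{-1/2}GG^{-1/2})_{jk}=\delta_{jk}$, so $\{g_j\}$ is an orthonormal system, complete since $E$ is (Lemma \ref{varphicomplete}). A direct computation then gives
\[
\sum_{j}\|E_j-g_j\|^2=\operatorname{tr}\bigl((I-G^{-1/2})\,G\,(I-G^{-1/2})\bigr),
\]
which, because $cI\le G\le CI$, is comparable to $\|I-G^{-1/2}\|_{\mathfrak{S}_2}^2$. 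The final link is the operator-ideal fact that for $G$ positive with spectrum in $[c,C]\subset(0,\infty)$ one has $G^{-1/2}-I\in\mathfrak{S}_2\Leftrightarrow G-I\in\mathfrak{S}_2$; this follows by writing $G^{-1/2}-I=(G-I)\,\psi(G)$ with $\psi$ bounded holomorphic near $[c,C]$ and using that $\mathfrak{S}_2$ is an ideal (and symmetrically for the converse). Thus $E$ is a Bari basis iff $G-I\in\mathfrak{S}_2$, i.e. iff \eqref{4.28S} holds; for the ``only if'' direction note that a Bari basis is a Riesz basis, so $d_\ast(X)>0$ again and the computation is legitimate. Alternatively one may quote the corresponding criterion from \cite{GK65}.

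For ``\eqref{4.28S} $\Leftrightarrow D_\infty>0$'' I would use the classical factorization of the Gram determinant,
\[
D(\varphi_1,\dots,\varphi_n)=\det(G_n)=\prod_{k=1}^n d_k^2,\qquad d_k:=\dist\bigl(E_k,\Span\{E_1,\dots,E_{k-1}\}\bigr),
\]
where $G_n$ is the $n\times n$ principal submatrix of $G$. Since $0<d_k\le1$ (positivity of $d_k$ is minimality, via Lemma \ref{lem4.3}), the sequence $\det(G_n)$ is non-increasing, $D_\infty$ exists, and $D_\infty>0\Leftrightarrow\sum_k(1-d_k^2)<\infty$ by the elementary fact $\prod(1-a_k)>0\Leftrightarrow\sum a_k<\infty$ for $a_k\in[0,1)$. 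Writing $b_k:=(e^{-|x_j-x_k|})_{j<k}$ one has the Schur-complement identity $1-d_k^2=b_k^{\top}G_{k-1}^{-1}b_k$, and by Cauchy interlacing the eigenvalues of $G_{k-1}$ lie in $[c,C]$, whence
\[
C^{-1}\|b_k\|^2\le 1-d_k^2\le c^{-1}\|b_k\|^2,\qquad \|b_k\|^2=\sum_{j<k}e^{-2|x_j-x_k|}.
\]
Summing over $k$ and using $\sum_k\sum_{j<k}=\tfrac12\sum_{j\neq k}$ shows $\sum_k(1-d_k^2)<\infty$ iff \eqref{4.28S}, once $G$ is known to be boundedly invertible.

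The one point needing care is that in the direction $D_\infty>0\Rightarrow\eqref{4.28S}$ the invertibility of $G$ is not given a priori, so I would bootstrap it: from $1-d_k^2\ge\max_{j<k}e^{-2|x_j-x_k|}$ (use the unit vector $E_j$, $j<k$, in the definition of $d_k$) together with $\sum_k(1-d_k^2)<\infty$ it follows that $\min_{j<k}|x_j-x_k|\to\infty$, which precludes $d_\ast(X)=0$; hence $d_\ast(X)>0$ and Theorem \ref{rieszbasisN1} again yields a Riesz basis, so the two-sided estimate above applies. I expect this bootstrap, together with the ideal lemma $G-I\in\mathfrak{S}_2\Leftrightarrow G^{-1/2}-I\in\mathfrak{S}_2$, to be the only non-routine steps; everything else is linear-algebra identities and the standard infinite-product criterion.
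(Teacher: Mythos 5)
Your argument is correct in substance but follows a genuinely different route from the paper. The paper's proof is two lines: it computes $\langle\varphi_j,\varphi_k\rangle=2\pi e^{-|x_j-x_k|}$ from \eqref{3.2AS}, invokes the $\omega$-linear independence of $E$ (Lemma \ref{lem4.3}, which is where the no-accumulation-point hypothesis enters), and then cites the Gokhberg--Krein criteria (\cite[Theorems 6.3.1 and 6.3.3]{GK65}) verbatim for both equivalences. You instead re-prove the relevant content of those two theorems in this concrete setting: the L\"owdin orthogonalization $g_j=\sum_i(G^{-1/2})_{ij}E_i$ together with the ideal identity $G^{-1/2}-I=(G-I)\psi(G)$ for the Bari criterion, and the Gram-determinant factorization $\det(G_n)=\prod_k d_k^2$ with the Schur-complement estimate $C^{-1}\|b_k\|^2\le 1-d_k^2\le c^{-1}\|b_k\|^2$ for the determinant criterion. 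Your preliminary observation that either condition forces $d_\ast(X)>0$ (so that Theorem \ref{rieszbasisN1} supplies the two-sided bound $cI\le G\le CI$), and the bootstrap via $1-d_k^2\ge\max_{j<k}e^{-2|x_j-x_k|}$, are correct and are exactly the points that make the self-contained argument work. Your reading of $D(\varphi_1,\dots,\varphi_n)$ as the \emph{normalized} Gram determinant is the right one (and is what \cite{GK65} requires); as literally stated the unnormalized determinant carries a factor $(2\pi)^n$. The one step you gloss over is the ``only if'' direction of the Bari criterion: being a Bari basis gives quadratic closeness to \emph{some} orthonormal basis $\{h_j\}$, whereas your trace computation controls only the distance to the L\"owdin basis $\{g_j\}$. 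This is easily repaired without GK: write $G_{jk}-\delta_{jk}=\langle E_j-h_j,E_k\rangle+\langle h_j,E_k-h_k\rangle$ and use Parseval for the second term and the Bessel bound of the Riesz basis $E$ for the first to get $G-I\in\mathfrak S_2$ directly. With that two-line insertion (or by simply quoting \cite{GK65}, as you offer), your proof is complete; its advantage is that it makes the constants and the role of $cI\le G\le CI$ explicit, at the cost of being considerably longer than the paper's citation-based argument.
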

%%%%%%%%%%%%%%%%%%%%%%%%%%%%%%%%%%%%%%%%%%%%%%
   \begin{proof}
By  \eqref{3.2AS}, we have $\langle\varphi_j,\varphi_k \rangle =
2\pi\exp(-|x_j-x_k|)$ for $ j,k\in{\mathbb N}$. By  Lemma  \ref{lem4.3},
the system $E$ is $\omega$-linearly independent.  Therefore, by
\cite[Theorem 6.3.3]{GK65}, $E$  is a Bari basis if and only if
$$
\bigl(\langle\varphi_j,\varphi_k \rangle - 2\pi
\delta_{jk}\bigr)^{\infty}_{j,k=1} = 2\pi \bigl( \exp(-|x_j-x_k|)
- \delta_{jk}\bigr)^{\infty}_{j,k=1} \in\mathfrak S_2(l^2),
$$
i.e.  condition \eqref{4.28S}  is satisfied.  The second statement
follows from \cite[Theorem 6.3.1.]{GK65}
      \end{proof}
%%%%%%%%%%%%%%%%%%%%%%%%%%%%%%%%%%%%%%%%%%%%%%
%%%%%%%%%%%%%%%%%%%%%%%%%%%%%%%%%%%%
%%%%%%%%%%%%%%%%%%%%%%%%%%%%%%%%%%%%

\section{Operator-Theoretic Preliminaries}\label{preliminaries}
\subsection{Boundary  triplets and  self-adjoint
relations}

Here  we briefly recall basic notions and facts on boundary
triplets (see \cite{DerMal91, GG,Sch2012} for
 details). In what follows  $A$   denotes a
densely defined closed symmetric operator  on a Hilbert space
$\gotH$, $\mathfrak N_z:= \mathfrak N_z(A) = \ker(A^*-z)$, $z\in
\C_{\pm},$ is the defect subspace. We also assume that $A$ has
equal deficiency indices $n_+(A) := \dim(\mathfrak N_i) = \dim(\mathfrak
N_{-i}) =: n_-(A).$

%%%%%%%%%%%%%%%%%%%%%%%%%%%%%%%%%
%%%%%%%%%%%%%%%%%%%%%%%%%%%%%%%%
    \begin{definition}\cite{GG}\label{bound}%
\,\,A  {\emph boundary triplet} for the adjoint operator $A^\ast$
is a triplet $\Pi = \{\kH,\gG_0,\gG_1\}$  of  an auxiliary Hilbert
space $\kH$ and of linear mappings
$\Gamma_0,\Gamma_1:\  \dom(A^*)\rightarrow\kH$  such that\\
 $(i)$ the following abstract   Green identity holds:
\begin{equation}\label{GI}
(A^*f,g)_\gotH - (f,A^*g)_\gotH = (\gG_1f,\gG_0g)_\kH -
(\gG_0f,\gG_1g)_\kH,\qquad f,g\in\dom(A^*).
\end{equation}
$(ii)$ the mapping $(\Gamma_0,\Gamma_1): \dom(A^*)
\rightarrow \kH \oplus\kH$ is surjective.
   \end{definition}
   With   a  boundary   triplet   $\Pi$   one  associates two    self-adjoint extensions of $A$  defined  by
  \begin{equation}\label{2.2}
 A_0:=A^*\!\upharpoonright\ker(\gG_0)\quad  \text{and}\quad
A_1:=A^*\!\upharpoonright\ker(\gG_1).
     \end{equation}
    \begin{definition}
$(i)$ A closed extension $\widetilde{A}$ of $A$ is called
\emph{proper} if $A\subset \widetilde{A} \subset A^*$.  The set of
all  proper  extensions of  $A$
is  denoted  by $\Ext_A$.

$(ii)$ Two proper extensions $\widetilde{A}_1$ and
$\widetilde{A}_2$ of $A$ are called \emph{disjoint}
 if $\dom(\widetilde{A}_1)\cap\dom(\widetilde{A}_2)=\dom(A)$ and \emph{transversal} if, in
 addition,
 $\dom(\widetilde{A}_1)\dotplus\dom(\widetilde{A}_2)=\dom(A^*)$.
           \end{definition}
%%%%%%%%%%%%%%%%%%%%%%%%%%%
%%%%%%%%%%%%%%%%%%%%%%%
       \begin{remark}
$(i)$  If the symmetric operator $A$ has equal deficiency indices
$n_+(A)=n_-(A)$, then a boundary triplet $\Pi =
\{\kH,\gG_0,\gG_1\}$ for $A^*$ always exists and we have  $\dim
\kH = n_{\pm}(A)$ \cite{GG}.

 $(ii)$ For each self-adjoint extension
 $\widetilde{A}$ of $A$  there  exists  a  boundary   triplet
 $\Pi = \{\mathcal{H},\Gamma_0,\Gamma_1\}$  such  that
 $\widetilde{A}=A^*\upharpoonright\ker(\Gamma_0) = A_0.$

 $(iii)$ If $\Pi =  \{\mathcal{H},\Gamma_0,\Gamma_1\}$ is a boundary triplet for  $A^*$  and $B=B^*\in
 \kB(\kH)$,
 then the     triplet  $\Pi_B = \{\mathcal{H},\Gamma_0^B,\Gamma_1^B\}$   with   $\Gamma_1^B:=\Gamma_0$  and
 $\Gamma_0^B:=B\Gamma_0-\Gamma_1$ is also  a  boundary  triplet   for  $A^*$.
 \end{remark}

 Boundary triplets for $A^*$  allow  one  to  parameterize
 the  set  $\Ext_A$  in terms of closed linear relations.
For this we recall the following definitions.
\begin{definition}
$(i)$  A    linear relation $\Theta$ in  $\mathcal{H}$ is
a linear subspace of $\mathcal{H}\oplus\mathcal{H}$. It is called closed if the corresponding subspace is closed in $\mathcal{H}\oplus\mathcal{H}$.

$(ii)$  A linear  relation  $\Theta$  is called symmetric
 if  $(g_1,f_2)-(f_1,g_2)=0$  for all  $\{f_1,g_1\},  \{f_2,g_2\}\in  \Theta$.

 $(iii)$ The adjoint relation
$\Theta^*$ of a  linear relation $\Theta$ in $\mathcal{H}$ is
defined by
\begin{equation*}
\Theta^*= \big\{ \{k,k^\prime\}: (h^\prime,k)=(h,k^\prime)\,\, \text{for all}\, \,
\{h,h^\prime\}
\in\Theta\big\}.
\end{equation*}

 $(iv)$  A closed linear   relation $\Theta$  is  called  self-adjoint
 if  $\Theta=\Theta^*$.

{$(v)$ The inverse of a relation $\Theta$ is the relation $\Theta^{-1}$ defined by $\Theta^{-1}=\big\{ \{h^\prime,h\}:\{h,h^\prime\}
\in\Theta\big\}. $}
    \end{definition}
%%%%%%%%%%%%%%%%%%%%%%%%%%%%%%%%%%%%%%%%%
\begin{definition}
{Let $\Theta$ be a closed relation in $\cH$. The resolvent set $\rho(\Theta)$ is the set of complex  numbers $\lambda$ such that the relation
$(\Theta-\lambda I)^{-1}:=\big\{ \{h^\prime-\lambda h,h\}: \{h,h^\prime\}\in \Theta \big\}$ is the graph of a bounded operator of $\kB(\cH)$. The complement set $\sigma(\Theta):=\bC\backslash \rho(\Theta)$ is called the spectrum of $\Theta$.}
\end{definition}

For a relation $\Theta$ in $\cH$ we define the domain $\dom(\Theta)$ and the multi-valued part $\mul(\Theta)$ by
\begin{align*}
\dom(\Theta)=\big\{ h\in \cH: \{h,h^\prime\}\in \Theta ~~{\rm for~ some}~~ h^\prime\in \cH\big\},~~~\mul(\Theta)=\big\{ h^\prime \in \cH: \{0,h^\prime\} \in \Theta\big\}.
\end{align*}
{Each closed relation $\Theta$  is the orthogonal sum of
$\Theta_\infty:=\big\{ \{0,f'\}\in\Theta\big\}$ and $\Theta_{\rm op}:=\Theta\ominus \Theta_\infty$. Then $\Theta_{\rm op}$ is the graph of a closed operator, called the operator part of $\Theta$ and denoted also by $\Theta_{\rm op}$,  and $\Theta_\infty$ is a ``pure'' relation, that is, $\mul(\Theta_\infty)=\mul(\Theta)$.}

{Suppose that $\Theta$ is a self-adjoint relation in $\cH$. Then
 $\mul(\Theta)$  is the orthogonal complement of
$\dom(\Theta)$ in $\cH$  and  $\Theta_{\rm op}$ is
 a self-adjoint operator in the Hilbert space
$\cH_{\rm op}:=\overline{\dom(\Theta)}$. That is, $\Theta$ is the orthogonal sum of an ''ordinary'' self-adjoint operator $\Theta_{\rm op}$ in  $\cH_{\rm op}$ and a ``pure'' relation $\Theta_\infty$
in  $\cH_\infty:=\mul(\Theta)$.
}

   \begin{proposition}[\cite{DerMal91,GG,Sch2012}]\label{propo}
Let  $\Pi = \{\mathcal{H},\Gamma_0,\Gamma_1\}$  be   a boundary
triplet for $A^*$. Then  the  mapping
   \begin{equation}\label{s-aext}
\Ext_A\ni\widetilde{A} := A_\Theta \rightarrow
\Theta:=\Gamma(\dom(\widetilde{A}))=\{\{\Gamma_0f,\Gamma_1f\}:\,\,f\in
\dom(\widetilde{A})\}
   \end{equation}
is a  bijection of the  set
$\Ext_A$ {of all proper extensions of $A$}  and  the set of all
closed   linear  relations
$\widetilde{\mathcal{C}}(\mathcal{H})$
in $\mathcal{H}$.  Moreover, the following equivalences hold:

$(i)$
   $(A_\Theta)^*=A_{\Theta^*}$
 for any linear  relation $\Theta$ in $\mathcal{H}.$

$(ii)$
  $A_\Theta$  is  symmetric
if  and only  if \,  $\Theta$  is symmetric.
Moreover, $n_{\pm}(A_\Theta) = n_{\pm}(\Theta).$  In particular, $A_\Theta$  is 
self-adjoint if  and only  if  $\Theta$  is self-adjoint.

$(iii)$ The closed extensions $A_\Theta$ and
$A_0$ are disjoint if and only if $\Theta = B$ is a closed operator. 
In this case
\begin{equation}\label{bijop}
A_\Theta=A_B=A^*\!\upharpoonright\dom(A_B) ,\quad \dom(A_B) =
\ker\bigl(\Gamma_1-B\Gamma_0\bigr).
   \end{equation}
 \end{proposition}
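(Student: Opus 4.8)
The plan is to base everything on the single structural fact that $\Gamma:=(\Gamma_0,\Gamma_1)$ is a bounded surjection of $\dom(A^*)$, equipped with the graph norm, onto $\kH\oplus\kH$ whose kernel is exactly $\dom(A)$. First I would prove $\ker\Gamma=\dom(A)$. If $f\in\dom(A)$, then $(A^*f,g)_\gotH=(f,A^*g)_\gotH$ for every $g\in\dom(A^*)$, so \eqref{GI} gives $(\Gamma_1 f,\Gamma_0 g)_\kH=(\Gamma_0 f,\Gamma_1 g)_\kH$, and surjectivity of $\Gamma$ lets $(\Gamma_0 g,\Gamma_1 g)$ run through all of $\kH\oplus\kH$, forcing $\Gamma_0 f=\Gamma_1 f=0$; conversely, $\Gamma f=0$ turns \eqref{GI} into $(A^*f,g)_\gotH=(f,A^*g)_\gotH$ for all $g\in\dom(A^*)$, i.e. $f\in\dom(A^{**})=\dom(A)$. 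Boundedness of $\Gamma$ then follows from the closed graph theorem: if $f_n\to f$ in the graph norm and $\Gamma f_n\to(a,b)$, passing to the limit in \eqref{GI} and again invoking surjectivity of $\Gamma$ identifies $a=\Gamma_0 f$ and $b=\Gamma_1 f$.

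With this in hand the bijection is essentially the correspondence theorem. Since $\ker\Gamma=\dom(A)$ and $\Gamma$ is onto, $\Gamma$ induces a linear isomorphism of $\dom(A^*)/\dom(A)$ onto $\kH\oplus\kH$; restricting $\Gamma$ to the graph-orthogonal complement $\mathcal M$ of $\dom(A)$ in $\dom(A^*)$ yields a bounded bijection $\mathcal M\to\kH\oplus\kH$, which by the open mapping theorem is a topological isomorphism. Proper extensions $\widetilde A$ are precisely the restrictions $A^*\!\upharpoonright\mathcal D$ with $\dom(A)\subseteq\mathcal D\subseteq\dom(A^*)$ and $\mathcal D$ graph-closed, so the maps $\mathcal D\mapsto\Theta:=\Gamma(\mathcal D)$ and $\Theta\mapsto\Gamma^{-1}(\Theta)$ are mutually inverse, inclusion-preserving bijections, and the topological isomorphism above makes $\mathcal D$ graph-closed if and only if $\Theta$ is closed in $\kH\oplus\kH$. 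This establishes the stated bijection between $\Ext_A$ and $\widetilde{\mathcal C}(\kH)$.

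For $(i)$, let $\Theta$ be an arbitrary linear relation and recall $\dom(A_\Theta)=\Gamma^{-1}(\Theta)$. A vector $g\in\dom((A_\Theta)^*)$ automatically lies in $\dom(A^*)$ (restrict the adjoint identity to $\dom(A)$), and for $f\in\dom(A_\Theta)$, $g\in\dom(A^*)$ the identity \eqref{GI} reads $(A^*f,g)_\gotH-(f,A^*g)_\gotH=(\Gamma_1 f,\Gamma_0 g)_\kH-(\Gamma_0 f,\Gamma_1 g)_\kH$. Hence $g\in\dom((A_\Theta)^*)$ exactly when the right-hand side vanishes for all $\{h,h'\}=\{\Gamma_0 f,\Gamma_1 f\}\in\Theta$, i.e. when $(h',\Gamma_0 g)_\kH=(h,\Gamma_1 g)_\kH$ for all $\{h,h'\}\in\Theta$; by the definition of $\Theta^*$ this says $\{\Gamma_0 g,\Gamma_1 g\}\in\Theta^*$, so $(A_\Theta)^*=A_{\Theta^*}$. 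Combining this with the inclusion-preserving bijection, $A_\Theta\subseteq(A_\Theta)^*$ iff $\Theta\subseteq\Theta^*$ and $A_\Theta=(A_\Theta)^*$ iff $\Theta=\Theta^*$, which gives the symmetry and self-adjointness statements of $(ii)$. The identity $n_\pm(A_\Theta)=n_\pm(\Theta)$ is the step I expect to be the real obstacle: it cannot be read off from Green's identity, and I would deduce it from the relation $\dim\ker(A_\Theta-z)=\dim\ker(\Theta-M(z))$ for $z\in\rho(A_0)$ (here $A_0=A^*\!\upharpoonright\ker\Gamma_0$ is self-adjoint, so $\pm i\in\rho(A_0)$), where $M(\cdot)$ is the Weyl function attached to $\Pi$. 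Evaluating at $z=\pm i$ and using that $M(\pm i)$ lies in the same open half-plane as $\pm i$ (so that the defect numbers of the relation are constant there) yields the claim. This is exactly the point where the $\gamma$-field/Weyl-function machinery, rather than the bare axioms of a boundary triplet, is required.

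Finally, for $(iii)$ I would compute the relation attached to $A_0$. Since $\dom(A_0)=\ker\Gamma_0=\Gamma^{-1}(\{0\}\oplus\kH)$, surjectivity gives $\Theta_0:=\Gamma(\dom A_0)=\{0\}\oplus\kH$. As $\Gamma^{-1}$ is injective on subspaces and $\dom(A)=\Gamma^{-1}(\{(0,0)\})$, disjointness $\dom(A_\Theta)\cap\dom(A_0)=\dom(A)$ is equivalent to $\Theta\cap\Theta_0=\{(0,0)\}$; but $\Theta\cap(\{0\}\oplus\kH)=\{0\}\oplus\mul(\Theta)$, so this holds iff $\mul(\Theta)=\{0\}$, i.e. iff the closed relation $\Theta$ is the graph of a closed operator $B$. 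In that case $\dom(A_B)=\Gamma^{-1}(\gr B)=\{f\in\dom(A^*):\Gamma_1 f=B\Gamma_0 f\}=\ker(\Gamma_1-B\Gamma_0)$, which is precisely \eqref{bijop}.
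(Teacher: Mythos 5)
This proposition is quoted in the paper from \cite{DerMal91,GG,Sch2012} without any proof, so there is no in-paper argument to compare against; your proof is the standard one found in those references and is essentially correct. The reduction of everything to the two facts $\ker\Gamma=\dom(A)$ and the (topological) isomorphism $\dom(A^*)/\dom(A)\cong\kH\oplus\kH$ induced by $\Gamma$ is exactly the right skeleton, and your derivations of the bijection, of $(A_\Theta)^*=A_{\Theta^*}$ from the Green identity, and of part $(iii)$ via $\Theta\cap(\{0\}\oplus\kH)=\{0\}\oplus\mul(\Theta)$ are all sound. The one place where your argument is only a sketch is, as you yourself flag, the equality $n_{\pm}(A_\Theta)=n_{\pm}(\Theta)$: reducing it to $\dim\ker(A_{\Theta^*}-z)=\dim\ker(\Theta^*-M(z))$ at $z=\pm i$ is correct, but one still has to justify that $\dim\ker(\Theta^*-M(\pm i))=\dim\ker(\Theta^*\mp i)$ for the \emph{operator-valued} point $M(\pm i)$; the needed fact is that for a symmetric relation $\Theta$ and any bounded $M$ with $\pm\imm M\ge\varepsilon I>0$ one has $\dim\ker(\Theta^*-M)=n_{\pm}(\Theta)$, which is a genuine (if standard) lemma from \cite{DerMal91} rather than a consequence of constancy of defect numbers for scalar spectral parameters. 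With that lemma supplied, the proof is complete.
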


\subsection{Weyl   function, $\gamma$-field  and spectra of proper extensions}

The notion of the  Weyl function and the $\gamma$-field  of a boundary triplet was introduced
 in \cite{DerMal91}.
\begin{definition}[\cite{DerMal91,Sch2012}]\label{Weylfunc}
Let $\Pi =  \{\kH,\gG_0,\gG_1\}$ be a boundary triplet  for $A^*.$
The operator-valued functions $\gamma(\cdot) :\
\rho(A_0)\rightarrow  \kB(\kH,\gotH)$ and  $M(\cdot) :\
\rho(A_0)\rightarrow  \kB(\kH)$ defined by
  \begin{equation}\label{2.3A}
\gamma(z):=\bigl(\Gamma_0\!\upharpoonright\mathfrak N_z\bigr)^{-1}
\qquad\text{and}\qquad M(z):=\Gamma_1\gamma(z), \quad
z\in\rho(A_0),
      \end{equation}
are called the {\em $\gamma$-field} and the {\em Weyl function},
respectively, of $\Pi = \{\kH,\gG_0,\gG_1\}.$
      \end{definition}
Note that the $\gamma$-field $\gamma(\cdot)$ and  the Weyl function
$M(\cdot)$ are  holomorphic on $\rho(A_0)$.%

Recall that a symmetric operator $A$ in $\frak H$ is said to be {\it simple} if
there is no non-trivial subspace which reduces it to a
self-adjoint operator. In other words,  $A$ is simple if it does not admit an (orthogonal) decomposition $A=A'\oplus S$ where $A'$ is a symmetric operator and $S$ is a selfadjoint operator acting on a nontrivial Hilbert space.

It is easily seen (and well-known) that $A$
is simple if and only if  $\Span\{\frak N_z(A): z\in\C\setminus\Bbb R \}=\frak H$.

If $A$ is  simple, then  the Weyl function $M(\cdot)$ determines
the boundary triplet  $\Pi$ uniquely up to the unitary equivalence  (see \cite{DerMal91}).
In particular, $M(\cdot)$ contains the full information about the
spectral properties of $A_0$.
Moreover, the spectrum  of a  proper  (not
necessarily self-adjoint) extension $A_\Theta\in \Ext_A$ can be
described  by means of $M(\cdot)$ and  the   boundary relation $\Theta$.
  \begin{proposition}[{\cite{DerMal91},\cite{Sch2012}}]\label{prop_II.1.4_spectrum}
Let $A$ be a simple densely defined symmetric operator in $\gotH$,
$\Theta\in \widetilde{\mathcal{C}}(\mathcal{H})$, 
and $z\in \rho(A_0)$. Then:

(i)\  $z\in \rho(A_\Theta)$ if and only if
$0\in \rho(\Theta-M(z));$

(ii) $z\in\sigma_\tau(A_\Theta)$ if and only if  $0\in
\sigma_\tau(\Theta-M(z))$,  \ $\tau\in\{ p,\ c\}.$

(iii) $f\in \ker(A_\Theta - z)$ if and only if  $\Gamma_0f\in
\ker(\Theta - M(z))$ and\ \
$$\dim\ker(A_\Theta {-}z) =
\dim\ker(\Theta {-} M(z)).
$$
%
%%%%%%%%%%%%%%%%%%%%%%%%%
      \end{proposition}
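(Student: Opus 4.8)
The plan is to reduce the spectral analysis of the extension $A_\Theta - z$ entirely to the behaviour of the boundary maps on the defect subspace $\mathfrak N_z$, and then transport it to an analysis of the relation $\Theta - M(z)$ in $\kH$. First I would record three elementary consequences of Definition \ref{Weylfunc} and the Green identity \eqref{GI}, valid for every $z\in\rho(A_0)$: (a) the restriction $\Gamma_0\!\upharpoonright\mathfrak N_z$ is a bounded bijection of $\mathfrak N_z$ onto $\kH$ with inverse $\gamma(z)$ (injectivity because $\dom(A_0)\cap\mathfrak N_z=\ker(A_0-z)=\{0\}$, surjectivity from $\dom(A^*)=\dom(A_0)\dotplus\mathfrak N_z$); (b) consequently $\Gamma_1 f = M(z)\Gamma_0 f$ for all $f\in\mathfrak N_z$; and (c) the pairing identity $\Gamma_1(A_0-z)^{-1}=\gamma(\bar z)^*$, which I would obtain by inserting $f=\gamma(z)h\in\mathfrak N_z$ and $g=(A_0-\bar z)^{-1}u\in\dom(A_0)=\ker\Gamma_0$ into \eqref{GI}. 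Note that since $\gamma(z)$ is a bounded bijection, it is bounded below, so $\gamma(\bar z)^*$ is surjective onto $\kH$; this will make the converse direction of (i) clean.

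With (a)--(b) in hand, part (iii) is immediate. If $(A_\Theta-z)f=0$ then $f\in\mathfrak N_z$ and $\{\Gamma_0 f,\Gamma_1 f\}=\{\Gamma_0 f, M(z)\Gamma_0 f\}\in\Theta$, i.e.\ $\Gamma_0 f\in\ker(\Theta-M(z))$; conversely every $h\in\ker(\Theta-M(z))$ produces $f=\gamma(z)h\in\ker(A_\Theta-z)$. Since $\Gamma_0\!\upharpoonright\mathfrak N_z$ is a bijection by (a), it restricts to a bijection of $\ker(A_\Theta-z)$ onto $\ker(\Theta-M(z))$, which yields both the equivalence and the dimension equality.

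For part (i) I would parametrize the solutions of the inhomogeneous equation. Fix $g\in\gotH$; since $\dom(A^*)=\dom(A_0)\dotplus\mathfrak N_z$, every solution of $(A^*-z)f=g$ has the form $f=(A_0-z)^{-1}g+\gamma(z)h$ with $h\in\kH$. Using $\Gamma_0(A_0-z)^{-1}g=0$ together with (b) and (c), one computes $\Gamma_0 f=h$ and $\Gamma_1 f=\gamma(\bar z)^*g+M(z)h$, so the membership $f\in\dom(A_\Theta)$ is equivalent to $\{h,\gamma(\bar z)^*g\}\in\Theta-M(z)$. Thus, if $0\in\rho(\Theta-M(z))$, the unique $h$ equals $(\Theta-M(z))^{-1}\gamma(\bar z)^*g$, and substituting back gives the Krein-type resolvent formula
$$(A_\Theta-z)^{-1}=(A_0-z)^{-1}+\gamma(z)(\Theta-M(z))^{-1}\gamma(\bar z)^*,$$
a bounded everywhere-defined operator, so $z\in\rho(A_\Theta)$. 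Conversely, if $z\in\rho(A_\Theta)$ then $\ker(\Theta-M(z))=\{0\}$ by (iii), while unique solvability of $(A_\Theta-z)f=g$ for every $g$ forces $\gamma(\bar z)^*g\in\ran(\Theta-M(z))$ for all $g$; since $\gamma(\bar z)^*$ is onto $\kH$, the relation $\Theta-M(z)$ is bijective with bounded inverse, i.e.\ $0\in\rho(\Theta-M(z))$. The identity $(A_\Theta)^*=A_{\Theta^*}$ from Proposition \ref{propo}(i) can be used to streamline the range bookkeeping.

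Part (ii) for the point spectrum is just a restatement of (iii): $z\in\sigma_p(A_\Theta)$ iff $\ker(A_\Theta-z)\neq\{0\}$ iff $0\in\sigma_p(\Theta-M(z))$. The continuous-spectrum case is where I expect the main obstacle to lie: one must show that $A_\Theta-z$ is injective with dense but non-closed range exactly when $\Theta-M(z)$ is. The parametrization above furnishes an explicit correspondence between $\ran(A_\Theta-z)$ and $\ran(\Theta-M(z))$ mediated by the bounded maps $\gamma(z)$ and the surjection $g\mapsto\gamma(\bar z)^*g$; the delicate point is to verify that this correspondence preserves closedness and density of ranges, and to handle the multivalued part $\mul(\Theta)$ together with the definition of $\sigma_c$ for relations in a consistent way. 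Once this range correspondence is established, combining it with the kernel statement (iii) gives the equivalence $z\in\sigma_c(A_\Theta)\iff 0\in\sigma_c(\Theta-M(z))$, completing (ii).
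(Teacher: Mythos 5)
The paper does not prove this proposition; it is quoted from \cite{DerMal91} and \cite{Sch2012}, and your argument is precisely the standard one from those sources: the decomposition $\dom(A^*)=\dom(A_0)\dotplus\mathfrak N_z$, the identities $\Gamma_1\!\upharpoonright\mathfrak N_z=M(z)\Gamma_0\!\upharpoonright\mathfrak N_z$ and $\Gamma_1(A_0-z)^{-1}=\gamma(\bar z)^*$, and the resulting parametrization $g\in\ran(A_\Theta-z)\Leftrightarrow\gamma(\bar z)^*g\in\ran(\Theta-M(z))$. Parts (i) and (iii) are complete and correct as written (for (i) you should say explicitly that boundedness of $(\Theta-M(z))^{-1}$ in the converse direction comes from the closed graph theorem, using that $\Theta$ is closed and $M(z)$ bounded, so $\Theta-M(z)$ and its inverse are closed relations).

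The one place you stop short is the $\tau=c$ case of (ii), which you flag as "the delicate point" without carrying it out. It does go through, and in only a few lines: writing $S:=\gamma(\bar z)^*$, your parametrization gives the exact identity $\ran(A_\Theta-z)=S^{-1}\bigl(\ran(\Theta-M(z))\bigr)$. Since $\gamma(\bar z)$ is a bounded bijection onto the closed subspace $\mathfrak N_{\bar z}$, the adjoint $S$ is a bounded surjection of $\gotH$ onto $\kH$, hence an open map by the open mapping theorem, i.e. a topological quotient map. For a quotient map $S$ and a subset $D\subset\kH$, the preimage $S^{-1}(D)$ is closed (resp. dense, resp. all of $\gotH$) if and only if $D$ is closed (resp. dense, resp. all of $\kH$); density in the forward direction uses openness of $S$, and the reverse direction uses $S(S^{-1}(D))=D$ together with continuity. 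Combined with the kernel bijection from (iii) and the identity $\mul(\Theta-M(z))=\mul(\Theta)$ (which plays no role once $\ker(\Theta-M(z))=\{0\}$), this yields $z\in\sigma_c(A_\Theta)\Leftrightarrow 0\in\sigma_c(\Theta-M(z))$. So the gap you identified is real but closes cleanly with the quotient-map observation; nothing in your reduction needs to be changed.
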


For any boundary triplet $\Pi =\{\cH,\gG_0,\gG_1\}$ for $A^*$ and
any proper   extension ${A}_{\Theta}\in \Ext_A$ with non-empty
resolvent set the following Krein-type resolvent formula holds
(cf. \cite{DerMal91,Sch2012})
     \begin{equation}\label{2.8}
(A_\Theta-z)^{-1} = (A_0 - z)^{-1} + \gamma(z)\bigl(\Theta -
M(z)\bigr)^{-1}\gamma(\overline{z})^*,\qquad
z\in\rho(A_\Theta)\cap\rho(A_0).
   \end{equation}
It should be emphasized that formulas \eqref{2.2}
\eqref{s-aext}, and \eqref{2.3A} express all data occuring in
\eqref{2.8} in terms of the boundary triplet.
These expressions allow one to apply formula \eqref{2.8} to boundary value problems.

The following result is deduced from  \eqref{2.8}.
   \begin{proposition}[{\cite[Theorem 2]{DerMal91}}]\label{prop2.9}
Let $\Pi = \{\cH,\gG_0,\gG_1\}$  be a boundary triplet for $A^*$
and let $\Theta',\Theta \in \widetilde{\mathcal C}(\cH)$. Suppose
that $\rho(A_{\Theta'})\cap \rho(A_{\Theta})\not = \emptyset$ and
$\rho({\Theta'})\cap \rho({\Theta})\not = \emptyset$.

\item[\;\;\rm (i)]  For   $z\in \rho(A_{\Theta'})\cap
\rho(A_{\Theta})$,   $\zeta\in \rho({\Theta'})\cap
\rho({\Theta})$, and $ p \in [0,\infty]$ the following equivalence is valid:
  \begin{equation}\label{2.31}
(A_{\Theta'} - z)^{-1} - (A_{\Theta} - z)^{-1}\in{\mathfrak S}_p(\mathfrak
H)  \Longleftrightarrow\  \bigl(\Theta' - \zeta\bigr)^{-1}-
\bigl(\Theta - \zeta \bigr)^{-1}\in{\mathfrak S}_p(\cH).
    \end{equation}
In particular,  $(A_{\Theta}- z)^{-1} - (A_0 -
z)^{-1}\in{\mathfrak S}_p(\mathfrak H)$ 
if and only if $\bigl(\Theta - \zeta\bigr)^{-1} \in{\mathfrak
S}_p(\cH)$ for $\zeta \in \rho(\Theta)$.

\item[\;\;\rm (ii)] If $\dom(\Theta') = \dom(\Theta)$, then the
following implication holds
   \begin{equation}\label{2.32}
\overline{\Theta' - \Theta} \in{\mathfrak S}_p(\cH)
\Longrightarrow (A_{\Theta'}-z)^{-1} -
(A_{\Theta}-z)^{-1}\in{\mathfrak S}_p(\mathfrak H),\quad z\in  \rho(A_{\Theta'})\cap \rho(A_{\Theta})
   \end{equation}
 In particular, if
$\Theta',\Theta\in \kB(\cH)$, then \eqref{2.31} is equivalent to
$\Theta' - \Theta\in \mathfrak S_p(\cH)$.
\end{proposition}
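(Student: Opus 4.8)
The plan is to reduce both assertions to the Krein-type resolvent formula \eqref{2.8} together with a purely relation-theoretic fact: membership of a difference of resolvents in $\mathfrak S_p$ does not depend on the point (scalar or bounded operator) at which the resolvents are taken. For part (i) I would first note that, by \eqref{2.8}, subtracting the formulas for $A_{\Theta'}$ and $A_{\Theta}$ gives
\begin{equation*}
(A_{\Theta'}-z)^{-1}-(A_{\Theta}-z)^{-1}=\gamma(z)\bigl[(\Theta'-M(z))^{-1}-(\Theta-M(z))^{-1}\bigr]\gamma(\overline z)^{*}.
\end{equation*}
By \eqref{2.3A} the map $\gamma(z)$ is a bounded bijection of $\cH$ onto $\mathfrak N_z$, so it admits a bounded left inverse $T$ with $T\gamma(z)=I_\cH$, and $\gamma(\overline z)^{*}$ admits a bounded right inverse $S$ with $\gamma(\overline z)^{*}S=I_\cH$. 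Since $\mathfrak S_p$ is a two-sided ideal, flanking the identity above by $T$ and $S$ shows
\begin{equation*}
(A_{\Theta'}-z)^{-1}-(A_{\Theta}-z)^{-1}\in\mathfrak S_p(\gotH)\iff(\Theta'-M(z))^{-1}-(\Theta-M(z))^{-1}\in\mathfrak S_p(\cH);
\end{equation*}
here the hypothesis $z\in\rho(A_{\Theta'})\cap\rho(A_{\Theta})$ guarantees via Proposition \ref{prop_II.1.4_spectrum}(i) that $M(z)$ is a common regular point of $\Theta'$ and $\Theta$, so all resolvents are bounded.

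The core step is to replace the operator argument $M(z)$ by the scalar $\zeta$. Writing $W:=M(z)-\zeta\in\kB(\cH)$ and abbreviating $R_C:=(\Theta-M(z))^{-1}$, $R_\zeta:=(\Theta-\zeta)^{-1}$, I would verify straight from the subspace definition of the inverse relation the shifted resolvent identity $R_C-R_\zeta=R_CWR_\zeta=R_\zeta WR_C$, and deduce that $I-WR_\zeta$ is boundedly invertible with inverse $I+WR_C$. Denoting by $F$ and $G$ the differences $(\Theta'-\zeta)^{-1}-(\Theta-\zeta)^{-1}$ and $(\Theta'-M(z))^{-1}-(\Theta-M(z))^{-1}$, and using the analogous identity for $\Theta'$, a short algebraic manipulation gives $G(I-WR_\zeta)=(I+R'_CW)F$, hence
\begin{equation*}
G=(I+R'_CW)\,F\,(I+WR_C),\qquad R'_C:=(\Theta'-M(z))^{-1}.
\end{equation*}
Since the factors $I+R'_CW$ and $I+WR_C$ are boundedly invertible, this yields $F\in\mathfrak S_p\iff G\in\mathfrak S_p$, which combined with the previous paragraph proves \eqref{2.31}. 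The \emph{in particular} statement then follows by choosing $\Theta'$ to be the relation $\{\{0,h'\}:h'\in\cH\}$ corresponding to $A_0$ (so that $A_{\Theta'}=A_0$ by \eqref{2.2}), for which $(\Theta'-\zeta)^{-1}=0$.

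For part (ii), the assumption $\dom(\Theta')=\dom(\Theta)$ forces $\mul(\Theta')=\mul(\Theta)$, so the operator parts act on the same subspace and the factorization
\begin{equation*}
(\Theta'-\zeta)^{-1}-(\Theta-\zeta)^{-1}=-(\Theta'-\zeta)^{-1}\,\overline{(\Theta'-\Theta)}\,(\Theta-\zeta)^{-1}
\end{equation*}
is meaningful, because $(\Theta-\zeta)^{-1}$ maps $\cH$ into the domain of $\Theta'-\Theta$. If $\overline{\Theta'-\Theta}\in\mathfrak S_p$, the ideal property forces the left-hand side into $\mathfrak S_p$, and \eqref{2.32} follows from part (i). When moreover $\Theta',\Theta\in\kB(\cH)$ the two resolvents $(\Theta'-\zeta)^{-1}$ and $(\Theta-\zeta)^{-1}$ are boundedly invertible, so the displayed identity is reversible and gives the asserted equivalence with $\Theta'-\Theta\in\mathfrak S_p$.

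I expect the main obstacle to lie in the second paragraph: carrying out the resolvent calculus for linear \emph{relations} rather than operators, i.e.\ proving the shifted identity $R_C-R_\zeta=R_CWR_\zeta=R_\zeta WR_C$ and the invertibility of $I-WR_\zeta$ directly from the subspace definition of $(\Theta-C)^{-1}$, and in part (ii) justifying the factorization through the closure $\overline{\Theta'-\Theta}$ when the operator parts are unbounded, where one must check that the resolvents map into a core on which the identity holds before passing to the $\mathfrak S_p$-bounded closure.
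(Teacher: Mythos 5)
The paper does not actually prove this proposition --- it is imported from \cite{DerMal91} with only the remark that it ``is deduced from \eqref{2.8}'' --- and your argument carries out exactly that deduction correctly: the $\gamma$-field factors are removed using that $\gamma(z)$ has a bounded left inverse and $\gamma(\overline z)^*$ a bounded right inverse together with the two-sided ideal property of $\mathfrak S_p$, the passage from the operator argument $M(z)$ to the scalar $\zeta$ is handled by the shifted resolvent identity for relations and the resulting boundedly invertible factors $I+R'_CW$ and $I+WR_C$, and part (ii) follows from the second-resolvent factorization. The only cosmetic caveat is that $\dom(\Theta')=\dom(\Theta)$ forces $\mul(\Theta')=\mul(\Theta)$ only for self-adjoint relations (where $\mul=\dom^\perp$); for general closed relations this is instead supplied by the hypothesis that $\overline{\Theta'-\Theta}$ is a (bounded) operator, which already presupposes that the multivalued parts cancel.
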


\subsection{Extensions of  nonnegative
symmetric operators}\label{sss_II.1.5_nno}

In this subsection we assume that  the symmetric operator $A$ on $\mathfrak{H}$
is \emph{nonnegative}.
Then the set $\Ext_A(0,\infty)$ of all nonnegative
self-adjoint extensions of $A$ on $\mathfrak{H}$  is  not empty.
Moreover, there exists
a maximal nonnegative  extension $A_{\rm F}$,  called the \emph{Friedrichs'}
extension, and  a  minimal nonnegative
extension $A_{\rm K}$, called the \emph{Krein's} extension, in the set $\Ext_A(0,\infty)$ and
$$
(A_F+x)^{-1} \le (\widetilde A + x)^{-1} \le (A_K + x)^{-1},
\qquad x\in (0,\infty), \qquad  \widetilde A\in  \Ext_A(0,\infty).
$$
(For details we refer the reader to \cite[Chapter 8]{AkhGlz78}, \cite[Section 6.2.3]{Kato66} or \cite[Sections 13.3, 14,8]{Sch2012}.)
%%%%%%%%%%%%%%%%%%%%%%%%%%
   \begin{proposition}[\cite{DerMal91}]\label{prop_II.1.5_01}
Let  $\Pi = \{\cH,\Gamma_0,\Gamma_1\}$ be a boundary triplet for
$A^*$ such that $A_0\geq 0$ and let $M(\cdot)$ be the
corresponding Weyl function.

$(i)$ There exists a lower semibounded self-adjoint linear relation
$M(0)$ in $\cH$ which is  the strong resolvent limit of $M(x)$ as
$x\uparrow 0$. Moreover,  $M(0)$ is associated with the closed
quadratic form
    \begin{eqnarray*}
\mathfrak t_0[h]:= \lim_{x\uparrow 0}\bigl(M(x)h,h\bigr), \quad
\dom(\mathfrak t_0)=\{h: \lim_{x\uparrow 0}\bigl(M(x)h,
h\bigr)<\infty\}=\dom\bigl((M(0) - M(-a))^{1/2}\bigr).
    \end{eqnarray*}

(ii) The Krein extension $A_K$ is given by
    \begin{equation}\label{2.10}
A_K = A^*\!\upharpoonright \dom(A_K),\qquad
\dom(A_K)=\{f\in \dom (A^\ast):\{\Gamma_0f, \Gamma_1 f\}\in M(0)\}.
    \end{equation}
The extensions $A_K$ and $A_0$ are disjoint if and only if
$M(0)\in {\mathcal C}(\cH)$. In this case  \
$\dom(A_K)=\ker\bigl(\Gamma_1-M(0)\Gamma_0\bigr).$

$(iii)$  $A_0=A_{\rm F}$\ if and only if \  $\lim_{x\downarrow-\infty}(M(x)f,f)=-\infty$ for
$f\in\cH\setminus\{0\}$.

$(iv)$  $A_0=A_K$ if and only if \  \ $\lim_{x\uparrow0}(M(x)f,f)=+\infty$ for  $f\in\cH\setminus\{0\}$.

       \end{proposition}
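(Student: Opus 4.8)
The plan is to exploit the monotonicity of the Weyl function on $(-\infty,0)\subset\rho(A_0)$ together with the parametrization of $\Ext_A$ from Proposition \ref{propo}. From the standard identity $M'(x)=\gamma(x)^*\gamma(x)\ge0$ the family $\{M(x)\}_{x<0}$ of bounded self-adjoint operators is non-decreasing, and for $x\in[-a,0)$ we have $M(x)\ge M(-a)\ge-\|M(-a)\|I$. Hence the quadratic forms $\mathfrak t_x[h]:=(M(x)h,h)$ are non-decreasing in $x$ and uniformly bounded below. To prove (i) I would apply the monotone-convergence theorem for closed semibounded forms to $\mathfrak t_x-(M(-a)\cdot,\cdot)\ge0$: their increasing limit is again closed and lower semibounded, with form domain $\{h:\mathfrak t_0[h]<\infty\}$, and it is the form of a self-adjoint relation $M(0)$ whose multivalued part is $(\dom\mathfrak t_0)^\perp$. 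The same theorem yields $M(x)\to M(0)$ in the strong resolvent sense, and the square-root description of form domains gives $\dom(\mathfrak t_0)=\dom\bigl((M(0)-M(-a))^{1/2}\bigr)$.

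For (ii), note first that $M(0)$ is self-adjoint, so $A_{M(0)}$ is self-adjoint by Proposition \ref{propo}(ii). By Proposition \ref{prop_II.1.4_spectrum}(i), an extension $A_\Theta$ is nonnegative precisely when $\Theta-M(x)$ is boundedly invertible for every $x<0$; since $M(x)\uparrow M(0)$, this forces, for each $t>0$, the relation inequality
\[
(\Theta-M(-t))^{-1}\le(M(0)-M(-t))^{-1}.
\]
Inserting this into Krein's formula \eqref{2.8} at $z=-t$,
\[
(A_\Theta+t)^{-1}=(A_0+t)^{-1}+\gamma(-t)\,(\Theta-M(-t))^{-1}\,\gamma(-t)^*,
\]
shows both that $A_{M(0)}$ is nonnegative and that $(A_\Theta+t)^{-1}\le(A_{M(0)}+t)^{-1}$ for every nonnegative $A_\Theta$; thus $A_{M(0)}$ is the minimal nonnegative extension, i.e.\ $A_{M(0)}=A_K$. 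Disjointness of $A_K$ and $A_0$ is, by Proposition \ref{propo}(iii), equivalent to $\Theta=M(0)$ being the graph of a closed operator, i.e.\ $M(0)\in\mathcal C(\cH)$, in which case \eqref{bijop} gives $\dom(A_K)=\ker(\Gamma_1-M(0)\Gamma_0)$.

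Parts (iii) and (iv) follow by comparing the above with the relation attached to $A_0$ itself: under \eqref{s-aext}, $A_0=A^*\!\upharpoonright\ker\Gamma_0$ corresponds to the pure relation $\Theta_0=\{0\}\oplus\cH$, since $\Gamma_1$ maps $\ker\Gamma_0$ onto $\cH$. Then (iv) is immediate from (ii): $A_0=A_K$ iff $M(0)=\Theta_0$, i.e.\ $\dom(\mathfrak t_0)=\{0\}$, i.e.\ $\lim_{x\uparrow0}(M(x)h,h)=+\infty$ for all $h\ne0$. For (iii) I would run the dual construction as $x\to-\infty$: the family $M(x)$ is then non-increasing and bounded above, so the strong resolvent limit $M(-\infty)$ of $M(x)$ as $x\to-\infty$ exists as a self-adjoint relation, and exactly as in (ii) --- now using $M(-\infty)\le M(-t)$, so that the perturbation term in \eqref{2.8} is nonpositive --- one obtains $A_0\le A_{M(-\infty)}$ and that $A_{M(-\infty)}$ is the \emph{maximal} nonnegative extension, i.e.\ $A_F=A_{M(-\infty)}$. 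Hence $A_0=A_F$ iff $M(-\infty)=\Theta_0=\{0\}\oplus\cH$, and since the strong resolvent limit of the monotone family equals $\{0\}\oplus\cH$ exactly when $\lim_{x\to-\infty}(M(x)h,h)=-\infty$ for all $h\ne0$, this is the stated criterion.

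The analytic heart, and the main obstacle, is part (i): passing to the monotone limit at the level of self-adjoint linear \emph{relations} rather than operators, correctly identifying $\mul(M(0))=(\dom\mathfrak t_0)^\perp$, and upgrading form convergence to strong resolvent convergence. The second delicate point is justifying the relation inequality $(\Theta-M(-t))^{-1}\le(M(0)-M(-t))^{-1}$ (and its analogue at $-\infty$) from nonnegativity; this is where the bounded invertibility of $M(0)-M(-t)$ and the careful handling of multivalued parts enter, and it is controlled throughout by the monotonicity of $M$ together with Proposition \ref{prop_II.1.4_spectrum} and Krein's formula \eqref{2.8}.
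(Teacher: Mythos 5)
A preliminary remark: the paper offers no proof of this proposition at all --- it is imported wholesale from \cite{DerMal91} (see also \cite{Sch2012}) --- so the only meaningful comparison is with the argument in those sources. Your sketch does follow their overall strategy: monotonicity $M'(x)=\gamma(x)^*\gamma(x)\ge 0$, monotone form convergence for (i), the order-theoretic description of the nonnegative extensions for (ii), and the limits at $0$ and $-\infty$ for (iii)--(iv). Parts (i) and (iv) are essentially complete as sketched. The two steps you yourself call ``delicate'' are, however, genuine gaps as written.

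In (ii) you pass from ``$A_\Theta\ge0$, i.e.\ $0\in\rho(\Theta-M(x))$ for all $x<0$'' to the inequality $(\Theta-M(-t))^{-1}\le(M(0)-M(-t))^{-1}$ by asserting that $M(x)\uparrow M(0)$ ``forces'' it. It does not: bounded invertibility of $\Theta-M(x)$ along $(-\infty,0)$ carries no sign information about $\Theta-M(x)$. What is needed is that $\Theta-M(x)\ge0$ for one $x<0$ (whence $\Theta\ge M(0)$ by letting $x\uparrow0$, and then antitonicity of the inverse), and the only route to this is the invariance of the number of negative squares, $\kappa_-(\Theta-M(x))=\kappa_-(A_\Theta)=0$ for $x<0$ --- essentially Proposition \ref{prkf}, itself one of the nontrivial results of \cite{DerMal91}, which cannot be taken for granted here; one must also check that $M(0)-M(-t)$ is \emph{uniformly} positive (via $\gamma(s)^*\gamma(s)\ge c_sI>0$) before inverting it as a relation. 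In (iii) the family $M(x)$ is \emph{decreasing} as $x\to-\infty$; for decreasing families of closed forms the limit form need not be closed, so $M(-\infty)$ must be produced from the resolvents, and the equivalence between ``$M(-\infty)=\{0\}\oplus\cH$'' and ``$(M(x)h,h)\to-\infty$ for every $h\neq0$'' is precisely the delicate point rather than a formality. Worse, your Krein-formula computation only yields the upper bound $(A_{M(-\infty)}+t)^{-1}\le(A_0+t)^{-1}$; it does not show $(A_{M(-\infty)}+t)^{-1}\ge0$, i.e.\ that $A_{M(-\infty)}$ is a nonnegative extension at all, which you need before it can be identified with $A_F$. The classical proof of (iii) bypasses $M(-\infty)$ entirely: it uses $\tfrac{d}{dx}(M(x)h,h)=\|\gamma(x)h\|^2$ to decide whether $\gamma(x_0)h$ belongs to $\dom(\mathfrak t_{A_0})$, together with Krein's characterization $\dom(\mathfrak t_{A_F})\cap\mathfrak N_{x_0}=\{0\}$.
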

%%%%%%%%%%%%%%%%%%%%%%%%%%
If $A_\Theta$ is lower semibounded, then  $\Theta$ is lower semibounded  too.  The converse is
not true in general. In order to state the corresponding result we
introduce the following definition.

We shall say that $M(\cdot)$ \emph{tends uniformly  to $-\infty$} as
$x\to-\infty$ if for any $a>0$ there exists $x_a<0$ such that
$M(x_a) < -a\cdot I_{\cH}$. In this case we write
$M(x)\rightrightarrows-\infty$ as $x\to-\infty$.
%
%%%%%%%%%%%%%%%%%%%%%%%%%%%%%%%%
        \begin{proposition}[\cite{DerMal91}]\label{prop_II.1.5_02}
Suppose that $A$ is a non-negative symmetric operator on $\mathfrak{H}$ and $\Pi=\{\cH,\Gamma_0,\Gamma_1\}$ is a boundary triplet
for $A^*$ such that $A_0=A_{\rm F}$. Let $M$ be
the corresponding Weyl function. Then the  two  assertions
\item $(i)$ \ a linear relation $\Theta\in \widetilde{\mathcal
C}_{\rm self}(\cH)$ is semibounded below, 
\item $(ii)$ \ a self-adjoint extension $A_\Theta$ is semibounded
below,
\item are equivalent if and only if
$M(x)\rightrightarrows-\infty$ for $x\to-\infty$.
    \end{proposition}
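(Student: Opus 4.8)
The plan is to prove the two directions separately, noting first that the implication (ii)$\Rightarrow$(i) holds unconditionally (it is recorded in the paragraph preceding the statement), so the whole assertion reduces to: the implication (i)$\Rightarrow$(ii) holds for every $\Theta\in\widetilde{\mathcal C}_{\rm self}(\cH)$ if and only if $M(x)\rightrightarrows-\infty$. Throughout I would use that, for $x\in\rho(A_0)$, one has $x\in\rho(A_\Theta)\Leftrightarrow 0\in\rho(\Theta-M(x))$, which follows from the Krein resolvent formula \eqref{2.8}. Since $A_0=A_{\rm F}\ge 0$, every $x<0$ lies in $\rho(A_0)$, so semiboundedness of the self-adjoint operator $A_\Theta$ from below is equivalent to $\Theta-M(x)$ being boundedly invertible for all sufficiently negative $x$.

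For \emph{sufficiency}, assume $M(x)\rightrightarrows-\infty$ and let $\Theta$ be semibounded below, say $\langle h',h\rangle\ge\beta\|h\|^2$ for all $\{h,h'\}\in\Theta$. Choosing $a$ with $M(x)\le(\beta-1)I$ for all $x\le -a$, one gets for such $x$ that $\Theta-M(x)$ is a self-adjoint relation with $\langle h'-M(x)h,h\rangle\ge\|h\|^2$; a self-adjoint relation with a positive lower bound has $0$ in its resolvent set, since its operator part is boundedly invertible while its multivalued part only contributes $\ker=\{0\}$ and $\ran=\cH$. Hence $(-\infty,-a)\subset\rho(A_\Theta)$ and $A_\Theta\ge -a$. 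This is the routine half.

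For \emph{necessity} I argue by contraposition: if $M(x)\not\rightrightarrows-\infty$, I construct a single $\Theta$ that is semibounded below while $A_\Theta$ is not, which breaks the equivalence. The negation gives a constant $a_0>0$ with $\max\sigma(M(x))\ge -a_0$ for all $x<0$. On the other hand, since $A_0=A_{\rm F}$, Proposition \ref{prop_II.1.5_01}(iii) yields $\langle M(x)f,f\rangle\to-\infty$ for every $f\ne 0$, hence uniformly on the unit sphere of any finite-dimensional subspace. The idea is then to build inductively an orthonormal system $\{e_n\}$ and points $x_n\downarrow-\infty$ such that each $e_n$ is an approximate eigenvector of $M(x_n)$ with eigenvalue $\theta_n:=\langle M(x_n)e_n,e_n\rangle\ge -a_0-o(1)$, i.e. $\|(M(x_n)-\theta_n)e_n\|\to 0$; one takes $e_n$ in the top spectral subspace $E_{[-a_0,\infty)}(M(x_n))$, which is nonzero, and arranges orthogonality to $e_1,\dots,e_{n-1}$ using that the form on that finite-dimensional span is already below $-a_0-1$ once $x_n$ is sufficiently negative, so the high-energy directions must have escaped it. Setting $\Theta$ to be the bounded diagonal self-adjoint operator $\Theta e_n=\theta_n e_n$ (and $0$ on the orthogonal complement), $\Theta$ is semibounded below by $-a_0-1$, while $\|(\Theta-M(x_n))e_n\|\to 0$ forces $0\in\sigma(\Theta-M(x_n))$, hence $x_n\in\sigma(A_\Theta)$; as $x_n\to-\infty$, $A_\Theta$ is not semibounded below.

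The hard part is exactly this inductive construction of orthonormal approximate eigenvectors. The structural fact making it possible is that no single direction can keep $M(x)$ bounded below (by the pointwise divergence coming from $A_0=A_{\rm F}$), so the directions realizing $\max\sigma(M(x))\ge -a_0$ must migrate and become asymptotically orthogonal to any fixed finite-dimensional subspace; turning this into genuinely orthonormal vectors that are simultaneously approximate eigenvectors requires a careful gliding-hump estimate controlling the cross terms between the projection onto $\mathrm{span}\{e_1,\dots,e_{n-1}\}$ and its complement. I would handle these conveniently by writing $M(x)=C_0I-P(x)$ with $P(x)\ge 0$ for $x<0$ and tracking $\|P(x)^{1/2}\,\cdot\|$. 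I expect this quadratic-form bookkeeping, rather than any conceptual difficulty, to be the main obstacle.
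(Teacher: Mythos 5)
The paper does not prove this statement; it is quoted from \cite{DerMal91}, so your proposal has to stand on its own. Your sufficiency half is fine: $M(x)\le(\beta-1)I$ for $x\le -a$ together with the equivalence $x\in\rho(A_\Theta)\Leftrightarrow 0\in\rho(\Theta-M(x))$ (valid for $x\in\rho(A_0)=\rho(A_F)\supset(-\infty,0)$) gives $(-\infty,-a)\subset\rho(A_\Theta)$, hence $A_\Theta\ge -a$.

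The necessity half has a genuine gap, and moreover takes a needlessly hard route. First, the step you yourself flag as "the hard part" is not an innocuous bookkeeping issue: to make the top spectral subspace $E_{[-a_0,\infty)}(M(x_n))\cH$ almost orthogonal to $L=\mathrm{span}\{e_1,\dots,e_{n-1}\}$ you would estimate, for a unit vector $g$ in that subspace with $g=g_1+g_2$, $g_1=P_Lg$, the quantity $\|g_1\|$ via the nonnegative form $P(x)=C_0I-M(x)$; but the cross term $\langle P(x)g_1,g_2\rangle$ involves $\|P(x)^{1/2}g_2\|$, which is \emph{not} uniformly bounded as $x\to-\infty$ precisely because $M(x)$ has no uniform lower bound. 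So the claim that the high-energy directions "must have escaped" $L$ in the Hilbert-space inner product does not follow from the stated ingredients. Second, even granting the construction, $\|(\Theta-M(x_n))e_n\|\to 0$ as $n\to\infty$ only yields $\mathrm{dist}(0,\sigma(\Theta-M(x_n)))\to 0$; it does not put $0$ in $\sigma(\Theta-M(x_n))$ for any fixed $n$, so the conclusion $x_n\in\sigma(A_\Theta)$ is a non sequitur as written. Both problems disappear if you use the order-theoretic criterion instead of spectral inclusion: applying Proposition \ref{prkf}(i) to the shifted operator $A-x$ (which is nonnegative with Friedrichs extension $A_0-x$ and Weyl function $M(\cdot+x)$) gives, for $x\le 0$, the equivalence $A_\Theta\ge x\Longleftrightarrow \Theta\ge M(x)$ in the form sense \eqref{4.9A}. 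If $M(x)\not\rightrightarrows-\infty$, there is $a>0$ such that for every $x<0$ some unit vector $h_x$ satisfies $\langle M(x)h_x,h_x\rangle\ge -a$; then the single bounded (hence semibounded) choice $\Theta=-(a+1)I$ violates $\Theta\ge M(x)$ for every $x<0$, so $A_\Theta$ is bounded below by no $x$, i.e.\ is not semibounded. No orthonormal system, approximate eigenvectors, or gliding-hump argument is needed.
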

%%%%%%%%%%%%%%%%%%%%%%%%%%%
Recall that the order relation  for lower semibounded self-adjoint operators $T_1,T_2$  is defined by
   \begin{equation}\label{4.9A}
T_1 \ge T_2 \quad \text{if} \quad \dom(\mathfrak t_{T_1})
\subset \dom(\mathfrak t_{T_2})\quad \text{and}\quad  \mathfrak
t_{T_1}[u]\ge\mathfrak t_{T_2}[u],\quad  u\in\dom(\mathfrak t_{T_1}),
      \end{equation}
where $\mathfrak t_{T_j}$ is the quadratic form associated with $T_j$.

If $T$ is a self-adjoint operator with spectral measure $E_T$,   put $\kappa_-(T)
:= \dim\ran\bigl(E_T(-\infty,0)\bigr)$. For a self-adjoint relation
$\Theta$ we set
$\kappa_-(\Theta) := \kappa_-(\Theta_{op}),$ where $\Theta_{op}$ is
the operator part of $\Theta$. For a quadratic form $\mathfrak t$ we
denote by $\kappa_-(\mathfrak t)$ the number of negative squares of
$\mathfrak t$ (cf. \cite{Mal92}).
   \begin{proposition}[\cite{DerMal91}]\label{prkf}
Suppose $A$ is a densely defined nonnegative symmetric operator
on $\gotH$ and    $\Pi=\{\kH,\gG_0,\gG_1\}$ is a  boundary
 triplet for  $A^*$ such that $A_0=A_F$.
Let    $M$ be the
Weyl function of this boundary triplet and let $\Theta$ be a self-adjoint relation on $\kH$.
Then:

$(i)$  The self-adjoint  extension $A_\Theta $   
is nonnegative if and only if  $\Theta \ge M(0)$.

$(ii)$   If $A_\Theta$ is lower semibounded and  $\dom(\mathfrak
t_{\Theta})\subset \dom(\mathfrak t_{M(0)})$, then
$\kappa_{-}(A_\Theta) = \kappa_{-}(\mathfrak t_\Theta - \mathfrak
t_{M(0)})$. If,  in  addition,  $M(0)\in\kB(\cH)$,   then   
$\kappa_{-}(A_\Theta)=\kappa_{-}(\Theta-M(0))$.
   \end{proposition}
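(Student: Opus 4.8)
The plan is to evaluate the Krein-type resolvent formula \eqref{2.8} at real points $x<0\in\rho(A_0)$ and combine it with the maximality of the Friedrichs extension and the monotonicity of the Weyl function. The basic observation is that, since $A_0=A_F\ge0$, for every $x<0$ both $A_0-x$ and (whenever $A_\Theta\ge x$) $A_\Theta-x$ are nonnegative self-adjoint extensions of $A-x$, and $A_0-x=(A-x)_F$ is the Friedrichs extension of $A-x$; hence by the ordering recalled in Section~\ref{sss_II.1.5_nno}, $A_\Theta-x\le A_0-x$ in the sense \eqref{4.9A}, i.e. $(A_0-x)^{-1}\le(A_\Theta-x)^{-1}$. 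For the forward implication of (i) I assume $A_\Theta\ge0$; then $A_\Theta-x>0$ for $x<0$, so this comparison together with \eqref{2.8} gives $\gamma(x)\bigl(\Theta-M(x)\bigr)^{-1}\gamma(x)^*=(A_\Theta-x)^{-1}-(A_0-x)^{-1}\ge0$. Since $\gamma(x)=\bigl(\Gamma_0\!\upharpoonright\mathfrak N_x\bigr)^{-1}$ is injective, $\gamma(x)^*$ has dense range and I conclude $\bigl(\Theta-M(x)\bigr)^{-1}\ge0$, hence $\Theta-M(x)\ge0$, for every $x<0$. Letting $x\uparrow0$ and using the description $\mathfrak t_{M(0)}[h]=\lim_{x\uparrow0}(M(x)h,h)$ of Proposition~\ref{prop_II.1.5_01}(i) yields $\dom(\mathfrak t_\Theta)\subseteq\dom(\mathfrak t_{M(0)})$ and $\mathfrak t_\Theta\ge\mathfrak t_{M(0)}$, that is $\Theta\ge M(0)$.

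For the converse of (i) I would use that the Krein extension $A_K=A_{M(0)}$ (Proposition~\ref{prop_II.1.5_01}(ii)) is nonnegative, so $x\in\rho(A_K)$ for every $x<0$; by Proposition~\ref{prop_II.1.4_spectrum}(i) applied with $\Theta=M(0)$ this means $0\in\rho\bigl(M(0)-M(x)\bigr)$. As $M$ is operator-monotone on $(-\infty,0)$ we also have $M(0)-M(x)\ge0$, so in fact $M(0)-M(x)\ge\varepsilon_xI$ for some $\varepsilon_x>0$. Writing $\Theta-M(x)=\bigl(\Theta-M(0)\bigr)+\bigl(M(0)-M(x)\bigr)$ and using $\Theta-M(0)\ge0$ (which includes $\dom(\mathfrak t_\Theta)\subseteq\dom(\mathfrak t_{M(0)})$), I obtain $\Theta-M(x)\ge\varepsilon_xI$, hence $0\in\rho\bigl(\Theta-M(x)\bigr)$ and therefore $x\in\rho(A_\Theta)$ by Proposition~\ref{prop_II.1.4_spectrum}(i), for every $x<0$. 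Thus $\sigma(A_\Theta)\subseteq[0,\infty)$, i.e. $A_\Theta\ge0$. This direction is comparatively soft, the decisive input being the strict positivity of $M(0)-M(x)$ forced by the nonnegativity of $A_K$.

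Part (ii) rests on the counting identity
\[
\dim\ran E_{A_\Theta}(-\infty,x)=\kappa_-\bigl(\Theta-M(x)\bigr),\qquad x<0,
\]
which I would establish from the form version of the resolvent formula. Using $\Gamma_0\gamma(x)=I$, each $f$ in the form domain of $A_\Theta$ decomposes as $f=f_x+\gamma(x)\Gamma_0f$ with $f_x\in\dom(\mathfrak t_{A_0})=\dom(\mathfrak t_{A_F})$, and a computation based on the Green identity \eqref{GI} gives the Krein--Birman--Vishik decomposition $\mathfrak t_{A_\Theta-x}[f]=\mathfrak t_{A_0-x}[f_x]+\mathfrak t_{\Theta-M(x)}[\Gamma_0f]$. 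Since $A_0-x\ge-x>0$, the first summand is nonnegative, so the negative index of $\mathfrak t_{A_\Theta-x}$ is carried entirely by the boundary form $\mathfrak t_{\Theta-M(x)}$, and the surjectivity of $\Gamma_0$ identifies the two negative indices, yielding the displayed identity via the variational (Glazman) description of $\dim\ran E_{A_\Theta}(-\infty,x)$. Letting $x\uparrow0$, the left side increases to $\dim\ran E_{A_\Theta}(-\infty,0)=\kappa_-(A_\Theta)$, while on the right the forms $\mathfrak t_\Theta-\mathfrak t_{M(x)}$ decrease monotonically (as $M(x)\uparrow M(0)$) to $\mathfrak t_\Theta-\mathfrak t_{M(0)}$ on $\dom(\mathfrak t_\Theta)\subseteq\dom(\mathfrak t_{M(0)})$, so their negative indices converge to $\kappa_-(\mathfrak t_\Theta-\mathfrak t_{M(0)})$; this gives $\kappa_-(A_\Theta)=\kappa_-(\mathfrak t_\Theta-\mathfrak t_{M(0)})$. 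When moreover $M(0)\in\kB(\kH)$, the form $\mathfrak t_{M(0)}$ is bounded with full domain, so $\mathfrak t_\Theta-\mathfrak t_{M(0)}$ is the form of the self-adjoint relation $\Theta-M(0)$ and $\kappa_-(\mathfrak t_\Theta-\mathfrak t_{M(0)})=\kappa_-(\Theta-M(0))$.

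The main obstacle will be the counting identity in part (ii): establishing the form decomposition rigorously at the level of \emph{form} domains (rather than operator domains), showing that the cross terms indeed vanish and that $\Gamma_0$ induces a genuine correspondence between maximal negative subspaces of $\mathfrak t_{A_\Theta-x}$ and of $\mathfrak t_{\Theta-M(x)}$, valid also when the negative index is infinite or $A_\Theta$ has continuous spectrum below $0$. A secondary technical point is the continuity of the negative index under the monotone form limit $x\uparrow0$, which has to be extracted from Proposition~\ref{prop_II.1.5_01}(i) and the monotonicity of $M$.
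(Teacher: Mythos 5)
The paper gives no proof of Proposition \ref{prkf} at all: it is imported verbatim from \cite{DerMal91}, so there is no internal argument to compare yours against. Judged on its own, your reconstruction is correct in outline and is in fact the standard proof from the cited literature. For (i), the forward direction (Krein formula \eqref{2.8} at $x<0$, maximality of $A_F$ to get $(A_0-x)^{-1}\le(A_\Theta-x)^{-1}$, surjectivity of $\gamma(x)^*$ to pass positivity to $(\Theta-M(x))^{-1}$, then the monotone limit $M(x)\uparrow M(0)$ via Proposition \ref{prop_II.1.5_01}(i)) is exactly right; for the converse, your key step $0\in\rho\bigl(M(0)-M(x)\bigr)$ is correct, though note that Proposition \ref{prop_II.1.4_spectrum} is stated only for simple $A$ --- harmless here, since the equivalence $x\in\rho(A_\Theta)\Leftrightarrow 0\in\rho(\Theta-M(x))$ for $x\in\rho(A_0)$ needs no simplicity (it comes straight from \eqref{2.8}), and strict positivity $M(0)-M(x)\ge\varepsilon_x I$ then propagates through $\Theta-M(x)=(\Theta-M(0))+(M(0)-M(x))$ as you say.

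For (ii), the counting identity you flag as the main obstacle is precisely \cite[Theorem 1]{Mal92}: the decomposition $\dom(\mathfrak t_{A_\Theta})=\dom(\mathfrak t_{A_0})\dotplus\gamma(x)\dom(\mathfrak t_\Theta)$ together with $\mathfrak t_{A_\Theta-x}[f]=\mathfrak t_{A_0-x}[g]+\mathfrak t_{\Theta-M(x)}[h]$ for $f=g+\gamma(x)h$, which this paper itself invokes (in that exact form) in the proof of Theorem \ref{prop4.20}(iv). So you are right that this is where the real work sits, and right that it is available. The limit $x\uparrow0$ is also sound: since $\mathfrak t_\Theta-\mathfrak t_{M(x)}$ decreases pointwise to $\mathfrak t_\Theta-\mathfrak t_{M(0)}$ on $\dom(\mathfrak t_\Theta)\subset\dom(\mathfrak t_{M(0)})$, one gets $\kappa_-(\mathfrak t_\Theta-\mathfrak t_{M(x)})\le\kappa_-(\mathfrak t_\Theta-\mathfrak t_{M(0)})$ immediately, and the reverse inequality in the limit by a Dini argument on finite-dimensional negative subspaces; on the left, $\dim\ran E_{A_\Theta}(-\infty,x)$ increases to $\kappa_-(A_\Theta)$. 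In short: no gap in the ideas, only the (acknowledged, and citable) technical debt of the form decomposition.
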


\subsection{Absolutely continuous spectrum and the Weyl function}\label{prelim4}

 In what follows we will denote
  \begin{equation*}
M_h(z):=(M(z)h,h),\quad z\in\mathbb{C}_+,\quad \text{and} \quad
M_h(x + i0) := \lim_{y\downarrow0}M_h(x+iy), \quad
h\in\mathcal{H}.
  \end{equation*}
Since  $\imm(M_h(z))>0,\ z\in \mathbb{C}_+,$ the limit $M_h(x+i0)$
exists and is finite for  a.e. $x\in\mathbb{R}.$ We put
   \begin{equation*}
 \Omega_{ac}(M_h):=\{x\in\mathbb{R}:\,0<\imm M_h(x)<+\infty\}. \quad
\end{equation*}
We  also set $d_M(x):=\rank(\imm(M(x+i0)))\le \infty$ provided
that the weak limit  $M(x + i0) := w-\lim_{y\downarrow0}M(x+iy)$
exists.
     \begin{proposition}[\cite{BraMal02}]\label{spectrum}
Let $A$  be a simple densely defined closed  symmetric operator on
a  separable Hilbert space  $\gotH$  and
let $\Pi=\{\kH,\gG_0,\gG_1\}$ be a boundary triplet  for $A^*$ with Weyl function
$M$. Assume  that
$\{h_k\}_{k=1}^N,\quad 1\leq N\leq\infty,$  is a total set in
$\mathcal{H}$. Recall that $A_0$ is the self-adjoint operator defined by
$A_0=A^*\upharpoonright\ker(\Gamma_0)$.

$(i)$ $A_0$ has  no point spectrum in the
interval $(a,b)$ if  and only if
$\lim\limits_{y\downarrow 0}yM_{h_k}(x+iy)=0$ for all $ x\in(a,b)$ and $k\in \{1,2,..,N\}.$

$(ii)$  $A_0$ has  no singular  continuous spectrum in the   interval  $(a,b)$ if
the set $(a,b)\setminus\Omega_{ac}(M_{h_k})$ is
 countable for each $k\in\{1,2,..,N\}$.
  \end{proposition}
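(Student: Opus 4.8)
The plan is to read off the spectral type of $A_0$ from the boundary behaviour of the scalar Nevanlinna functions $M_{h_k}$, exploiting that the defect subspace $\mathfrak N_i$ is a \emph{generating} subspace for $A_0$ because $A$ is simple. First I would record the two standard boundary-triplet identities $\imm M(z) = (\imm z)\,\gamma(z)^*\gamma(z)$ and $\gamma(z) = \bigl(I + (z-i)(A_0-z)^{-1}\bigr)\gamma(i)$ for $z\in\rho(A_0)$, where $\gamma$ is the $\gamma$-field of $\Pi$. Combining them with the spectral theorem for $A_0$, for each $h\in\kH$ and $z=x+iy$, $y>0$, one computes
\[
\imm M_h(x+iy) = y\,\|\gamma(x+iy)h\|^2 = \int_{\R}\frac{y}{(t-x)^2+y^2}\,d\mu_h(t),
\]
where $d\mu_h(t):=(1+t^2)\,d\|E_{A_0}(t)\gamma(i)h\|^2$ and $E_{A_0}$ is the spectral measure of $A_0$. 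Thus $M_h$ is a scalar Nevanlinna function whose representing measure $\mu_h$ is, on every bounded interval, mutually absolutely continuous with the spectral measure $\rho_h:=\|E_{A_0}(\cdot)\gamma(i)h\|^2$ attached to the vector $\gamma(i)h$. This identity is the heart of the matter; the remaining steps are classical function theory and abstract spectral theory.

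Second I would show that the measures $\{\rho_{h_k}\}$ control the \emph{entire} spectral type of $A_0$. Since $\gamma(i)\colon\kH\to\mathfrak N_i$ is a topological isomorphism and $\{h_k\}$ is total in $\kH$, the vectors $\{\gamma(i)h_k\}$ are total in $\mathfrak N_i$. As $A$ is simple, $\Span\{\mathfrak N_z:z\in\C\setminus\R\}=\gotH$; together with the resolvent identity for $\gamma$ above this shows that the smallest $A_0$-reducing subspace containing $\mathfrak N_i$ is all of $\gotH$, i.e. $\mathfrak N_i$ is generating for $A_0$. Consequently, for any Borel set $\delta$ one has $E_{A_0}(\delta)=0$ if and only if $\|E_{A_0}(\delta)\gamma(i)h_k\|=0$ for all $k$, and the maximal spectral type of $A_0$ is dominated by $\sum_k 2^{-k}c_k\,\rho_{h_k}$ for suitable normalising constants $c_k>0$.

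Third I would apply the classical boundary theory of scalar Nevanlinna functions to each $M_{h_k}$. From the Poisson representation above, $\lim_{y\downarrow 0} y\,M_h(x+iy)=i\,\mu_h(\{x\})$, so the condition in (i) is equivalent to $\mu_{h_k}(\{x\})=0$ for all $k$, hence (by mutual absolute continuity on bounded intervals) to $\rho_{h_k}(\{x\})=0$ for all $k$, which by the previous paragraph is equivalent to $E_{A_0}(\{x\})=0$; letting $x$ range over $(a,b)$ yields (i). For (ii), the singular part of $\mu_h$ is carried by $\{x:\imm M_h(x+i0)=+\infty\}$, so if $(a,b)\setminus\Omega_{ac}(M_{h_k})$ is countable then $\mu_{h_k}$, and therefore $\rho_{h_k}$, has no singular continuous component in $(a,b)$; since the maximal spectral type is dominated by the countable sum $\sum_k 2^{-k}c_k\,\rho_{h_k}$, whose singular continuous part in $(a,b)$ then vanishes, $A_0$ has no singular continuous spectrum in $(a,b)$.

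The step I expect to be the main obstacle is the passage from "each $\rho_{h_k}$ has the desired property" to "$A_0$ has the desired property": one must justify that a total sequence in the generating subspace $\mathfrak N_i$ determines the absolutely continuous, singular continuous and point parts of $A_0$. This rests on the domination of the maximal spectral type by $\sum_k 2^{-k}c_k\,\rho_{h_k}$ together with the fact that the Lebesgue decomposition commutes with countable summation of measures. The simplicity of $A$ enters precisely to guarantee that $\mathfrak N_i$ is generating; without it the functions $M_{h_k}$ would only detect a reducing part of $A_0$.
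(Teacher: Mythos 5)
The paper does not prove this proposition itself; it is quoted verbatim from the cited reference \cite{BraMal02}, so there is no in-paper argument to compare against. Your reconstruction is correct and follows essentially the standard route of that reference: the identity $\imm M_h(x+iy)=y\|\gamma(x+iy)h\|^2$ expressed as a Poisson integral of the measure $(1+t^2)\,d\|E_{A_0}(t)\gamma(i)h\|^2$, simplicity of $A$ to make $\mathfrak N_i$ generating so that the total family $\{\gamma(i)h_k\}$ determines the maximal spectral type, and the classical Fatou/de la Vall\'ee Poussin boundary theory of Nevanlinna functions to read off atoms and the singular continuous part.
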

%%%%%%%%%%%%%%%%%%%%%%%%%%%%%%%%%%%%%%%%%%%%

To state the next proposition we need the concept of the
$ac$-closure $\cl_{ac}(\delta)$ of a Borel subset
$\delta\subset \R$ introduced independently in \cite{BraMal02} and \cite{Ges08}.
We refer to \cite{Ges08, MalNei09} for the definition of this
notion as well as for its basic properties.
%%%%%%%%%%%%%%%%%%%%%%%%%%%%%%%%%%%%%%%%%
       \begin{proposition}[\cite{MalNei11, MalNei09}]\label{Prop-ac}
Retain the assumptions of   Proposition  \ref{spectrum}. Let
$B$ be  a self-adjoint operator on $\mathcal{H}$, \
$A_B=A^*\upharpoonright\ker(\Gamma_1-B\Gamma_0),$ and $M_B(z) :=
(B-M(z))^{-1}.$

 $(i)$
 If the
limit $M(x+i0) := w-\lim\limits_{y\downarrow0}M(x+iy)$  exists
a.e. on $\mathbb{R}$, then
$\sigma_{ac}(A_0)=\cl_{ac}(\supp(d_M(x)))$.

$(ii)$ For  any  Borel subset   $\mathcal{D}\subset \mathbb{R}$
the  $ac$-parts $A_0E^{ac}_{A_0}(\mathcal{D})$ and
$A_BE^{ac}_{A_B}(\mathcal{D})$ of the operators
$A_0E_{A_0}(\mathcal{D})$  and $A_BE_{A_B}(\mathcal{D})$ are
unitarily equivalent if and only if\,\, $d_M(x)=d_{M_B}(x)$ a.e.
on $\mathcal{D}.$
\end{proposition}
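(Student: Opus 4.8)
The plan is to exploit that $M(\cdot)$ is an operator-valued Nevanlinna (Herglotz) function and to connect its boundary values on $\mathbb{R}$ with the absolutely continuous part of $A_0$ through a functional model. Since $A$ is simple (as assumed in Proposition \ref{spectrum}), the Weyl function determines the pair $(A^*,A_0)$ up to unitary equivalence, so all spectral data of $A_0$ can be read off from $M$. First I would write the Herglotz representation $M(z) = C_0 + C_1 z + \int_{\mathbb{R}}\left(\frac{1}{t-z} - \frac{t}{1+t^2}\right)d\Sigma(t)$ with an operator-valued measure $\Sigma$, and recall that the absolutely continuous part of $\Sigma$ has Radon--Nikodym density $\frac{1}{\pi}\imm(M(x+i0))$ for a.e.\ $x$. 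In the associated model $A_0$ acts as multiplication by the independent variable, so its ac part is unitarily equivalent to multiplication by $x$ on the direct integral $\int^{\oplus}\ran(\imm M(x+i0))\,dx$. This identifies the spectral multiplicity function of the ac part of $A_0$ with $d_M(x)=\rank(\imm M(x+i0))$, and the ac-spectrum as a set with the ac-closure of $\{x: d_M(x)>0\}=\supp(d_M)$, which is exactly claim (i).

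For the second claim I would reduce $A_B$ to the situation of (i) by a change of boundary triplet. Using the transformation recalled in the third item of the Remark following Definition \ref{bound}, the triplet $\Pi_B=\{\kH,\Gamma_0^B,\Gamma_1^B\}$ with $\Gamma_1^B=\Gamma_0$ and $\Gamma_0^B=B\Gamma_0-\Gamma_1$ is again a boundary triplet for $A^*$, and $A_B=A^*\upharpoonright\ker(\Gamma_0^B)$; that is, $A_B$ is the ``$A_0$-operator'' of $\Pi_B$. A direct computation using $\Gamma_1\upharpoonright\mathfrak{N}_z=M(z)\Gamma_0\upharpoonright\mathfrak{N}_z$ gives $\gamma^B(z)=\gamma(z)(B-M(z))^{-1}$ and $\Gamma_0\gamma(z)=I$, whence the Weyl function of $\Pi_B$ is precisely $M_B(z)=(B-M(z))^{-1}$. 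Applying part (i) to $\Pi_B$, the ac multiplicity function of $A_B$ equals $d_{M_B}(x)=\rank(\imm M_B(x+i0))$.

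Claim (ii) then follows from classical spectral multiplicity theory: two absolutely continuous self-adjoint operators are unitarily equivalent if and only if their spectral multiplicity functions agree a.e. Restricting to the Borel set $\mathcal{D}$, the ac parts $A_0 E_{A_0}^{ac}(\mathcal{D})$ and $A_B E_{A_B}^{ac}(\mathcal{D})$ are unitarily equivalent to multiplication by $x$ with multiplicities $d_M(x)$ and $d_{M_B}(x)$, respectively; hence they are unitarily equivalent precisely when $d_M(x)=d_{M_B}(x)$ for a.e.\ $x\in\mathcal{D}$.

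The main obstacle is the rigorous construction of the functional model in the first paragraph and the justification that the ac part of $A_0$ is faithfully represented by the ac part of $\Sigma$ with the stated multiplicity. This requires careful control of the boundary behaviour of the operator Herglotz function $M$ --- in particular that $w\text{-}\lim_{y\downarrow 0}M(x+iy)$ exists with $\imm M(x+i0)$ of locally finite rank a.e.\ (which is where the hypothesis of (i) enters) --- together with the measurable direct-integral decomposition underlying spectral multiplicity. The other delicate point is the handling of the ac-closure $\cl_{ac}$, so that in (i) one obtains equality of sets rather than merely equality modulo Lebesgue-null sets.
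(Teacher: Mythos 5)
The paper itself gives no proof of this proposition: it is imported verbatim from \cite{MalNei11, MalNei09}, so there is no internal argument to compare against. Your plan is, in outline, the route taken in those references: represent $M$ as an operator-valued Herglotz function, use the simplicity of $A$ to identify the spectral measure of $A_0$ with the measure $\Sigma$ in that representation (simplicity guarantees that $\gamma(i)\mathcal{H}=\mathfrak N_i$ generates all of $\gotH$ under $A_0$, so nothing is lost), read off the local multiplicity of the ac part as $d_M(x)=\rank(\imm M(x+i0))$, and reduce $A_B$ to the same situation via the transformed triplet whose Weyl function is $(B-M)^{-1}$; part (ii) is then the classical classification of ac operators by their multiplicity functions.

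Two points need attention before this closes. First, the proposition allows $B$ to be an arbitrary self-adjoint operator on $\mathcal H$, whereas the transformation $\Gamma_0^B=B\Gamma_0-\Gamma_1$, $\Gamma_1^B=\Gamma_0$ that you invoke yields an ordinary boundary triplet only for $B=B^*\in\kB(\kH)$. For unbounded $B$ the triplet $\Pi_B$ is only a \emph{generalized} boundary triplet for $A_*\subset A^*$ with $\dom(A_*)=\dom(A_0)+\dom(A_B)$ (the paper uses exactly this in the proof of Lemma \ref{lem4.22}), and one must verify that the identification of the ac multiplicity with $\rank(\imm M_B(x+i0))$ persists in that setting. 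Second, the step you flag as the main obstacle is in fact the entire mathematical content: the equality of the local spectral multiplicity of $A_0^{ac}$ with $\rank\bigl(\tfrac{d\Sigma^{ac}}{dx}(x)\bigr)=\rank(\imm M(x+i0))$ a.e.\ rests on a nontrivial theorem about operator-valued measures (the rank of the a.e.\ weak derivative computes the local multiplicity of the associated model operator), and in infinite dimensions the weak limit $M(x+i0)$ need not exist a.e.\ at all --- which is precisely why (i) carries that hypothesis and why one must also justify that $d_{M_B}$ is well defined a.e.\ before (ii) is meaningful. With those two items supplied, your argument is a correct skeleton of the proof in \cite{MalNei11}; as written it is a plan rather than a proof.
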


\section{Three-dimensional Schr\"{o}dinger  operator with  point  interactions}\label{d=3}

First we collect some notation and assumptions that will be kept
in this section. Throughout the section  we fix a sequence
$X=\{x_k\}_1^\infty$  of points  $x_k\in\bR^3$ satisfying
   \begin{align*}
d_\ast(X) = {\rm inf}_{k,j\in \N, k\neq j}~|x_k-x_j|>0,
  \end{align*}
denote  by $H$  the restriction of $-\Delta$  given by
 (\ref{min}), and set
    \begin{equation}\label{5.1}
\varphi_{j,z}(x) = \frac{e^{i\sqrt{z}|x-x_j|}}{|x-x_j|} \quad
\text{and}\quad e_{j,z}(x) =  e^{i\sqrt{z}|x-x_j|}, \quad \ z\in
\bC \backslash [0,+\infty),\quad  j\in \bN.
    \end{equation}
Clearly, $\varphi_j=\varphi_{j,-1}$ and $e_j=e_{j,-1}$.
Recall from Lemma \ref{lemma4.7} that $T_1$ is the bounded operator on $l^2(\bN)$ defined by the matrix
$\kT_1:= (2^{-1}e^{-|x_j-x_k|})_{j,k\in\bN}$.

 \subsection{Boundary  triplets  and Weyl  functions}\label{sec5.1}

The  following lemma is a special case of Example 14.3 in \cite{Sch2012}.
   \begin{lemma}\label{technicallemma}
Let $A$ be a densely defined closed symmetric operator on $\mathfrak
H$. Suppose that $\widetilde A$ is a self-adjoint extension of $A$
on $\mathfrak H$ and  $ - 1\in \rho(\widetilde  A)$. Then:
\begin{align*}
(i)\qquad\qquad\qquad\qquad \qquad\qquad \dom (A^\ast) = \dom A &{\dot+} \ker (A^\ast{+}I){\dot+}(\widetilde  A{+}I)^{-1}\mathfrak N_{-1},\qquad\qquad\qquad\qquad\quad\\
A^\ast(f_A + f_0+ (\widetilde  A{+}I)^{-1}f_1) & = Af_A - f_0+
\widetilde  A(\widetilde  A{+}I)^{-1}f_1 , 
   \end{align*}
where  $f_A\in \dom (A)$ and  $f_0, f_1\in  \mathfrak N_{-1}:=\ker (A^\ast{+}I).$ 

\item (ii) Define $\cH^\prime = \mathfrak N_{-1}$ and \ $\Gamma^\prime_j(f_A + f_0+ (\widetilde  A{+}I)^{-1}f_1))=f_j$ \ for $j=0,1$. Then
 $\Pi' = \{\cH^\prime,\Gamma^\prime_0,\Gamma^\prime_1\}$  forms a
boundary triplet for $A^*$.
  \end{lemma}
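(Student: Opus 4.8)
The plan is to establish (i) by a von Neumann-type decomposition argument and then read off (ii) from the explicit direct-sum structure. The starting point for (i) is the orthogonal decomposition $\mathfrak H = \ran(A+I)\oplus\mathfrak N_{-1}$. This is available precisely because $-1\in\rho(\widetilde A)$: for $f_A\in\dom A\subset\dom\widetilde A$ one has $(A+I)f_A=(\widetilde A+I)f_A$, hence $\|(A+I)f_A\|\ge\|(\widetilde A+I)^{-1}\|^{-1}\|f_A\|$, so $A+I$ is bounded below and $\ran(A+I)$ is closed. Since $\mathfrak N_{-1}=\ker(A^*+I)=\ran(A+I)^\perp$ for the densely defined closed symmetric operator $A$, the orthogonal decomposition follows.

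For the existence part of (i), given $f\in\dom(A^*)$ I would set $h:=(\widetilde A+I)^{-1}(A^*+I)f\in\dom\widetilde A\subset\dom(A^*)$. Because $A^*=\widetilde A$ on $\dom\widetilde A$, a direct check gives $(A^*+I)(f-h)=0$, so $f_0:=f-h\in\mathfrak N_{-1}$. Splitting $(A^*+I)f=(A+I)f_A+f_1$ according to $\mathfrak H=\ran(A+I)\oplus\mathfrak N_{-1}$ (with $f_A\in\dom A$, $f_1\in\mathfrak N_{-1}$) and applying $(\widetilde A+I)^{-1}$ yields $h=f_A+(\widetilde A+I)^{-1}f_1$, since $(\widetilde A+I)^{-1}(A+I)f_A=f_A$. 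Thus $f=f_A+f_0+(\widetilde A+I)^{-1}f_1$. Directness is obtained by applying $A^*+I$ to a vanishing combination: the $f_A$-term lands in $\ran(A+I)$, the $f_0$-term dies, and $(A^*+I)(\widetilde A+I)^{-1}f_1=f_1\in\mathfrak N_{-1}$; orthogonality of the two subspaces forces every summand to vanish. The action formula is then immediate from $A^*f_A=Af_A$, $A^*f_0=-f_0$, and $A^*(\widetilde A+I)^{-1}f_1=\widetilde A(\widetilde A+I)^{-1}f_1$.

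For (ii), surjectivity of $(\Gamma'_0,\Gamma'_1)$ is immediate from the direct-sum structure: given $(f_0,f_1)\in\mathfrak N_{-1}\oplus\mathfrak N_{-1}$, the vector $f_0+(\widetilde A+I)^{-1}f_1$ has boundary values $(f_0,f_1)$. For the abstract Green identity I would exploit bilinearity of the boundary form $[f,g]:=(A^*f,g)-(f,A^*g)$. Since $A$ is symmetric, $[f_A,g]=[f,g_A]=0$ whenever $f_A,g_A\in\dom A$, so only the $\mathfrak N_{-1}$-components contribute. Writing $u=(\widetilde A+I)^{-1}f_1$, $v=(\widetilde A+I)^{-1}g_1$ and using $A^*f_0=-f_0$, $A^*u=\widetilde Au$, $(\widetilde A+I)u=f_1$, one computes $[f_0,g_0]=0$, $[f_0,v]=-(f_0,g_1)$, $[u,g_0]=(f_1,g_0)$, and $[u,v]=0$ (the last by self-adjointness of $\widetilde A$). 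Summing gives $[f,g]=(f_1,g_0)-(f_0,g_1)=(\Gamma'_1f,\Gamma'_0g)-(\Gamma'_0f,\Gamma'_1g)$, as required.

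I expect the only genuinely delicate point to be the justification of the decomposition $\mathfrak H=\ran(A+I)\oplus\mathfrak N_{-1}$, that is, the closedness of $\ran(A+I)$; this is exactly where the hypothesis $-1\in\rho(\widetilde A)$ enters, and once it is in place the remaining steps are routine bookkeeping with the explicit formulas above.
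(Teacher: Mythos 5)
Your proposal is correct. For part (ii), which is the only part the paper actually proves (assertion (i) is cited as well known, with a reference to \cite[formula (14.17)]{Sch2012}), your argument is in substance the same as the paper's: the paper expands $\langle A^*f,g\rangle-\langle f,A^*g\rangle$ in one long chain of inner products, whereas you organize the computation via bilinearity of the boundary form $[\cdot,\cdot]$ into the four brackets $[f_0,g_0]$, $[f_0,v]$, $[u,g_0]$, $[u,v]$ — a cosmetic but arguably cleaner bookkeeping; the surjectivity observation is identical. What your write-up adds is a complete, self-contained proof of (i), which the paper omits: you correctly identify that the hypothesis $-1\in\rho(\widetilde A)$ is used to show $A+I$ is bounded below (via $(A+I)f_A=(\widetilde A+I)f_A$ on $\dom A$), hence $\ran(A+I)$ is closed and $\mathfrak H=\ran(A+I)\oplus\mathfrak N_{-1}$, after which existence, directness, and the action formula all follow by routine manipulation with $(\widetilde A+I)^{-1}$. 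All individual steps check out, including the injectivity of $A+I$ needed to conclude $f_A=0$ in the directness argument and the identity $(A^*+I)(\widetilde A+I)^{-1}f_1=f_1$.
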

%%%%%%%%%%%%%%%%%%%%%%%%%%%%%%%%%%%%%%%%%%%
%%%%%%%%%%%%%%%%%%%%%%%%%%%%%%%%%%%%%%%%%%%%%%%%%%%
   \begin{proof}
   Assertion (i) is  well known in extension theory (see e.g.  \cite[formula (14.17)]{Sch2012}), so we prove only assertion (ii).
Let $f=f_A+f_0+(I+\widetilde A)^{-1}f_1$\  and   $g = g_A + g_0
+(I+\widetilde A)^{-1}g_1$, where $f_0, f_1, g_0, g_1 \in \mathfrak
N_{-1}$. Then
%%%%%%%%%%%
%
%
   \begin{eqnarray}\label{4.21S}
\langle A^* f,g\rangle -\langle f,A^* g\rangle = \langle\widetilde
A(I+\widetilde A)^{-1}f_1,g_0\rangle  -  \langle
f_0,  (I+\widetilde A)^{-1}g_1\rangle  \nonumber    \\
+ \  \langle\widetilde A(I + \widetilde A)^{-1}f_1, (I +\widetilde
A)^{-1}g_1\rangle  -  \langle f_0,\widetilde A(I + \widetilde
A)^{-1}g_1\rangle  \nonumber  \\
+ \  \langle (I+\widetilde A)^{-1}f_1,  g_0\rangle   - \langle (I
+ \widetilde A)^{-1}f_1,  \widetilde A(I +\widetilde
A)^{-1}g_1\rangle    \nonumber  \\
=  - \  \langle f_0,(I+\widetilde A)(I+\widetilde
A)^{-1}g_1\rangle  +  \langle (I+\widetilde A)(I+\widetilde
A)^{-1}f_1, g_0\rangle  \nonumber  \\
= - \  \langle f_0,g_1\rangle_{\cH^\prime}  +  \langle f_1,
g_0\rangle_{\cH^\prime}  =  \langle \Gamma^\prime_1 f,\Gamma^\prime_0 g\rangle_{\cH^\prime} -
\langle \Gamma^\prime_0 f,\Gamma^\prime_1 g\rangle_{\cH^\prime}.
   \end{eqnarray}
The surjectivity of the mapping $(\Gamma^\prime_0,\Gamma^\prime_1)$ is
obvious.
  \end{proof}
%%%%%%%%%%%%%%%%%%%%%%%%%%%%%%%%%%%%%%%%%%%%%%%
%%%%%%%%%%%%%%%%%%%%%%%%%%%%%%%%%%%%%%%%%%%%%%%%%

Next we apply Lemma \ref{technicallemma}  to the minimal
Schr\"odinger operator $A=H.$
    \begin{proposition}\label{pr1}
Suppose $H$ is the minimal Schr\"odinger operator  defined by
\eqref{min} and  $d_*(X)>0.$  Let $T_1$ be  the bounded operator
on $l^2(\bN)$ defined by the matrix $\kT_1:=
(2^{-1}e^{-|x_j-x_k|})_{j,k\in\bN}$.  Then

$(i)$     $H$  is  a closed symmetric operator with deficiency
indices  $(\infty, \infty)$.  
The  defect subspace $\mathfrak {N}_{-1}  = \ker(H^* +I)$  is given by
    \begin{equation}\label{4.18}
\mathfrak N_{-1} =\left\{ \sum\limits_{j=1}^\infty c_j\varphi_{j}: \{c_j\}_1^\infty\in l^2(\N) \right\}  .
         \end{equation}

 $(ii)$   $\dom(H^*)$  is the direct sum of vector spaces $\dom H$, $\mathfrak N_{-1}$ and $(-\Delta+I)^{-1} \mathfrak N_{-1}$,  that is,
   \begin{align}\label{eq3}
\dom(H^*)& =\{ f = f_H + f_0 +(-\Delta{+}I)^{-1}f_1; f_H \in \dom H,~ f_0,f_1\in \mathfrak N_{-1} \}\\
&= \left\{ f = f_H +
\sum\limits^\infty_{j=1}\bigl(\xi_{0j}\varphi_j+\xi_{1j} e_j
\bigr) : f_H\in\dom H,~ \xi_{0} := \{\xi_{0j}\},\  \xi_{1} =
\{\xi_{1j}\}\in l^2(\N)\right\}, \nonumber
%%% \label{eq3i}
   \end{align}
\begin{equation}\label{H_max}
H^*f  = -\Delta f_H - f_0 + (-\Delta)(- \Delta {+} I)^{-1}f_1
=-\Delta f_H + \sum\limits^\infty_{j=1}\bigl(-\xi_{0j}\varphi_j +
\xi_{1j}(\varphi_j - e_j/2)\bigr).  
  \end{equation}

 $(iii)$   The triplet $\widetilde{\Pi}=
  \{\cH, \widetilde{\Gamma}_0, \widetilde{\Gamma}_1\}$, where
    \begin{equation}\label{4.37S}
\cH = l^2(\N), \qquad \widetilde{\Gamma}_0f = \xi_0,  \qquad
\widetilde{\Gamma}_1 f = T_1\xi_1, \qquad f\in  \dom(H^*),
    \end{equation}
is a boundary triplet for $H^*$.
   \end{proposition}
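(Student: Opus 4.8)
The plan is to derive all three parts from the abstract construction of Lemma~\ref{technicallemma}, applied with $A=H$ and $\widetilde A=-\Delta$ (legitimate since $\sigma(-\Delta)=[0,+\infty)$, so $-1\in\rho(-\Delta)$), combined with the Riesz-basis property of $\{\varphi_j\}$ furnished by Theorem~\ref{rieszbasisN1}. For (i) I would first note that $H$ is densely defined (its domain contains $C_0^\infty(\bR^3\setminus X)$, which is dense because $d_\ast(X)>0$ forces $X$ to be discrete), symmetric (as a restriction of the self-adjoint $-\Delta$), and closed: since $W^{2,2}(\bR^3)\hookrightarrow C(\bR^3)$, the point evaluations $f\mapsto f(x_j)$ are continuous on $\dom(-\Delta)=W^{2,2}(\bR^3)$, so $\dom H$ is closed in the graph norm of the closed operator $-\Delta$. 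The description \eqref{4.18} of $\mathfrak N_{-1}$ is then immediate from Theorem~\ref{rieszbasisN1}: as $d_\ast(X)>0$, the system $\{\varphi_j\}$ is a Riesz basis of $\mathfrak N_{-1}$, so $\mathfrak N_{-1}$ consists exactly of the series $\sum c_j\varphi_j$ with $\{c_j\}\in l^2(\N)$. Infinite deficiency indices follow because Corollary~\ref{cor4.9} gives Riesz bases $\{\varphi_{j,\pm i}\}$ of $\mathfrak N_{\pm i}$, whence $n_\pm(H)=\dim\mathfrak N_{\pm i}=\infty$.

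Part (ii) is essentially a transcription of Lemma~\ref{technicallemma}(i), which yields $\dom(H^*)=\dom H\dotplus\mathfrak N_{-1}\dotplus(-\Delta+I)^{-1}\mathfrak N_{-1}$ together with $H^*(f_H+f_0+(-\Delta+I)^{-1}f_1)=-\Delta f_H-f_0+(-\Delta)(-\Delta+I)^{-1}f_1$. To reach the explicit series in \eqref{eq3} and \eqref{H_max} I would insert $f_0=\sum\xi_{0j}\varphi_j$ and $(-\Delta+I)^{-1}f_1=\sum\xi_{1j}e_j$, using the key identity $(-\Delta+I)e^{-|x-x_j|}=2\,e^{-|x-x_j|}/|x-x_j|$, i.e. $(-\Delta+I)e_j=2\varphi_j$ (the same computation already invoked in the proof of Lemma~\ref{varphicomplete}). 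This identifies $f_1=2\sum\xi_{1j}\varphi_j\in\mathfrak N_{-1}$ and gives $(-\Delta)(-\Delta+I)^{-1}f_1=\sum\xi_{1j}(2\varphi_j-e_j)$, producing \eqref{H_max}.

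For (iii), Lemma~\ref{technicallemma}(ii) already provides a boundary triplet $\Pi'=\{\mathfrak N_{-1},\Gamma'_0,\Gamma'_1\}$ with $\Gamma'_0f=f_0$, $\Gamma'_1f=f_1$; the work is to transport it to $\cH=l^2(\N)$ through the coordinate isomorphism attached to the Riesz basis. The decisive input is the Gram identity $\langle\varphi_j,\varphi_k\rangle=2\pi e^{-|x_j-x_k|}$ coming from \eqref{3.2AS}, which says that the Gram matrix of $\{\varphi_j\}$ is a positive multiple of $\kT_1$. Writing $f_0=\sum\xi_{0j}\varphi_j$, $f_1=2\sum\xi_{1j}\varphi_j$ (and similarly for $g$), a direct computation converts the canonical form $\langle\Gamma'_1f,\Gamma'_0g\rangle_{\mathfrak N_{-1}}-\langle\Gamma'_0f,\Gamma'_1g\rangle_{\mathfrak N_{-1}}=\langle f_1,g_0\rangle-\langle f_0,g_1\rangle$ into $\langle T_1\xi_1,\eta_0\rangle_{l^2}-\langle\xi_0,T_1\eta_1\rangle_{l^2}$, up to the positive normalization fixed in \eqref{4.37S}; together with the Green identity for $\Pi'$ this verifies \eqref{GI} for $\widetilde\Pi$. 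For surjectivity of $\widetilde\Gamma=(\widetilde\Gamma_0,\widetilde\Gamma_1)$, I note that $\xi_0,\xi_1$ range independently over $l^2(\N)$ as $f_0,f_1$ range over $\mathfrak N_{-1}$, so the image is $l^2\oplus\ran(T_1)$; hence surjectivity reduces to $\ran(T_1)=l^2(\N)$, i.e. to the bounded invertibility of $T_1$, which holds by \cite[Theorem 6.2.1]{GK65} and Theorem~\ref{rieszbasisN1} since the Gram matrix of a Riesz basis is boundedly invertible.

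The whole conceptual weight lies in step (iii): recognizing that the boundary map $\widetilde\Gamma_1$ must be ``twisted'' by the Gram operator $T_1$ precisely so that the abstract symplectic form of $\Pi'$ becomes the standard pairing on $l^2(\N)$, and that the surjectivity requirement is exactly the bounded invertibility of $T_1$. I expect this to be the main obstacle, and it rests entirely on the Riesz-basis theorem; the remainder is routine bookkeeping of the Gram-matrix constants in the Green identity, which I would not grind through in detail.
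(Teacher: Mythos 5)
Your proposal is correct and follows essentially the same route as the paper: closedness of $H$ via the Sobolev embedding, the description of $\mathfrak N_{-1}$ and the infinite deficiency indices from Theorem~\ref{rieszbasisN1}, part (ii) as a transcription of Lemma~\ref{technicallemma}(i) using $(-\Delta+I)e_j=2\varphi_j$, and part (iii) by transporting the canonical triplet $\Pi'$ of Lemma~\ref{technicallemma}(ii) to $l^2(\N)$ through the Riesz-basis coordinates, with the Green identity coming from the Gram identity \eqref{3.2AS} and surjectivity from the bounded invertibility of $T_1$. If anything, your justification of that last point (Gram matrix of a Riesz basis is boundedly invertible, via \cite[Theorem 6.2.1]{GK65}) is cleaner than the paper's appeal to Lemma~\ref{lemma4.7}, which only treats the case $d_*(X)=0$.
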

%%%%%%%%%%%%%%%%%%%%%%%%%%%%%%%%%%%%%%
     \begin{proof} (i):
 By the Sobolev embedding theorem,
$f\rightarrow f(x_j)$ is a continuous linear functional on $W^{2,2}(\mathbb{R}^3)$ (see \cite[Chapter 2.5]{MasN}). Therefore,
$\dom(H)=W^{2,2}(\mathbb{R}^3)\upharpoonright\bigcap\limits_{j=1}^\infty\ker(\delta_{x_j})$
is closed in the  graph norm of $-\Delta,$ so the operator $H$
is closed. Since $-\Delta$ is self-adjoint, $H$ is symmetric.

  Since $d_\ast(X)>0$ by assumption,
Theorem \ref{rieszbasisN1}  applies and  shows that
$\{\varphi_{j}\}_1^\infty$ is a Riesz basis of the Hilbert space
$\mathfrak N_{-1}$. In particular, $n_\pm(H)= \infty.$

(ii): All assertions of (ii) follow  from (i) and Lemma
\ref{technicallemma}(i), applied to the self-adjoint operator
$A{=}-\Delta$ on $L^2(\R^3)$. For the formula of $H^\ast
f$ we recall that $e_j/2 = (-\Delta+I)^{-1}\varphi_j$ and
therefore, $H^\ast e_j= -\Delta (-\Delta{+}I)^{-1}\varphi_j =
\varphi_j - e_j/2$.

(iii) From \eqref{3.2AS} it follows that $\langle
\varphi_j,\varphi_k\rangle = 2^{-1}e^{-|x_j-x_k|}$, i.e., the Gram
matrix of  $E=\{\varphi_j\}_{j\in \N}$ is $\kT_1$. By Lemma \ref{lemma4.7},
$\kT_1$ defines the bounded operator $T_1$ on $l^2(\bN)$ with
bounded inverse. Hence  $\widetilde{\Gamma}_0$ and
$\widetilde{\Gamma}_1$ are well--defined and the map
$(\widetilde{\Gamma}_0, \widetilde{\Gamma}_1): \dom(A^*)
\rightarrow \kH \oplus\kH$ is surjective.

Next we verify  the  Green formula. Let $f,g\in \dom(H^*).$ By
\eqref{eq3}, these vectors are of the form
$$
f = f_H + f_0 +(-\Delta{+}I)^{-1}f_1, \quad g = g_H + g_0 +
(-\Delta{+}I)^{-1}g_1
  $$
with  $f_H, g_H \in \dom H$ and  $f_0, f_1, g_0, g_1  \in \mathfrak
N_{-1}.$ By  \eqref{4.18},  $f_0,f_1,g_0,g_1$ 
can be written as%
$$
f_0=\sum_{j=1}^\infty ~\xi_{0j}\varphi_j,~f_1=\sum_{j=1}^\infty
~\xi_{1j}\varphi_j,\quad  g_0=\sum_{j=1}^\infty~
\eta_{0j}\varphi_j, \quad g_1=\sum_{j=1}^\infty~
\eta_{0j}\varphi_j,
$$
where $\{\xi_{0j}\}_{j\in \N}, \{\xi_{1j}\}_{j\in \N}, \{\eta_{0j}\}_{j\in \N}, \{\eta_{1j}\}_{j\in \N} \in
l^2(\bN)$. Using the Green identity for the boundary triplet $\Pi'
= (\kH^\prime, \Gamma_0^\prime, \Gamma_1^\prime)$ in Lemma
\ref{technicallemma}, applied to $A=H$ and
$\widetilde{A}=-\Delta$, we derive the identity
   \begin{align*}
\langle H^\ast &f,g\rangle -\langle f,H^\ast g\rangle =
\langle\Gamma_1^\prime f,\Gamma_0^\prime g\rangle-\langle
\Gamma_0^\prime f,\Gamma_1^\prime g\rangle
=  \langle
f_1,g_0\rangle_{\mathfrak N_{-1}}-\langle f_0,g_1\rangle_{\mathfrak
N_{-1}}    \\
& =\sum_{j,k=1}^\infty~ (\xi_{1j}\overline{\eta_{0k}} -
\xi_{0j}\overline{\eta_{1k}})\langle \varphi_j,\varphi_k\rangle~
 = \sum^{\infty}_{k=1}\big((T_1\xi_1)_k\overline{\eta_{0k}} - \xi_{0k}\overline{(T_1\eta_1)_k} ~\big)  \\
 &=
\langle T_1\xi_1,  \eta_0\rangle - \langle \xi_1, T_1
\eta_{0}\rangle = \langle \widetilde{\Gamma}_1 f,
\widetilde{\Gamma}_0 g\rangle_{\cH}- \langle \widetilde{\Gamma}_0
f, \widetilde{\Gamma}_1 g\rangle_{\cH},
\end{align*}
which completes the proof.
\end{proof}
%%%%%%%%%%%%%%%%%%%%%%%%%%%%%%%%%%%%%%%%%%%
%%%%

However,  we prefer to work with another boundary triplet.  For
this purpose we define
  \begin{align}\label{defvarphimatrix1}
(T_0(\xi_j))_k = - \xi_k + \sum_{j\in \N, j\neq k}
~\xi_j~\frac{e^{-|x_k{-}x_j|}}{|x_k{-}x_j|},\qquad \{\xi_j\}_{j\in \N}\in
l^2(\bN).
   \end{align}
It  follows from the assumption $d_\ast(X)>0$ and the fact that
the matrix $ (2^{-1}e^{-|x_j-x_k|})_{j,k\in\bN}$ defines a bounded
operator $T_1$ on $l^2(\bN)$ by Lemma \ref{lemma4.7}, that $T_0$
is a bounded self-adjoint  operator on $l^2(\bN)$.

Next we slightly modify the boundary triplet $\widetilde{\Pi}=
 \{\cH, \widetilde{\Gamma}_0, \widetilde{\Gamma}_1\}$   and express
the trace  mappings $\widetilde{\Gamma}_j$ in terms of the
"boundary values". We abbreviate
   \begin{equation} \label{4.44A}
\widetilde{G}_{\sqrt{z}}(x)=\left\{%
\begin{array}{ll}
    \frac{e^{i\sqrt{z}|x|}}{|x|}, & \hbox{x$\neq$0;} \\
    0, & \hbox{x=0.} \\
\end{array}%%%
\right.
   \end{equation}

%%%%%%%%%%%%%%%%%%%%%%%%%%%%%%%%%%
%%%%%%%%%%%%%%%%%%
   \begin{proposition}\label{prop4.14}
Let $H$ be the  Schr\"odinger operator  defined by \eqref{min}. Suppose that
$d_*(X)>0.$

$(i)$ The triplet $\Pi = \{\cH,\Gamma_0,\Gamma_1\}$, where  $\cH =
l^2(\N),$
   \begin{align}\label{otherdefbtr}
\Gamma_0 f =  \big\{\lim_{x\to x_k}
f(x)|x{-}x_k|\big\}_1^\infty =: \{\xi_{0k}\}_1^\infty , \qquad  
\Gamma_1 f=\big\{\lim_{x\to x_k}(f(x) - \xi_{0k}|x{-}x_k|^{-1}) \big\}_1^\infty,
   \end{align}
is a boundary triplet for $H^*.$

$(ii)$ The  deficiency subspace $\mathfrak {N}_{z}  = \mathfrak
{N}_{z}(H)$ is $\mathfrak N_{z} = \left\{ \sum\limits_{j=1}^\infty
c_j\varphi_{j,z}: \{ c_j\}_1^\infty \in l^2(\N) \right\}$, $z\in
\bC\backslash \bR$.

$(iii)$  The  gamma field  $\gamma(\cdot)$ of the
triplet $\Pi = \{\cH,\Gamma_0,\Gamma_1\}$ is given by
\begin{align}\label{gammabtr}
\gamma(z)(\{c_j\})=\sum_{j=1}^\infty c_j \varphi_{j,z}, \quad
\{c_j\}_1^\infty \in l^2(\bN),\quad  z\in\mathbb{C}\backslash [0,+\infty).
\end{align}

$(iv)$ The corresponding  Weyl function acts by
\begin{align}\label{W3Operator}
(M(z)\{ c_j\})_k = c_k i\sqrt{z}
 + \sumprime_{j\in \N\ }~c_j\frac{e^{i\sqrt{z}|x_k{-}x_j|}}{|x_k{-}x_j|}~,\quad\{c_j\}_{j\in \N} \in l^2(\bN),
 \quad  z\in\mathbb{C}\backslash [0,+\infty),
\end{align} that is,  the operator $M(z)$ is given by the matrix
    \begin{equation}\label{W3}
 \kM(z)=\left({i\sqrt{z}}\delta_{jk} + \widetilde{G}_{\sqrt{z}}(x_j-x_{k})\right)_{j,k=1}^\infty.
   \end{equation}
   \end{proposition}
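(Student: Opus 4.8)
The plan is to deduce the whole proposition from two facts already available: the boundary triplet $\widetilde\Pi=\{\cH,\widetilde\Gamma_0,\widetilde\Gamma_1\}$ of Proposition~\ref{pr1}, and the Riesz basis property of $\{\varphi_{j,z}\}$ from Corollary~\ref{cor4.9}. Indeed, (ii) is essentially a restatement of Corollary~\ref{cor4.9}: for $d_\ast(X)>0$ the map $\{c_j\}\mapsto\sum_j c_j\varphi_{j,z}$ is a linear homeomorphism of $l^2(\bN)$ onto $\mathfrak N_z$. The assertions (iii)--(iv) then reduce to evaluating the boundary limits \eqref{otherdefbtr}, and the only substantial point is the equivalence of the two descriptions of the triplet in (i).

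For (i) I would start from a general $f=f_H+\sum_j(\xi_{0j}\varphi_j+\xi_{1j}e_j)\in\dom(H^*)$ written as in \eqref{eq3} and compute the two limits in \eqref{otherdefbtr}. Since $f_H\in\dom(H)\subset W^{2,2}(\bR^3)$ is continuous with $f_H(x_k)=0$, each $e_j$ is continuous, and $\varphi_j(x)|x-x_k|\to\delta_{jk}$ while $\varphi_j(x)-\delta_{jk}|x-x_k|^{-1}$ stays bounded near $x_k$, term-by-term passage to the limit gives $\Gamma_0 f=\{\xi_{0k}\}_k=\widetilde\Gamma_0 f$ and, using $\lim_{t\downarrow 0}(e^{-t}-1)/t=-1$,
\[
(\Gamma_1 f)_k=-\xi_{0k}+\sumprime_{j\in\bN}\xi_{0j}\frac{e^{-|x_k-x_j|}}{|x_k-x_j|}+\sum_{j}\xi_{1j}e^{-|x_k-x_j|}=(T_0\widetilde\Gamma_0 f)_k+(\widetilde\Gamma_1 f)_k,
\]
where $T_0$ is the bounded self-adjoint operator \eqref{defvarphimatrix1} and the last sum equals $\widetilde\Gamma_1 f$, the second trace map of Proposition~\ref{pr1}. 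Thus $\Gamma_0=\widetilde\Gamma_0$ and $\Gamma_1=\widetilde\Gamma_1+T_0\widetilde\Gamma_0$. Because $T_0=T_0^*$, the extra contributions $\langle T_0\widetilde\Gamma_0 f,\widetilde\Gamma_0 g\rangle-\langle\widetilde\Gamma_0 f,T_0\widetilde\Gamma_0 g\rangle$ cancel, so the right-hand side of the Green identity \eqref{GI} for $\Pi$ coincides with that for $\widetilde\Pi$; moreover $(\widetilde\Gamma_0,\widetilde\Gamma_1)\mapsto(\widetilde\Gamma_0,\widetilde\Gamma_1+T_0\widetilde\Gamma_0)$ is a bijection of $\cH\oplus\cH$, so surjectivity of $(\Gamma_0,\Gamma_1)$ follows from that of $(\widetilde\Gamma_0,\widetilde\Gamma_1)$. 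Hence $\Pi$ is a boundary triplet for $H^*$.

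With (i) in hand, (iii) and (iv) are immediate. Applying $\Gamma_0$ to $f=\sum_j c_j\varphi_{j,z}\in\mathfrak N_z$ and using $\varphi_{j,z}(x)|x-x_k|\to\delta_{jk}e^{i\sqrt z\cdot 0}=\delta_{jk}$ gives $\Gamma_0 f=\{c_k\}_k$; thus $\Gamma_0\!\upharpoonright\mathfrak N_z$ is the inverse of the homeomorphism in (ii), which by Definition~\ref{Weylfunc} means $\gamma(z)\{c_j\}=\sum_j c_j\varphi_{j,z}$, i.e.\ \eqref{gammabtr}. For the Weyl function I would evaluate $M(z)\{c_j\}=\Gamma_1\gamma(z)\{c_j\}$ via the second limit in \eqref{otherdefbtr}: by continuity the off-diagonal terms contribute $\sumprime_{j}c_j\,e^{i\sqrt z|x_k-x_j|}/|x_k-x_j|$, while the diagonal term yields $c_k\lim_{t\downarrow0}(e^{i\sqrt z t}-1)/t=c_k\,i\sqrt z$. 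This is exactly \eqref{W3Operator}, whose matrix form is \eqref{W3}; boundedness of $M(z)$ on $l^2(\bN)$ needs no separate argument, being automatic from (i).

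The step that must be handled with care --- the main obstacle --- is the justification of the term-by-term passage to the limit $x\to x_k$ inside the infinite series $\sum_j c_j\varphi_{j,z}$ (and inside the expansion of a general $f\in\dom(H^*)$). For $z\notin[0,+\infty)$ one has $\imm\sqrt z>0$, so $|\varphi_{j,z}(x)|\le e^{-\imm\sqrt z\,|x-x_j|}/|x-x_j|$ decays exponentially in $|x-x_j|$; together with $d_\ast(X)>0$ this yields, exactly as in the proof of Lemma~\ref{lem4.3}, uniform convergence of $\sumprime_{j}c_j\varphi_{j,z}$ on a small closed ball around $x_k$ that excludes all the other points $x_j$. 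Uniform convergence legitimizes the interchange of limit and summation, shows that the boundary values \eqref{otherdefbtr} are well defined, and thereby validates all the computations above; the remaining manipulations are elementary.
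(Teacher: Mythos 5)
Your proposal is correct and follows essentially the same route as the paper: part (ii) via Corollary \ref{cor4.9}, part (i) by computing the boundary limits to identify $\Gamma_0=\widetilde\Gamma_0$ and $\Gamma_1=\widetilde\Gamma_1+T_0\widetilde\Gamma_0$ and invoking that a bounded self-adjoint modification of $\Gamma_1$ preserves the boundary triplet property, and parts (iii)--(iv) by evaluating the limits in \eqref{otherdefbtr} on $\varphi_{j,z}$. Your explicit justification of the term-by-term passage to the limit (via exponential decay and $d_\ast(X)>0$) matches the locally uniform convergence argument the paper uses.
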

%%%%%%%%%%%%%%%%%%%%
   \begin{proof}
(i)  Since $T_0 = T_0^*\in [\mathcal H]$ and $
\widetilde{\Pi}$ is boundary triplet for $H^\ast$ by   Proposition
\ref{pr1}(iii), so is the triplet
$\Pi' =
\{\cH,\Gamma'_0,\Gamma'_1\}$, where
    \begin{equation}\label{4.42S}
\cH = l^2(\N),  \qquad \Gamma'_0 := \widetilde{\Gamma}_0, \qquad
\text{and}\qquad \Gamma'_1 = \widetilde{\Gamma}_1 + T_0
\widetilde{\Gamma}_0.
    \end{equation}
It therefore
suffices to show that ${\Gamma}_j = \Gamma'_j,\  j=0,1.$

Let $f\in \dom H^\ast$. By Proposition \ref{pr1}(ii), $f$ is of
the form   $f=f_H+f_0+(-\Delta{+}I)^{-1}f_1$, where $f_H\in \dom
(H)$, $f_0=\sum_{j\in\N} \xi'_{0j}\varphi_j$ and $f_1=\sum_{j\in\N} \xi_{1j}
\varphi_j$. Then $(-\Delta{+}I)^{-1}f_1 = 2^{-1}\sum_j \xi_{1j}
e_j$.

Fix  $k\in \bN$. Since the series $f_0=\sum_{j\in\N} \xi'_{0j}\varphi_j$
converges uniformly on compact subsets of $\R^3\setminus X$ and
$f_H\in W^{2,2}(\bR^3)$ is continuous and $f_H(x_j)=0$    by
(\ref{min}),  we get
    $$
 \xi_{0k}  = \lim_{x\to x_k} f(x)|x{-}x_k|=\xi'_{0k} =
(\widetilde{\Gamma}_0f)_k = (\Gamma'_0f)_k.
   $$
This proves the first formula of \eqref{otherdefbtr}. The second
formula is derived by
  \begin{align*}
\lim_{x\to x_k}&\big(f(x)- \xi_{0k}|x{-}x_k|^{-1}\big) =
\lim_{x\to x_k}~\left(\xi_{0k}~ \frac{e^{-|x{-}x_k|}-1}{|x{-}x_k|}
+ ~\sum_{j\neq k}^\infty
~\xi_{0j}~\frac{e^{-|x{-}x_j|}}{|x{-}x_j|}
 + 2^{-1}\sum_{j=1}^\infty~ \xi_{1j}~e^{-|x-x_j|}\right)\nonumber \\ & =
 - \xi_{0k}+ \sum_{j\neq k}^\infty ~\xi_{0j}~\frac{e^{-|x_k{-}x_j|}}{|x_k{-}x_j|} + ~2^{-1}\sum_{j=1}^\infty~ \xi_{1j}~e^{-|x_k-x_j|}
 = (T_0(\xi_{0j}))_k + (T_1(\xi_{1j}))_k  = (\Gamma'_1 f)_k,
   \end{align*}
%%%%%%%%%%%%%%%%%%%%%%%%%%%%%%%%%
where $T_0$ is defined by \eqref{defvarphimatrix1}, and $T_1$ is
introduced in Proposition \ref{pr1}.

(ii) follows at once from  Corollary \ref{cor4.9}.

(iii)   Clearly,  $\lim_{x\to x_k}
(\varphi_{k,z}(x){-}\varphi_k(x))|x{-}x_k|=0$. Therefore, by
(\ref{otherdefbtr}), $\Gamma_0(\varphi_{k,z} - \varphi_k)=0$ and
so $\Gamma_0 \varphi_{k,z}= \Gamma_0 \varphi_k=e_k$, where
$e_k=\{\delta_{jk}\}_{j=1}^\infty$ is the standard orthonormal
basis of $l^2(\bN)$. Hence, by \eqref{2.3A} combined with (ii),
the gamma field is of  the form given in (\ref{gammabtr}).

 (iv) Next we prove the formula for the Weyl function. Since $M$ is linear and
bounded, it suffices to prove this formula for the vectors $e_l$,
$l\in \bN$.
Fix $l\in \bN$. The function $\varphi_{l,z}\in \dom (H^\ast)$ is
of the form \eqref{eq3}, i.e., $\varphi_{l,z} = f_{H,z}  + f_{0,z}
+ (-\Delta{+}I)^{-1}f_{1, z},$ where $f_{0,z} =\sum_{j\in\N}
\xi_{0j}(z)\varphi_j$ and $f_{1,z} = \sum_{j\in\N}
\xi_{1j}(z)\varphi_j$. Then, by \eqref{otherdefbtr} and
\eqref{5.1},
    \begin{equation}\label{5.14}
 \xi_{0j}(z) =\lim_{x\to x_j} \varphi_{l,z}(x)|x{-}x_j| = \delta_{jl}, \qquad j\in \N, \qquad\text{i.e.},\quad
 f_{0,z}(x) = {|x{-}x_l|}^{-1}{e^{-|x{-}x_l|}},
   \end{equation}
so $f_{0,z}$ does not depend on $z$. Since  $\xi_{0k}(z) = 0$ for
$k\neq l$,   \eqref{otherdefbtr} and \eqref{5.1} yield
\begin{align*}\label{gammavarphi}
(\Gamma_1\varphi_{l,z})_k = \lim_{x\to x_k}(\varphi_{l,z} -
\xi_{0k}|x{-}x_k|^{-1})  = \lim_{x\to x_k}\varphi_{l,z}(x)
= \frac{e^{i\sqrt{z}|x_l{-}x_k|}}{|x_l{-}x_k|} , \qquad k\neq l,\
k,l\in \bN.
\end{align*}
Similarly, using that $\xi_{0l}(z) = 1$ it follows from
\eqref{otherdefbtr} and \eqref{5.1} that
$(\Gamma_1\varphi_{l,z})_l = i\sqrt{z}$.\ Inserting these
expressions into \eqref{2.3A} with account of \eqref{gammabtr} we
arrive at the formula \eqref{W3Operator} for the Weyl function.
     \end{proof}
%%%%%%%%%%%%%%%%%%%%%%%%%%%%%%%%%%%%
%%%%%%%%%%%%%%%%%%%%%%%%%%%%%%%%%%%%%%%
   \begin{remark}
(i) Statement $(i)$ in Lemma \ref{technicallemma}  goes back to
the paper by M.I.  Vishik \cite{Vi63} and was systematically used in
the works of M. Birman  and G. Grubb \cite{Gr68}. Statement $(ii)$
is contained in a slightly different form in \cite[Remark
4]{DerMal91}.

(ii) Proposition \ref{pr1}(i)  was obtained in \cite [Lemma
4.1]{LyaMaj} for $m=1$ and for  $m <\infty$ in \cite[Theorem
1.1.2]{AGHH88}. In the  case $m = \infty$  another description of
$\dom(H_B)$ with diagonal $B=B^*$ is contained in \cite[Theorem
3.1.1.2]{AGHH88}.

 (iii)\  For $m < \infty$  another  construction of a boundary
triplet for $H_3^*$ is contained in  \cite[Proposition
4.1]{GMZ11},  while even in this case  the proof of Proposition
\ref{pr1}(iii) is simpler. In the case $m =1$ other constructions
can be found in \cite[Theorem 2.1]{LyaMaj}, \cite{BMN08}  and
\cite{HK09}.

 Another construction  of a    boundary   triplet for general  elliptic  operators
 with  boundary   conditions  on   a set  of zero  Lebesgue measure   can be found
in \cite{Koc82}.  However this construction does not allow to
compute the Weyl function and obtain other spectral results.

(iv)\   In the case $m < \infty$ the  Weyl  function in the form
\eqref{W3}
 appeared  in \cite[chapter II.1]{AGHH88}.
     In this connection  we also mention   the  paper  by  Posilicano \cite[Example  5.3]{Pos08}.
In the  case  $m=1$ the  Weyl  function was also computed
by  another  method in \cite[Section 10.3]{Ash10}.
    \end{remark}
%%%%%%%%%%%%%%%%%%%%%%%%%%%%%%%%%%%%%%%%%%%%%%

 \subsection{Some spectral properties of self-adjoint realizations}\label{sec5.2}

In this subsection we apply the theory of boundary triplets to
describe and study self-adjoint extensions of the  minimal
Schr\"odinger  operator $H$ of the form \eqref{min}.
%%%%%%%%%%%%%%%%%%%%%%%%%%%%%%%%%%%%%%%%%
   \begin{proposition}\label{prop4.17}
Let $\Pi =\{\kH,\Gamma_0,\Gamma_1\}$ the boundary triplet for $H^*$
defined in Proposition \ref{prop4.14} (see \eqref{otherdefbtr}).
Let   $T_0$ be defined by \eqref{defvarphimatrix1} and   $T_1
= 2^{-1}(e^{-|x_j-x_k|})_{j,k\in \bN}$. Then:

$(i)$ The set of  self-adjoint realizations $\widetilde H\in \Ext_H$  is  
parameterized by the set of linear relations $\Theta = \Theta^*
\in \widetilde{\mathcal C}(\kH)$  as follows:
 $H_\Theta=H^*\upharpoonright \dom(H_\Theta)$, where
     \begin{equation}\label{s-adjconcr}
\dom (H_\Theta) =\left\{f=f_H +
\sum\limits^\infty_{j=1}\left(\xi_{0j} \frac{e^{-|x-x_j|}}{
|x-x_j|}  + \xi_{1j}e^{-|x-x_j|} \right) :\, f_H\in \dom(H), \,
(\xi_0, T_0\xi_0 + T_1\xi_1)\in\Theta \right\}.
  \end{equation}
Moreover,  we have   
 $\Theta = \Theta_{op}\oplus\Theta_{\infty}$ where $\Theta_{op}$
is the graph of an operator $B =B^*$  in  $\kH_0 :=
\overline{\dom(\Theta)}$ and  $\Theta_{\infty}$ is  the
multivalued part of $\Theta,$ and     $\kH =\kH_0
\oplus\kH_\infty$, where $\kH_\infty:={\rm mul} (\Theta)$ and
    \begin{gather}\label{op}
\Theta_{\infty} := \{0, \kH_\infty\} := \{\{0,
T_1\xi''_1\}:\,\xi''_1\perp T_1 \xi_0,\
 \xi_0\in \kH_0 \}, \\  
\Theta_{op} = \{\{\xi_0, T_0\xi_0 + T_1\xi'_1\}:\, \xi_0\in
\kH_0,\,\xi'_1 = T_1^{-1}( B\xi_0 - T_0\xi_0)\}. \label{mul}
  \end{gather}
In particular, $\widetilde H =
H_\Theta$ is disjoint with $H_0$ if and only
if\,   $\overline{\dom(\Theta)} = \cH = l^2(\N)$. In this case
 $\Theta=\Theta_{op}$ is the graph of $B,$ so that $
H_\Theta=H^*\upharpoonright(\ker(\Gamma_1 -B\Gamma_0)).$

$(ii)$  Let $z\in \C\setminus \overline \R_+.$ Then
$z\in\sigma_p(H_\Theta)$  
if and only if\, $0\in\sigma_p \left( \Theta -
\bigl({i\sqrt{z}}\delta_{jk}
+\widetilde{G}_{\sqrt{z}}(x_j-x_{k})\bigr)_{j,k=1}^\infty
\right).$
The corresponding   eigenfunctions $\psi_z$ have the form
   \begin{equation}\label{eigen}
\psi_z=\sum\limits_{j=1}^\infty \xi_j
|x-x_j|^{-1}{e^{i\sqrt{z}|x-x_j|}}, \qquad \text{where} \qquad
(\xi_j)\in \ker(\Theta-M(z))\subset l^2(\N).
  \end{equation}

$(iii)$  The resolvent of the extension $-\Delta_{\Theta,X} :=
H_\Theta$ admits the integral representation
    \begin{equation}\label{5.19MMM}
\bigl((-\Delta_{\Theta,X} - z)^{-1}f\bigr)(x) =
\int_{\R^3}T_{\Theta,X}(x,y;z)f(y)dy, \qquad z\in
\rho(-\Delta_{\Theta,X}),
    \end{equation}
with  kernel $T_{\Theta,X}(\cdot,\cdot;z)$ defined by
\begin{equation}\label{5.20}
T_{\Theta,X}(x,y;z) = \frac{e^{i\sqrt{z}| x-y|}}{4\pi|x-y|} +
\sum_{j,k}\Theta_{jk}(z)\frac{e^{i\sqrt{z}|y-x_j|}}{|y-x_j|}\cdot
\frac{e^{i\sqrt{z}|x-x_k|}}{|x-x_k|},
\end{equation}
where $\bigl(\Theta_{jk}(z)\bigr)_{j,k\in{\mathbb N}}$ is the matrix
representation  of the operator $\bigl(\Theta-M(z)\bigr)^{-1}$ on
$l^2(\N)$.
     \end{proposition}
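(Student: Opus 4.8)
The plan is to derive all three assertions from the abstract machinery of Section~\ref{preliminaries}, specialized to the explicit boundary triplet $\Pi=\{\cH,\Gamma_0,\Gamma_1\}$ of Proposition~\ref{prop4.14} and to its Weyl function \eqref{W3}.

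\emph{Part (i).} The bijection $\widetilde H=H_\Theta\mapsto\Theta=\Gamma(\dom H_\Theta)$ of Proposition~\ref{propo} identifies the self-adjoint realizations of $H$ with the self-adjoint relations $\Theta=\Theta^*$ in $\cH=l^2(\N)$, with $\dom(H_\Theta)=\{f\in\dom(H^*):\{\Gamma_0f,\Gamma_1f\}\in\Theta\}$. First I would translate this into concrete terms: writing a general $f\in\dom(H^*)$ in the form \eqref{eq3} with coefficients $\xi_0=\{\xi_{0j}\}$, $\xi_1=\{\xi_{1j}\}$, the trace computation carried out in the proof of Proposition~\ref{prop4.14}(i) gives $\Gamma_0f=\xi_0$ and $\Gamma_1f=T_0\xi_0+T_1\xi_1$. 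Hence $\{\Gamma_0f,\Gamma_1f\}\in\Theta$ reads $(\xi_0,T_0\xi_0+T_1\xi_1)\in\Theta$, which is exactly \eqref{s-adjconcr}. The splitting $\Theta=\Theta_{op}\oplus\Theta_\infty$ is the canonical decomposition of a self-adjoint relation recalled before Proposition~\ref{propo}, with $\cH_\infty=\mul(\Theta)$, $\cH_0=\overline{\dom\Theta}$, and $\Theta_{op}$ the graph of a self-adjoint operator $B$ in $\cH_0$. Spelling out $\mul(\Theta)$ (the pairs $\{0,h'\}\in\Theta$, i.e. $\xi_0=0$ and $h'=T_1\xi_1''$) and the operator part (solving $T_0\xi_0+T_1\xi_1'=B\xi_0$ for $\xi_1'$, using that $T_1$ is boundedly invertible by Lemma~\ref{lemma4.7}) yields \eqref{op} and \eqref{mul}. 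Finally, $H_\Theta$ is disjoint with $H_0$ iff $\Theta$ is an operator by Proposition~\ref{propo}(iii), i.e. iff $\mul(\Theta)=\{0\}$, equivalently $\overline{\dom\Theta}=l^2(\N)$; in that case \eqref{bijop} gives $\dom(H_\Theta)=\ker(\Gamma_1-B\Gamma_0)$.

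\emph{Part (ii).} Since $H_0=-\Delta\ge0$, we have $\C\setminus[0,\infty)\subset\rho(H_0)$. For $z$ in this set and $f\in\ker(H_\Theta-z)$ one has $f\in\mathfrak N_z$, so by Proposition~\ref{prop4.14}(ii) $f=\sum_j c_j\varphi_{j,z}$ with $\{c_j\}\in l^2(\N)$; moreover $\Gamma_0f=\{c_j\}$ (as $\gamma(z)\{c_j\}=f$) and, by the definition \eqref{2.3A} of the Weyl function, $\Gamma_1f=M(z)\{c_j\}$. Thus $f\in\dom(H_\Theta)$, i.e. $\{\Gamma_0f,\Gamma_1f\}\in\Theta$, holds iff $\{c_j\}\in\ker(\Theta-M(z))$. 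This is precisely the content of Proposition~\ref{prop_II.1.4_spectrum}(ii),(iii), and inserting the explicit matrix \eqref{W3} for $M(z)$ gives both the stated point-spectrum criterion and the eigenfunction form \eqref{eigen}.

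\emph{Part (iii).} The key preliminary observation is that $H_0=H^*\!\upharpoonright\ker\Gamma_0=-\Delta$; indeed $\ker\Gamma_0=\{f:\xi_0=0\}=\dom(-\Delta)=W^{2,2}(\R^3)$, which follows from the decomposition in Lemma~\ref{technicallemma} applied to $\widetilde A=-\Delta$. Hence by \eqref{3.2S} the resolvent $(H_0-z)^{-1}$ has integral kernel $\frac{1}{4\pi}\frac{e^{i\sqrt z|x-y|}}{|x-y|}$. I would then apply the Krein-type resolvent formula \eqref{2.8}, namely $(H_\Theta-z)^{-1}=(H_0-z)^{-1}+\gamma(z)(\Theta-M(z))^{-1}\gamma(\bar z)^*$. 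Using \eqref{gammabtr} for $\gamma(z)$ and computing the adjoint $\gamma(\bar z)^*$, I obtain $(\gamma(\bar z)^*f)_j=\int_{\R^3}f(y)\,\frac{e^{i\sqrt z|y-x_j|}}{|y-x_j|}\,dy$, where the branch identity $\overline{\sqrt{\bar z}}=-\sqrt z$ is used to conjugate the exponential correctly. Writing $(\Theta_{jk}(z))$ for the matrix of $(\Theta-M(z))^{-1}$ and again invoking \eqref{gammabtr}, the second summand in \eqref{2.8} becomes the integral operator with kernel $\sum_{j,k}\Theta_{jk}(z)\frac{e^{i\sqrt z|y-x_j|}}{|y-x_j|}\frac{e^{i\sqrt z|x-x_k|}}{|x-x_k|}$, which assembles to \eqref{5.20}. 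The most delicate points are the concrete identification of $\Theta_{op}$ and $\Theta_\infty$ in part (i), where one must track how the bounded invertible operators $T_0,T_1$ intertwine the abstract decomposition with the $(\xi_0,\xi_1)$-coordinates, and the correct handling of the square-root branch in the computation of $\gamma(\bar z)^*$ in part (iii).
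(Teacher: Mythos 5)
Your proposal is correct and follows essentially the same route as the paper: translate the abstract parametrization of Proposition \ref{propo} through the concrete trace maps $\Gamma_0 f=\xi_0$, $\Gamma_1 f=T_0\xi_0+T_1\xi_1$, use the canonical decomposition of a self-adjoint relation into operator part and multivalued part for \eqref{op}--\eqref{mul}, and assemble the kernel \eqref{5.20} from the Krein resolvent formula \eqref{2.8} together with \eqref{3.2S}, \eqref{gammabtr} and the branch identity for $\sqrt{\bar z}$ (your $\overline{\sqrt{\bar z}}=-\sqrt z$ is equivalent to the paper's $i\sqrt{\bar z}=\overline{i\sqrt z}$, and both yield $\varphi_{j,\bar z}=\overline{\varphi_{j,z}}$).

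The one place where you diverge is part (ii). Proposition \ref{prop_II.1.4_spectrum} is stated for a \emph{simple} symmetric operator, and $H$ need not be simple; the paper therefore first splits $H=\widehat H\oplus H'$ with $\widehat H$ simple and $H'$ self-adjoint and non-negative, checks that the Weyl functions agree, and only then invokes that proposition. You cite Proposition \ref{prop_II.1.4_spectrum}(ii),(iii) without this reduction, which as stated is not licensed. However, the direct argument you supply alongside the citation --- for $z\in\rho(H_0)$ every $f\in\ker(H_\Theta-z)$ lies in $\mathfrak N_z=\gamma(z)\,l^2(\N)$, and $\gamma(z)h\in\dom(H_\Theta)$ iff $\{h,M(z)h\}\in\Theta$, so $\ker(H_\Theta-z)=\gamma(z)\ker(\Theta-M(z))$ with $\gamma(z)$ injective --- is complete, needs no simplicity hypothesis, and yields both the point-spectrum criterion and \eqref{eigen}. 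So there is no gap, only a citation that should either be dropped in favour of your direct computation or be preceded by the paper's decomposition into simple and self-adjoint parts.
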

%%%%%%%%%%%%%%%%%%%%%%%%%%%%%%%%%%%%%%%%%%%%%%
        \begin{proof}
(i)  Formula  \eqref{s-adjconcr} is immediate from Proposition
\ref{propo}, formula \eqref{s-aext}.

Both formulas \eqref{op} and \eqref{mul} are proved by direct
computations. We show that \eqref{op} and \eqref{mul} imply the
self-adjointness of $\Theta$; the proof of the converse
implication is similar. Indeed, it follows from \eqref{op} and
\eqref{mul} that  
$(T_1\xi_1'',\xi_0) =0 =(\xi_0, T_1\xi_1'')$ and
   \begin{equation}
(T_1\xi'_1,\xi_0)=( B\xi_0 - T_0\xi_0,\xi_0)=(\xi_0,  B\xi_0 -
T_0\xi_0)=(\xi_0, T_1\xi'_1).
   \end{equation}
Hence we have $(T_1\xi_1,\xi_0)=(\xi_0, T_1\xi_1)$ for all
$(\xi_0,\xi_1)\in \Theta$. It is easily checked    that the latter
condition is equivalent to the  self-adjointness    of the
relation $\Theta$.

(ii) The   symmetric operator  $H$ is in general not simple. It
admits  a direct sum decomposition $H = \widehat{H}\oplus H'$
where $\widehat{H}$ is a simple symmetric operator  and $H'$ is
self-adjoint. Define
$\widehat\Pi=\{\cH,\widehat\Gamma_0,\widehat\Gamma_1\}$, where
$\widehat\Gamma_j := \Gamma_j\upharpoonright \dom(\widehat{H}^*),
 \ j\in\{0,1\}.$  Clearly,  $\widehat\Pi$
is a boundary triplet  for $\widehat{H}^*$ and the corresponding
Weyl function $\widehat{M}(\cdot)$ coincides with  the Weyl
function $M(\cdot)$ of $\Pi$. Further,  any proper extension
$\widetilde H = H_\Theta$ of $H$ admits  a decomposition $H_\Theta
= \widehat{H}_\Theta\oplus H'.$  Being a part of  $H_0$, the
operator $H'$ is non-negative. Therefore, for  $z\in \C\setminus
\overline \R_+$, we have $z\in\sigma_p(H_\Theta)$ if and only if $z\in\sigma_p(\widehat{H}_\Theta)$. Thus, it
suffices to prove the assertion  for  extensions
$\widehat{H}_\Theta$ of the \emph{simple symmetric operator}
$\widehat{H}.$ But then the statement follows from  Propositions
\ref{prop_II.1.4_spectrum} and \ref{prop4.14}(ii) and
formula \eqref{gammabtr}.

(iii) Noting that ${i\sqrt{\overline z}} = \overline{i\sqrt{z}}$
it follows from  \eqref{5.1} that
$\varphi_{j,\overline{z}}=\overline{\varphi_{j,z}}$. Therefore,  \eqref{gammabtr} implies that
    \begin{equation}\label{5.21}
\gamma^*(\overline{z})f=\sum^{\infty}_{k=1}\left(\int_{{\mathbb
R}^3}f(x)\overline{\varphi_{k,\overline{z}}(x)}dx\right) e_k =
\sum^{\infty}_{k=1}\left(\int_{{\mathbb
R}^3}f(x)\frac{e^{i\sqrt{z}|x-x_k|}}{|x-x_k|}dx\right) e_k,
    \end{equation}
where $e_k=\{\delta_{jk}\}^{\infty}_{j=1}$ is the standard
 basis of $l^2(\mathbb N)$.

Inserting  \eqref{5.21} and  \eqref{gammabtr} into the
Krein type formula \eqref{2.8} and  applying the formula
\eqref{3.2S} for the resolvent of the free Hamiltonian $-\Delta$,
we obtain
    \begin{eqnarray*}\label{5.19M}
\bigl((-\Delta_{\Theta,X} - z)^{-1}f\bigr)(x) =
\int_{\R^3}\frac{e^{i\sqrt{z}| x-y|}}{4\pi|x-y|}\ f(y)dy + \
\sum^{\infty}_{j,k}[\bigl(\Theta - M(z)\bigr)^{-1}]_{j,k} \bigl(f,
\overline{\varphi_{k,z}}\bigr)\varphi_{j,z}(x).
    \end{eqnarray*}
Cleraly, the latter  is equivalent to the representations
\eqref{5.19MMM}--\eqref{5.20}.
      \end{proof}
%%%%%%%%%%%%%%%%%%%%%%%%%%%%%%%%%%%%%

Next we turn to nonnegative or lower semibounded self-adjoint
extensions of $H$. For this  we need the following technical result.
   \begin{lemma}
Retain the assumptions of Proposition \ref{prop4.14} and let  $\Pi
= \{\cH,\Gamma_0,\Gamma_1\}$ be the boundary triplet for
 $H^*$ defined therein. Then:

(i)\  There exists a lower semibounded self-adjoint operator
$M(0)$ on $\kH=l^2(\bN)$ which is the limit of $M(-x)$ in the
strong resolvent convergence as $x\to +0$.

(ii)\  The  quadratic form  $\mathfrak t_{M(0)}$ of $M(0)$ is given by
     \begin{align}\label{5.17}
\mathfrak t_{M(0)}[\xi] = \sum_{|j-k|>0}
\frac{1}{|x_j-x_k|}~\xi_j\overline{\xi}_k, &\quad \dom(\mathfrak
t_{M(0)}) = \big\{\xi= \{\xi_j\}\in l^2(\N): \sum_{|j-k|>0}
\frac{1}{|x_j-x_k|}~\xi_j \overline{\xi}_k < \infty \big\}.
   \end{align}

(iii) The operator $M(0)=M(0)^*$  associated with the form
$\mathfrak t_{M(0)}$ is uniquely determined by the following
conditions:\  $\dom\bigl(M(0)\bigr)\subset \dom(\mathfrak t_{M(0)})$
and
   \begin{eqnarray}\label{5.19}
\bigl(M(0)\xi,\eta\bigr) = \sum_{|j-k|>0}
\frac{1}{|x_j-x_k|}~\xi_j\overline{\eta}_k, \qquad \xi =
 \{\xi_j\}\in\dom\bigl(M(0)\bigr),\  \eta =  \{\eta_j\}\in \dom(\mathfrak t_{M(0)}).
\end{eqnarray}

(iv)\  If, in addition,
$\sum\nolimits^{'}_{j\in \N}|x_j-x_k|^{-2}<\infty \  \text{for every}\
k\in{\N},$
then $e_k\in \dom(M(0)), \ k\in \N,$ where
$e_k=\{\delta_{jk}\}^{\infty}_{j=1}$ is the standard orthonormal basis
of $l^2(\N)$ ,
and the matrix
   \begin{equation}
\mathcal M'(0):=   \left( \frac{1 -\delta_{kj}}{|x_k-x_j| +
\delta_{kj}}\right)_{j,k=1}^{\infty},
   \end{equation}
defines a (minimal) closed symmetric operator $M'(0)$ on
$l^2({\N})$.
Moreover,
   \begin{equation}
\dom\bigl(M'(0)^*\bigr) = \left\{\{\xi_j\}\in l^2(\N):
\sum\nolimits_{j\in \N} \big|\sum\nolimits^{'}_{k\in \N}{|x_j-x_k|^{-1}}\xi_k
\big|^2  < \infty \right\}.
   \end{equation}

(v) The  operator $M'(0)$ is semibounded from below and  its
Friedrichs extension $M'(0)_F$ coincides with $M(0)$, that is,
$M'(0)_F = M(0)$.
   \end{lemma}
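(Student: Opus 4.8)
The plan is to read everything off the explicit Weyl function \eqref{W3Operator}. For $z=-s^2$ with $s>0$ one has $\sqrt{-s^2}=is$, hence $i\sqrt{-s^2}=-s$ and $e^{i\sqrt{-s^2}|x_k-x_j|}=e^{-s|x_k-x_j|}$, so $M(-s^2)$ is the bounded self-adjoint operator on $l^2(\N)$ with
\begin{equation*}
(M(-s^2)\xi,\xi)=-s\,\|\xi\|^2+\sumprime_{j,k}\frac{e^{-s|x_j-x_k|}}{|x_j-x_k|}\,\xi_j\overline{\xi}_k,\qquad s>0.
\end{equation*}
Since $\ker(\Gamma_0)=W^{2,2}(\R^3)$ by \eqref{otherdefbtr}, the reference extension is $H_0=-\Delta\ge 0$, so Proposition \ref{prop_II.1.5_01}(i) applies and yields a lower semibounded self-adjoint \emph{relation} $M(0)$, equal to the strong resolvent limit of $M(-s^2)$ as $s\downarrow 0$ and associated with the closed form $\mathfrak t_0[\xi]=\lim_{s\downarrow 0}(M(-s^2)\xi,\xi)$. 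Every finitely supported $\xi$ lies in $\dom(\mathfrak t_0)$ (there the limit is a finite sum), so $\dom(\mathfrak t_0)$ is dense, $\mul(M(0))=\{0\}$, and $M(0)$ is an operator; this proves (i).

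For (ii) I would use that the Weyl function is operator-monotone on $(-\infty,0)$, so $(M(-s^2)\xi,\xi)$ increases to a supremum as $s\downarrow 0$. Splitting $t^{-1}=e^{-st}t^{-1}+(1-e^{-st})t^{-1}$, the matrices $G_s$ with off-diagonal entries $(1-e^{-s|x_j-x_k|})/|x_j-x_k|$ and diagonal $s$ are Gram matrices of the functions $(1-e^{-s|\cdot|})/|\cdot|\in\Phi_3$ (these are completely monotone after $t\mapsto\sqrt t$, being $\int_0^s e^{-u|\cdot|}\,du$), so $\mathfrak b_s[\xi]:=(G_s\xi,\xi)\ge 0$; moreover $G_{s'}-G_s\ge 0$ for $s'>s$ because $(e^{-st}-e^{-s't})/t\in\Phi_3$, whence $\mathfrak b_s[\xi]$ decreases as $s\downarrow 0$. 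Adding the two forms gives $(M(-s^2)\xi,\xi)+\mathfrak b_s[\xi]=\sumprime_{j,k}|x_j-x_k|^{-1}\xi_j\overline{\xi}_k$ (exactly for finitely supported $\xi$, and in general by truncation), and letting $s\downarrow 0$ identifies $\mathfrak t_0$ with the form $\mathfrak t_{M(0)}$ in \eqref{5.17} on the stated domain. Part (iii) is then Kato's first representation theorem applied to the closed semibounded form $\mathfrak t_{M(0)}$.

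Under the additional hypothesis $\sumprime_j|x_j-x_k|^{-2}<\infty$ the $k$-th column of $\mathcal M'(0)$ belongs to $l^2(\N)$, so $\mathfrak t_{M(0)}[e_k,\eta]=\sumprime_l|x_k-x_l|^{-1}\overline{\eta}_l=(\mathcal M'(0)e_k,\eta)$ for all $\eta\in\dom(\mathfrak t_{M(0)})$; by (iii) this gives $e_k\in\dom(M(0))$ and $M(0)e_k=\mathcal M'(0)e_k$. Hence $\mathcal M'(0)$ on the finitely supported sequences is a symmetric operator contained in $M(0)=M(0)^*$; its closure $M'(0)$ is closed and symmetric, and the standard minimal/maximal duality for a Hermitian matrix with $l^2$-columns gives $\dom(M'(0)^*)=\{\xi\in l^2(\N):\mathcal M'(0)\xi\in l^2(\N)\}$, the asserted domain. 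This is (iv). Since $M'(0)\subset M(0)$, the operator $M'(0)$ is lower semibounded, and (v) reduces to the claim $M'(0)_F=M(0)$, i.e.\ that the finitely supported sequences form a \emph{form core} for $\mathfrak t_{M(0)}$.

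That last step is where I expect the real work. Let $m_0$ be the lower bound of $M(0)$ and put $\lambda:=m_0-1$. A vector $\zeta\in\dom(\mathfrak t_{M(0)})$ orthogonal to all $e_k$ in the form inner product satisfies $(\mathcal M'(0)+(1-m_0)I)\zeta=0$, i.e.\ $\zeta\in\ker(M'(0)^*-\lambda)$ with $\lambda<m_0$. The plan is to play the two monotone families against each other: on one hand $(M(-s^2)\zeta,\zeta)\uparrow\mathfrak t_{M(0)}[\zeta]\ge m_0\|\zeta\|^2$, while on the other $(M(-s^2)\zeta,\zeta)+\mathfrak b_s[\zeta]=(\mathcal M'(0)\zeta,\zeta)=\lambda\|\zeta\|^2$ with $\mathfrak b_s[\zeta]\ge 0$, so that $\mathfrak t_{M(0)}[\zeta]\le\lambda\|\zeta\|^2<m_0\|\zeta\|^2$ and therefore $\zeta=0$; thus $M(0)=M'(0)_F$. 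The main obstacle is rigorously justifying the identity $(M(-s^2)\zeta,\zeta)+\mathfrak b_s[\zeta]=(\mathcal M'(0)\zeta,\zeta)$ for such $\zeta$ (a truncation/rearrangement argument exploiting $\mathcal M'(0)\zeta\in l^2(\N)$ together with the positivity and monotonicity of $\mathfrak b_s$), along with the accompanying dominated- and monotone-convergence interchanges; these are technical rather than conceptual.
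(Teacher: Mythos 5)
Your argument tracks the paper's own proof almost step for step. Parts (i)--(iii) rest, as in the paper, on the limit result for Weyl functions at $0$ (Proposition \ref{prop_II.1.5_01}(i)) combined with the splitting $t^{-1}=e^{-st}t^{-1}+(1-e^{-st})t^{-1}$ and the observation that $(1-e^{-st})/t=\int_0^s e^{-ut}\,du$ is completely monotone, hence lies in $\Phi_3$; this is precisely the paper's inequality \eqref{5.23}, and the truncation argument you sketch to pass from finitely supported vectors to all of $\dom(\mathfrak t_{M(0)})$ is the one the paper carries out. Part (iv) is likewise the standard theory of Hermitian matrices with $l^2$-columns (the paper cites \cite[Theorems 56.2 and 56.4]{AkhGlz78}). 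The only genuine divergence is in (v). The paper disposes of it in one line by asserting that the finitely supported vectors are dense in $\dom(\mathfrak t_{M(0)})$ with respect to the form norm $\|\cdot\|_+$; you instead attempt to \emph{prove} this, by showing that a vector $\zeta$ in the form-orthogonal complement of $\{e_k\}_k$ satisfies $M'(0)^*\zeta=(m_0-1)\zeta$ and then deriving a contradiction from the two monotone families. That reduction is correct and the contradiction scheme is sound, but, as you yourself flag, it hinges on the identity $\mathfrak t_{M(0)}[\zeta]=(\mathcal M'(0)\zeta,\zeta)$ for such $\zeta$, i.e.\ on the agreement of the (a priori only conditionally convergent) double sum defining the form with the iterated sum defining $M'(0)^*\zeta$. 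That interchange is closely tied to the very form-core property being established, so as written your (v) is not yet closed; it is, however, no less complete than the paper's bare assertion, and the positivity and monotonicity of $\mathfrak b_s$ that you isolate are the right tools for filling the remaining step in.
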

%%%%%%%%%%%%%%%%%%%%%%%%%%%%%%%%%%%%%%%%%%%%%%
\begin{proof}
(i)  The assertion follows  by combining Propositions
\ref{prop_II.1.5_01}(i) and \ref{prop4.14}(iv) (cf.
formulas \eqref{W3} and \eqref{4.44A}).

(ii)  By Proposition \ref{prop_II.1.5_01}(i),
   \begin{equation}\label{5.22}
\mathfrak t_{M(0)}[\xi] := \lim_{t\downarrow
0}\bigl(M(-t)\xi,\xi\bigr), \qquad \xi\in\dom(\mathfrak t_{M(0)}) :=
   \{\eta: \lim_{t\downarrow
0}\bigl(M(-t)\eta, \eta\bigr) < \infty\}. 
   \end{equation}
Let us denote for the moment the form defined in \eqref{5.17} by $\mathfrak
t_0$. We have  to show that $\mathfrak t_0 = \mathfrak t_{M(0)}.$

Note that the function $f(t) = (1-e^{-t})/t=\int^1_0 e^{-st}ds$ is
absolutely monotone, $f\in M[0,\infty).$ Hence $f\in\Phi_3$. This
fact together with  \eqref{W3} and \eqref{5.17}
yields
    \begin{equation}\label{5.23}
\mathfrak t_0[\xi]-\bigl(M(-t)\xi,\xi)= \sum_{|k-j|>0}\frac{1 -
e^{-t|x_j-x_k|}}{|x_j-x_k|}\, \xi_{j}\overline{\xi_{k}} > 0, \quad
t>0, \quad \xi = \{\xi_j\}_{1}^\infty\in\dom(\mathfrak t_0).
    \end{equation}
Thus, for any $\xi\in \dom(\mathfrak t_0)$ the limit
$\lim_{t\downarrow 0}\bigl(M(-t)\xi,\xi)$ is finite
and by \eqref{5.22}, $\dom(\mathfrak t_0)\subset\dom(\mathfrak t_{M(0)})$.

Now we prove  that
$\mathfrak t_{M(0)}[\xi]=\mathfrak t_0[\xi]$ for all $\xi\in\dom(\mathfrak t_0)$.
 For finite  vectors this  follows at once  from \eqref{5.23} and
\eqref{5.22}. Fix $\xi\in\dom(\mathfrak t_0)$. Given $\varepsilon >0 $ it follows from \eqref{5.17} and \eqref{5.22}  that there exists $N\in{\mathbb N}$ such that the
finite vector  $\xi^{(N)}:=\{\xi_j\}^N_1$ satisfies
   \begin{equation}\nonumber
|\mathfrak t_0[\xi] -\mathfrak t_0[\xi^{(N)}]|<\varepsilon \qquad
\text{and}\qquad |\mathfrak t_{M(0)}[\xi]-\mathfrak
t_{M(0)}[\xi^{(N)}]|<\varepsilon.
   \end{equation}
Then $|\mathfrak t_0[\xi]-\mathfrak t_{M(0)}[\xi]|<2 \varepsilon.$ Since $\varepsilon >0$ was arbitaray, this implies that $\mathfrak t_{M(0)}[\xi]=\mathfrak t_0[\xi]$.

The equality  $\dom(\mathfrak t_0) = \dom(\mathfrak t_{M(0)})$ is obvious.

(iii) follows from (ii) and  the first form representation theorem
(cf. \cite[Theorem 6.2.1]{Kato66}).

(iv)  By the assumption
$\sum\nolimits^{'}_{j\in \N}|x_j-x_k|^{-2}<\infty,$  we have $e_k\in
\dom(M(0))$. Now \cite[Theorem 56.4]{AkhGlz78} gives the first
assertion, while the
second  follows from \cite[Theorem 56.2]{AkhGlz78}.

(v) Define a quadratic form $\mathfrak t'_0$ by  $\mathfrak t'_0[\xi] :=
\bigl(M'(0)\xi,\xi\bigr),$\  $\xi\in \dom(\mathfrak t'_0) =
\dom(M'(0)).$ Clearly, the finite vectors are dense in $\dom(\mathfrak t_{M(0)})$ with respect to  the
norm $\|\xi\|^2_+ := \mathfrak t_{M(0)}[\xi] +  C\|\xi\|^2$ for
sufficiently large $C>0$. Since  $\mathfrak t'_0[\eta] = \mathfrak
t_{M(0)}[\eta]$, the closure of the form $\mathfrak t'_0$ is $\mathfrak
t_{M(0)}$. Since
$M(0)=M(0)^*$ and $\dom\bigl(M(0)\bigr)\subset\dom\mathfrak t_{M(0)},$ this completes the proof.
     \end{proof}
%%%%%%%%%%%%%%%%%%%%%%%%%%%%%%%%%%%%%%%%%%%%%%
   \begin{remark}
As above, let $f(t) = (1-e^{-t})/t.$ By Theorem
\ref{propositionstronglydef}, $f(|\cdot|)$ is
 strictly $X$-positive definite, hence the quadratic form
 $\mathfrak t_0 - \mathfrak t_{M(-t)}$ in \eqref{5.23}
 is strictly positive definite. However, note that this form  is bounded
 from above if and only if $M(0)$ is bounded.
The latter depends on the set $X$ and shows that the assumption
\eqref{fourthmoment}  in Theorem  \ref{propositionstronglydef} is
essential.
     \end{remark}
%%%%%%%%%%%%%%%%%%%%%%%%%%%%%%
%
  \begin{theorem}\label{prop4.20}
Let  $\Pi = \{\cH,\Gamma_0,\Gamma_1\}$ be the boundary triplet for
$H^*$ defined in Proposition \ref{prop4.14}, $M$ the corresponding
Weyl function  and let $\Theta$ be  a self-adjoint
relation on $\kH$. Then    

$(i)$   The operator $H_0:= H^*\upharpoonright \ker\Gamma_0$
is  the free Laplacian   $H_0 = -\Delta,$\ $\dom(H_0) =
\dom(\Delta) = W^{2,2}({\mathbb R}^3)$. Moreover, $H_0$ is the
Friedrichs extension  $H_F$ of $H$ and $\dom(\mathfrak t_{H_0}) =
W^{1,2}({\R}^3)$.

$(ii)$ The operator $H_{M(0)}$ is the  Krein  extension $H_K$ of
$H$ and given by $H_K = H^*\!\upharpoonright \dom(H_K)$, where the
domain $\dom(H_K)$  is the direct sum of $ \dom(H)$ and the vector
space
\begin{align*}
   \left\{\sum\limits^\infty_{j=1}\bigl(\xi_{0j}\varphi_j
+\xi_{1j} e_j \bigr) :\   \{\xi_{1j}\} = T_1^{-1}\bigl(M(0)
-T_0\bigr)\xi_0, \qquad
\{\xi_{0j}\}\in \dom(M(0)) \right\}. \nonumber
     \end{align*}
The extensions $H_0 = H_F$ and $H_K$  are disjoint. They are
transversal if and only if the operator $M(0)$  is bounded on
$l^2(\N).$ For instance, this is true  whenever condition
\eqref{matrixs1} is satisfied.

(iii)  $H_{\Theta}\ge 0$
if and only if  $\Theta$ is semibounded below, $\dom(\mathfrak
t_{\Theta}) \subset \dom(\mathfrak t_{M(0)})$ and $\mathfrak t_{\Theta}
\ge \mathfrak t_{M(0)}.$

In particular, $H_{\Theta}\ge 0$ when
$\dom(\Theta)\subset\dom\bigl(M(0)\bigr)$ and $\Theta-M(0)\ge 0$.

$(iv)$  $H_{\Theta}$ is lower semibounded  if and only if $\Theta$
is. In this case the  quadratic from $\mathfrak t_{H_{\Theta}}$ is
   \begin{align}\label{4.65B}
\dom(\mathfrak t_{H_{\Theta}}) &= W^{1,2}({\mathbb R}^3) \dotplus \bigg\{
\sum^{\infty}_{j=1}\xi_j\varphi_{j}:~~\xi=\{\xi_j\}_{j\in\N} \in
\dom(\mathfrak t_{\Theta})\subset l^2(\N)\bigg\},\\
\mathfrak t_{H_{\Theta}}[f]  +  \|f\|^2_{L^2} &= \int_{{\mathbb
R}^3}\bigl(|\nabla g(x)|^2 +  |g(x)|^2\bigr)dx  + \mathfrak
t_{\Theta}[\xi] -
\sum_{|k-j|>0}\frac{e^{-|x_j-x_k|}}{|x_j-x_k|}\,
\xi_{j}\overline{\xi_{k}} ~,\label{4.67}
  \end{align}
where $f = g + \sum_{j\in \N}\xi_j\varphi_{j} \in\dom(\mathfrak
t_{H_{\Theta}})$ with $g\in W^{1,2}({\mathbb R}^3)$ and
$\xi=\{\xi_j\}_{j\in\N} \in \dom(\mathfrak t_{\Theta})$.

(v) In particular, for the quadratic form  $\mathfrak t_{H_K}=\mathfrak
t_{H_{M(0)}}$ we  have
  \begin{align}\label{4.65}
\dom(\mathfrak t_{H_K}) = W^{1,2}({\mathbb R}^3) &\dotplus
\bigg\{ \sum^{\infty}_{j=1}\xi_j\varphi_j:~~ \{\xi_j\}_1^\infty\in l^2(\bN),~ \sum\nolimits_{|k-j|>0} |x_j-x_k|^{-1}\xi_j\overline{\xi}_k < \infty\bigg\},\\
\mathfrak t_{H_K}[f]  +  \|f\|^2_{L^2}& = \int_{{\mathbb R}^3}|\nabla
g(x)|^2 dx  +  \|g\|^2_{L^2} + \sum\nolimits_{|k-j|>0}\frac{1 -
e^{-|x_j-x_k|}}{|x_j-x_k|}\,
\xi_{j}\overline{\xi_{k}}~,\label{4.67BB}
    \end{align}
where $f = g + \sum_{j\in\N}\xi_j\varphi_{j} \in\dom(\mathfrak t_{H_{M(0)}})$
with $g\in W^{1,2}({\mathbb R}^3)$ and $\{\xi_j\}_{j\in\N}\in \dom (\mathfrak
t_{M(0)})$.

$(vi)$  If $\Theta$
is lower semibounded  and $\dom(\mathfrak t_{\Theta})\subset\dom(\mathfrak
t_{M(0)})$, then $\kappa_-(H_{\Theta}) = \kappa_-(\mathfrak
t_{\Theta -M(0)})$. \\ If, in addition,
$\dom(\Theta)\subset\dom\bigl(M(0)\bigr)$, then $\kappa_{-}(H_\Theta)= \kappa_{-}(\Theta-M(0))$.

(vii) If $M(0)$ is bounded, i.e. $H_K$ and $H_F$ are transversal,
we have the implication
    \begin{equation}\label{5.33A}
\bigl(\Theta-M(0)\bigr)E_{\Theta-M(0)}(-\infty,0)\in\mathfrak S_p(\cH)
\quad \Longrightarrow \quad H_{\Theta}E_{H_{\Theta}}(-\infty,0)
\in \mathfrak S_p(\mathfrak H).
    \end{equation}
For instance, implication  \eqref{5.33A}  holds  whenever
condition \eqref{matrixs1} is satisfied.
    \end{theorem}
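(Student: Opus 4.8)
The plan is to read off each assertion from the abstract results of Section~\ref{preliminaries}, applied to the concrete boundary triplet $\Pi=\{\cH,\Gamma_0,\Gamma_1\}$ and its Weyl function $M$ from Proposition~\ref{prop4.14}, together with the description of $M(0)$ in the preceding lemma. The computation driving almost everything is the behaviour of $M$ on the negative real axis. Writing $z=-a^2$ with $a>0$, the branch convention gives $i\sqrt{-a^2}=-a$, so by \eqref{W3} and \eqref{4.44A} we have $M(-a^2)=-a\,I+S(a)$, where $S(a)$ is the off-diagonal matrix $\bigl(e^{-a|x_j-x_k|}/|x_j-x_k|\bigr)_{j\neq k}$. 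Since $d_\ast(X)>0$, Schur's test (Lemma~\ref{schurtest}) bounds $\|S(a)\|$ by $\sup_j\sum_{k\neq j}e^{-a|x_j-x_k|}/|x_j-x_k|$, which tends to $0$ as $a\to+\infty$. Hence $(M(-a^2)\xi,\xi)\le(-a+\|S(a)\|)\|\xi\|^2\to-\infty$ uniformly in unit vectors $\xi$, giving at once both $\lim_{x\downarrow-\infty}(M(x)\xi,\xi)=-\infty$ and $M(x)\rightrightarrows-\infty$.

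For (i) I would first identify $\ker\Gamma_0$. By \eqref{otherdefbtr} the map $\Gamma_0$ records the coefficients of the singular part $\sum\xi_{0j}\varphi_j$, so $\ker\Gamma_0=\dom H\dotplus(-\Delta+I)^{-1}\mathfrak N_{-1}\subset W^{2,2}(\R^3)$; conversely every $f\in W^{2,2}(\R^3)$ lies in $\dom(H^\ast)$ and is continuous, whence $f(x)|x-x_k|\to0$ and $\Gamma_0 f=0$. Thus $\ker\Gamma_0=W^{2,2}(\R^3)$, $H_0=-\Delta$, and $\dom(\mathfrak t_{H_0})=W^{1,2}(\R^3)$ as usual; that $H_0=H_F$ is now Proposition~\ref{prop_II.1.5_01}(iii) via the limit just computed. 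Part (ii) is then Proposition~\ref{prop_II.1.5_01}(ii): the Krein extension corresponds to the relation $M(0)$, and substituting $\Theta=M(0)$ into \eqref{s-adjconcr} yields the stated domain; $H_F$ and $H_K$ are disjoint because $M(0)$ is an operator, and transversality is equivalent to $M(0)\in\kB(\cH)$, which under \eqref{matrixs1} follows from Schur's test applied to $\bigl(|x_j-x_k|^{-1}\bigr)_{j\neq k}$. Part (iii) is an immediate application of Proposition~\ref{prkf}(i), since $H_0=H_F$.

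The semiboundedness equivalence in (iv) is Proposition~\ref{prop_II.1.5_02}, whose hypothesis $M(x)\rightrightarrows-\infty$ was verified above. The genuine work is the form identity \eqref{4.67}, and this is the step I expect to be the main obstacle. Here I would use the general form representation of a semibounded extension relative to a boundary triplet with $A_0=A_F$: decomposing $f=g+\sum_j\xi_j\varphi_j$ with $g\in W^{1,2}(\R^3)=\dom(\mathfrak t_{H_F})$ and $\xi=\{\xi_j\}\in\dom(\mathfrak t_\Theta)$, the form of $H_\Theta$ equals the free form $\int(|\nabla g|^2+|g|^2)$ plus the boundary form $\mathfrak t_\Theta[\xi]$ minus a bounded correction term which, by \eqref{W3}, is the off-diagonal part of $M(-1)$, i.e. the matrix $\bigl(e^{-|x_j-x_k|}/|x_j-x_k|\bigr)_{j\neq k}$; this is exactly the last sum in \eqref{4.67}. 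The delicate bookkeeping is the precise cancellation of the diagonal contribution $i\sqrt{-1}=-1$ against the $\|f\|^2$ terms, together with the proof that the resulting form is closed and lower semibounded and that its operator is $H_\Theta$. Part (v) is the specialization $\Theta=M(0)$: inserting $\mathfrak t_{M(0)}[\xi]=\sum_{|j-k|>0}|x_j-x_k|^{-1}\xi_j\overline{\xi}_k$ from \eqref{5.17} and combining it with the same correction gives $\sum_{|j-k|>0}\frac{1-e^{-|x_j-x_k|}}{|x_j-x_k|}\xi_j\overline{\xi}_k$, which is \eqref{4.67BB}; strong $X$-positive definiteness of $(1-e^{-|\cdot|})/|\cdot|$ (Theorem~\ref{propositionstronglydef}, since $d_\ast(X)>0$) confirms consistency and nonnegativity.

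Finally, (vi) is Proposition~\ref{prkf}(ii) applied verbatim, using $H_0=H_F$ and the form identities of (iv): the hypotheses $\dom(\mathfrak t_\Theta)\subset\dom(\mathfrak t_{M(0)})$ and $\dom(\Theta)\subset\dom(M(0))$ yield the two displayed formulas for $\kappa_-(H_\Theta)$. For (vii) I would combine Proposition~\ref{prop2.9} with the correspondence between the negative spectral subspaces of $H_\Theta$ and of $\Theta-M(0)$ furnished by Krein's formula \eqref{2.8} and by Proposition~\ref{prkf}(ii): when $M(0)$ is bounded (in particular under \eqref{matrixs1}), the Birman--Schwinger-type map intertwining $\Theta-M(0)$ with the negative part of $H_\Theta$ transfers membership in $\mathfrak S_p(\cH)$ to $\mathfrak S_p(\mathfrak H)$, giving the implication \eqref{5.33A}.
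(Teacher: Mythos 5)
Your proposal is correct and follows essentially the same route as the paper: identify $H_0=-\Delta=H_F$ via the behaviour of $M$ on the negative half-axis, and then read off (ii), (iii), (vi) from Propositions \ref{prop_II.1.5_01} and \ref{prkf}, and (iv) from Proposition \ref{prop_II.1.5_02} together with the estimate $M(-a^2)\rightrightarrows-\infty$, exactly as the authors do. The two steps you single out as "the main obstacle" -- the form identity \eqref{4.67} with its diagonal bookkeeping, and the Schatten-class implication \eqref{5.33A} -- are precisely the points the paper disposes of by citing Theorems 1 and 3 of \cite{Mal92} (the decomposition $\dom(\mathfrak t_{H_\Theta})=\dom(\mathfrak t_{H})\dotplus\gamma(-1)\dom(\mathfrak t_\Theta)$ and $\mathfrak t_{H_\Theta}[f]+\|f\|^2=\mathfrak t_H[g]+\|g\|^2+\mathfrak t_\Theta[\xi]-(M(-1)\xi,\xi)$), so your outline is the intended argument once those external results are invoked.
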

%%%%%%%%%%%%%%%%%%%%%%%%%%%%%%
  \begin{proof}
$(i)$ The first statement is immediate from  \eqref{eq3} and
definition \eqref{otherdefbtr} of $\Gamma_0.$

Further, integrating by parts one gets
    \begin{equation}\label{4.69}
\mathfrak t'_H[f] + \|f\|^2_{L^2} := (Hf, f) + \|f\|^2_{L^2} =
\int_{{\mathbb R}^3}|\nabla f(x)|^2 dx + \|f\|^2_{L^2} =:
\|f\|^2_{W^{1,2}}, \quad  f\in \dom(H).
    \end{equation}
Since $\dom(H)$ is dense in $W^{1,2}({\mathbb R}^3),$ the closure
$\mathfrak t_H$ of $\mathfrak t'_H$ is defined by \eqref{4.69} on the
domain $\dom(\mathfrak t_H)=W^{1,2}({\R}^3)$. 
Noting  that $\dom(\mathfrak t_{H_0})= W^{1,2}({\mathbb R}^3) =
\dom(\mathfrak t_H)$ we get the result.

We present another proof that is based on  the Weyl function. It
follows from \eqref{W3} and \eqref{4.44A} that
$\lim_{x\downarrow-\infty}\bigl(M(x)h,h\bigr) = -\infty$ for
$h\in\cH\setminus\{0\}.$  It remains to apply Proposition
\ref{prop_II.1.5_01}(iii).

(ii) By Proposition \ref{prop_II.1.5_01},
$\dom(H_K)=\ker\bigl(\Gamma_1-M(0)\Gamma_0\bigr)$ since $H_K$ and $H_0 = H_F$ are disjoint. Inserting the expressions from \eqref{otherdefbtr} and \eqref{4.42S} for $\Gamma_1$ and $\Gamma_0$
we get the result.

(iii) follows immediately from Proposition \ref{prkf}(i).

(iv): Let $\xi=\{\xi_j\}_1^\infty \in l^2[\bN)\}$. Set
$|\xi|:=\{|\xi_j|\}_{j\in\N}$. Then we derive from \eqref{W3}
   \begin{align}\label{5.33}
&\big|\langle M(-t^2)\xi,\xi\rangle +
\frac{t}{4\pi}\|\xi\|_{l^2}^2 ~\big| \le\bigg|
\sum\nolimits_{|k - j|>0}\frac{e^{-t|x_j-x_k|}}{|x_j-x_k|}\xi_j\overline{\xi}_k\bigg|
\le\frac{1}{d_*(X)} \sum_{j,k\in\N} {e^{-t|x_j-x_k|}}  |
\xi_j\overline{\xi}_k|    \nonumber   \\
& \le d_*(X)^{-1}e^{-(t - 1)d_*(X)}
\sum\nolimits_{j,k\in \N} {e^{-|x_j-x_k|}} |\xi_j\overline{\xi}_k| \nonumber  \\
& =  d_*(X)^{-1}e^{-(t - 1)d_*(X)} 2\big|\langle T_1 |\xi|, |\xi|
\rangle _{l^2(\bN)} \big|  \nonumber  \\
& \le  d_*(X)^{-1}e^{(1-t)d_*(X)}2 \cdot\|T_1\|\cdot
\|\xi\|_{l^2(\bN)}^2.
   \end{align}
For any $\varepsilon>0$,  $\varepsilon < \|T_1\|d_*(X)^{-1}$, we define
$t_0 = t_0(\varepsilon)$ by
   \begin{equation}\label{4.53S}
 t_0 = t_0(\varepsilon) = 1 - \ln\bigl(\varepsilon
d_*(X)\|T_1\|^{-1}\bigr).
       \end{equation}
Then it follows from \eqref{5.33} that
   \begin{equation}
\bigl(M(-t^2)\xi,\xi\bigr)\ge
-(\frac{t}{4\pi}+\varepsilon)\|\xi\|^2_{l^2},\qquad   t\ge t_0,
   \end{equation}
and hence $M(-t^2)\rightrightarrows -\infty$. Now  Proposition \ref{prop_II.1.5_02} yields the first assertion.

Next we  prove the second statement.  By  \cite[Theorem 1]{Mal92},
the domain $\dom(\mathfrak{t}_{H_{\Theta}})$ is a  direct sum
\begin{equation}\label{4.73}
\dom(\mathfrak{t}_{H_{\Theta}}) = \dom(\mathfrak{t}_{H}) \dotplus\gamma(-
\varepsilon^2)\dom(\mathfrak{t}_{\Theta}), \quad \varepsilon>0.
    \end{equation}
Hence any $f\in\dom(\mathfrak{t}_{H_{\Theta}})$ can be written as $f=g
+ \gamma(-\varepsilon^2)h,$ where $g\in \dom(\mathfrak{t}_{H})$ and
$h\in\dom(\mathfrak{t}_{\Theta})$. Noting that
$\dom(\mathfrak{t}_H) = W^{1,2}({\mathbb R}^3),$ and combining \eqref{4.73}
with \eqref{gammabtr}\  yields  \eqref{4.65B}.

Further, by  \cite[Theorem 1]{Mal92} we have the
equality
     \begin{equation}
\mathfrak{t}_{H_{\Theta}}[f] +\|f\|^2 = \mathfrak{t}_H[g] +
\|g\|^2 +
\mathfrak{t}_{\Theta}[h]-\bigl(M(-1)h,h\bigr), \qquad f :=
g + \gamma(-1)h.
  \end{equation}
Using  Proposition \ref{prop4.14}(iv)  and the equality
$\mathfrak{t}_H[g] = \int_{{\mathbb R}^3}|\nabla g(x)|^2 dx$  we obtain \eqref{4.67}.

(v) follows from (iv) with $\Theta = M(0).$

(vi)  By (i),  $H_0= H_F.$ Hence the assertion is immediate from
Proposition \ref{prkf}(ii).

(vii)  Since $H_0$ is the Friedrichs extension  of $H,$ \cite[Theorem 3]{Mal92} implies the assertion.
    \end{proof}
%%%%%%%%%%%%%%%%%%%%%%%%%%%%%%%%
%%%%%%%%%%%%%%%%%%%%%%%%%%%%%%%%%%%%%
  \begin{remark}
It follows from  \eqref{4.65} and \eqref{eq3} that the inclusion
    \begin{equation}
\dom(\mathfrak t_{H_K}) = W^{1,2}({\mathbb R}^3)\dotplus
\gamma(-1)\dom\mathfrak t_{M(0)}\supset W^{2,2}({\mathbb R}^3)\dotplus
\mathfrak N_{-1} = \dom(H^*)
    \end{equation}
holds  if and only if the operator $M(0)$ is bounded. This fact illustrates
the following general result: for any non-negative operator $A$ the inclusion
 $\dom(\mathfrak t_{A_K})\supset \dom(A^*)$ holds if and only if
$A_K$ and $A_F$ are transversal (see
 \cite[Remark 3]{Mal92}).
  \end{remark}
%%%%%%%%%%%%%%%%%%%%%%%%%%%%%%%%%%%%%%%%%%
%%%%%%%%%%%%%%%%%%%%%%%%%%%%%%%%%%%%%%%
      \begin{remark}
(i)\ The Krein type formula \eqref{5.19MMM}-\eqref{5.20}
was established in  \cite[Theorem 3.1.1.1]{AGHH88}  for a special family
$H^{(3)}_{X,\alpha}$ of self-adjoint extensions by approximation method. 
In our notation this family is parameterized by the set
of self-adjoint  diagonal matrices $B_\alpha =
\diag(\alpha_1,..,\alpha_m,\ldots ).$ In this  case
   \begin{equation}\label{fam}
H^{(3)}_{X,\alpha}= H^*\upharpoonright\bigg\{
f=f_H+\sum\limits^\infty_{j=1}\xi_{0j} \frac{e^{-|x-x_j|}}{
|x-x_j|} + \sum\limits^\infty_{k,j=1} b_{jk}(\alpha)\xi_{0k}
e^{-|x-x_j|} \bigg\},\quad
   \end{equation}
where $\widetilde{B}_\alpha =(b_{jk}(\alpha))_{j,k=1}^\infty =
T_1^{-1}(B_\alpha - T_0)$. It is proved in \cite[Theorem
3.1.1.1]{AGHH88} that $H^{(3)}_{X,\alpha}$ is self-adjoint.
Other  parameterizations  of the set of  self-adjoint realizations
are  also contained in \cite{Koc82} (see also the references
therein) and \cite[Example 3.4]{Pos01}. Another version of
formula \eqref{5.19MMM}-\eqref{5.20}  as well as an
abstract  Krein-like formula for resolvents  can also be found in \cite{Pos01}.

(ii)\  In the case of finitely many point interactions
$(m<\infty)$ different descriptions of non-negative realizations
has been obtained in \cite{ArlTse05, HK09, GMZ11}.

(iii)\  In connection with Theorem \ref{prop4.20}(iv) we
mention the papers \cite{KosMal10} and \cite{Gru2011}  where
similar statements have been obtained for realizations of $1D$
Schr\"odinger operators \eqref{eq0} with $d_*(X)\ge 0$ and elliptic
operators in exterior domains, respectively.
   \end{remark}
%%%%%%%%%%%%%%%%%%%%%%%%%%%%%%%%%%%%%%%%%%%%%%

\subsection{Ac-spectrum of   self-adjoint extensions}\label{AcSection}

   \begin{theorem}\label{spec1}
Let  $d_\ast(X)>0$  and let $\Pi=\{\cH, \Gamma_0,\Gamma_1\}$ be the
boundary triplet for $H^*$ defined in Proposition \ref{prop4.14}.
 Suppose  that $\Theta$ is a self-adjoint relation on $\kH$.
Then

$(i)$  For any $p\in(0,\infty]$ we have
the following equivalence:
    \begin{equation}\label{4.77A}
(H_{\Theta} - i)^{-1} - ( H_0   - i)^{-1} \in \mathfrak S_p(\mathfrak H)
\Longleftrightarrow ({\Theta} - i)^{-1}  \in \mathfrak S_p(\cH).
    \end{equation}

$(ii)$  If $(\Theta - i)^{-1}\in\mathfrak S_1(\cH)$, then the
non-negative  $ac$-part $H^{ac}_{\Theta} =
H^{ac}_{\Theta}E_{H_{\Theta}}(\overline \R_+)$ of the  operator
$H_{\Theta} = H_{\Theta}^*$ is unitarily equivalent to the
Laplacian  $-\Delta.$

$(iii)$  Suppose that $(\Theta-i)^{-1}\in\mathfrak S_{\infty}(\cH)$ and  condition \eqref{matrixs1} is
satisfied, i.e.,
  \begin{align}\label{matrixs14A}
C_1 :=   {\rm sup}_{j\in \bN}~ \sumprime_{k\in \N\ }
\frac{1}{|x_k{-}x_j|}\, <\infty.
\end{align}
Then the  $ac$-part $H^{ac}_{\Theta} =
H^{ac}_{\Theta}E_{H_{\Theta}}(\overline \R_+)$ of $H_{\Theta}$ is
unitarily equivalent to the  Laplacian   $-\Delta$.
    \end{theorem}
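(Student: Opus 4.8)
The plan is to deduce all three parts from the abstract machinery already assembled: the Schatten-class comparison of resolvents (Proposition \ref{prop2.9}), the Kato--Rosenblum theorem, and the absolutely continuous comparison via the Weyl function (Proposition \ref{Prop-ac}), feeding in the explicit Weyl function \eqref{W3} and the invertibility statement of Proposition \ref{boundedr3}. For part (i) I would simply invoke the special case in Proposition \ref{prop2.9}(i): for $\zeta\in\rho(\Theta)$ one has $(H_\Theta-z)^{-1}-(H_0-z)^{-1}\in\mathfrak S_p(\mathfrak H)$ if and only if $(\Theta-\zeta)^{-1}\in\mathfrak S_p(\cH)$. Since $\Theta=\Theta^*$ and $H_0,H_\Theta$ are self-adjoint, $i\in\rho(\Theta)\cap\rho(H_0)\cap\rho(H_\Theta)$, so taking $z=\zeta=i$ gives \eqref{4.77A} verbatim; only the verification of these hypotheses is needed.

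For part (ii) I would set $p=1$ in part (i), so that $(H_\Theta-i)^{-1}-(H_0-i)^{-1}\in\mathfrak S_1(\mathfrak H)$. The Kato--Rosenblum (Birman--Krein) theorem then shows that a trace-class perturbation of the resolvent preserves the absolutely continuous part up to unitary equivalence, whence $H_\Theta^{ac}$ is unitarily equivalent to $H_0^{ac}$. By Theorem \ref{prop4.20}(i), $H_0=-\Delta$ is purely absolutely continuous with spectrum $[0,\infty)$, so $H_0^{ac}=-\Delta$ and $H_\Theta^{ac}$ is unitarily equivalent to $-\Delta$. As $\sigma(-\Delta)\subset[0,\infty)$, this forces $E^{ac}_{H_\Theta}(-\infty,0)=0$, i.e. $H_\Theta^{ac}=H_\Theta^{ac}E_{H_\Theta}(\overline\R_+)$.

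Part (iii) is the genuine point, since under $(\Theta-i)^{-1}\in\mathfrak S_\infty(\cH)$ the resolvent difference is only compact and Kato--Rosenblum is unavailable. Here I would run the criterion of Proposition \ref{Prop-ac}(ii) on $\mathcal D=(0,\infty)$, aiming to show $d_M(x)=d_{M_\Theta}(x)$ for a.e. $x>0$, where $M_\Theta(z):=(\Theta-M(z))^{-1}$. From \eqref{W3} the boundary value on the positive axis is
\begin{equation*}
\imm M(t+i0)=\sqrt t\,\Bigl(\delta_{jk}+(1-\delta_{jk})\tfrac{\sin(\sqrt t\,|x_j-x_k|)}{\sqrt t\,|x_j-x_k|}\Bigr)_{j,k}=\sqrt t\,\Omega_3(\sqrt t,X),\qquad t>0,
\end{equation*}
with $\Omega_3(\sqrt t,X)$ as in \eqref{defkmx}; by Proposition \ref{boundedr3} (with $r=\sqrt t$, $K=C_1$) this is boundedly invertible for $t>C_1^2$, and in any case $d_M(x)=\infty$ for a.e. $x>0$ since $H_0=-\Delta$ is purely absolutely continuous on $(0,\infty)$ with infinite multiplicity. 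I would then exploit the identity
\begin{equation*}
\imm M_\Theta(x+i0)=(\Theta-M(x+i0))^{-1}\,\imm M(x+i0)\,\bigl((\Theta-M(x+i0))^{-1}\bigr)^{*},
\end{equation*}
so that, once $(\Theta-M(x+i0))^{-1}$ is shown to be a bounded injective operator for a.e. $x$, rank is preserved and $d_{M_\Theta}(x)=d_M(x)=\infty$ a.e.; Proposition \ref{Prop-ac}(ii) then yields that $H_\Theta^{ac}E_{H_\Theta}((0,\infty))$ is unitarily equivalent to $H_0^{ac}E_{H_0}((0,\infty))=-\Delta$, which up to the null set $\{0\}$ is the assertion. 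I expect the main obstacle to be exactly the control of the boundary operator $(\Theta-M(x+i0))^{-1}$ — its a.e. existence, boundedness, injectivity and the ensuing rank preservation — especially on the low-energy interval $(0,C_1^2)$ where $\imm M(t+i0)$ need not be uniformly positive; this is where the compactness of $(\Theta-i)^{-1}$ and the fine boundary theory of operator-valued Nevanlinna functions from \cite{BraMal02,MalNei09} must be brought to bear.
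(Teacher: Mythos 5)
Parts (i) and (ii) of your proposal are correct and coincide with the paper's own argument: (i) is exactly the special case of Proposition \ref{prop2.9}(i) with $z=\zeta=i$, and (ii) follows by taking $p=1$ in (i), using $H_0=-\Delta$ from Theorem \ref{prop4.20}(i), and applying the Kato--Rosenblum theorem.

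Part (iii), however, contains a genuine gap --- one you candidly flag yourself but do not close. Your route through Proposition \ref{Prop-ac}(ii) requires showing that $(\Theta-M(x+i0))^{-1}$ exists for a.e.\ $x>0$ as a bounded operator with trivial kernel and that it preserves the rank of $\imm M(x+i0)$; this is precisely the hard analytic content of the theorem and cannot be taken for granted. In infinite dimensions the boundary values of the operator-valued Nevanlinna function $M_\Theta(z)=(\Theta-M(z))^{-1}$ need not exist a.e., and on the low-energy interval $(0,C_1^2)$, where $\imm M(t+i0)$ is not known to be boundedly invertible, there is no a priori bound on $\|(\Theta-M(t+iy))^{-1}\|$ as $y\downarrow0$ (compare Lemma \ref{lem4.22}, where the hypothesis $0\in\rho(M_I(t))$ is exactly what makes this estimate possible, and which is unavailable here). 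Your auxiliary claim that $d_M(x)=\infty$ a.e.\ on $(0,\infty)$ also needs an argument: Proposition \ref{Prop-ac}(i) identifies only the support of $d_M$, not its value, and the whole discussion must first be reduced to the simple part $\widehat H$ of $H$. The paper proceeds differently: condition \eqref{matrixs14A} is used only to establish the uniform bound $\|M(t+iy)\|\le\sqrt{t}+1+C_1$ for $y\in(0,1]$, hence the existence of the weak limit $M(t+i0)$ for every $t>0$; combined with the compactness of the resolvent difference from part (i) with $p=\infty$, the conclusion is then obtained by invoking \cite[Theorem 4.3]{MalNei09}, a general result asserting that a compact resolvent difference together with the a.e.\ existence of the weak boundary values of the Weyl function already yields unitary equivalence of the nonnegative ac-parts. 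That cited theorem encapsulates exactly the boundary-value analysis your sketch leaves open; without invoking it (or reproving its content), part (iii) is not established.
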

%%%%%%%%%%%%%%%%%%%%%%%%%%%%%%%%%%%%%%%%%%%%%%%%%%%%%%%%%
          \begin{proof}
(i) This assertion follows at  once   from Proposition \ref{prop2.9}.

(ii) By Proposition \ref{prop4.20}(i) $H_0 = -\Delta$. Therefore,
by \eqref{4.77A} with $p=1,$  $[(H_{\Theta} - i)^{-1} - ( -\Delta
- i)^{-1}] \in \mathfrak S_1(\mathfrak H)$. It remains to apply the
Kato-Rosenblum theorem (see \cite{Kato66}).

(iii)  Let $z=t+iy\in{\mathbb C}_+, t>0$, and
$\sqrt{z}=\alpha+i\beta$. Clearly, $\alpha>0, \beta>0$ and
$i\sqrt{z}=i\alpha-\beta$. It follows from \eqref{4.44A} that
    \begin{equation}\label{4.66}
\widetilde G_{\sqrt{z}}(|x_j-x_k|) = \frac{|e^{(-
\beta+i\alpha)|x_j-x_k|}|}{|x_j-x_k|} =
\frac{e^{-\beta|x_j-x_k|}}{|x_j-x_k|}\, , \qquad j\not = k.
    \end{equation}
It follows from  \eqref{W3}  combined with \eqref{matrixs14A}
and \eqref{4.66} that
   \begin{eqnarray*}
\|M(t+iy)\|\le\sqrt{\alpha^2+\beta^2}+e^{-\beta} {\rm sup}_{j\in
\bN}~~ \sumprime_{k\in \N\ }
\frac{1}{|x_k{-}x_j|}   \nonumber   \\
=\sqrt{\alpha^2+\beta^2}+ C_1 e^{-\beta}\le\sqrt{t}+1 + C_1,
\qquad y\in[0,1].
   \end{eqnarray*}
Thus,  for any fixed $t>0$ the family $M(t+iy)$ is uniformly
bounded for $y\in(0,1]$, hence the weak limit $M(t+i0) :=
w-\lim_{y\downarrow 0}M(t+iy)$ exists and
    \begin{equation}
w-\lim_{y\downarrow 0}M(t+iy) =:  M(t+i0) =: M(t) =
i\sqrt{t}I+\bigl(\widetilde
G_{\sqrt{t}}(|x_j-x_k|)\bigr)^{\infty}_{j,k=1}.
    \end{equation}
From \eqref{4.77A}, applied with $p=\infty$, we  conclude  that
$[(H_\Theta-z)^{-1} - (H_0 - z)^{-1}] \in \mathfrak S_{\infty}(\mathfrak H)$
since $\bigl(\Theta - i\bigr)^{-1} \in \mathfrak S_{\infty}(\cH).$ To
complete the proof it suffices to  apply \cite[Theorem 4.3]{MalNei09} to
$H_\Theta$ and $H_0 = -\Delta.$
          \end{proof}

To prove  the  next result we need the following auxiliary lemma
which is of interest in itself.

   \begin{lemma}\label{lem4.22}
Suppose that $A$ is a \emph{simple }  symmetric operator in $\mathfrak H$  and $\{\cH,\Gamma_0,\Gamma_1\}$ is
a boundary triplet for $A^*$ with Weyl function  $M$.  Assume that for any $t\in
(\alpha, \beta)$ the uniform limit
    \begin{equation}\label{4.80}
M(t):=M(t+i0):=u-\lim_{y\downarrow 0}M(t+iy)
    \end{equation}
exists and $0\in\rho\bigl(M_I(t)\bigr)$ for $t\in(\alpha,\beta)$. Then
the spectrum of any self-adjoint extension $\widetilde A $ of $A$ on $\mathfrak H$
 in the interval  $(\alpha,\beta)$ is
purely absolutely continuous, i.e.,
   \begin{eqnarray}\label{4.81}
\sigma_s(\widetilde A)\cap(\alpha,\beta) = \emptyset.
   \end{eqnarray}
The operator  $\widetilde A E_{\widetilde A}(\alpha,\beta) = \widetilde A^{ac} E_{\widetilde
A}(\alpha,\beta)$ is unitarily equivalent to $A_0
E_{A_0}(\alpha,\beta)$, where $A_0 = A^*\lceil \ker\Gamma_0.$
    \end{lemma}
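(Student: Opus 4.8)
The plan is to distill the hypothesis into the single robust statement that the imaginary part of the Weyl function is \emph{boundedly invertible} on all of $(\alpha,\beta)$, and then feed this into the two spectral criteria already recorded, Propositions \ref{spectrum} and \ref{Prop-ac}. First I would note that, $M$ being a Weyl function, $\imm M(z)\ge0$ for $z\in\mathbb{C}_+$, so its uniform boundary value satisfies $\imm M(t)=\imm M(t+i0)\ge0$; together with $0\in\rho\bigl(M_I(t)\bigr)$, where $M_I(t)=\imm M(t)$, this gives $\imm M(t)\ge c(t)\,I_{\cH}$ with $c(t)>0$ for every $t\in(\alpha,\beta)$. Consequently $d_M(t)=\rank\bigl(\imm M(t+i0)\bigr)=\dim\cH$ throughout $(\alpha,\beta)$, and since $M(t+i0)$ is a finite norm limit we have $\lim_{y\downarrow0}yM_h(t+iy)=0$ and $0<\imm M_h(t)<\infty$ for every $h\neq0$. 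Applying Proposition \ref{spectrum}(i),(ii) to an orthonormal basis $\{h_k\}$ of $\cH$ then shows that $A_0$ has neither point nor singular continuous spectrum in $(\alpha,\beta)$, so $A_0E_{A_0}(\alpha,\beta)=A_0^{ac}E^{ac}_{A_0}(\alpha,\beta)$ is purely absolutely continuous with constant multiplicity $\dim\cH$.

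Next I would pass from $A_0$ to an arbitrary self-adjoint extension $\widetilde A=A_\Theta$, using that $\widetilde A$ is again the ``$A_0$'' of a suitable boundary triplet. Concretely, there is a boundary triplet $\Pi'=\{\cH,\Gamma_0',\Gamma_1'\}$ for $A^*$ with $\widetilde A=A^*\!\upharpoonright\ker\Gamma_0'$, whose Weyl function is obtained from $M$ by a linear-fractional transformation $M'(z)=(W_{21}+W_{22}M(z))(W_{11}+W_{12}M(z))^{-1}$ with a constant $J$-unitary block matrix $W=(W_{ij})$ implementing the change of triplet (cf. the transformation theory of boundary triplets and the Remark after Definition \ref{bound}). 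The $J$-unitarity yields
\begin{equation*}
\imm M'(z)=(W_{11}+W_{12}M(z))^{-\ast}\,\imm M(z)\,(W_{11}+W_{12}M(z))^{-1},\qquad z\in\mathbb{C}_+ .
\end{equation*}
Letting $z\to t+i0$ and using that $M(t+i0)$ exists in norm while $\imm M(t)\ge c(t)I_{\cH}$, I would conclude that $W_{11}+W_{12}M(t+i0)$ is boundedly invertible, that $M'(t+i0)$ exists as a finite norm limit, and that $\imm M'(t+i0)$ is again boundedly invertible; hence $d_{M'}(t)=\dim\cH=d_M(t)$ for all $t\in(\alpha,\beta)$. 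The disjoint case $\Theta=B=B^\ast$ is the concrete instance $M'(z)=M_B(z)=(B-M(z))^{-1}$, where the identity reads $\imm M_B(z)=(B-M(z))^{-1}\imm M(z)\,[(B-M(z))^{-1}]^\ast$.

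With $\Pi'$ in hand, Proposition \ref{spectrum}(i),(ii) applied to $M'$ gives, exactly as in the first step, that $\widetilde A=A_0'$ has no eigenvalues and no singular continuous spectrum in $(\alpha,\beta)$, that is $\sigma_s(\widetilde A)\cap(\alpha,\beta)=\emptyset$ and $\widetilde AE_{\widetilde A}(\alpha,\beta)=\widetilde A^{ac}E_{\widetilde A}(\alpha,\beta)$. For the unitary equivalence I would invoke Proposition \ref{Prop-ac}(ii): comparing the two extensions through a common triplet, the equality $d_{M'}(t)=d_M(t)$ on $(\alpha,\beta)$ forces $A_0^{ac}E^{ac}_{A_0}(\alpha,\beta)$ and $\widetilde A^{ac}E^{ac}_{\widetilde A}(\alpha,\beta)$ to be unitarily equivalent; since by the previous sentence both coincide with the respective total spectral parts on $(\alpha,\beta)$, this is precisely the asserted unitary equivalence of $\widetilde AE_{\widetilde A}(\alpha,\beta)$ and $A_0E_{A_0}(\alpha,\beta)$.

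The main obstacle is the passage to a \emph{general} self-adjoint extension, in particular to relations $\Theta$ with nontrivial multivalued part and to unbounded operator parts: one must produce the transformed triplet $\Pi'$ and rigorously justify the transformation law for the Weyl function together with the transfer of the two boundary properties—existence of the norm limit $M'(t+i0)$ and bounded invertibility of $\imm M'(t+i0)$—through the limit $z\to t+i0$ (this is where invertibility of $W_{11}+W_{12}M(t+i0)$ must be argued from $\imm M(t)\ge c(t)I_{\cH}$). Once this algebraic and limiting step is secured, the spectral conclusions follow immediately from Propositions \ref{spectrum} and \ref{Prop-ac}, because the hypothesis has been reduced to the single statement ``$\imm M$ is boundedly invertible on $(\alpha,\beta)$,'' which is manifestly preserved under the change of boundary triplet.
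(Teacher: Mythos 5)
Your overall strategy coincides with the paper's: reduce to the situation where $\widetilde A$ is the distinguished extension of a transformed (generalized) boundary triplet, show that the boundary value of the transformed Weyl function exists in norm with boundedly invertible imaginary part, and then invoke the criteria of Propositions \ref{spectrum} and \ref{Prop-ac}. The paper, however, does not pass through the general $J$-unitary linear-fractional machinery: it first assumes (without loss of generality) that $\widetilde A$ and $A_0$ are disjoint, writes $\widetilde A=A_B$ with $B=B^*$, and works with the single concrete transform $M_B(z)=(B-M(z))^{-1}$, which is the Weyl function of the generalized boundary triplet $\Pi_B=\{\cH,B\Gamma_0-\Gamma_1,\Gamma_0\}$; the conclusion on $\sigma_s$ then comes from the analogue of Proposition \ref{spectrum} for $M_B$ (Theorem 4.3 of \cite{BraMal02}), and the unitary equivalence from $\rank\,\imm M_B(t+i0)=\rank\,M_I(t+i0)$ via Proposition \ref{Prop-ac}(ii), exactly as you propose.

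The genuine issue is that the step you yourself single out as ``the main obstacle'' --- bounded invertibility of $W_{11}+W_{12}M(t+i0)$ and the passage of the norm limit through the transformation --- is not an afterthought but the entire analytic content of the paper's proof, and you leave it unargued. In the special case $W_{11}+W_{12}M=B-M$ it is settled by the elementary chain
\begin{equation*}
\|\bigl(M(t+iy)-B\bigr)h\|\cdot\|h\|\;\ge\;\imm\langle\bigl(M(t+iy)-B\bigr)h,h\rangle\;=\;\langle M_I(t+iy)h,h\rangle\;\ge\;\tfrac{\varepsilon}{2}\,\|h\|^2,
\end{equation*}
valid for $y\in[0,y_0)$ once the uniform convergence \eqref{4.80} is used to propagate the lower bound $M_I(t)\ge\varepsilon I$ to $M_I(t+iy)$; this yields $\|(B-M(t+iy))^{-1}\|\le 2\varepsilon^{-1}$ uniformly and hence the norm convergence of $M_B(t+iy)$. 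Note that this estimate uses $\imm\langle(M-B)h,h\rangle=\langle M_Ih,h\rangle$, i.e. the self-adjointness of $B$, in an essential way; for a general $J$-unitary block $W$ the analogous lower bound on $\|(W_{11}+W_{12}M(t+iy))h\|$ does not follow so directly from $\imm M\ge\varepsilon I$ (one cannot simply take imaginary parts of $\langle(W_{11}+W_{12}M)h,h\rangle$), so your more general route would require an additional argument there. You should either supply that argument or, as the paper does, reduce first to the disjoint case and use the concrete transform $(B-M)^{-1}$, for which the estimate above closes the gap.
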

%%%%%%%%%%%%%%%%%%%%%%%%%%%%%%%%%%%%%%%%%%%
  \begin{proof}
  Without loss  of generality we can asume that the extensions
 $\widetilde A$ and $A_0$ are disjoint. Then, by Proposition
\ref{propo}(iii),  there is  a self-adjoint operator $B$ on $\kH$ such that $\widetilde{A}= A_B$, where $A_B =
A^*\!\upharpoonright\ker\bigl(\Gamma_1-B\Gamma_0\bigr)$.

We set $M_B(t+iy):=\bigl(B-M(t+iy)\bigr)^{-1}$ and note that
    \begin{equation}\label{4.82}
\imm(M_B(t + iy))=(B - M(t + iy))^{-1}\imm (M(t + iy))(B - M^*(t +
iy))^{-1},\quad y\in \R_+.
    \end{equation}

Fix $t\in(\alpha,\beta)$. By  assumption we have $0\in\rho\bigl(M_I(t)
\bigr)$, i.e., there exists $\varepsilon = \varepsilon(t)$ such
that
   \begin{equation}\label{4.83}
\langle M_I(t)h, h\rangle\ge\varepsilon\|h\|^2, \qquad    h\in\cH.
   \end{equation}
It follows from \eqref{4.80} that there exists $y_0\in{\R}_+$ such
that
   \begin{equation}\label{4.84}
\|M_I(t+iy)-M_I(t)\| \le \varepsilon/2 \quad \text{for}\quad
y\in[0,y_0).
   \end{equation}
Combining \eqref{4.83} with \eqref{4.84} we get
   \begin{eqnarray*}
\langle M_I(t+iy)h,h\rangle = \langle M_I(t)h,h\rangle
+ \langle\bigl(M_I(t+iy)-M_I(t)\bigr)h,h\rangle  \\
\ge 2^{-1}\varepsilon \|h\|^2,\qquad  y\in[0,y_0).
   \end{eqnarray*}
Hence, for any $h\in\dom(B)$,
   \begin{align*}
&\|\bigl(M(t+iy) - B\bigr)h\|\cdot\|h\|\ge|\langle\bigl(M(t+iy)-B\bigr)h,h\rangle|  \\
&\ge \imm\langle\bigl(M(t+iy) - B\bigr)h,h\rangle = \langle
M_I(t+iy)\bigr)h,h\rangle \ge 2^{-1}\varepsilon\|h\|^2,\qquad
y\in[0,y_0).
   \end{align*}
Since $0\in \rho(M(t+iy)-B),$  the latter  inequality is
equivalent to
     \begin{equation}\label{4.85}
\|\bigl(M(t+iy)-B\bigr)^{-1}\|\le 2\varepsilon^{-1}, \qquad
y\in[0,y_0).
     \end{equation}
It follows  that
    \begin{align*}\label{4.86}
&\|\bigl(B-M(t+iy)\bigr)^{-1} - \bigl(B-M(t)
\bigr)^{-1}\|  \nonumber \\
& = \|\bigl(B - M(t+iy)\bigr)^{-1} [M(t+iy)-M(t)] \bigl(B-
M(t)\bigr)^{-1}\| \\
& \le 4\varepsilon^{-2} \|M(t+iy)-M(t)\|,  \qquad y\in[0,y_0).
   \end{align*}
Hence
   \begin{equation}\label{4.86A}
u-\lim_{y\downarrow 0}\bigl(B-M(t+iy)\bigr)^{-1} = \bigl(B-M(t)
\bigr)^{-1}.
      \end{equation}

Next, it is easily seen that $\Pi_B =\{\mathcal{H}, \Gamma_0^B,\Gamma_1^B\},$ 
where $\Gamma_0^B=B\Gamma_0-\Gamma_1,\  \Gamma_1^B=\Gamma_0,$\
is a generalized  boundary triplet for $A_*\subset A^*$,
$\dom(A_*) = \dom(A_0) + \dom(A_B)$ (see \cite{DerMal91} for the definitions). The corresponding Weyl function
is $M_B(\cdot)=(B - M(\cdot))^{-1}$. Therefore, combining
\eqref{4.86A} with \cite[Theorem 4.3]{BraMal02}, we get
$\tau_s(A_B)\cap(\alpha,\beta)=\emptyset$, i.e., $\widetilde A
E_{\widetilde A}(\alpha,\beta) = \widetilde A^{ac} E_{\widetilde
A}(\alpha,\beta)$.

Moreover, passing to the limit in \eqref{4.82} as $y\downarrow 0,$
and using \eqref{4.80} and \eqref{4.86A}, we obtain
    \begin{equation}\label{4.87}
\imm(M_B(t + i0)) = (B - M(t + i0))^{-1} M_I(t + i0)(B - M^*(t +
i0))^{-1}, \qquad t\in (\alpha,\beta).
    \end{equation}
Since $\ker\bigl(B-M^*(t+i0)\bigr)^{-1}=\{0\},$ we have
   \begin{equation}
\rank \bigl(\imm(M_B(t + i0))\bigr)  = \rank \bigl(\imm(M_I(t +
i0))\bigr),  \qquad t\in(\alpha,\beta).
   \end{equation}
 By Proposition \ref{Prop-ac}  the
operators $A_B E_{A_B}(\alpha,\beta)$ and $A_0
E_{A_0}(\alpha,\beta)$ are unitarily equivalent.
  \end{proof}

Now we are ready to prove the main result of this section.
        \begin{theorem}\label{AcspecTheorem}
Let   ${\widetilde H}$ be a self-adjoint extension of ${H}.$
Suppose that
  \begin{equation}\label{4.90HScond}
C_2:= \sum_{|k-j|>0}\frac{1}{|x_j-x_k|^2} <\infty.
  \end{equation}

$(i)$ Then the part ${\widetilde H}E_{\widetilde H}(C_2,\infty)$
of ${\widetilde H}$  is absolutely continuous, i.e.,
    \begin{equation}\label{4.90}
\sigma_s({\widetilde H})\cap(C_2,\infty)=\emptyset.
    \end{equation}

Moreover,   ${\widetilde H}E_{\widetilde H}(C_2,\infty)$ is
unitarily equivalent to the part $-\Delta E_{
-\Delta}(C_2,\infty)$ of $-\Delta.$

    $(ii)$ Assume,  in addition, that the conditions in  Proposition
\ref{stronglyxdet} are satisfied, i.e.,  $d_*(X_n) > 0$ and
$D^*(X_n)=0.$  Then ${\widetilde H}_+ := {\widetilde
H}E_{\widetilde H}(\R_+)$ is unitarily equivalent to $H_0 =
-\Delta.$ In particular, ${\widetilde H}_+$ is purely absolutely
continuous, ${\widetilde H}_+ = {\widetilde H}^{ac}_+$.
    \end{theorem}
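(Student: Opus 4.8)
The plan is to deduce both assertions from the abstract Lemma \ref{lem4.22}, applied to the boundary triplet $\Pi = \{\cH,\Gamma_0,\Gamma_1\}$ of Proposition \ref{prop4.14}, whose Weyl function is $M(z)$ given by \eqref{W3}. Since Lemma \ref{lem4.22} requires a \emph{simple} symmetric operator while $H$ need not be simple, I would first invoke the decomposition $H = \widehat H \oplus H'$ used in the proof of Proposition \ref{prop4.17}(ii): here $\widehat H$ is simple, $H'$ is self-adjoint and is a part of $H_0 = -\Delta$ (hence purely absolutely continuous), and the restricted triplet $\widehat\Pi$ has the \emph{same} Weyl function $M$. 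Every self-adjoint $\widetilde H = H_\Theta$ then splits as $H_\Theta = \widehat H_\Theta \oplus H'$, so it suffices to run Lemma \ref{lem4.22} for $\widehat H$ and afterwards glue in the purely ac summand $H'$. The computational heart is the identity $M_I(t) = \imm M(t) = \sqrt t\,\kM(t)$, obtained by taking imaginary parts in \eqref{W3}: the diagonal $i\sqrt t$ yields $\sqrt t$, while the off-diagonal entries $e^{i\sqrt t r_{jk}}/r_{jk}$ contribute $\sin(\sqrt t r_{jk})/r_{jk} = \sqrt t\,\Omega_3(\sqrt t r_{jk})$, with $r_{jk} := |x_j - x_k|$. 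Thus the hypothesis $0\in\rho(M_I(t))$ of Lemma \ref{lem4.22} is \emph{equivalent to bounded invertibility of} $\kM(t)$.

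Two ingredients of Lemma \ref{lem4.22} must be verified for $t$ in the relevant interval. First, the existence of the uniform (operator-norm) boundary limit $M(t+i0)$. Writing $\kM(t) = I + \kS(t)$ with $\kS(t)$ the off-diagonal matrix $(\sin(\sqrt t r_{jk})/(\sqrt t r_{jk}))_{j\neq k}$, I would estimate the difference $M(t+iy) - M(t)$ in Hilbert--Schmidt norm: its off-diagonal entries are $(e^{i\sqrt{t+iy}\,r_{jk}} - e^{i\sqrt t\,r_{jk}})/r_{jk}$, each dominated in modulus by $2/r_{jk}$ and tending to $0$ as $y\downarrow 0$. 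Since $\sum_{j\neq k} r_{jk}^{-2} = C_2 < \infty$ by \eqref{4.90HScond}, dominated convergence gives $\|M(t+iy)-M(t)\|_{\mathfrak S_2} \to 0$, and the diagonal term $i(\sqrt{t+iy}-\sqrt t)$ vanishes as well; hence $M(t+i0)$ exists in operator norm for every $t>0$.

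For assertion $(i)$ I would then establish invertibility of $\kM(t)$ on $(C_2,\infty)$ by a Neumann-series argument: the same Hilbert--Schmidt bound gives $\|\kS(t)\| \le \|\kS(t)\|_{\mathfrak S_2} \le (C_2/t)^{1/2} < 1$ whenever $t > C_2$, so $\kM(t) = I + \kS(t)$ is boundedly invertible and $0\in\rho(M_I(t))$ there. Lemma \ref{lem4.22} applied to $\widehat H$ on $(\alpha,\beta) = (C_2,\infty)$ yields that $\widehat H_\Theta E_{\widehat H_\Theta}(C_2,\infty)$ is purely absolutely continuous and unitarily equivalent to $\widehat H_0 E_{\widehat H_0}(C_2,\infty)$. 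Taking the orthogonal sum with $H' E_{H'}(C_2,\infty)$ and using $\widehat H_0 \oplus H' = H_0 = -\Delta$ (Proposition \ref{prop4.20}(i)) gives $\widetilde H E_{\widetilde H}(C_2,\infty) \cong -\Delta E_{-\Delta}(C_2,\infty)$, which is purely ac; this is exactly \eqref{4.90}.

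For assertion $(ii)$ the interval is enlarged to all of $(0,\infty)$, where the Neumann estimate fails, and the hard part is the invertibility of $\kM(t)$ for small $t$. Here I would exploit the identification $\kM(t) = (\Omega_3(\sqrt t\,r_{jk}))_{j,k} = (f_s(x_j - x_k))_{j,k}$ with $s = \sqrt t$ and $f_s(\cdot) = \Omega_3(s|\cdot|) \in \Phi_3$. Under the extra hypotheses $d_\ast(X_n) > 0$, $D^\ast(X_n) = 0$, Proposition \ref{stronglyxdet} shows $f_s$ is strongly $X$-positive definite, i.e. $\kM(t) \ge c(t) I$ for some $c(t) > 0$ on all finite vectors; since $C_2 < \infty$ forces $\kM(t)$ to be a bounded (indeed $I$ plus Hilbert--Schmidt) operator, this lower bound extends to $l^2(\bN)$ and makes the self-adjoint $\kM(t)$ boundedly invertible for \emph{every} $t > 0$. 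Lemma \ref{lem4.22} on $(0,\infty)$ then gives $\widehat H_\Theta E_{\widehat H_\Theta}(0,\infty) \cong \widehat H_0 E_{\widehat H_0}(0,\infty)$; gluing in $H'$ as above and noting $E_{-\Delta}(\{0\}) = 0$ yields $\widetilde H_+ \cong -\Delta E_{-\Delta}(0,\infty) = -\Delta$, purely absolutely continuous. The principal obstacle throughout is thus the boundary control of the Weyl function --- establishing the norm limit $M(t+i0)$ and, above all, the positivity and invertibility of $M_I(t) = \sqrt t\,\kM(t)$ down to $t = 0$, which is precisely where the radial positive-definiteness theory (Proposition \ref{stronglyxdet}, together with $\sin(s|\cdot|)/|\cdot| \in \Phi_3$) enters decisively.
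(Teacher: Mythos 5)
Your proposal is correct and follows essentially the same route as the paper: the same decomposition $H=\widehat H\oplus H'$ into a simple part plus an absolutely continuous self-adjoint part, the same reduction to Lemma \ref{lem4.22} via $M_I(t)=\sqrt t\,\kM(t)$, the Hilbert--Schmidt bound $\|\kM(t)-I\|\le (C_2/t)^{1/2}<1$ for $t>C_2$ in part (i), and the strong $X$-positive definiteness of $\Omega_3(\sqrt t\,|\cdot|)$ from Proposition \ref{stronglyxdet} for the invertibility of $\kM(t)$ on all of $\R_+$ in part (ii). The only (harmless) variation is that you establish the uniform limit $M(t+i0)$ by dominated convergence in the Hilbert--Schmidt norm, whereas the paper splits the matrix into a small tail controlled by Cauchy--Schwarz together with \eqref{4.90HScond} and a finite block on which $1-e^{-\beta|x_j-x_k|}$ is made small.
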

%%%%%%%%%%%%%%%%%%%%%%%%%%%%%%%%%
\begin{proof}
As in the proof of Proposition \ref{prop4.17}(ii)  we decompose
the symmetric operator $H$  in  a direct sum $H =
\widehat{H}\oplus H'$ of a simple symmetric operator $\widehat{H}$
and a self-adjoint operator $H'$. Next we define a boundary
triplet $\widehat\Pi=\{\cH,\widehat\Gamma_0,\widehat\Gamma_1\}$
for $\widehat{H}^*$ by setting $\widehat\Gamma_j :=
\Gamma_j\upharpoonright \dom(\widehat{H}^*),
 \ j\in\{0,1\},$  and note that  
the corresponding Weyl function $\widehat{M}(\cdot)$ coincides
with the Weyl function $M(\cdot)$ of $\Pi$. Further,  any proper
extension $\widetilde H = H_\Theta$ of $H$ admits  a decomposition
$H_\Theta = \widehat{H}_\Theta\oplus H'.$ In particular,  the
operator $H_0 = -\Delta$ is decomposed  as  $H_0=\widehat
H_0\oplus H'$, where $\widehat H_0 = \widehat
H^*\upharpoonright\ker (\widehat\Gamma_0) = \widehat H_0^*$. Being
a part of
 $H_0$, the operator $H'= (H')^*$ is absolutely continuous and $\sigma(H') = \sigma_{ac}(H')\subset \R_+$,
 because  $\sigma(H_0) = \sigma_{ac}(H_0)= {\overline \R_+}.$
Therefore, it suffices to prove all assertions  for
self-adjoint extensions  $\widehat{H}_\Theta$ of the \emph{simple
symmetric operator} $\widehat{H}.$

$(i)$ To prove  \eqref{4.90} for any extension of  $\widehat{H}$
it suffices to verify the conditions of Lemma \ref{lem4.22} noting
that $\widehat M(\cdot) =  M(\cdot).$  First we prove that for any
$t\in{\mathbb R}_+$ the uniform limit
    \begin{equation}\label{4.70A}
M(t + i0) :=  u-\lim_{y\downarrow 0}M(t + iy) \cong
 \left( i\sqrt{t}\delta_{kj}+ \frac{e^{i\sqrt{t}|x_k -
x_j|}-\delta_{kj}}{|x_k-x_j| +
\delta_{kj}}\right)_{j,k=1}^\infty,\quad t\in\mathbb{R},
    \end{equation}
exists, where the symbol $T\cong \kT$ means that the operator $T$  has the matrix  $\kT$ with respect
to the standard basis of $l^2(\bN)$.

 Indeed, it follows from \eqref{W3}  that for any $\xi,\ \eta\in
l^2(\mathbb N)$,
   \begin{eqnarray}\label{4.70}
\langle
\bigl(M(t+iy)-M(t)\bigr)\xi,\eta\rangle = (\sqrt{t+iy}-\sqrt{t}) \langle\xi,\eta\rangle
+ \ \sumprime_{j, k\in \N\ }\bigl(e^{-\beta|x_j-x_k|}-1 \bigr)
\frac{e^{i\alpha|x_j-x_k|}}{|x_j-x_k|}\xi_j\overline{\eta}_k.
   \end{eqnarray}
Fix $\varepsilon>0$. By to the assumption \eqref{4.90HScond} there exists $N=N(\varepsilon)\in{\N}$ such that
     \begin{equation}\label{4.94}
\sum_{j\ge N} \sumprime_{k\in \N\ }  
\frac{1}{|x_j - x_k|^2} +  \sum_{k\ge N} \sumprime_{j\in \N\ }
\frac{1}{|x_j - x_k|^2} <(\varepsilon/2)^2.
     \end{equation}
Then
     \begin{eqnarray}\label{4.71}
\sum_{j\ge N} \sumprime_{k\in \N\ }\frac{1}{|x_j -
x_k|}|\xi_j\overline{\eta}_k| +  \sum_{k\ge N}
\sumprime_{j\in \N\ }
\frac{1}{|x_j - x_k|}|\xi_j\overline{\eta}_k| \nonumber  \\
\le \bigl(\sum_{j\ge N}|\xi_j|^2\bigr)^{1/2}
\bigl(\sum^{\infty}_{k=1}|\eta_k|^2\bigr)^{1/2} \left(\sum_{j\ge
N} \sumprime_{k\in \N\ }
\frac{1}{|x_j - x_k|^2}\right)^{1/2}  \nonumber  \\
+ \bigl(\sum_{j\ge N}|\eta_k|^2\bigr)^{1/2}(\sum^{\infty}_{j = 1}
|\xi_k|^2\bigr)^{1/2}\left(\sum_{k\ge N} \sumprime_{j\in \N\ }
\frac{1}{|x_j - x_k|^2}\right)^{1/2}   \nonumber \\
\le 2^{-1}\varepsilon\|\xi\|_{l^2}\cdot\|\eta\|_{l^2}.
     \end{eqnarray}

On the other hand, since     
$d_*(X)>0,$  we can find $\beta_0=\beta_0(N)$ such that
    \begin{eqnarray}\label{4.72}
\sum^N_{j,k=1}\frac{(1-e^{-\beta|x_j-x_k|})}{|x_j-x_k|} \le
\varepsilon d_*(X)^{-1} \qquad\text{for}\qquad
\beta\in(0,\beta_0).
    \end{eqnarray}
Combining \eqref{4.70} with \eqref{4.71}  and \eqref{4.72} we get
     \begin{equation}
|\langle \bigl(M(t+iy)-M(t)\bigr)\xi,\eta\rangle |
 \le \varepsilon\bigl(1 + d_*(X)^{-1}\bigr)\|\xi\|_{l^2}\cdot\|\eta\|_{l^2},\qquad
 y\in(0, y_0),
     \end{equation}
that is,
     \begin{equation}\label{4.98}
\|M(t+iy)-M(t)\|
 \le \varepsilon\bigl(1 + d_*(X)^{-1}\bigr)\quad\text{for}\quad
 y\in(0, y_0).
     \end{equation}
Thus, the uniform limit \eqref{4.70A} exists for any $t\in \R_+.$

Further, it follows from \eqref{4.70A} that
    \begin{equation}\label{im1}
M_I(t) := M_I(t + i0) \cong \sqrt{t}
\left(\delta_{kj}+ \frac{\sin(\sqrt{t}|x_k-x_j|)}{\sqrt{t} (|x_k-x_j| +
\delta_{kj})}\right)_{j,k=1}^\infty ,   \qquad t
\in\mathbb{R}_+ .
     \end{equation}
This relation  combined with  assumption \eqref{4.90HScond}
yields $0\in\rho\bigl(M_I(t)\bigr)$ for $t > C_2$. The assertion follows now by applying Lemma \ref{lem4.22} to the operator
$\widehat{H}$  and the interval $(C_2, \infty).$

(ii)
 By \eqref{3.13}  the  function  $\Omega_3(t) = \frac{\sin
 t}{t}$ is  in $\Phi_3$. Hence, by Proposition \ref{stronglyxdet},  the matrix function
$\Omega_3(t\|\cdot\|)$ is strongly $X$-positively definite for
any $t>0,$ i.e., the matrix $\Omega_3(t\|x_j - x_k\|)_{j,k\in
\N}$ is positively definite for any $t>0.$  By
\eqref{im1} we have
  $$
M_I(t):= M_I(t+i0) \cong \sqrt t  \Omega_3(\sqrt t \|x_j -
x_k\|)_{j,k\in \N}, \qquad t\in \R_+.
  $$
Hence $M_I(t)$ is positively definite for  $t\in \R_+.$ It remains
to apply Lemma \ref{lem4.22} to the boundary triplet
$\widehat{\Pi}$ and the interval $\R_+.$
     \end{proof}
%
%
%%%%%%%%%%%%%%%%%%%
 Next we present another result on the ac-spectrum of self-adjoint extensions that is based on Corollary \ref{schurcor}.
%%%%%%%%%%%%%%%%%%%%%%%%%%%%%%%%%%%%%%%%%%%%%%%%%%%%%%%%%%%%%%%%%%
%
%
        \begin{theorem}\label{AcspecTheorem2}
Let ${\widetilde H}$ be an arbitrary self-adjoint extension of
$H.$  Assume that
    \begin{equation}\label{3.34CompactnessCond2}
\lim_{p\to\infty}\left({\rm sup}_{j\in \bN}~\sumprime_{k\in \N\ }
\frac{1}{|x_k{-}x_j|}\right) = 0
  \end{equation}
and let $C_1$ be defined by  \eqref{matrixs14A}.  Then:

$(i)$ The part ${\widetilde H}E_{\widetilde H}(C_1^2,\infty)$ of ${\widetilde H}$ is absolutely continuous, i.e.
    \begin{equation}\label{4.90B}
\sigma_s({\widetilde H})\cap(C_1^2,\infty) = \emptyset.
    \end{equation}
Moreover,   ${\widetilde H}E_{\widetilde H}(C_1^2,\infty)$ is
unitarily equivalent to the part $-\Delta E_{
-\Delta}(C_1^2,\infty)$ of $-\Delta.$

    $(ii)$ Assume,  in addition, that the conditions of  Proposition
\ref{stronglyxdet} are fulfilled, i.e.  $d_*(X_n) > 0$ and
$D^*(X_n)=0.$  Then ${\widetilde H}_+  = {\widetilde
H}E_{\widetilde H}(\R_+)$ is unitarily equivalent to $H_0 =
-\Delta.$ In particular, ${\widetilde H}_+$ is purely absolutely
continuous, i.e. ${\widetilde H}_+ = {\widetilde H}^{ac}_+$.
    \end{theorem}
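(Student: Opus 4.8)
The plan is to mirror the proof of Theorem \ref{AcspecTheorem}, substituting for the two uses of the hypothesis $C_2<\infty$ the compactness condition \eqref{3.34CompactnessCond2} and Proposition \ref{boundedr3}. First I would reduce to the simple part of $H$: write $H=\widehat H\oplus H'$ with $\widehat H$ simple and $H'=(H')^*$, equip $\widehat H^*$ with the induced boundary triplet $\widehat\Pi=\{\cH,\widehat\Gamma_0,\widehat\Gamma_1\}$ whose Weyl function again equals $M$, and note that every self-adjoint extension decomposes as $\widetilde H=\widehat H_\Theta\oplus H'$, where $H'$ is a part of $H_0=-\Delta$ and is therefore purely absolutely continuous with $\sigma(H')\subset\overline\R_+$. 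Hence it suffices to establish both assertions for extensions of the simple operator $\widehat H$, after which the direct sum furnishes the claims for $\widetilde H$.

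The technical heart, which I expect to be the main obstacle, is to prove that the uniform limit $M(t+i0)=u\text{-}\lim_{y\downarrow 0}M(t+iy)$ exists for every $t\in\R_+$, now using \eqref{3.34CompactnessCond2} in place of \eqref{4.90HScond}. Writing $\sqrt{t+iy}=\alpha+i\beta$, the diagonal difference $i(\sqrt{t+iy}-\sqrt t)$ tends to $0$, so by \eqref{W3} it remains to control the off-diagonal part of $M(t+iy)-M(t)$, whose entries are bounded in modulus by $2|x_j-x_k|^{-1}$. I would split this matrix into a finite head ($j,k<p$) and a tail (at least one index $\ge p$). Given $\varepsilon>0$, condition \eqref{3.34CompactnessCond2} lets one fix $p$ with $\sup_j\sum_{k\ge p}|x_k-x_j|^{-1}<\varepsilon$; Schur's test (Lemma \ref{schurtest}) then bounds the tail uniformly in $y$ by $2\varepsilon$, while on the finite head $e^{i\sqrt{t+iy}|x_j-x_k|}\to e^{i\sqrt t|x_j-x_k|}$ uniformly as $y\downarrow 0$. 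Thus $\limsup_{y\downarrow 0}\|M(t+iy)-M(t)\|\le 2\varepsilon$ for every $\varepsilon>0$, so the uniform limit exists and is the matrix $(i\sqrt t\,\delta_{kj}+(e^{i\sqrt t|x_k-x_j|}-\delta_{kj})(|x_k-x_j|+\delta_{kj})^{-1})_{j,k}$, whence $M_I(t)=\imm M(t+i0)\cong\sqrt t\,\Omega_3(\sqrt t,X)$ exactly as in \eqref{im1}.

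With $M_I(t)$ in this form, part (i) follows by verifying the invertibility hypothesis of Lemma \ref{lem4.22} on $(C_1^2,\infty)$. Applying Proposition \ref{boundedr3} with $r=\sqrt t$ and $K=C_1$, the matrix $\Omega_3(\sqrt t,X)$ defines a boundedly invertible operator as soon as $\sqrt t>C_1$, i.e. $t>C_1^2$; hence $0\in\rho(M_I(t))$ for such $t$. Lemma \ref{lem4.22}, applied to $\widehat H$ on $(C_1^2,\infty)$, then gives $\sigma_s(\widehat H_\Theta)\cap(C_1^2,\infty)=\emptyset$ together with the unitary equivalence of the part of $\widehat H_\Theta$ on $(C_1^2,\infty)$ with the corresponding part of $\widehat H_0$. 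Reassembling the direct sum and using $H_0=-\Delta$ from Proposition \ref{prop4.20}(i) yields \eqref{4.90B} and the asserted unitary equivalence with $-\Delta E_{-\Delta}(C_1^2,\infty)$.

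For part (ii), the additional hypotheses $d_*(X_n)>0$ and $D^*(X_n)=0$ allow me to invoke Proposition \ref{stronglyxdet}: since $\Omega_3\in\Phi_3$, the function $\Omega_3(s\,|\cdot|)$ is strongly $X$-positive definite for every $s>0$, so $M_I(t)=\sqrt t\,\Omega_3(\sqrt t,X)\ge c(t)I$ with $c(t)>0$ for all $t>0$. Being bounded by the estimate of the second paragraph, $M_I(t)$ is then boundedly invertible, so $0\in\rho(M_I(t))$ throughout $\R_+$. Applying Lemma \ref{lem4.22} to $\widehat\Pi$ on the whole interval $\R_+$ shows that $\widehat H_\Theta$, and hence $\widetilde H_+$, is unitarily equivalent to $-\Delta$, so that $\widetilde H_+=\widetilde H_+^{ac}$ is purely absolutely continuous.
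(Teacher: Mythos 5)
Your proposal is correct and follows essentially the same route as the paper: reduction to the simple part $\widehat H$, existence of the uniform limit $M(t+i0)$ via the tail estimate supplied by \eqref{3.34CompactnessCond2} together with a Schur-type bound, bounded invertibility of $M_I(t)\cong\sqrt t\,\Omega_3(\sqrt t,X)$ for $t>C_1^2$ from Proposition \ref{boundedr3}, and then Lemma \ref{lem4.22}; part (ii) via Proposition \ref{stronglyxdet} is likewise the paper's argument.
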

%%%%%%%%%%%%%%%%%%%%%%%%%%%%%%%%%
    \begin{proof}
(i)\  The proof is similar to that of Theorem
\ref{AcspecTheorem}(i).  Indeed,   by  assumption
\eqref{3.34CompactnessCond2}, for any  $\varepsilon>0$ one can find
$N=N(\varepsilon)\in{\N}$ such that
     \begin{equation}\label{4.103}
\sup_{j\ge N} \sumprime_{k\in \N\ }
\frac{1}{|x_j - x_k|}\  + \  \sup_{k\ge N}
\sumprime_{j\in \N\ }
\frac{1}{|x_j - x_k|} < \varepsilon/2.
     \end{equation}
Starting with \eqref{4.103} instead  of \eqref{4.94}  and applying
Corollary  \ref{schurcor}  we derive
     \begin{eqnarray}\label{4.71AA}
\sum_{j\ge N} \sumprime_{k\in \N\ }
\frac{1}{|x_j -
x_k|}|\xi_j\overline{\eta}_k| +  \sum_{k\ge N}
\sumprime_{j\in \N\ }\frac{1}{|x_j -
x_k|}|\xi_j\overline{\eta}_k|  \le
2^{-1}\varepsilon\|\xi\|_{l^2}\cdot\|\eta\|_{l^2}
     \end{eqnarray}
which implies \eqref{4.98}. That the operator $M_I(\cdot)$ has a
bounded inverse if $t> C_1^2$  follows from  \eqref{im1} and
Proposition \ref{boundedr3}.  It remains to apply Lemma
\ref{lem4.22} to the operator $\widehat{H}$  and the interval
$(C_1^2, \infty).$

(ii) follows by arguing in  a similar manner as in  the proof of Theorem
\ref{AcspecTheorem}(ii).
      \end{proof}
%%%%%%%%%%%%%%%%%%%%%%%%%%%%%%%%%%%%%%%
  \begin{remark}
(i)\  The assertions of Theorems \ref{AcspecTheorem}(iii) and  \ref{AcspecTheorem2}(iii) remains  valid if the
sequence $X$ satisfies the assumptions of
Proposition \ref{pointsr3}(i)  instead of Proposition
\ref{stronglyxdet}. The proof of Theorem \ref{AcspecTheorem}(ii)
shows that Propositions \ref{stronglyxdet} and  \ref{pointsr3}(i)
guarantee the absence of singular continuous spectrum and of
eigenvalues embedded in the $ac$-spectrum for any self-adjoint
extension $\widetilde H$ of $H$.

(ii) \ For sets  $X=\{x_j\}_1^m$ of finitely many points  a description  of  the
$ac$-spectrum and  the point spectrum of  self-adjoint
realizations of  $\mathfrak{L}_3$        
was obtained by different methods in \cite[Theorem 1.1.4]{AGHH88}
and \cite{GMZ11}.  For this purpose a connection with
radial positive definite functions was exploited for the first
time and  strong  $X$-positive definiteness of some functions
$f\in \Phi_3$ was used in  \cite{GMZ11}.
   \end{remark}
%%%%%%%%%%%%%%%%%%%%%%%%%%%%%%%%%%%%%
    \begin{remark}
At first glance it seems that Theorem \ref{AcspecTheorem}  might
contradict the classical Weyl -- von Neumann theorem \cite[Theorem
X.2.1]{Kato66}, \cite[Theorem 13.16.1]{ReeSim78}  which states the
existence of an additive perturbation $K= K^*\in \mathfrak S_2$
such that the operator $H + K$ has a purely point spectrum. In
fact, Theorem \ref{AcspecTheorem} yields explicit examples showing
that the analog of the Weyl -- von Neumann theorem does not hold
for non-additive (singular) compact perturbations. Under the
assumptions of Theorem \ref{AcspecTheorem}(ii), for \emph{any}
self-adjoint extension $\widetilde{H}$ of $H$, the part
${\widetilde H}E_{\widetilde H}(\R_+)$ is purely absolutely
continuous and ${\widetilde H}E_{\widetilde H}(\R_+)$ is unitarily
equivalent to $H = -\Delta.$
\emph{This shows that  
both the ac-spectrum $\sigma(H)$ and its multiplicity cannot  be
eliminated by some perturbations $K_{\widetilde H} := (\widetilde
H - i)^{-1}- (H_0 - i)^{-1}$ with $\widetilde H = \widetilde H^*
\in \Ext_H$. That is, the operator $H = -\Delta$ satisfies the
property of $ac$-minimality  in the sense of \cite{MalNei09}.
Moreover, if $K_{\widetilde H}$ is compact, then ${\widetilde
H}E_{\widetilde H}(\R_+)$ is even unitarily equivalent to $H =
-\Delta.$} A similar result was obtained for realizations in
$L^2(\R_+,\cH)$ of the differential expression $\mathcal L =
\frac{d^2}{dx^2} +T$ with unbounded non-negative operator
potential $T=T^*\in \mathcal C(\cH)$ in \cite{MalNei09}.
However, in contrast to our Theorems \ref{AcspecTheorem},
\ref{AcspecTheorem2},   the non-negative spectrum of some
realizations of $\mathcal L$   \emph{might contain a singular
part} (see  \cite{MalNei09}).

Note also that in contrast to the $3D$-case one dimensional sparse
point interactions (as well as ordinary potentials) may lead to singular
spectrum.
   \end{remark}
%%%%%%%%%%%%%%%%%%%%%%%%%%%%%%%%%%%%%%%%%%%%%%
       \begin{remark}
The absolute continuity of self-adjoint realizations $\widetilde H$
of $H$ has been studied only  for special configurations
 $X = Y+\Lambda$, where $Y=\{y_j\}^N_1\in{\mathbb R}^3$ is a finite
set and $\Lambda = \{\sum^3_1 n_j a_j\in{\R}^3:
(n_1,n_2,n_3)\in{\mathbb Z}^3\}$ is the Bravais  lattice. It was first proved in \cite{GrHKM80} that in the case
 $N=1$ the spectrum of local periodic
realizations is absolutely continuous and contains at most two
bands (see also \cite[Theorems 1.4.5, 1.4.6]{AGHH88}). Further
development  can be found in
 \cite{AGHH87, AGHHK86, HHM83, HHJ84, Kar83},
The most complete result in this direction was obtained in
\cite{AG2000}. It was proved in \cite{AG2000} that the spectrum of
some (not necessarily local) realizations $\widetilde H$  is
absolutely continuous and has a band structure with a finite
number of gaps (for the negative part of the energy axis this
result  was proved earlier in \cite{AGHHK86, HHM83}).
 In particular, these results
confirm the Bethe-Sommerfeld conjecture on the finiteness of bands
for the case of periodic perturbations.
       \end{remark}

{\bf Acknowledgement}
We express our gratitude to Fritz Gesztesy for his careful reading of
the manuscript and numerous useful remarks.
We also thank L.L. Oridoroga for his help
in proving  Lemma \ref{lemma4.7}.

A part of this work was done while  the first named author  was visiting
the Department of Mathematics at the University of Leipzig. His visit
was supported by the DFG  grant Schm 009/4-1.

   %%%%%%%%%%%%

\quad

Mark Malamud,\\
\emph{Institute of Applied Mathematics and Mechanics, NAS of Ukraine},\\
\emph{R. Luxemburg str. 74},\\
\emph{83114 Donetsk,Ukraine}\\
 \emph{e-mail:} mmm@telenet.dn.ua
 \\

 Konrad Schm\"udgen,\\
\emph{ Institut of Mathematics,    University of Leipzig },\\
\emph{Johannisgasse 26},\\
\emph{04109 Leipzig, Germany}\\
 \emph{e-mail:}schmuedgen@math.uni-leipzig.de


\begin{thebibliography} {20}
%
%
\bibitem{Akh65}
N.I. Akhiezer, \textit{The  classical   moment problem  and some
related questions of  analysis}, Oliver  and Boyd, Edinburgh, 1965
(Russian edition: Moscow, 1961).
%
%
  \bibitem{AkhGlz78}
N.I. Akhiezer, I.M. Glazman, \emph{Theory of Linear Operators in
Hilbert Spaces},  Ungar, New York, 1961.
%
%
\bibitem{AGHH87}
S.~Albeverio, F.~Gesztesy, R.~Hoegh-Krohn, H.~Holden,
\textit{Point interactions in  two dimensions: Basic properties,
approximations and  applications to solid state physics}, J. Reine
und Angew. Math. \textbf{380} (1987), 87-107.
%
%
\bibitem{AGHH88}
S.~Albeverio, F.~Gesztesy, R.~Hoegh-Krohn, H.~Holden,
\emph{Solvable Models in Quantum Mechanics}, Sec. Edition, (with
an Appendix by P. Exner) AMS Chelsea Publ., 2005.
%
%
\bibitem{AGHHK86}
S.~Albeverio, F.~Gesztesy, R.~Hoegh-Krohn, H.~Holden, W. Kirsch,
\emph{The periodic  Schr\"odinger operator for a particle in a
solid with with  deterministic and stohastic point interactions},
Lect. Notes Math. \textbf{1218} (1986), 1-38.
%
%
%
\bibitem{AG2000}
S. Albeverio, V.A. Geyler, \textit{The band structure of the
general periodic Schr\"odinger operator with point interactions},
Comm. Math. Phys. \textbf{210} (2000), 29-48.
%
%
\bibitem{AKM2010}
S. Albeverio, A. Kostenko, M. Malamud,
\textit{Spectral theory of semibounded Sturm-Liouville operators  with  local point interactions
on a discrete set}, J.  Math. Physics \textbf{51} (2010), 102102-.
%

\bibitem{ArlTse05}
Yu. Arlinskii, E. Tsekanovskii,  \textit{The von Neumann Problem
for Nonnegative Symmetric Operators}, Integr. Eq. Oper. Theory
\textbf{51} (2005), 319-356.
%

\bibitem{Ash10}
M. S. Ashbaugh, F. Gesztesy, M. Mitrea, G. Teschl,
\textit{Spectral
 Theory for  Perturbed  Krein   Laplacians in  Nonsmooth
 Domains},  Adv. Math., \textbf{51} (2010), 1372--1467.

\bibitem{BF61}
F. A. Berezin, L. D. Faddeev,  \textit{Remark on the Schr\"odinger
equation with singular potential}, Dokl. Acad. Sci. USSR
\textbf{137} (1961), 1011--1014.

\bibitem{BCR}
C. Berg, J.P.R. Christensen, P. Ressel, \textit{Harmonic Analysis on Semigroups}, Springer-Verlag, New-York, 1984.



\bibitem{beurling}
A. Beurling, \textit{ Local harmonic analysis with some applications to differential operators}, in Proc. Annual Science Conference, Belfer Graduate School of Science, 1966, pp.  109--125.

\bibitem{BraMal02}
J. F. Brasche, M. Malamud, H. Neidhardt, \textit{ Weyl function
and spectral properties  of self-adjoint  extensions}, Integr. Eq.
Oper. Theory \textbf{43} (2002),  264--289.
%
\bibitem{BMN08}
J. Behrndt, M. Malamud, H. Neidhardt,  \textit{Scattering matrices
and Weyl functions}, Proc. London Math. Soc. \textbf{97} (2008),
568--598.
%
%
\bibitem{BGPankr2008}
J.~Br{\"u}ning, V.~Geyler,  K.~Pankrashkin.
\textit{ Spectra of self-adjoint extensions and applications to solvable
  {S}chr\"odinger operators}, Rev. Math. Phys. \textbf{20} (2008), 1--70.
%
\bibitem{Boch}
S. Bochner, \textit{Monotone Funktionen, Stieltjessche Integrale
und harmonische  Funktionen}, Math. Ann. \textbf{108} (1933), 378-410.
%
%

\bibitem{DerMal91}
V.A. Derkach,  M.M. Malamud,  \textit{Generalized resolvents and
the boundary value problems for hermitian operators with gaps}, J.
Funct. Anal.  \textbf{95} (1991), 1--95.
%
%
%
%
\bibitem{Fas84}
S. Fassari, \textit {On the Schr\"odinger operator with periodic
point interactions in the three-dimensional case}, J. Math. Phys.
\textbf{25} (1984), 2910--2917.
%
%
\bibitem{Ges08}
F. Gesztesy, K. A. Makarov, M. Zinchenko,\textit{ Essential closures and
AC spectra for reflectionless CMV, Jacobi, and Schr\"odinger operators
revisited}, Acta Appl. Math. {\bf 103}  (2008),  315--339.
%
%
\bibitem{GK65} Gokhberg, I.C.,  Krein, M.G.: {\it Introduction to the theory of linear
nonselfadjoint operators}, Amer. Math. Soc., Providence, R.I., 1969.
%
%
%%
\bibitem{GMZ11} Goloschapova N., Malamud M., Zastavnyi V.,
\textit{Radial Positive definite functions and spectral theory of Schr\"odinger operators
with point interactions}, {Math. Nachr.}  {\bf 285} (2012), Doi 10202/mana.201000017.
%
%
%
%
\bibitem{GG}
   V.I. Gorbachuk,  M.L. Gorbachuk, \textit{Boundary Value Problems for
Operator Differential Equations},  Kluwer Academic Publ., Dordrecht,
1991.
%
%
\bibitem{GrHKM80}
A. Grossman, R.Hoegh-Krohn, M.Mebkhout, \textit{The one-particle
theory of periodic  point interactions}, Comm. Math. Phys.
\textbf{77} (1980), 87-110.
%
%
   \bibitem{Gr68}
G.\ Grubb, {\it A characterization of the non-local boundary value
problems associated with an elliptic operator}, Ann. Scuola Norm.
Sup. Pisa (3) \textbf{22} (1968), 425--513.
%
%
\bibitem{Gru08}
G.~Grubb.
\newblock {\em Distributions and  Operators}, Springer-Verlag, New York, 2009.
%
%
   \bibitem{Gru2011}
G.\ Grubb, {\it Krein-like extensions and the lower boundedness
problem for elliptic operators},  J. Differ. Equations
\textbf{252} (2012), 852-885.

\bibitem{HK09}
S. Hassi, S. Kuzhel,  \textit{On symmetries in the theory of
singular perturbations}, J. Funct. Anal. \textbf{256} (2009),
777--809.
%
%
\bibitem{HHM83}
R. Hoegh-Krohn, H. Holden, F. Martinelli,
 \textit{The spectrum of
defect periodic point interactions}, Lett. Math. Phys. \textbf{7}
(1983), 221--228.
%
%
\bibitem{HHJ84}
H. Holden, R. Hoegh-Krohn, S. Johannesen, \textit{The short-range
expansions in solid state physics}, Ann. Inst. H. Poincare
\textbf{A41} (1984), 333--362.
%
%
\bibitem{Kar83}
Y.E. Karpeshina, \textit{Spectrum and eigenfunctions of
Schrodinger operator with zerorange potential of homogeneous
lattice type in three-dimensional space}, Theoret. Math. Phys. \textbf{57}
(1983), 1156-1162.
%
%
\bibitem{Kato66}
T. Kato, \textit{Perturbation theory for linear operators},
Springer-Verlag, Berlin-Heidelberg, New York,
1976.



%
%
\bibitem{Koc82}
A. N. Kochubei, \textit{Elliptic operators with boundary
conditions on a subset of measure zero}, Funktsional  Anal. i Prilozhen.
\textbf{16} (1982), 137--139.
%
%
\bibitem{Koc89}
A.N. Kochubei, \textit{One dimensional point interactions}, Ukrain. Math. J. \textbf{41} (1989), 1391--1397.
%
%
\bibitem{KosMal10}
A.~S. Kostenko,  M.~M. Malamud.
\textit{1-D Schr\"odinger operators with local point interactions on a
  discrete set.}
 J. Differ. Equations \textbf{249} (2010), 253--304.
%
%
\bibitem{landau}
H.J. Landau, \textit{Necessary density conditions for sampling and
interpolation of certain entire functions}, Acta Math.
  \textbf{117} (1967), 37--52.
%
%
\bibitem{LioMag72}
J.\ L.\ Lions and E.\ Magenes, {\it Non-Homogeneous Boundary Value
Problems and Applications}, Vol.\ I, Springer, Berlin, 1972.
%
%
\bibitem{LyaMaj}
V.E. Lyantse, Kh.B. Maiorga, \textit{On the Theory of One-Point
Boundary-Value Problem for Laplace Operator}, Theor. Funktsii Funktsional  Anal. i Prilozhen.  \textbf{38} (1982), 84--91.

\bibitem{Mal92}
M.M. Malamud,  \textit{Certain classes of extensions of a lacunary
hermitian operator}, Ukrain. Math. J. \textbf{44} (1992),
190--204.
%
%
\bibitem{Mal10}
M.M. Malamud,  \textit{Spectral theory  of elliptic operators in
exterior domains}, Russ. J. Math. Phys. \textbf{17}
(2010), 97-126.
%
%
\bibitem{MalNei11}
M.M. Malamud, H. Neidhardt, \textit{On the unitary equivalence of
absolutely continuous parts of self-adjoint extensions}, J. Funct.
Anal. {\bf 260} (2011), 613--638 (arXiv:0907.0650v1 [math-ph]).
%
%
\bibitem{MalNei09}
M. Malamud, H. Neidhardt, \textit{Sturm-Liouville boundary value
problems with operator potentials and  unitary equivalence}
{J. Differential Equations}, \textbf{252} (2012), 5875-5922
(arXiv:0907.0650v1 [math-ph]).
%
%
\bibitem{MasN}
V.P. Maslov, \textit{Operational Methods}, { Mir, Moscow, 1976}( translated from the Russian).
%
%
%
\bibitem{Mik94}
V.A. Mikhailets, \textit{One-dimensional Schr\"odinger operator
with point interactions}, Dokl. Math. \textbf{335}
(1994), 421--423.
%
%
%
%
\bibitem{Pos01}
A. Posilicano, \textit{ A Krein-like Formula for Singular Perturbations of Self-Adjoint Operators and Applications} J. Funct. Anal.
\textbf{183} (2001), 109-147.
%
%
\bibitem{Pos08}
A. Posilicano, \textit{ Self-Adjoint Extensions of  Restrictions},
Oper.   Matrices  \textbf{2} (2008),  483--506.
%
%
\bibitem{rw}
L. Rade, B. Westergren,
\textit{Mathematische Formeln}, Springer-Verlag, Berlin, 1996.
%
%
\bibitem{ReeSim75}
M. Reed, B. Simon,
\textit{Methods of modern mathematical
physics II},  Academic Press, New York, 1975.
%
%
\bibitem{ReeSim78}
M. Reed, B. Simon,
\textit{Methods of modern mathematical
physics}. IV,
Academic Press, New
York, 1978.
%
%

\bibitem{Sch2012}
  K. Schm\"udgen, \textit{Unbounded Self-adjoint Operators on Hilbert Space}, Springer-Verlag, New York, 2012.

\bibitem{Sch38_1} {I. J. Schoenberg},
  \textit{Metric spaces and completely monotone functions},
 Ann. Math. {\bf 39} (1938), 811--841.

\bibitem{Sch38}
I. J. Schoenberg, \textit{Metric  spaces   and  positive
definite functions}, Trans. Amer. Math. Soc. {\bf 44} (1938),
522--536.


\bibitem{seip} K. Seip, \textit{On the connection between exponential bases and certain related sequences in $L^2(-\pi,\pi)$},  J. Funct. Anal. {\bf 130}
(1995), 131--160.

\bibitem{SK}   A.G.M. Steerneman, F. van Perlo-ten Kleij, \textit{Spherical distributions: Schoenberg (1938) revisited}, Expo. Math. \textbf{23} (2005), 281--287.

\bibitem{Trigub_1989} R. M. Trigub, \textit{A criterion for a characteristic function
and Poly\'a type criterion for radial functions of several
variables}, {Theory Prob.  Appl.} {\bf 34} (1989), 738--742.
%
%
 \bibitem{Vi63} M.\ L.\ Vishik, {\it On general boundary problems
for elliptic differential equations}, Trudy Moskov. Mat. Obsc.
{\bf 1}(1952),  187--246 (Russian); Engl. transl. in Amer. Math.
Soc. Transl. (2), {\bf 24} (1963), 107--172.
%

\bibitem{young80} R. M. Young,   \textit{An Introduction to Nonharmonic Fourier Series}, Academic Press, New York, 1980.


\bibitem{young98} R. M. Young,   \textit{On a class of Riesz-Fischer sequences},
{Proc. Amer. Math. Soc.} {\bf 126} (1998), 1139--1142.
%
%
\bibitem{Zastavnyi_2000} V. P. Zastavnyi, \textit{On positive definiteness of some functions},
 J. Multiv. Anal. {\bf 73} (2000),  55--81.

\end{thebibliography}
\end{document}